\documentclass[10pt]{amsart}
      \usepackage{amsmath,amsfonts}
      %

             \hoffset -1.3cm
      \voffset -1cm
\textwidth 16truecm
      \textheight 22.5truecm

\def\rg{\hbox to 30pt{\rightarrowfill}}
\def\lg{\hbox to 30pt{\leftarrowfill}}

      \parskip\smallskipamount
          \newtheorem{theorem}{Theorem}[section]
      \newtheorem{definition}[theorem]{Definition}
      \newtheorem{proposition}[theorem]{Proposition}
      \newtheorem{corollary}[theorem]{Corollary}
      \newtheorem{lemma}[theorem]{Lemma}

      \newtheorem{remark}[theorem]{Remark}
      \makeatletter
      \@addtoreset{equation}{section}
      \makeatother

      \newcommand{\BB}{{\mathbb B}}
      \newcommand{\CC}{{\mathbb C}}
      \newcommand{\NN}{{\mathbb N}}
      
      \newcommand{\ZZ}{{\mathbb Z}}
      \newcommand{\DD}{{\mathbb D}}
      
      \newcommand{\FF}{{\mathbb F}}

      \newcommand{\cA}{{\mathcal A}}
      
      \newcommand{\cC}{{\mathcal C}}
      \newcommand{\cD}{{\mathcal D}}
      \newcommand{\cE}{{\mathcal E}}
      
      \newcommand{\cG}{{\mathcal G}}
      \newcommand{\cH}{{\mathcal H}}
      \newcommand{\cK}{{\mathcal K}}
      \newcommand{\cL}{{\mathcal L}}
      \newcommand{\cM}{{\mathcal M}}
      \newcommand{\cN}{{\mathcal N}}
      \newcommand{\cQ}{{\mathcal Q}}
      
      \newcommand{\cR}{{\mathcal R}}
      
      \newcommand{\cT}{{\mathcal T}}
      
      \newcommand{\cV}{{\mathcal V}}

      \newcommand{\supp}{\hbox{\rm{supp}}\,}
      \newcommand{\rank}{\hbox{\rm{rank}}\,}

      \newdimen\expt
      \expt=.1ex
      \def\boxit#1{\setbox0\hbox{$\displaystyle{#1}$}
            \hbox{\lower.4\expt
       \hbox{\lower3\expt\hbox{\lower\dp0
            \hbox{\vbox{\hrule height.4\expt
       \hbox{\vrule width.4\expt\hskip3\expt
            \vbox{\vskip3\expt\box0\vskip2\expt}%
       \hskip3\expt\vrule width.4\expt}\hrule height.4\expt}}}}}}
      \begin{document}
       \pagestyle{myheadings}
      \markboth{ Gelu Popescu}{  Curvature invariant on noncommutative  polyballs  }

      \title [  Curvature invariant on noncommutative   polyballs  ]
      { Curvature invariant on noncommutative   polyballs }
        \author{Gelu Popescu}
\date{November 7, 2014, revised version}
      \thanks{Research supported in part by an NSF grant}
      \subjclass[2000]{Primary:  46L52;  32A70;  Secondary: 47A13; 47A15}
      \keywords{Noncommutative polyball;   Curvature invariant; Multiplicity invariant; Berezin transform;  Characteristic function;
      Fock space; Creation operators, Invariant subspaces.
}

      \address{Department of Mathematics, The University of Texas
      at San Antonio \\ San Antonio, TX 78249, USA}
      \email{\tt gelu.popescu@utsa.edu}

\begin{abstract}   In this paper we  develop a theory of curvature  (resp.~multiplicity) invariant
  for  tensor products of full
Fock spaces $F^2(H_{n_1})\otimes \cdots \otimes F^2(H_{n_k})$  and
also for   tensor products  of   symmetric Fock spaces
$F_s^2(H_{n_1})\otimes \cdots \otimes F_s^2(H_{n_k})$. This is an attempt to find a more general framework for these invariants and extend  some of the  results    obtained by Arveson for the symmetric Fock space, by the author and Kribs for the full Fock space,   and by Fang for the Hardy space $H^2(\DD^k)$ over the polydisc.
To prove the existence of the curvature and its basic properties in these  settings requires a new approach  based on noncommutative Berezin transforms and  multivariable operator theory on polyballs and varieties, as well as summability results for completely positive maps.
The results are presented in the more general setting of  regular polyballs.

\end{abstract}

      \maketitle

\section*{Contents}
{\it

\quad Introduction

\begin{enumerate}
   \item[1.]    Curvature invariant on noncommutative polyballs
   \item[2.]    The curvature operator and classification
   \item[3.]    Invariant subspaces and multiplicity invariant
   \item[4.]    Stability, continuity, and multiplicative properties
   \item[5.]  Commutative polyballs and curvature invariant
\end{enumerate}

\quad References

}

\bigskip

\section*{Introduction}

In \cite{Arv2} and \cite{Arv3},  Arveson introduced and studied a notion of
curvature for finite rank contractive Hilbert modules over
 $\CC[z_1,\dots, z_n]$, which is basically a numerical  invariant for
  commuting  $n$-tuples  $T:=(T_1,\ldots, T_n)$ in the unit ball
  $$
 [B(\cH)^{n}]_1^-:=\left\{
(X_{1},\ldots, X_{n})\in B(\cH)^{n}:\ I-X_{1} X_{1}^*-\cdots -X_{n}X_{n}^*\geq 0
 \right\},
$$
with
  rank\,$\Delta_T<\infty$, where  $\Delta_T:=I-T_1T_1^*-\cdots -T_nT_n^*$ and $B(\cH)$ is the algebra of bounded
linear operators on a Hilbert space $\cH$.
Subsequently, the author \cite{Po-curvature}, \cite{Po-varieties} and, independently,
Kribs \cite{Kr} defined and studied a notion of curvature for
arbitrary  elements in $[B(\cH)^n]_1^-$ and, in particular,  for the full Fock space  $F^2(H_n)$ with $n$ generators. Some of these results were extended by  Muhly and Solel \cite{MuSo4} to a class of completely positive maps on semifinite factors.
The theory of Arveson's curvature on the symmetric Fock space $F_s^2(H_n)$ with $n$ generators was significantly expanded  due to the work  by Greene, Richter, and Sundberg  \cite{GRS},  Fang  \cite{Fang1}, and   Gleason, Richter, and Sundberg  \cite{GlRS}. Engli\v s remarked in \cite{En}  that using Arveson's ideas one can extend the notion of curvature to complete Nevanlinna-Pick kernels. The extension of Arveson's theory to holomorphic spaces with non Nevanlinna-Pick kernels was first considered by
Fang \cite{Fang} who was able to show that the main results about the curvature invariant on the symmetric Fock space carry over, with different proofs and using commutative algebra techniques, to the Hardy space $H^2(\DD^k)$ over the polydisc, in spite of its extremely complicated lattice of invariant subspaces (see \cite{Ru}). Inspired by some results on the invariant subspaces of the Dirichlet shift obtained by Richter \cite{R},
the theory of  curvature invariant was extended to the Dirichlet space  by Fang \cite{Fang2}. In the noncommutative setting, a notion of curvature invariant  for  noncommutative domains generated by positive regular free polynomials  was considered in \cite{Po-domains}.

The goal of the present paper is to develop a theory of curvature invariant for    the  regular polyball
 ${\bf B_n}(\cH)$, which will be introduced below.
 In particular, our results allow one to formulate  a theory of  curvature invariant and multiplicity invariant  for the tensor product of full
Fock spaces $F^2(H_{n_1})\otimes \cdots \otimes F^2(H_{n_k})$  and
also for the  tensor product  of   symmetric Fock spaces
$F_s^2(H_{n_1})\otimes \cdots \otimes F_s^2(H_{n_k})$.
To prove the existence of the curvature and its basic properties in these  settings   requires a new approach  based on noncommutative Berezin transforms and  multivariable operator theory on polyballs and varieties (see \cite{Po-poisson}, \cite{Po-automorphism}, \cite{Po-Berezin-poly}, and \cite{Po-Berezin3}), and also certain summability results for completely positive maps which are trace contractive.
In particular, we obtain new proofs for the existence  of the curvature on  the full Fock space $F^2(H_{n})$,   the Hardy space $H^2(\DD^k)$ (which corresponds
    to $n_1=\cdots=n_k=1$), and the symmetric Fock space $F_s^2(H_{n})$.
We expect
our paper to be a step ahead towards finding a set of numerical (or operatorial) unitary invariants
which  completely classify  the elements of large classes of noncommutative domains
in $B(\cH)^m$  which
admit universal operator models.

  The results of the present paper  play a crucial role in \cite{Po-Euler-charact},
where
 we introduce  and study the Euler characteristic  associated with
  the elements of polyballs, and obtain an analogue of Arveson's  version of Gauss-Bonnet-Chern
   theorem from Riemannian  geometry, which connects the curvature to
   the Euler characteristic of some associated  algebraic modules.
   In particular, we prove that if $\cM$ is an  invariant subspace of
     $F^2(H_{n_1})\otimes \cdots \otimes F^2(H_{n_k})$, $n_i\geq 2$, which is graded
      (generated by multi-homogeneous polynomials),
    then the curvature and the Euler characteristic of the orthocomplement  of $\cM$ coincide.

To present our results, we need some notation and preliminaries.
Throughout this paper, we denote by  $B(\cH)^{n_1}\times_c\cdots \times_c B(\cH)^{n_k}$, where $n_i \in\NN:=\{1,2,\ldots\}$,
   the set of all tuples  ${\bf X}:=({ X}_1,\ldots, { X}_k)$ in $B(\cH)^{n_1}\times\cdots \times B(\cH)^{n_k}$
     with the property that the entries of ${X}_s:=(X_{s,1},\ldots, X_{s,n_s})$  are commuting with the entries of
      ${X}_t:=(X_{t,1},\ldots, X_{t,n_t})$  for any $s,t\in \{1,\ldots, k\}$, $s\neq t$.
  Note that  the operators $X_{s,1},\ldots, X_{s,n_s}$ are not necessarily commuting.
  Let ${\bf n}:=(n_1,\ldots, n_k)$ and define  the {\it regular polyball}
$$
{\bf B_n}(\cH):=\left\{ {\bf X}\in B(\cH)^{n_1}\times_c\cdots \times_c B(\cH)^{n_k}: \ {\bf \Delta_{X}^p}(I)\geq 0 \ \text{ for }\ {\bf 0}\leq {\bf p}\leq (1,\ldots,1)\right\},
$$
where
 the {\it defect mapping} ${\bf \Delta_{X}^p}:B(\cH)\to  B(\cH)$ is defined by
$$
{\bf \Delta_{X}^p}:=\left(id -\Phi_{X_1}\right)^{p_1}\circ \cdots \circ\left(id -\Phi_{ X_k}\right)^{p_k}, \qquad  {\bf p}:=(p_1,\ldots, p_k)\in \ZZ_+^k,
$$
 and
$\Phi_{X_i}:B(\cH)\to B(\cH)$  is the completely positive linear map defined by   $\Phi_{X_i}(Y):=\sum_{j=1}^{n_i}   X_{i,j} Y X_{i,j} ^*$ for $Y\in B(\cH)$.  We use the convention that $(id-\Phi_{f_i,X_i})^0=id$.
 For information on completely bounded (resp. positive) maps we refer the reader to \cite{Pa-book}.

Let $H_{n_i}$ be
an $n_i$-dimensional complex  Hilbert space with orthonormal basis $e^i_1,\ldots, e^i_{n_i}$.
  We consider the {\it full Fock space}  of $H_{n_i}$ defined by
$$F^2(H_{n_i}):=\CC 1 \oplus\bigoplus_{p\geq 1} H_{n_i}^{\otimes p},$$
where  $H_{n_i}^{\otimes p}$ is the
(Hilbert) tensor product of $p$ copies of $H_{n_i}$. Let $\FF_{n_i}^+$ be the unital free semigroup on $n_i$ generators
$g_{1}^i,\ldots, g_{n_i}^i$ and the identity $g_{0}^i$.
  Set $e_\alpha^i :=
e^i_{j_1}\otimes \cdots \otimes e^i_{j_p}$ if
$\alpha=g^i_{j_1}\cdots g^i_{j_p}\in \FF_{n_i}^+$
 and $e^i_{g^i_0}:= 1\in \CC$.
  The length of $\alpha\in
\FF_{n_i}^+$ is defined by $|\alpha|:=0$ if $\alpha=g_0^i$  and
$|\alpha|:=p$ if
 $\alpha=g_{j_1}^i\cdots g_{j_p}^i$, where $j_1,\ldots, j_p\in \{1,\ldots, n_i\}$.
 We  define
 the {\it left creation  operator} $S_{i,j}$ acting on the  Fock space $F^2(H_{n_i})$  by setting
$
S_{i,j} e_\alpha^i:=  e^i_{g_j^i \alpha}$, $\alpha\in \FF_{n_i}^+,
$
 and
 the operator ${\bf S}_{i,j}$ acting on the tensor  product
$F^2(H_{n_1})\otimes\cdots\otimes F^2(H_{n_k})$ by setting
$${\bf S}_{i,j}:=\underbrace{I\otimes\cdots\otimes I}_{\text{${i-1}$
times}}\otimes S_{i,j}\otimes \underbrace{I\otimes\cdots\otimes
I}_{\text{${k-i}$ times}},
$$
where  $i\in\{1,\ldots,k\}$ and  $j\in\{1,\ldots,n_i\}$.
 Let  ${\bf T}=({ T}_1,\ldots, { T}_k)\in {\bf B_n}(\cH)$ with $T_i:=(T_{i,1},\ldots, T_{i,n_i})$.
We  use the notation $T_{i,\alpha_i}:=T_{i,j_1}\cdots T_{i,j_p}$
  if  $\alpha_i=g_{j_1}^i\cdots g_{j_p}^i\in \FF_{n_i}^+$ and
   $T_{i,g_0^i}:=I$.
The {\it noncommutative Berezin kernel} associated with any element
   ${\bf T}$ in the noncommutative polyball ${\bf B_n}(\cH)$ is the operator
   $${\bf K_{T}}: \cH \to F^2(H_{n_1})\otimes \cdots \otimes  F^2(H_{n_k}) \otimes  \overline{{\bf \Delta_{T}}(I) (\cH)}$$
   defined by
   $$
   {\bf K_{T}}h:=\sum_{\beta_i\in \FF_{n_i}^+, i=1,\ldots,k}
   e^1_{\beta_1}\otimes \cdots \otimes  e^k_{\beta_k}\otimes {\bf \Delta_{T}}(I)^{1/2} T_{1,\beta_1}^*\cdots T_{k,\beta_k}^*h,
   $$
where the defect of ${\bf T}$ is defined by
$
{\bf \Delta_{T}}(I)  :=(id-\Phi_{T_1})\circ \cdots \circ(id-\Phi_{T_k})(I).
$
 A very  important property of the Berezin kernel is that
     $${\bf K_{T}} { T}^*_{i,j}= ({\bf S}_{i,j}^*\otimes I)  {\bf K_{T}}.
    $$
    This was used in \cite{Po-Berezin-poly}  to prove that
    ${\bf T}\in B(\cH)^{n_1}\times\cdots \times B(\cH)^{n_k}$
    is a {\it pure} element in the regular polyball  ${\bf B_n}(\cH)$, i.e.  $\lim_{q_i\to \infty}\Phi_{T_i}^{q_i}(I)=0$ in the weak operator topology,  if and only if
there is a Hilbert space $\cK$ and a subspace $\cM\subset F^2(H_{n_1})\otimes \cdots \otimes  F^2(H_{n_k})\otimes \cK$ invariant under each  operator ${\bf S}_{i,j}$ such that
$T_{i,j}^*=({\bf S}_{i,j}^*\otimes I)|_{\cM^\perp}$ under an appropriate idetification of $\cH$ with $\cM^\perp$.
The $k$-tuple ${\bf S}:=({\bf S}_1,\ldots, {\bf S}_k)$, where  ${\bf S}_i:=({\bf S}_{i,1},\ldots,{\bf S}_{i,n_i})$, is  an element  in the
regular polyball $ {\bf B_n}(\otimes_{i=1}^kF^2(H_{n_i}))$ and  plays the role of  {\it universal model} for
   the abstract
  polyball ${\bf B_n}:=\{{\bf B_n}(\cH):\ \cH \text{\ is a Hilbert space} \}$.
For more  results concerning  noncommutative Berezin transforms and multivariable operator theory on noncommutative balls and  polydomains, we refer the reader to \cite{Po-poisson}, \cite{Po-automorphism}, \cite{Po-Berezin-poly}, and \cite{Po-Berezin3}.

In   Section 1,
  we  introduce the {\it curvature}   of any   element  ${\bf T}\in  {\bf B_n}(\cH)$  with trace class defect, i.e. $\text{\rm trace\,}[{\bf \Delta_{T}}(I)]<\infty$,  by setting
\begin{equation*}
\text{\rm curv}\,({\bf T}):=
\lim_{m\to\infty}\frac{1}{\left(\begin{matrix} m+k\\ k\end{matrix}\right)}\sum_{{q_1\geq 0,\ldots, q_k\geq 0}\atop {q_1+\cdots +q_k\leq m}} \frac{\text{\rm trace\,}\left[ {\bf K_{T}^*} (P_{q_1}^{(1)}\otimes \cdots \otimes P_{q_k}^{(k)}\otimes I_\cH){\bf K_{T}}\right]}{\text{\rm trace\,}\left[P_{q_1}^{(1)}\otimes \cdots \otimes P_{q_k}^{(k)}\right]},
\end{equation*}
where ${\bf K_T}$ is the Berezin kernel of ${\bf T}$ and  $P_{q_i}^{(i)}$ is the orthogonal projection of the full Fock space $F^2(H_{n_i})$ onto the span of all vectors $e_{\alpha_i}^i$ with  $\alpha_i\in  \FF_{n_i}^+$ and $|\alpha_i|=q_i$.
The curvature $\text{\rm curv}({\bf T})$ is a unitary invariant for ${\bf T}$ that measures how far  ${\bf T}$ is  from being  ``free'', i.e. a  multiple of the universal model ${\bf S}$.
 We prove several summability results for completely positive maps which are trace contractive. These are used together with the theory of Berezin transforms on noncommutative polyballs to  prove the existence of the curvature
 $\text{\rm curv}({\bf T})$   and established several asymptotic  formulas for the curvature invariant which are very useful later on. It is also shown that the curvature  of  ${\bf T}:=(T_1,\ldots, T_k)$ can be expressed  in terms of
the associated completely positive maps $\Phi_{T_1}, \ldots, \Phi_{T_k}$. In particular, we prove that
$$
\text{\rm curv}\,({\bf T})=\lim_{(q_1,\ldots, q_k)\in \ZZ_+^k} \frac{1}{n_1^{q_1}\cdots n_k^{q_k}}\text{\rm trace\,}\left[ \Phi_{T_1}^{q_1}\circ \cdots \circ \Phi_{T_k}^{q_k}({\bf \Delta_{T}}(I))\right],
$$
which implies the inequalities  $0\leq\text{\rm curv}\,({\bf T})\leq \text{\rm trace\,} [{\bf \Delta_{T}}(I)]\leq \rank [{\bf \Delta_{T}}(I)]$.

 In Section 2, we  introduce the {\it curvature operator} ${\bf \Delta_{S\otimes {\it I}_\cH}}(\bf {K_T K_T^*})({\bf N}\otimes {\it I}_\cH)$ associated with each element ${\bf T}$ in the polyball ${\bf B_n}(\cH)$, which can be seen as a normalized ``differential'' of the Berezin transform.  We show that if ${\bf T}$ has characteristic function and finite rank, i.e.  $\rank {\bf \Delta_{T}}(I)<\infty$, then the curvature operator associated with ${\bf T}$  is trace class and \begin{equation*}\begin{split}
\text{\rm curv}({\bf T})=\text{\rm trace\,}\left[{\bf \Delta_{S\otimes {\it I}_\cH}}(\bf {K_T K_T^*})({\bf N}\otimes {\it I}_\cH)\right].
\end{split}
\end{equation*}
This is used to obtain an index type result for the curvature, namely,
$$
\text{\rm curv}({\bf T})=\rank [{\bf \Delta_{T}}(I)]-\text{\rm trace\,}\left[\Theta_{\bf T}({\bf P}_\CC\otimes I)\Theta_{\bf T}^* ({\bf N}\otimes I_\cH)\right],
$$
where $\Theta_{\bf T}$ is the characteristic function of ${\bf T}$.
As a consequence of these results, we show that  the curvature invariant can be used to detect the elements
${\bf T}\in B(\cH)^{n_1}\times \cdots\times B(\cH)^{n_k}$ which are unitarily equivalent  to ${\bf S}\otimes I_\cK$ for some Hilbert space   with $\dim \cK<\infty$.
      These are precisely  the  pure  elements ${\bf T}$ in the regular polyball  ${\bf B_n}(\cH)$ such that $\rank {\bf \Delta_{T}}(I)$ is finite,   ${\bf \Delta_{S\otimes {\it I}}}(I-{\bf K_{T}}{\bf K_{T}^*})\geq 0$, and
    $$  \text{\rm curv}({\bf T})=\rank [{\bf \Delta_{T}}(I)].
    $$
In this case, the Berezin kernel ${\bf K_T}$ is a unitary operator and
${\bf T}_{i,j}={\bf K_T^*} ({\bf S}_{i,j}\otimes I_{\overline{{\bf \Delta_{T}}(I) (\cH)}}){\bf K_T}.
$

We say that $\cM$
 is an invariant subspace of the tensor product $F^2(H_{n_1})\otimes \cdots \otimes F^2(H_{n_k})\otimes \cH$  or  that $\cM$  is    invariant  under  ${\bf S}\otimes I_\cH$ if it is invariant under each operator
  ${\bf S}_{i,j}\otimes I_\cH$.
In Section 2, we show that the  curvature invariant completely classifies the
 finite rank  Beurling type invariant subspaces  of ${\bf S}\otimes I_\cH$  which do not contain reducing subspaces. In particular, the curvature invariant classifies  the  finite rank  Beurling type invariant subspaces  of $ F^2(H_{n_1})\otimes \cdots \otimes  F^2(H_{n_k})$ (see Theorem \ref{classification}).

  Given  an invariant subspace $\cM$ of the tensor product $F^2(H_{n_1})\otimes\cdots\otimes F^2(H_{n_k})\otimes \cE$, where $\cE$ is a finite dimensional Hilbert space, we introduce its {\it multiplicity}, in Section 3,  by setting
\begin{equation*}
\begin{split}
m(\cM)&:=\lim_{m\to\infty}\frac{1}{\left(\begin{matrix} m+k\\ k\end{matrix}\right)}\sum_{{q_1\geq 0,\ldots, q_k\geq 0}\atop {q_1+\cdots +q_k\leq m}}\frac{\text{\rm trace\,}\left[P_{\cM}(P_{q_1}^{(1)}\otimes \cdots \otimes P_{q_k}^{(k)}\otimes I_\cE) \right]}{\text{\rm trace\,}\left[P_{q_1}^{(1)}\otimes \cdots \otimes P_{q_k}^{(k)}\right]}.
\end{split}
\end{equation*}
Analogously, we define $m(\cM^\perp)$ by using $P_{\cM^\perp}$ instead of $P_\cM$.
The multiplicity measures the size of the subspace. We  prove that the multiplicity of $\cM$ exists  and provide several asymptotic formulas for it and  also an important   connection with the curvature invariant, namely,
$$
m(\cM)=\dim \cE -\text{\rm curv}({\bf M}),
$$
where  ${\bf M}$ is the compression of ${\bf S}\otimes I_\cE$ to the orthocomplement of $\cM$.

Unlike the case of the symmetric Fock space and the Hardy space over the polydisc when the curvature  and the multiplicity turn out to be nonnegative  integers (see \cite{GRS} and \cite{Fang}), for the tensor product of full Fock spaces,  we prove that they can be any nonnegative real number, as long as at least one of the Fock spaces has  at least $2$  generators (see Corollary \ref{range}). Moreover, we show that if
${\bf n}=(n_1,\ldots, n_k)\in \NN^k$ is  such that $n_i\geq 2$ and $n_j\geq 2$  and  $i\neq j$, then, for each $t\in(0,1)$,  there exists an uncountable family $\{T^{(\omega)}(t)\}_{\omega\in \Omega}$ of  non-isomorphic pure elements  of rank  one defect  in the regular polyball  such that
$$
\text{\rm curv} (T^{(\omega)}(t))=t, \qquad  \text{ for all } \omega\in \Omega.
$$
We also show that  the curvature invariant detects  the {\it inner sequences} of multipliers of $\otimes_{i=1}^k F^2(H_{n_i})$ when it takes the extremal value zero.

In Section 4, we consider several properties concerning the stability, continuity, and multiplicative   properties for the curvature and multiplicity. In particular, we show that  the multiplicity invariant is lower semi-continuous, i.e., if
 $\cM$ and $\cM_m$ are  invariant subspaces of
$\otimes_{i=1}^k F^2(H_{n_i})\otimes \cE$ with $\dim \cE<\infty$
and  $\text{\rm WOT-}\lim_{m\to\infty}P_{\cM_m}=P_\cM$, then
$$
\liminf_{m\to\infty}m(\cM_m)\geq m(\cM).
$$

In Section 5,  we introduce a curvature invariant associated with the elements of the  commutative polyball ${\bf B_n^c}(\cH)$  with the property  that they  have finite rank defects and constrained characteristic functions. Since the case $n_1=\cdots=n_k=1$ is considered in the previous sections, we assume, throughout this section,  that at least one $n_i\geq 2$. The  commutative polyball ${\bf B_n^c}(\cH)$ is the set of all ${\bf X}=(X_1,\ldots, X_k)\in {\bf B_n}(\cH)$ with $X_i=(X_{i,1},\ldots, X_{i,n_i})$, where  the entries $X_{i,j}$ are commuting operators. According to \cite{Po-Berezin3}, the universal model associated with the abstract commutative polyball ${\bf B_n^c}$ is the $k$-tuple
 ${\bf B}:=({\bf B}_1,\ldots, {\bf B}_k)$   with ${\bf B}_i=({\bf B}_{i,1},\ldots, {\bf B}_{i, n_i})$, where
the operator ${\bf B}_{i,j}$ is acting on the tensor product of symmetric Fock spaces
$F_s^2(H_{n_1})\otimes\cdots\otimes F_s^2(H_{n_k})$  and is defined by setting
$${\bf B}_{i,j}:=\underbrace{I\otimes\cdots\otimes I}_{\text{${i-1}$
times}}\otimes B_{i,j}\otimes \underbrace{I\otimes\cdots\otimes
I}_{\text{${k-i}$ times}},
$$
where ${B}_{i,j}$ is the compression of the left creation operator  ${S}_{i,j}$ to the symmetric Fock space  $F_s^2(H_{n_i})$.   For basic results concerning constrained  Berezin transforms and   multivariable model theory on the commutative polyball ${\bf B_n^c}(\cH)$ we refer the reader to \cite{Po-Berezin3}.
  All the results of this section are under the assumption that  the elements ${\bf T} \in  {\bf B^c_n}(\cH)$  have characteristic  functions.

Given an element ${\bf T}$ in the  commutative polyball ${\bf B_n^c}(\cH)$  with the property  that  it has finite rank defect and constrained characteristic function, we introduce  its curvature
 by setting
$$
\text{\rm curv}_c({\bf T}):=
\lim_{m\to\infty}\frac{1}{\left(\begin{matrix} m+k\\ k\end{matrix}\right)}\sum_{{q_1\geq 0,\ldots, q_k\geq 0}\atop {q_1+\cdots +q_k\leq m}} \frac{\text{\rm trace\,}\left[ {\bf \widetilde K_{T}^*} (Q_{q_1}^{(1)}\otimes \cdots \otimes Q_{q_k}^{(k)}\otimes I_\cH){\bf \widetilde K_{T}}\right]}{\text{\rm trace\,}\left[Q_{q_1}^{(1)}\otimes \cdots \otimes Q_{q_k}^{(k)}\right]},
$$
where  $Q_{q_i}^{(i)}$ is the orthogonal projection of the symmetric Fock space ${F_s^2(H_{n_i})}$ onto its subspace of homogeneous polynomials of degree $q_i$. In spite of many similarities  with the noncommutative  case, the proof of the existence of $\text{\rm curv}_c({\bf T})$ is quite  different from the one for $\text{\rm curv}({\bf T})$ and depends on the theory of characteristic functions. We obtain commutative analogues of Theorem \ref{index}, Corollary \ref{curva-maps},  Theorem \ref{comp-inv},  and Theorem \ref{multiplicity}.
In particular,  we mention that the curvature  of  ${\bf T}:=(T_1,\ldots, T_k)$ can be expressed  in terms of
the associated completely positive maps $\Phi_{T_1}, \ldots, \Phi_{T_k}$ by the  asymptotic  formula
$$
\text{\rm curv}_c({\bf T})=n_1 !\cdots n_k !
 \lim_{q_1\to\infty}\cdots\lim_{q_k\to\infty}
\frac{\text{\rm trace\,}\left[(id-\Phi_{T_1}^{q_1+1})\circ\cdots\circ (id-\Phi_{T_k}^{q_k+1})(I)\right]}
{q_1^{n_1}\cdots q_k^{n_k}}.
$$
When $k=1$, we recover Arveson's asymptotic formula for the curvature.

Given a  Beurling type invariant subspace  $\cM$   of the  tensor product $F_s^2(H_{n_1})\otimes\cdots\otimes F_s^2(H_{n_k})\otimes \cE$, where $\cE$ is a finite dimensional Hilbert space,
 we introduce  its multiplicity   by setting
$$
m_c(\cM):=
\lim_{m\to\infty}\frac{1}{\left(\begin{matrix} m+k\\ k\end{matrix}\right)}\sum_{{q_1\geq 0,\ldots, q_k\geq 0}\atop {q_1+\cdots +q_k\leq m}} \frac{\text{\rm trace\,}\left[ P_\cM (Q_{q_1}^{(1)}\otimes \cdots \otimes Q_{q_k}^{(k)}\otimes I_\cE) \right]}{\text{\rm trace\,}\left[Q_{q_1}^{(1)}\otimes \cdots \otimes Q_{q_k}^{(k)}\right]}.
$$
 We show that the multiplicity invariant exists for Beurling type invariant subspace.
 It remains an open problem whether  the multiplicity invariant  exists for arbitrary invariant subspace
of the tensor product $F_s^2(H_{n_1})\otimes\cdots\otimes F_s^2(H_{n_k})\otimes \cE$.
This is true for the polydisc, when $n_1=\cdots =n_k=1$, and  for the symmetric Fock space, when $k=1$.
We remark that there are commutative analogues of all the results from Section 4, concerning the stability, continuity, and multiplicative properties of the curvature and multiplicity invariants.

Regarding the results of Section 5, when  $k\geq 2$ and at least one $n_i\geq 2$, an important problem remains open. Can one drop the condition that the elements of the commutative polyball  have constrained characteristic functions $?$  In case  of a positive answer, the multiplicity invariant would exist  for any invariant subspace
of $F_s^2(H_{n_1})\otimes\cdots\otimes F_s^2(H_{n_k})\otimes \cE$.
We should mention that   $F_s^2(H_{n_1})\otimes\cdots\otimes F_s^2(H_{n_k})$ can be seen as a reproducing kernel Hilbert space of holomorphic functions on $\BB_{n_1}\times\cdots \times \BB_{n_k}$ (see \cite{Po-Berezin3}), where $\BB_n:=\{z\in \CC^n: |z|<1\}$. Under this identification, one can obtain an integral type  formula for the curvature, similar to the one introduced by Arveson \cite{Arv2}. Having in mind the  results on the curvature  invariant on the symmetric Fock space (see \cite{Arv2}, \cite{Arv3}, \cite{GRS}, \cite{Fang1},  \cite{GlRS}), significant  problems still remain open in the setting of Section 5.

Finally, we remark that one can re-formulate the results of the paper in terms of Hilbert modules \cite{DoPa} over  the complex semigroup algebra $\CC[\FF_{n_1}^+\times \cdots \times \FF_{n_k}^+]$ generated by the direct product of the free semigroups $\FF_{n_1}^+, \ldots ,\FF_{n_k}^+$. In this setting, the Hilbert module associated with the universal model ${\bf S}$ acting on  the  tensor product  $F^2(H_{n_1})\otimes \cdots \otimes F^2(H_{n_k})$ plays the role of rank-one free module in the algebraic theory \cite{K}. The commutative case can be re-formulated in a similar manner.

\bigskip

\section{Curvature invariant on noncommutative polyballs}

In this section we  introduce the curvature invariant associated with   the   elements  of the polyball ${\bf B_n}(\cH)$  which have trace class defects.  We prove several summability results for completely positive maps which are trace contractive. These are used together with the theory of Berezin transforms on noncommutative polyballs to  prove the existence of the curvature  and establish several asymptotic  formulas which will be very useful in the coming sections. It is also shown that the curvature  of  ${\bf T}:=(T_1,\ldots, T_k)$ can be expressed  in terms of
the associated completely positive maps $\Phi_{T_1}, \ldots, \Phi_{T_k}$ .

 Given two $k$-tuples ${\bf q}=(q_1,\ldots, q_k)$ and ${\bf p}=(p_1,\ldots, p_k)$ in $\ZZ_+^k$, we consider the partial order ${\bf q}\leq {\bf p}$ defined by $q_i\leq p_i$ for any $i\in \{1,\ldots, k\}$. We consider $\ZZ_+^k$ as a directed set with respect to this partial order. Denote by $\cT(\cH)$ the ideal of trace class operators on the Hilbert space $\cH$ and let $\cT^+(\cH)$ be its positive cone.

\begin{lemma}\label{basic} Let $\phi_1,\ldots, \phi_k$ be commuting positive linear maps on $B(\cH)$ such that $\phi_i(\cT^+(\cH))\subset \cT^+(\cH)$ and
$$
\text{\rm trace\,} [\phi_i(X)]\leq \text{\rm trace\,} (X)
$$
for any $X\in \cT^+(\cH)$ and $i\in \{1,\ldots, k\}$.
Then the limit

\begin{equation*}
  \lim_{(q_1,\ldots, q_k)\in \ZZ_+^k} \text{\rm trace\,}\left[ \phi_1^{q_1}\circ \cdots \circ \phi_k^{q_k}(X)\right]
\end{equation*}
exists and is equal to
\begin{equation*}
\lim_{m\to\infty}\frac{1}{\left(\begin{matrix} m+k-1\\ k-1\end{matrix}\right)}\sum_{{q_1\geq 0,\ldots, q_k\geq 0}\atop {q_1+\cdots +q_k=m}} \text{\rm trace\,}\left[ \phi_1^{q_1}\circ \cdots \circ \phi_k^{q_k}(X)\right]
\end{equation*}
for any $X\in \cT^+(\cH)$.
\end{lemma}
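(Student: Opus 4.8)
The plan is to set $F(q_1,\dots,q_k):=\text{\rm trace\,}[\phi_1^{q_1}\circ\cdots\circ\phi_k^{q_k}(X)]$ and to exploit one structural feature: $F$ is a bounded, coordinatewise nonincreasing function on the directed set $\ZZ_+^k$. First I would record that, because each $\phi_i$ preserves $\cT^+(\cH)$, the operator $Y:=\phi_1^{q_1}\circ\cdots\circ\phi_k^{q_k}(X)$ lies in $\cT^+(\cH)$ for every $\mathbf{q}$; since the maps commute, increasing a single coordinate $q_i$ by one produces $\phi_i(Y)$, and the trace-contractivity hypothesis gives $\text{\rm trace\,}[\phi_i(Y)]\le\text{\rm trace\,}(Y)$. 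Hence $F(\dots,q_i+1,\dots)\le F(\dots,q_i,\dots)$, so $F$ is nonincreasing in each variable, with $0\le F(\mathbf{q})\le F(\mathbf{0})=\text{\rm trace\,}(X)$.

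From here the existence of $\lim_{\mathbf{q}\in\ZZ_+^k}F(\mathbf{q})$ is immediate: a bounded net that is nonincreasing in each coordinate converges over the directed set $\ZZ_+^k$ to its infimum $L:=\inf_{\mathbf{q}}F(\mathbf{q})$. Indeed, given $\epsilon>0$ choose $\mathbf{q}^*$ with $F(\mathbf{q}^*)<L+\epsilon$; monotonicity forces $L\le F(\mathbf{q})\le F(\mathbf{q}^*)<L+\epsilon$ for all $\mathbf{q}\ge\mathbf{q}^*$. It remains to identify the Cesàro average with the same value $L$.

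For the averaged limit I would write $S_m:=\sum_{q_1+\cdots+q_k=m}F(\mathbf{q})$ and note that the slice $\{q_1+\cdots+q_k=m\}$ contains exactly $\binom{m+k-1}{k-1}$ lattice points. The lower bound $S_m/\binom{m+k-1}{k-1}\ge L$ is immediate from $F\ge L$. For the matching upper bound, fix $\epsilon>0$ and, using monotonicity, choose $N$ with $F(N,\dots,N)<L+\epsilon$; then every point of the slice with all $q_i\ge N$ satisfies $F(\mathbf{q})<L+\epsilon$. I would split the slice into these ``interior'' points and the ``boundary'' points having some $q_i<N$, bounding $F$ by $L+\epsilon$ on the former and by $\text{\rm trace\,}(X)$ on the latter.

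The main obstacle, and the only genuinely quantitative step, is the counting estimate showing the boundary contribution is asymptotically negligible. For fixed $N$ the points of the slice with $q_i<N$ number $\sum_{j=0}^{N-1}\binom{m-j+k-2}{k-2}=O(m^{k-2})$, so a union bound over $i\in\{1,\dots,k\}$ keeps the total number of boundary points at $O(m^{k-2})$, whereas $\binom{m+k-1}{k-1}\sim m^{k-1}/(k-1)!$ grows one order faster. Thus the boundary fraction is $O(1/m)\to0$, and dividing the split estimate $S_m\le(L+\epsilon)\binom{m+k-1}{k-1}+\text{\rm trace\,}(X)\,O(m^{k-2})$ by $\binom{m+k-1}{k-1}$, then letting $m\to\infty$ and $\epsilon\to0$, yields $\limsup_m S_m/\binom{m+k-1}{k-1}\le L$, completing the identification. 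For $k=1$ the slice is the single point $\mathbf{q}=m$, and the asserted equality of the two one-dimensional limits is trivial.
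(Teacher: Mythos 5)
Your proof is correct and follows essentially the same route as the paper's: coordinatewise monotonicity of the trace (using commutativity to pull one extra $\phi_i$ outside), identification of the directed-set limit with the infimum $L$, and the identical boundary/interior split of the slice $\{q_1+\cdots+q_k=m\}$ with the count $\sum_{j=0}^{N-1}\binom{m-j+k-2}{k-2}=O(m^{k-2})$ measured against $\binom{m+k-1}{k-1}\sim m^{k-1}/(k-1)!$. The only (minor) difference is that you get convergence over $\ZZ_+^k$ directly from monotone net convergence, whereas the paper first establishes the iterated limits by an induction on the number of variables — extra information it reuses in later results — and then notes these coincide with the directed-set limit.
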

\begin{proof} Due to the hypotheses,
if $Y\in \cT^+(\cH)$ and $q_i\in \ZZ_+:=\{0,1,\ldots\}$, then $\phi_i^{q_i}(Y)\in \cT^+(\cH)$ and
$
0\leq  \text{\rm trace\,}[\phi_i^{q_i+1}(Y)]\leq  \text{\rm trace\,}[\phi_i^{q_i}(Y)].
$
 Consequently, $\lim_{q_i\to\infty}\text{\rm trace\,}[\phi_i^{q_i}(Y)]$ exists. Assume that $1\leq p< k$ and
\begin{equation}
\label{lim}
 \lim_{q_1\to\infty}\cdots\lim_{q_p\to\infty} \text{\rm trace\,}\left[ \phi_1^{q_1}\circ \cdots \circ \phi_p^{q_p}(Y)\right]\quad \text{exists}
\end{equation}
for any $Y\in \cT^+(\cH)$. Using the fact that  $\phi_1,\ldots, \phi_k$ are commuting and
$$
\text{\rm trace\,}\left[\phi_{p+1}^{q_{p+1}+1}( \phi_1^{q_1}\circ \cdots \circ \phi_p^{q_p}(X))\right]
\leq \text{\rm trace\,}\left[\phi_{p+1}^{q_{p+1}}( \phi_1^{q_1}\circ \cdots \circ \phi_p^{q_p}(X))\right],
$$
we can apply  relation \eqref{lim} when $Y$ is equal to  $\phi_{p+1}^{q_{p+1}}(X)$ or $\phi_{p+1}^{q_{p+1}+1}(X)$ and deduce that
$$
0\leq  \lim_{q_p\to\infty}\cdots\lim_{q_1\to\infty}
\text{\rm trace\,}\left[  \phi_1^{q_1}\circ \cdots \circ \phi_p^{q_p}(\phi_{p+1}^{q_{p+1}+1}(X))\right]
\leq
\lim_{q_p\to\infty}\cdots\lim_{q_1\to\infty}
\text{\rm trace\,}\left[  \phi_1^{q_1}\circ \cdots \circ \phi_p^{q_p}(\phi_{p+1}^{q_{p+1}}(X))\right]
$$
for any $q_{p+1}\in \ZZ_+$. Consequently,
$
 \lim_{q_{p+1}\to\infty}\cdots\lim_{q_1\to\infty} \text{\rm trace\,}\left[ \phi_1^{q_1}\circ \cdots \circ \phi_p^{q_{p+1}}(X)\right]$ exists
for any $X\in \cT^+(\cH)$. Since the sequence
$\left\{ \text{\rm trace\,}\left[ \phi_1^{q_1}\circ \cdots \circ \phi_k^{q_k}(X)\right]\right\}_{q_1,\ldots,q_k}$ is decreasing with respect to each of the indices $q_1,\ldots, q_k$, it is clear that
\begin{equation*}
\begin{split}
\lim_{q_1\to\infty}\cdots\lim_{q_k\to\infty} \text{\rm trace\,}\left[ \phi_1^{q_1}\circ \cdots \circ \phi_k^{q_k}(X)\right]
&= \lim_{(q_1,\ldots, q_k)\in \ZZ_+^k} \text{\rm trace\,}\left[ \phi_1^{q_1}\circ \cdots \circ \phi_k^{q_k}(X)\right]\\
&= \inf_{(q_1,\ldots, q_k)\in \ZZ_+^k} \text{\rm trace\,}\left[ \phi_1^{q_1}\circ \cdots \circ \phi_k^{q_k}(X)\right]
\end{split}
\end{equation*}
and the order of the iterated limits does not matter. Set $L(X):=\inf_{(q_1,\ldots, q_k)\in \ZZ_+^k} \text{\rm trace\,}\left[ \phi_1^{q_1}\circ \cdots \circ \phi_k^{q_k}(X)\right]$.
Given $\epsilon >0$ let $N_0\in \NN$ be such that
$
\left|\text{\rm trace\,}\left[ \phi_1^{q_1}\circ \cdots \circ \phi_k^{q_k}(X)\right]-L(X)\right|< \epsilon$ for any $q_1\geq N_0,\ldots q_k\geq N_0$.
Consider the following sets:
\begin{equation*}
\begin{split}
A_1&:=\left\{(q_1,\ldots,q_k)\in \ZZ_+^k:\  q_1+\cdots + q_k=m, q_1<N_0\right\}\\
A_2&:=\left\{(q_1,\ldots,q_k)\in \ZZ_+^k:\  q_1+\cdots + q_k=m, q_1\geq N_0, q_2<N_0\right\}\\
&\  \cdots\\
A_k&:=\left\{(q_1,\ldots,q_k)\in \ZZ_+^k:\  q_1+\cdots + q_k=m, q_1\geq N_0, ,\ldots, q_{k-1}\geq N_0,  q_k<N_0\right\}\\
B_{N_0}&:=\left\{(q_1,\ldots,q_k)\in \ZZ_+^k:\  q_1+\cdots + q_k=m, q_1\geq N_0, ,\ldots, q_{k}\geq N_0\right\}.
\end{split}
\end{equation*}
Note that $A_1,\ldots, A_k, B_{N_0}$ are disjoint sets  and
$$\left\{(q_1,\ldots,q_k)\in \ZZ_+^k:\  q_1+\cdots + q_k=m\right\}=\left(\bigcup_{i=1}^k A_i\right)\cup B_{N_0}.
$$
Regarding the cardinality of these sets, a close look  reveals that
$$
\text{\rm card\,}(\left\{(q_1,\ldots,q_k)\in \ZZ_+^k:\  q_1+\cdots + q_k=m\right\})=\left(\begin{matrix} m+k-1\\ k-1\end{matrix}\right)
$$
and
$
\text{\rm card\,} (A_k)\leq \cdots \leq \text{\rm card\,} (A_1).
$
Moreover, we have
$$
 \text{\rm card\,} (A_1) =\sum_{j=0}^{N_0-1}\left(\begin{matrix} m-j+k-2\\ k-2\end{matrix}\right)\leq N_0\left(\begin{matrix} m+k-2\\ k-2\end{matrix}\right).
 $$
 Consequently, using the fact that
 $\text{\rm trace\,} [\phi_i(X)]\leq \text{\rm trace\,} (X)$
for any $X\in \cT^+(\cH)$ and $i\in \{1,\ldots, k\}$, we obtain
\begin{equation*}
\begin{split}
\sum_{(q_1,\ldots, q_k)\in \cup_{i=1}^k A_i}\text{\rm trace\,}\left[ \phi_1^{q_1}\circ \cdots \circ \phi_k^{q_k}(X)\right]
&=\sum_{i=1}^k\sum_{(q_1,\ldots, q_k)\in  A_i}\text{\rm trace\,}\left[ \phi_1^{q_1}\circ \cdots \circ \phi_k^{q_k}(X)\right]\\
&\leq \sum_{i=1}^k \text{\rm card\,} (A_i)  \text{\rm trace\,}(X)\leq kN_0 \left(\begin{matrix} m+k-2\\ k-2\end{matrix}\right) \text{\rm trace\,}(X).
\end{split}
\end{equation*}
Now, using these inequalities and  that $
\left|\text{\rm trace\,}\left[ \phi_1^{q_1}\circ \cdots \circ \phi_k^{q_k}(X)\right]-L(X)\right|< \epsilon$ for any $q_1\geq N_0,\ldots q_k\geq N_0$, we have
\begin{equation*}
\begin{split}
&\left|\frac{1}{\left(\begin{matrix} m+k-1\\ k-1\end{matrix}\right)}\sum_{{q_1\geq 0,\ldots, q_k\geq 0}\atop {q_1+\cdots +q_k=m}} \text{\rm trace\,}\left[ \phi_1^{q_1}\circ \cdots \circ \phi_k^{q_k}(X)\right]-L(X)\right|\\
&\qquad \leq
\frac{1}{\left(\begin{matrix} m+k-1\\ k-1\end{matrix}\right)}\sum_{(q_1,\ldots, q_k)\in \cup_{i=1}^k A_i}\text{\rm trace\,}\left[ \phi_1^{q_1}\circ \cdots \circ \phi_k^{q_k}(X)\right]\\
&\qquad+ \frac{1}{\left(\begin{matrix} m+k-1\\ k-1\end{matrix}\right)}
\sum_{(q_1,\ldots, q_k)\in  B_{N_0}}\left|\text{\rm trace\,}\left[ \phi_1^{q_1}\circ \cdots \circ \phi_k^{q_k}(X)\right]-L(X)\right|+\frac{1}{\left(\begin{matrix} m+k-1\\ k-1\end{matrix}\right)}L(X) \text{\rm card\,}\left(\cup_{i=1}^k A_i\right)\\
&\qquad \leq \frac{ kN_0( \text{\rm trace\,}(X) +L(X))\left(\begin{matrix} m+k-2\\ k-2\end{matrix}\right)}
{\left(\begin{matrix} m+k-1\\ k-1\end{matrix}\right)} + \frac{ \text{\rm card\,} (B_{N_0})}
{\left(\begin{matrix} m+k-1\\ k-1\end{matrix}\right)}\epsilon.
\end{split}
\end{equation*}
Since $\lim_{m\to\infty} \frac{ \left(\begin{matrix} m+k-2\\ k-2\end{matrix}\right)}
{\left(\begin{matrix} m+k-1\\ k-1\end{matrix}\right)} =0
$
and $\text{\rm card\,} (B_{N_0})\leq \left(\begin{matrix} m+k-1\\ k-1\end{matrix}\right)$, one can easily complete the proof.
\end{proof}

We remark that  Lemma \ref{basic} implies
$
\lim_{(q_1,\ldots, q_k)\in \ZZ_+^k} \text{\rm trace\,}\left[ \phi_1^{q_1}\circ \cdots \circ \phi_k^{q_k}(X)\right]\leq \text{\rm trace\,}(X)$ for  any $ X\in \cT^+(\cH).
$
Using the classical Stolz-Ces\` aro  convergence theorem  and Lemma \ref{basic}, one can easily prove the following result.
\begin{lemma}\label{basic2} Under the conditions of Lemma \ref{basic},
the limit
\begin{equation*}
\lim_{m\to\infty}\frac{1}{\left(\begin{matrix} m+k\\ k\end{matrix}\right)}\sum_{{q_1\geq 0,\ldots, q_k\geq 0}\atop {q_1+\cdots +q_k\leq m}} \text{\rm trace\,}\left[ \phi_1^{q_1}\circ \cdots \circ \phi_k^{q_k}(X)\right]
\end{equation*}
exists and is equal to
\begin{equation*}
  \lim_{q_1\to\infty}\cdots\lim_{q_k\to\infty} \frac{1}{q_1}\sum_{s_1=0}^{q_1}\cdots \frac{1}{q_k}\sum_{s_k=0}^{q_k}\text{\rm trace\,}\left[ \phi_1^{q_1}\circ \cdots \circ \phi_k^{q_k}(X)\right]
\end{equation*}
for any $X\in \cT^+(\cH)$. Moreover, these limits are equal to those from
Lemma \ref{basic}.
\end{lemma}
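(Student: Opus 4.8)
The plan is to deduce both assertions from Lemma~\ref{basic} by one-variable summation arguments, treating the partial sum over the simplex $\{q_1+\cdots+q_k\le m\}$ as a single Stolz--Ces\`aro average of the ``diagonal'' sums already controlled by Lemma~\ref{basic}. Write $b_j(X):=\sum_{q_1+\cdots+q_k=j}\text{\rm trace}[\phi_1^{q_1}\circ\cdots\circ\phi_k^{q_k}(X)]$ and recall from the proof of Lemma~\ref{basic} that $b_j(X)/\binom{j+k-1}{k-1}\to L(X)$, where $L(X)=\inf_{(q_1,\ldots,q_k)}\text{\rm trace}[\phi_1^{q_1}\circ\cdots\circ\phi_k^{q_k}(X)]$. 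First I would observe that $\sum_{q_1+\cdots+q_k\le m}\text{\rm trace}[\cdots]=\sum_{j=0}^m b_j(X)$ and, by the hockey-stick identity, $\binom{m+k}{k}=\sum_{j=0}^m\binom{j+k-1}{k-1}$, so that $C_m:=\binom{m+k}{k}$ is strictly increasing to $\infty$ with $C_m-C_{m-1}=\binom{m+k-1}{k-1}$. Applying the classical Stolz--Ces\`aro theorem to $A_m:=\sum_{j=0}^m b_j(X)$ and $C_m$ yields $A_m/C_m\to \lim_m b_m(X)/\binom{m+k-1}{k-1}=L(X)$, which establishes existence of the first limit and its equality with the limit in Lemma~\ref{basic}.

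For the iterated Ces\`aro means I would peel off one variable at a time, using only the one-dimensional fact that the Ces\`aro mean of a convergent sequence converges to the same limit (the $k=1$ case of Stolz--Ces\`aro). Fix $s_1,\dots,s_{k-1}$; since $\{\text{\rm trace}[\phi_1^{s_1}\circ\cdots\circ\phi_k^{s_k}(X)]\}_{s_k}$ is nonincreasing and nonnegative, it converges as $s_k\to\infty$, whence its Ces\`aro mean $\frac{1}{q_k}\sum_{s_k=0}^{q_k}\text{\rm trace}[\cdots]$ has the same limit (the harmless factor $\frac{q_k+1}{q_k}\to1$ absorbs the nonstandard normalization). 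Because the outer averages in $s_1,\dots,s_{k-1}$ are finite sums for fixed $q_1,\dots,q_{k-1}$, I can pass the limit $q_k\to\infty$ through them, reducing the expression to the same iterated-Ces\`aro object in $k-1$ variables applied to $g(s_1,\dots,s_{k-1}):=\lim_{s_k\to\infty}\text{\rm trace}[\phi_1^{s_1}\circ\cdots\circ\phi_k^{s_k}(X)]$. Lemma~\ref{basic} guarantees that all relevant iterated limits exist and agree in every order, so $g$ is again a convergent array in the remaining variables, and repeating the argument $k-1$ more times collapses the whole expression to $\lim_{s_1}\cdots\lim_{s_k}\text{\rm trace}[\phi_1^{s_1}\circ\cdots\circ\phi_k^{s_k}(X)]=L(X)$.

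The only delicate point is the bookkeeping in the second part: at each stage I must know that the inner limit obtained after Ces\`aro-averaging one variable is itself a convergent sequence in the next variable, so that the one-variable Ces\`aro theorem can be reapplied. This is exactly what the ``all iterated limits exist and agree in any order'' conclusion of Lemma~\ref{basic} supplies, so no new monotonicity or summability estimate is needed; the interchange of the limit with the finite outer sums is legitimate simply because those sums have finitely many terms for fixed $q_i$. Once these interchanges are recorded, both displayed limits equal $L(X)$, and hence the common value of the limits in Lemma~\ref{basic}, completing the proof.
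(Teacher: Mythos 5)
Your proof is correct and is precisely the argument the paper has in mind: the paper disposes of this lemma in one line ("Using the classical Stolz--Ces\`aro convergence theorem and Lemma \ref{basic}, one can easily prove the following result"), and your two steps --- Stolz--Ces\`aro applied to $A_m=\sum_{j\le m}b_j(X)$, $C_m=\binom{m+k}{k}$ with increments controlled by the slice-average limit of Lemma \ref{basic}, and then the one-variable Ces\`aro theorem peeled off coordinate by coordinate (legitimately interchanged with the finite outer sums, with monotonicity and Lemma \ref{basic} guaranteeing each intermediate array converges) --- are exactly the details that sentence suppresses. Nothing further is needed.
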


We remark that if we take $X=\Delta_\phi(I):=(id-\phi_1)\circ \cdots \circ (id -\phi_k)(I)$ and assume that it is  a positive trace class operator, then we can show that the limits in Lemma \ref{basic2} are equal to
$$
\text{\rm curv}({\bf \phi}):=\lim_{q_1\to\infty}\cdots\lim_{q_k\to\infty}
\frac{\text{\rm trace\,}\left[(id-\phi_{1}^{q_1+1})\circ \cdots \circ (id-\phi_{k}^{q_k+1})(I)\right]}
{q_1\cdots q_k}.
$$
 This can be seen as a curvature invariant associated  with the $k$-tuple of positive maps $\phi=(\phi_1,\ldots, \phi_k)$ satisfying the conditions of Lemma \ref{basic} and such that the defect $\Delta_\phi(I)$ is a positive trace class operator. In this case, we have
$0\leq \text{\rm curv}({\bf \phi})\leq \text{\rm trace\,}[\Delta_\phi(I)]$.

 Let ${\bf T}=({ T}_1,\ldots, { T}_k)$ be in  the regular polyball  $ {\bf B_n}(\cH)$. If    the
  defect of ${\bf T}$,
$$
{\bf \Delta_{T}}(I)  :=(id-\Phi_{T_1})\circ \cdots \circ (id-\Phi_{T_k})(I),
$$
is  a trace class operator,  we say that ${\bf T}$ has  trace class defect. When ${\bf \Delta_{T}}(I)$ has finite rank,  we say that ${\bf T}$ has finite rank and write  $\rank({\bf T}):=\rank [{\bf \Delta_{T}}(I)]$.
  We  introduce the curvature   of any   element  ${\bf T}\in  {\bf B_n}(\cH)$  with trace class defect by setting
\begin{equation}\label{curva}
\text{\rm curv}({\bf T}):=
\lim_{m\to\infty}\frac{1}{\left(\begin{matrix} m+k\\ k\end{matrix}\right)}\sum_{{q_1\geq 0,\ldots, q_k\geq 0}\atop {q_1+\cdots +q_k\leq m}} \frac{\text{\rm trace\,}\left[ {\bf K_{T}^*} (P_{q_1}^{(1)}\otimes \cdots \otimes P_{q_k}^{(k)}\otimes I_\cH){\bf K_{T}}\right]}{\text{\rm trace\,}\left[P_{q_1}^{(1)}\otimes \cdots \otimes P_{q_k}^{(k)}\right]},
\end{equation}
where ${\bf K_T}$ is the Berezin kernel of ${\bf T}$ and  $P_{q_i}^{(i)}$ is the orthogonal projection of the full Fock space $F^2(H_{n_i})$ onto the span of all vectors $e_{\alpha_i}^i$ with  $\alpha\in  \FF_{n_i}^+$ and $|\alpha_i|=q_i$.
 In what follows,  we show that $\text{\rm curv}({\bf T})$  exists and established several  asymptotic formulas for the curvature invariant in terms of the Berezin kernel.

For each $q_i\in \{0,1,\ldots\}$ and  $i\in \{1,\ldots, k\}$, let $P_{\leq(q_1,\ldots, q_k)}$ be the orthogonal projection of $\otimes_{i=1}^k F^2(H_{n_i})$ onto the the span of all vectors of the form $e_{\alpha_1}^1\otimes\cdots \otimes e_{\alpha_k}^k$, where $\alpha_i\in \FF_{n_i}^+$, $|\alpha_i|\leq q_i$.

\begin{theorem} \label{curva1} If  \, ${\bf T}=({ T}_1,\ldots, { T}_k)$ is  in the regular polyball  $ {\bf B_n}(\cH)$ and has  trace class defect, then the curvature of ${\bf T}$ exists and satisfies the  asymptotic formulas
\begin{equation*}\begin{split}
\text{\rm curv}({\bf T})&=\lim_{(q_1,\ldots, q_k)\in \ZZ_+^k}  \frac{\text{\rm trace\,}\left[ {\bf K_{T}^*} (P_{q_1}^{(1)}\otimes \cdots \otimes P_{q_k}^{(k)}\otimes I_\cH){\bf K_{T}}\right]}{\text{\rm trace\,}\left[P_{q_1}^{(1)}\otimes \cdots \otimes P_{q_k}^{(k)}\right]}\\
&=\lim_{m\to\infty}\frac{1}{\left(\begin{matrix} m+k-1\\ k-1\end{matrix}\right)}\sum_{{q_1\geq 0,\ldots, q_k\geq 0}\atop {q_1+\cdots +q_k= m}} \frac{\text{\rm trace\,}\left[ {\bf K_{T}^*} (P_{q_1}^{(1)}\otimes \cdots \otimes P_{q_k}^{(k)}\otimes I_\cH){\bf K_{T}}\right]}{\text{\rm trace\,}\left[P_{q_1}^{(1)}\otimes \cdots \otimes P_{q_k}^{(k)}\right]}\\
&=\lim_{q_1\to\infty}\cdots\lim_{q_k\to\infty}
\frac{\text{\rm trace\,}\left[ {\bf K_{T}^*}(P_{\leq(q_1,\ldots, q_k)}  \otimes I_\cH){\bf K_{T}}\right]}{\text{\rm trace\,}\left[P_{\leq(q_1,\ldots, q_k)}\right]}.
\end{split}
\end{equation*}
\end{theorem}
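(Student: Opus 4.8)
The plan is to reduce every assertion to the two summability results already proved, namely Lemma \ref{basic} and Lemma \ref{basic2}, applied to the commuting completely positive maps $\phi_i:=\frac{1}{n_i}\Phi_{T_i}$ and to the positive trace class operator $X:={\bf \Delta_{T}}(I)$. The bridge between the Berezin-kernel formulation of the curvature and those lemmas is the identity
\[
\text{\rm trace\,}\left[{\bf K_{T}^*}(P_{q_1}^{(1)}\otimes\cdots\otimes P_{q_k}^{(k)}\otimes I_\cH){\bf K_{T}}\right]
=\text{\rm trace\,}\left[\Phi_{T_1}^{q_1}\circ\cdots\circ\Phi_{T_k}^{q_k}({\bf \Delta_{T}}(I))\right],
\]
together with the elementary count $\text{\rm trace\,}[P_{q_1}^{(1)}\otimes\cdots\otimes P_{q_k}^{(k)}]=n_1^{q_1}\cdots n_k^{q_k}$, since there are $n_i^{q_i}$ words of length $q_i$ in $\FF_{n_i}^+$.

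First I would establish the trace identity. Substituting the explicit formula for ${\bf K_T}$ and using the orthonormality of the vectors $e^i_{\beta_i}$, one computes
\[
{\bf K_{T}^*}(P_{q_1}^{(1)}\otimes\cdots\otimes P_{q_k}^{(k)}\otimes I_\cH){\bf K_{T}}
=\sum_{|\beta_i|=q_i}T_{1,\beta_1}\cdots T_{k,\beta_k}\,{\bf \Delta_{T}}(I)\,T_{k,\beta_k}^*\cdots T_{1,\beta_1}^*.
\]
Because the entries of the block $T_i$ commute with those of $T_j$ for $i\neq j$, the block products $T_{1,\beta_1}\cdots T_{k,\beta_k}$ may be freely reordered; combining this with $\Phi_{T_i}^{q_i}(Y)=\sum_{|\beta_i|=q_i}T_{i,\beta_i}YT_{i,\beta_i}^*$ and the cyclicity of the trace identifies the trace of this operator with $\text{\rm trace\,}[\Phi_{T_1}^{q_1}\circ\cdots\circ\Phi_{T_k}^{q_k}({\bf \Delta_{T}}(I))]$. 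Consequently each summand in the definition \eqref{curva} equals $\text{\rm trace\,}[\phi_1^{q_1}\circ\cdots\circ\phi_k^{q_k}(X)]$, so the defining average is precisely the quantity analysed in Lemma \ref{basic2}.

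Next I would verify that $\phi_1,\ldots,\phi_k$ satisfy the hypotheses of Lemma \ref{basic}. They are commuting (the across-block commutation forces $\Phi_{T_i}\Phi_{T_j}=\Phi_{T_j}\Phi_{T_i}$) positive linear maps. For the trace-contractivity, the polyball condition with ${\bf p}=e_i$ gives $\sum_j T_{i,j}T_{i,j}^*\leq I$, hence $\|T_{i,j}\|\leq 1$ and $\sum_j T_{i,j}^*T_{i,j}\leq n_i I$; since $\text{\rm trace\,}[\Phi_{T_i}(X)]=\text{\rm trace\,}[(\sum_j T_{i,j}^*T_{i,j})X]\leq n_i\,\text{\rm trace\,}(X)$ for $X\in\cT^+(\cH)$, this yields $\text{\rm trace\,}[\phi_i(X)]\leq \text{\rm trace\,}(X)$ and, as a positive operator of finite trace, shows $\phi_i(\cT^+(\cH))\subset \cT^+(\cH)$. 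With these verified, Lemma \ref{basic2} gives the existence of $\text{\rm curv}({\bf T})$, while Lemma \ref{basic} together with the equality of all the limits asserted in Lemma \ref{basic2} yields the first two asymptotic formulas (the directed-net limit and the $q_1+\cdots+q_k=m$ Cesàro average).

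Finally, for the third formula I would note that
\[
\text{\rm trace\,}\left[{\bf K_{T}^*}(P_{\leq(q_1,\ldots,q_k)}\otimes I_\cH){\bf K_{T}}\right]
=\sum_{s_1=0}^{q_1}\cdots\sum_{s_k=0}^{q_k}\text{\rm trace\,}\left[\phi_1^{s_1}\circ\cdots\circ\phi_k^{s_k}(X)\right]\,n_1^{s_1}\cdots n_k^{s_k},
\]
while $\text{\rm trace\,}[P_{\leq(q_1,\ldots,q_k)}]=\prod_{i=1}^k\sum_{s_i=0}^{q_i}n_i^{s_i}$, so the ratio is a weighted average of the net $\text{\rm trace\,}[\phi_1^{s_1}\circ\cdots\circ\phi_k^{s_k}(X)]$ with weights $n_1^{s_1}\cdots n_k^{s_k}$. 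Since this net converges to $\text{\rm curv}({\bf T})$ by Lemma \ref{basic}, and the total weight $\prod_i\sum_{s_i=0}^{q_i}n_i^{s_i}$ tends to infinity while any fixed weight is asymptotically negligible, a regular (Toeplitz--Silverman / Stolz--Ces\`aro) summability argument, carried out one index at a time, shows the iterated limit equals $\text{\rm curv}({\bf T})$. The step I expect to be the most delicate is exactly this one: unlike Lemma \ref{basic2}, which averages with the uniform weights $\tfrac1{q_1}\cdots\tfrac1{q_k}$, the third formula averages with the geometric weights $n_i^{s_i}$, so it does not follow verbatim and requires the separate weighted-summability estimate, with the cases $n_i=1$ (uniform weights, ordinary Ces\`aro) and $n_i\geq 2$ (geometric weights concentrating at large indices) both accommodated. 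The trace identity of the second paragraph, by contrast, is routine once the across-block commutation is used to reorder the operator products.
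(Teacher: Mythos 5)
Your proposal is correct, and its overall skeleton coincides with the paper's: normalize to $\phi_i=\frac{1}{n_i}\Phi_{T_i}$, take $X={\bf \Delta_{T}}(I)$, invoke Lemma \ref{basic} and Lemma \ref{basic2} for the existence of the curvature and the first two asymptotic formulas, and obtain the $P_{\leq(q_1,\ldots,q_k)}$ formula by an iterated Stolz--Ces\`aro argument. Where you genuinely diverge is in the proof of the pivotal identity ${\bf K_{T}^*}(P_{q_1}^{(1)}\otimes\cdots\otimes P_{q_k}^{(k)}\otimes I_\cH){\bf K_{T}}=\Phi_{T_1}^{q_1}\circ\cdots\circ\Phi_{T_k}^{q_k}({\bf \Delta_{T}}(I))$. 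The paper obtains it abstractly: it pushes projections through the intertwining relation ${\bf K_{T}}T_{i,j}^*=({\bf S}_{i,j}^*\otimes I){\bf K_{T}}$, writes $P_{q_1}^{(1)}\otimes\cdots\otimes P_{q_k}^{(k)}$ as $\Phi_{{\bf S}_1}^{q_1}\circ\cdots\circ\Phi_{{\bf S}_k}^{q_k}({\bf P}_\CC)$, and then needs the auxiliary identity ${\bf \Delta_{T}}({\bf K_{T}^*}{\bf K_{T}})={\bf \Delta_{T}}(I)$, which it proves by a careful iterated WOT-limit argument (relations \eqref{equ} and \eqref{id-Q}). You instead compute ${\bf K_{T}^*}(P_{\bf q}\otimes I_\cH){\bf K_{T}}$ directly from the defining series of the Berezin kernel: orthonormality of the vectors $e^1_{\beta_1}\otimes\cdots\otimes e^k_{\beta_k}$ reduces it to the finite sum $\sum_{|\beta_i|=q_i}T_{k,\beta_k}\cdots T_{1,\beta_1}{\bf \Delta_{T}}(I)T_{1,\beta_1}^*\cdots T_{k,\beta_k}^*$, and cross-commutation of the blocks reorders this into $\Phi_{T_1}^{q_1}\circ\cdots\circ\Phi_{T_k}^{q_k}({\bf \Delta_{T}}(I))$; your appeal to cyclicity of the trace is not even needed, since the identity already holds at the operator level. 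Your route is shorter and avoids all WOT-limit subtleties, at the cost of being less reusable: the paper's route also establishes \eqref{id-Q}, which has independent interest, and stays entirely within the universal-model calculus rather than relying on the explicit series. Finally, the step you flag as delicate is resolved in the paper exactly as you anticipate: the classical Stolz--Ces\`aro theorem is applied one index at a time to the partial sums $\sum_{s_1=0}^{q_1}n_1^{s_1}x_{s_1}$ against $\sum_{s_1=0}^{q_1}n_1^{s_1}$, which handles the uniform case $n_i=1$ and the geometric case $n_i\geq 2$ in one stroke.
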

\begin{proof}  Let   ${\bf S}:=({\bf S}_1,\ldots, {\bf S}_k)\in {\bf B_n}(\otimes_{i=1}^kF^2(H_{n_i}))$ with  ${\bf S}_i:=({\bf S}_{i,1},\ldots,{\bf S}_{i,n_i})$  be  the  universal model associated
  with the abstract
  polyball ${\bf B_n}$.  It is easy to see that     ${\bf S}:=({\bf S}_1,\ldots, {\bf S}_k)$ is  a pure $k$-tuple   and
   $(id-\Phi_{{\bf S}_1})\circ \cdots \circ (id-\Phi_{{\bf S}_k})(I)={\bf P}_\CC,
   $
   where ${\bf P}_\CC$ is the
 orthogonal projection from $\otimes_{i=1}^k F^2(H_{n_i})$ onto $\CC 1\subset \otimes_{i=1}^k F^2(H_{n_i})$, where $\CC 1$ is identified with $\CC 1\otimes\cdots \otimes \CC 1$.
First, we  prove that
\begin{equation} \label{connection}
 {\bf K_{T}^*} (P_{q_1}^{(1)}\otimes \cdots \otimes P_{q_k}^{(k)}\otimes I_\cH){\bf K_{T}}=
  \Phi_{T_1}^{q_1}\circ \cdots \circ \Phi_{T_k}^{q_k}({\bf \Delta_{T}}(I))
  \end{equation}
  for any $q_1,\ldots,q_k\in \ZZ^+$.
  Taking into account that
   ${\bf K_{T}} { T}^*_{i,j}= ({\bf S}_{i,j}^*\otimes I)  {\bf K_{T}}
    $
for any $i\in \{1,\ldots, k\}$ and $j\in \{1,\ldots, n_i\}$, we deduce that
\begin{equation}\label{equ} \begin{split}
 {\bf K_{T}^*}&\left\{[ \Phi_{{\bf S}_1}^{q_1}\circ \cdots \circ \Phi_{{\bf S}_k}^{q_k}\circ(id-\Phi_{{\bf S}_k})\circ \cdots \circ (id-\Phi_{{\bf S}_1})(I)]\otimes
 I_\cH\right\}{\bf K_{T}}\\
 &\qquad \qquad=\Phi_{T_1}^{q_1}\circ \cdots \circ \Phi_{T_k}^{q_k}\circ(id-\Phi_{T_1})\circ \cdots \circ (id-\Phi_{T_k})({\bf K_{T}^*}{\bf K_{T}}).
 \end{split}
 \end{equation}
Since
$$
{\bf K_{T}^*}{\bf K_{T}}=
\lim_{q_k\to\infty}\ldots \lim_{q_1\to\infty}  (id-\Phi_{T_k}^{q_k})\circ \cdots \circ (id-\Phi_{T_1}^{q_1})(I),
$$
where the limits are in the weak  operator topology, we can prove that
\begin{equation} \label{id-Q}
 {\bf \Delta_{T}}({\bf K_{T}^*}{\bf K_{T}})={\bf \Delta_{T}}(I).
\end{equation}
Indeed,   note that   that  $\{(id-\Phi_{T_k}^{q_k})\circ \cdots \circ (id-\Phi_{T_1}^{q_1})(I)\}_{{\bf q}=(q_1,\ldots, q_k)\in \ZZ_+^k}$ is an  increasing sequence of positive operators and
 $$
 (id-\Phi_{T_k}^{q_k})\circ \cdots \circ (id-\Phi_{T_1}^{q_1})(I)
 =\sum_{s_k=0}^{q_k-1}\Phi_{T_k}^{s_k}\circ\cdots \sum_{s_1=0}^{q_1-1}\Phi_{T_1}^{s_1}\circ
(id-\Phi_{T_k})\circ \cdots \circ (id-\Phi_{T_1})(I).
$$
 Since $\Phi_{T_1},\ldots, \Phi_{T_k}$ are commuting WOT-continuous completely positive linear maps  and  $\lim_{q_i\to \infty}\Phi_{T_i}^{q_i}(I)$ exists  in the weak operator topology for each $i\in\{1,\ldots,k\}$, we have
\begin{equation*}
\begin{split}
(id-\Phi_{T_1})({\bf K_{T}^*}{\bf K_{T}})
&=\lim_{q_k\to\infty}\ldots \lim_{q_1\to\infty}  (id-\Phi_{T_k}^{q_k})\circ \cdots \circ (id-\Phi_{T_1}^{q_1})\circ(id-\Phi_{T_1})(I)\\
&=\lim_{q_k\to\infty}\ldots \lim_{q_2\to\infty}  (id-\Phi_{T_k}^{q_k})\circ \cdots \circ (id-\Phi_{T_2}^{q_2})
\left[\lim_{q_1\to\infty}(id-\Phi_{T_1}^{q_1})\circ(id-\Phi_{T_1})(I)\right]\\
&=\lim_{q_k\to\infty}\ldots \lim_{q_2\to\infty}  (id-\Phi_{T_k}^{q_k})\circ \cdots \circ (id-\Phi_{T_2}^{q_2})\circ(id-\Phi_{T_1})(I).
\end{split}
\end{equation*}
Applying now $id-\Phi_{T_2}$, a similar reasoning leads to
$$
(id-\Phi_{T_2})\circ(id-\Phi_{T_1})({\bf K_{T}^*}{\bf K_{T}})=\lim_{q_k\to\infty}\ldots \lim_{q_3\to\infty}  (id-\Phi_{T_k}^{q_k})\circ \cdots \circ (id-\Phi_{T_3}^{q_3})\circ(id-\Phi_{T_1})\circ(id-\Phi_{T_2})(I).
$$
Continuing this process, we obtain relation \eqref{id-Q}. Now, using  relations  \eqref{equ} and \eqref{id-Q},  we have
\begin{equation*}
\begin{split}
{\bf K_{T}^*} (P_{q_1}^{(1)}\otimes \cdots \otimes P_{q_k}^{(k)}\otimes I_\cH){\bf K_{T}}
&={\bf K_{T}^*}\left[ (\Phi_{{\bf S}_1}^{q_1}-  \Phi_{{\bf S}_1}^{q_1+1})\circ \cdots \circ (\Phi_{{\bf S}_k}^{q_k}-  \Phi_{{\bf S}_k}^{q_k+1})(I)\otimes I_\cH \right]{\bf K_{T}}\\
&={\bf K_{T}^*}\left[ \Phi_{{\bf S}_1}^{q_1} \circ \cdots \circ \Phi_{{\bf S}_k}^{q_k}\circ(id-\Phi_{{\bf S}_k})\circ \cdots \circ (id-\Phi_{{\bf S}_1})(I) \otimes I_\cH \right]{\bf K_{T}}\\
&=\Phi_{T_1}^{q_1}\circ \cdots \circ \Phi_{T_k}^{q_k}\circ(id-\Phi_{T_1})\circ \cdots \circ (id-\Phi_{T_k})({\bf K_{T}^*}{\bf K_{T}})\\
&=\Phi_{T_1}^{q_1}\circ \cdots \circ \Phi_{T_k}^{q_k} ({\bf \Delta_{T}}(I)).
\end{split}
\end{equation*}
This proves relation \eqref{connection}.
Now, note that for each $i\in\{1,\ldots, k\}$ and $X\in \cT^+(\cH)$,
$\Phi_{T_i}(X)$ is a positive trace class operator and
\begin{equation*}
\begin{split}
\text{\rm trace\,}[\Phi_{T_i}(X)]&=\sum_{j=1}^{n_i}\text{\rm trace\,}[T_{i,j} X T_{i,j}^*]
=\text{\rm trace\,}\left[ \sum_{j=1}^{n_i}T_{i,j}^*T_{i,j} X\right]\\
&\leq \left\| \sum_{j=1}^{n_i}T_{i,j}^*T_{i,j}\right\|\text{\rm trace\,}(X)
\leq n_i \text{\rm trace\,}(X).
\end{split}
\end{equation*}
Applying Lemma \ref{basic}  when $\phi_i:=\frac{1}{n_i} \Phi_{T_i}$, $i\in \{1,\ldots, k\}$, and $X={\bf \Delta_T}(I)$, we deduce that the limit
 $$
L:=\lim_{q_1\to\infty}\cdots\lim_{q_k\to\infty}  \frac{1}{n_1^{q_1}\cdots n_k^{q_k}}\text{\rm trace\,}\left[ \Phi_{T_1}^{q_1}\circ \cdots \circ \Phi_{T_k}^{q_k}({\bf \Delta_{T}}(I))\right]
$$
exists, which proves the existence of the curvature invariant defined by relation \eqref{curva}.
  Setting $$x_{q_1}:=\lim_{q_2\to\infty}\cdots\lim_{q_k\to\infty}  \frac{1}{n_1^{q_1}\cdots n_k^{q_k}}\text{\rm trace\,}\left[ \Phi_{T_1}^{q_1}\circ \cdots \circ \Phi_{T_k}^{q_k}({\bf \Delta_{T}}(I))\right],
 $$
 an application of  Stolz-Ces\` aro  convergence theorem to the sequence $\{x_{q_1}\}_{q_1=0}^\infty$ implies
$$
\lim_{q_1\to \infty}\frac{1}{1+n_1+\cdots + n_1^{q_1}}\sum_{s_1=0}^{q_1}\lim_{q_2\to\infty}\cdots\lim_{q_k\to\infty} \frac{1}{n_2^{q_2}\cdots n_k^{q_k}}\text{\rm trace\,}\left[ \Phi_{T_1}^{s_1}\circ\Phi_{T_2}^{q_2}\circ \cdots \circ \Phi_{T_k}^{q_k}({\bf \Delta_{T}}(I))\right]=\lim_{q_1\to\infty}x_{q_1}=L.
$$
Similarly, setting $y_{q_2}:=\lim_{q_3\to\infty}\cdots\lim_{q_k\to\infty} \frac{1}{n_2^{q_2}\cdots n_k^{q_k}}\text{\rm trace\,}\left[ \Phi_{T_1}^{s_1}\circ \Phi_{T_2}^{q_2}\circ \cdots \circ \Phi_{T_k}^{q_k}({\bf \Delta_{T}}(I))\right]$, we deduce that
$$
\lim_{q_2\to \infty}\frac{1}{1+n_2+\cdots + n_2^{q_2}}\sum_{s_2=0}^{q_2}\lim_{q_3\to\infty}\cdots\lim_{q_k\to\infty} \frac{1}{n_3^{q_3}\cdots n_k^{q_k}}\text{\rm trace\,}\left[ \Phi_{T_1}^{s_1}\circ \Phi_{T_2}^{s_2} \circ \Phi_{T_3}^{q_3}\cdots \circ \Phi_{T_k}^{q_k}({\bf \Delta_{T}}(I))\right]=\lim_{q_2\to\infty}y_{q_2}.
$$
Continuing this process and putting together all these relations, we obtain
\begin{equation*}\label{formulas2}
\begin{split}
L&=\lim_{q_1\to\infty}\cdots\lim_{q_k\to\infty}
\frac{\text{\rm trace\,}\left[ \sum_{s_1=0}^{q_1}\cdots \sum_{s_k=0}^{q_k} \Phi_{T_1}^{s_1}\circ \cdots \circ \Phi_{T_k}^{s_k}({\bf \Delta_{T}}(I))\right]}
{\prod_{i=1}^k(1+n_i+\cdots + n_i^{q_i})}\\
&=\lim_{q_1\to\infty}\cdots\lim_{q_k\to\infty}
\frac{\text{\rm trace\,}\left[(id-\Phi_{T_1}^{q_1+1})\circ \cdots \circ (id-\Phi_{T_k}^{q_k+1})(I)\right]}
{\prod_{i=1}^k(1+n_i+\cdots + n_i^{q_i})}.
\end{split}
\end{equation*}
Consequently, using relation \eqref{connection}, we obtain
\begin{equation*}
\begin{split}
\text{\rm curv}({\bf T})&=\lim_{q_1\to\infty}\cdots\lim_{q_k\to\infty}
\frac{\text{\rm trace\,}\left[ \sum_{s_1=0}^{q_1}\cdots \sum_{s_k=0}^{q_k} \Phi_{T_1}^{s_1}\circ \cdots \circ \Phi_{T_k}^{s_k}({\bf \Delta_{T}}(I))\right]}
{\prod_{i=1}^k(1+n_i+\cdots + n_i^{q_i})}\\
&=
\lim_{q_1\to\infty}\cdots\lim_{q_k\to\infty}
\frac{\text{\rm trace\,}\left[ {\bf K_{T}^*}(P_{\leq(q_1,\ldots, q_k)}  \otimes I_\cH){\bf K_{T}}\right]}{\text{\rm trace\,}\left[P_{\leq(q_1,\ldots, q_k)}\right]},
\end{split}
\end{equation*}
where the order of the iterated limits is irrelevant.
 The proof is  complete.
\end{proof}

If $m\in \ZZ_+$, we denote by $P_{\leq m}$ the orthogonal projection  of
$\otimes_{i=1}^k F^2(H_{n_i})$ onto the the span of all vectors   $e_{\alpha_1}\otimes\cdots \otimes e_{\alpha_k}$, where $\alpha_i\in \FF_{n_i}^+$ and $|\alpha_1|+\cdots+ |\alpha_k|\leq m$.
In the particular case of the regular polydisk, i.e.  $n_1=\cdots =n_k=1$,  we have $\text{\rm trace\,}\left[P_{q_1}^{(1)}\otimes \cdots \otimes P_{q_k}^{(k)}\right]=1$ and, therefore,
$$
\text{\rm curv}({\bf T})= \lim_{m\to\infty}\frac{
\text{\rm trace\,}\left[ {\bf K_{T}^*} (P_{\leq m}\otimes I_\cH){\bf K_{T}}\right]}{\text{\rm trace\,} P_{\leq m}}.
$$
 Theorem \ref{curva1}  implies  several other asymptotic formulas for the curvature in the regular polydisc.
 Using Theorem \ref{curva1}  and Lemma \ref{basic2} when $\phi_i:=\frac{1}{n_i} \Phi_{T_i}$, $i\in \{1,\ldots, k\}$, and $X={\bf \Delta_T}(I)$, we obtain the following

\begin{corollary}\label{curva-maps}  Let  ${\bf T}=({ T}_1,\ldots, { T}_k)$ be an   element   in the regular polyball  $ {\bf B_n}(\cH)$, ${\bf n}=(n_1,\ldots, n_k)\in \NN^k$, with  trace class defect and let $\Phi_{T_1}, \ldots, \Phi_{T_k}$ be the associated completely positive maps. Then the curvature of ${\bf T}$ satisfies the  asymptotic formulas
\begin{equation*}\begin{split}
\text{\rm curv}({\bf T})&=\lim_{m\to\infty}\frac{1}{\left(\begin{matrix} m+k\\ k\end{matrix}\right)}\sum_{{q_1\geq 0,\ldots, q_k\geq 0}\atop {q_1+\cdots +q_k\leq m}}\frac {1}{n_1^{q_1}\cdots n_k^{q_k}}{\text{\rm trace\,}\left[ \Phi_{T_1}^{q_1}\circ \cdots \circ \Phi_{T_k}^{q_k}({\bf \Delta_{T}}(I))\right]}\\
&=\lim_{m\to\infty}\frac{1}{\left(\begin{matrix} m+k-1\\ k-1\end{matrix}\right)}\sum_{{q_1\geq 0,\ldots, q_k\geq 0}\atop {q_1+\cdots +q_k=m}} \frac{1}{n_1^{q_1}\cdots n_k^{q_k}}\text{\rm trace\,}\left[ \Phi_{T_1}^{q_1}\circ \cdots \circ \Phi_{T_k}^{q_k}({\bf \Delta_{T}}(I))\right]\\
&= \lim_{q_1\to\infty}\cdots\lim_{q_k\to\infty}
\frac{\text{\rm trace\,}\left[(id-\Phi_{T_1}^{q_1+1})\circ \cdots \circ (id-\Phi_{T_k}^{q_k+1})(I)\right]}
{\prod_{i=1}^k(1+n_i+\cdots + n_i^{q_i})}\\
&=\lim_{(q_1,\ldots, q_k)\in \ZZ_+^k} \frac{1}{n_1^{q_1}\cdots n_k^{q_k}}\text{\rm trace\,}\left[ \Phi_{T_1}^{q_1}\circ \cdots \circ \Phi_{T_k}^{q_k}({\bf \Delta_{T}}(I))\right].
\end{split}
\end{equation*}
\end{corollary}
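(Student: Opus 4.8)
The plan is to deduce all four formulas from the abstract summability results of Lemma \ref{basic} and Lemma \ref{basic2}, applied to the rescaled maps $\phi_i:=\frac{1}{n_i}\Phi_{T_i}$, $i\in\{1,\ldots,k\}$, and to $X:={\bf \Delta_{T}}(I)$. First I would verify that these $\phi_i$ meet the hypotheses of Lemma \ref{basic}. Positivity and the inclusion $\phi_i(\cT^+(\cH))\subset \cT^+(\cH)$ are inherited from $\Phi_{T_i}$, whose positivity on the trace ideal was already recorded in the proof of Theorem \ref{curva1}; the maps commute because, by the defining property of the polyball, the entries of $T_s$ commute with those of $T_t$ for $s\neq t$, whence $\Phi_{T_s}\Phi_{T_t}=\Phi_{T_t}\Phi_{T_s}$. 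The trace-contractivity requirement is precisely the estimate $\text{\rm trace\,}[\Phi_{T_i}(X)]\leq n_i\,\text{\rm trace\,}(X)$ established inside the proof of Theorem \ref{curva1}; dividing by $n_i$ gives $\text{\rm trace\,}[\phi_i(X)]\leq \text{\rm trace\,}(X)$ for every $X\in \cT^+(\cH)$.

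The second step is the bookkeeping identity that converts the Berezin-kernel ratios into traces of the rescaled maps. Counting words of length $q_i$ over $n_i$ generators gives $\text{\rm trace\,}[P_{q_i}^{(i)}]=n_i^{q_i}$, so $\text{\rm trace\,}[P_{q_1}^{(1)}\otimes\cdots\otimes P_{q_k}^{(k)}]=n_1^{q_1}\cdots n_k^{q_k}$. Combining this with relation \eqref{connection} yields
\begin{equation*}
\begin{split}
\frac{\text{\rm trace\,}\left[{\bf K_{T}^*}(P_{q_1}^{(1)}\otimes\cdots\otimes P_{q_k}^{(k)}\otimes I_\cH){\bf K_{T}}\right]}{\text{\rm trace\,}\left[P_{q_1}^{(1)}\otimes\cdots\otimes P_{q_k}^{(k)}\right]}
&=\frac{1}{n_1^{q_1}\cdots n_k^{q_k}}\,\text{\rm trace\,}\left[\Phi_{T_1}^{q_1}\circ\cdots\circ\Phi_{T_k}^{q_k}({\bf \Delta_{T}}(I))\right]\\
&=\text{\rm trace\,}\left[\phi_1^{q_1}\circ\cdots\circ\phi_k^{q_k}({\bf \Delta_{T}}(I))\right],
\end{split}
\end{equation*}
which is the bridge between the two descriptions of the curvature.

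With this identity in hand, I would match the four displayed expressions against the outputs of the cited results. The net-limit formula and the Cesàro-by-degree formula (summed over $q_1+\cdots+q_k=m$ and normalized by $\binom{m+k-1}{k-1}$) are exactly the first two asymptotic expressions of Theorem \ref{curva1} rewritten through the identity above; the Cesàro-by-ball formula (summed over the simplex $q_1+\cdots+q_k\leq m$ and normalized by $\binom{m+k}{k}$) is the first limit in Lemma \ref{basic2} for $\phi_i$ and $X={\bf \Delta_{T}}(I)$, which Lemma \ref{basic2} guarantees coincides with the Lemma \ref{basic} limits and hence with $\text{\rm curv}({\bf T})$. The Arveson-type formula with $(id-\Phi_{T_i}^{q_i+1})$ over $\prod_i(1+n_i+\cdots+n_i^{q_i})$ is the intermediate value $L$ isolated inside the proof of Theorem \ref{curva1}, obtained from the telescoping $(id-\Phi_{T_i}^{q_i+1})=(id-\Phi_{T_i})\circ\sum_{s_i=0}^{q_i}\Phi_{T_i}^{s_i}$ together with $\sum_{s_i=0}^{q_i}n_i^{s_i}=1+n_i+\cdots+n_i^{q_i}$. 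Since no new estimate is needed, I do not expect a genuine analytic obstacle; the only point requiring care is that the normalizing factors $n_1^{q_1}\cdots n_k^{q_k}$ and the three families of binomial coefficients line up correctly across Theorem \ref{curva1}, Lemma \ref{basic}, and Lemma \ref{basic2}, so that the chain of equalities closes and every limit is seen to equal $\text{\rm curv}({\bf T})$.
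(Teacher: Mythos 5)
Your proposal is correct and is essentially the paper's own argument: the paper derives this corollary in one line by applying Theorem \ref{curva1} and Lemma \ref{basic2} with $\phi_i:=\frac{1}{n_i}\Phi_{T_i}$ and $X={\bf \Delta_T}(I)$, exactly as you do, with the trace-contractivity bound $\text{\rm trace\,}[\Phi_{T_i}(X)]\leq n_i\,\text{\rm trace\,}(X)$, the identity \eqref{connection}, and the count $\text{\rm trace\,}[P_{q_i}^{(i)}]=n_i^{q_i}$ all already established inside the proof of Theorem \ref{curva1}. Your write-up merely makes explicit the hypothesis-checking and bookkeeping that the paper leaves implicit.
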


\begin{remark}  A closer look at the proof of Thoorem \ref{curva1} reveals that, if ${\bf T}=({ T}_1,\ldots, { T}_k)$ is an element in $B(\cH)^{n_1}\times_c\cdots \times_c B(\cH)^{n_k}$    such that ${\bf \Delta_{T}}(I)$ is a  trace class positive operator, the limits in Corollary \ref{curva-maps} exist and can be used as the definition of the curvature  of   $T$, extending  the one on the regular polyball.
\end{remark}

\begin{corollary} If ${\bf T}\in  {\bf B_n}(\cH)$ has   trace class defect, then
$$
0\leq \text{\rm curv}({\bf T})\leq \text{\rm trace\,}\left[{\bf \Delta_{T}}(I)\right]\leq \rank [{\bf \Delta_{T}}(I)].
$$
\end{corollary}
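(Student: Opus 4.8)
The plan is to establish the three inequalities
$$
0\leq \text{\rm curv}({\bf T})\leq \text{\rm trace\,}\left[{\bf \Delta_{T}}(I)\right]\leq \rank [{\bf \Delta_{T}}(I)]
$$
one at a time, relying on the asymptotic formula
$$
\text{\rm curv}({\bf T})=\lim_{(q_1,\ldots, q_k)\in \ZZ_+^k} \frac{1}{n_1^{q_1}\cdots n_k^{q_k}}\text{\rm trace\,}\left[ \Phi_{T_1}^{q_1}\circ \cdots \circ \Phi_{T_k}^{q_k}({\bf \Delta_{T}}(I))\right]
$$
from Corollary \ref{curva-maps}, which already guarantees the limit exists. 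The rightmost inequality $\text{\rm trace\,}[{\bf \Delta_{T}}(I)]\leq \rank[{\bf \Delta_{T}}(I)]$ is the easiest: since ${\bf T}\in {\bf B_n}(\cH)$, the defect ${\bf \Delta_{T}}(I)$ is a positive contraction (indeed $I-\Phi_{X_1}(I)\geq {\bf \Delta_{X}^{p}}(I)\geq 0$ forces ${\bf \Delta_{T}}(I)\leq I$ after expanding the product), so its nonzero eigenvalues lie in $(0,1]$, and the trace of a positive trace-class operator is bounded by the number of its nonzero eigenvalues counted with multiplicity, which is exactly the rank.

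For the leftmost inequality $0\leq \text{\rm curv}({\bf T})$, I would argue directly from the formula. For each fixed $(q_1,\ldots,q_k)$, the operator $\Phi_{T_1}^{q_1}\circ\cdots\circ\Phi_{T_k}^{q_k}({\bf \Delta_{T}}(I))$ is positive, since each $\Phi_{T_i}$ is completely positive and ${\bf \Delta_{T}}(I)\in\cT^+(\cH)$ (using that ${\bf \Delta_{T}}(I)$ is positive because ${\bf T}$ lies in the polyball and trace class by hypothesis). Hence every term of the net is nonnegative, so its limit is nonnegative.

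For the middle inequality $\text{\rm curv}({\bf T})\leq \text{\rm trace\,}[{\bf \Delta_{T}}(I)]$, I would invoke the remark following Lemma \ref{basic}, which states that Lemma \ref{basic} implies $\lim_{(q_1,\ldots,q_k)\in\ZZ_+^k}\text{\rm trace\,}[\phi_1^{q_1}\circ\cdots\circ\phi_k^{q_k}(X)]\leq \text{\rm trace\,}(X)$ for any $X\in\cT^+(\cH)$. Applying this with $\phi_i:=\frac{1}{n_i}\Phi_{T_i}$ (which satisfy the hypotheses of Lemma \ref{basic}, as verified in the proof of Theorem \ref{curva1} where $\text{\rm trace\,}[\Phi_{T_i}(X)]\leq n_i\,\text{\rm trace\,}(X)$) and $X={\bf \Delta_{T}}(I)$ gives precisely $\text{\rm curv}({\bf T})\leq\text{\rm trace\,}[{\bf \Delta_{T}}(I)]$, since the limit on the left is exactly the curvature by Corollary \ref{curva-maps}. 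I expect no serious obstacle here; the only point requiring a line of care is confirming that ${\bf \Delta_{T}}(I)$ is a positive trace-class operator so that all three pieces apply, but this is immediate from the definition of the regular polyball together with the trace-class defect assumption.
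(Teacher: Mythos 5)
Your proof is correct and follows essentially the same route the paper intends: the corollary is stated without proof precisely because it is immediate from the asymptotic formula of Corollary \ref{curva-maps} together with the remark after Lemma \ref{basic} (the net $\text{\rm trace\,}[\phi_1^{q_1}\circ\cdots\circ\phi_k^{q_k}(X)]$ is nonnegative, decreasing in each index, hence its limit lies in $[0,\text{\rm trace\,}(X)]$), plus the elementary facts that ${\bf \Delta_{T}}(I)$ is a positive contraction and that a positive contraction's trace is bounded by its rank. Your verification of the hypotheses ($\text{\rm trace\,}[\Phi_{T_i}(X)]\leq n_i\,\text{\rm trace\,}(X)$, positivity and trace-class of the defect, and ${\bf \Delta_{T}}(I)\leq I$ via positivity of the partial defects) matches the ingredients the paper relies on.
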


\begin{corollary} If ${\bf T}\in  {\bf B_n}(\cH)$  and ${\bf T}'\in {\bf B_n}(\cH')$  have trace class defects,   then ${\bf T}\oplus {\bf T}'\in {\bf B_n}(\cH\oplus\cH')$ has trace class defect and
$$
\text{\rm curv}({\bf T}\oplus {\bf T}')=\text{\rm curv}({\bf T})+ \text{\rm curv}({\bf T}').
$$
If, in addition, $\dim\cH'<\infty$, then $\text{\rm curv}({\bf T}\oplus {\bf T}')=\text{\rm curv}({\bf T})$.
\end{corollary}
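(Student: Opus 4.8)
The plan is to reduce everything to the block-diagonal behaviour of the associated completely positive maps and then to quote the asymptotic formulas of Theorem \ref{curva1} and Corollary \ref{curva-maps}. First I would record the key factorization: writing $({\bf T}\oplus{\bf T}')_{i,j}=T_{i,j}\oplus T'_{i,j}$ on $\cH\oplus\cH'$, every associated map preserves the block-diagonal decomposition, so that $\Phi_{(T\oplus T')_i}(Y\oplus Y')=\Phi_{T_i}(Y)\oplus\Phi_{T'_i}(Y')$ for $Y\in B(\cH)$, $Y'\in B(\cH')$. Since the cross-commutation relations for $s\neq t$ are inherited entrywise from ${\bf T}$ and ${\bf T}'$, the tuple ${\bf T}\oplus{\bf T}'$ lies in $B(\cH\oplus\cH')^{n_1}\times_c\cdots\times_c B(\cH\oplus\cH')^{n_k}$. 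Applying the factorization repeatedly to $I=I_\cH\oplus I_{\cH'}$ gives, for every ${\bf 0}\leq{\bf p}\leq(1,\ldots,1)$,
\[
{\bf \Delta_{T\oplus T'}^p}(I)={\bf \Delta_{T}^p}(I)\oplus{\bf \Delta_{T'}^p}(I)\geq 0,
\]
so that ${\bf T}\oplus{\bf T}'\in{\bf B_n}(\cH\oplus\cH')$. In particular ${\bf \Delta_{T\oplus T'}}(I)={\bf \Delta_{T}}(I)\oplus{\bf \Delta_{T'}}(I)$, whose trace is $\text{\rm trace\,}[{\bf \Delta_{T}}(I)]+\text{\rm trace\,}[{\bf \Delta_{T'}}(I)]<\infty$, which settles the trace-class claim.

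For the additivity I would use the formula $\text{\rm curv}({\bf T})=\lim_{(q_1,\ldots,q_k)\in\ZZ_+^k}\frac{1}{n_1^{q_1}\cdots n_k^{q_k}}\text{\rm trace\,}[\Phi_{T_1}^{q_1}\circ\cdots\circ\Phi_{T_k}^{q_k}({\bf \Delta_{T}}(I))]$ of Theorem \ref{curva1}. Iterating the block-diagonal factorization, the operator $\Phi_{(T\oplus T')_1}^{q_1}\circ\cdots\circ\Phi_{(T\oplus T')_k}^{q_k}({\bf \Delta_{T\oplus T'}}(I))$ equals the direct sum of the two corresponding expressions for ${\bf T}$ and ${\bf T}'$. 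Because the trace of a block-diagonal operator is the sum of the traces of its diagonal blocks, the normalized quotient for ${\bf T}\oplus{\bf T}'$ is the sum of those for ${\bf T}$ and ${\bf T}'$. Each of the two limits exists by Theorem \ref{curva1}, so the limit of the sum is the sum of the limits, yielding $\text{\rm curv}({\bf T}\oplus{\bf T}')=\text{\rm curv}({\bf T})+\text{\rm curv}({\bf T}')$.

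For the case $\dim\cH'<\infty$ it then suffices to show $\text{\rm curv}({\bf T}')=0$. I would take the formula $\text{\rm curv}({\bf T}')=\lim_{q_1\to\infty}\cdots\lim_{q_k\to\infty}\frac{\text{\rm trace\,}[(id-\Phi_{T'_1}^{q_1+1})\circ\cdots\circ(id-\Phi_{T'_k}^{q_k+1})(I)]}{\prod_{i=1}^k(1+n_i+\cdots+n_i^{q_i})}$ of Corollary \ref{curva-maps}. As shown in the proof of Theorem \ref{curva1}, the numerator equals $\text{\rm trace\,}$ of $\sum_{s_1=0}^{q_1}\cdots\sum_{s_k=0}^{q_k}\Phi_{T'_1}^{s_1}\circ\cdots\circ\Phi_{T'_k}^{s_k}({\bf \Delta_{T'}}(I))$, an increasing family of positive operators bounded above by ${\bf K_{T'}^*}{\bf K_{T'}}\leq I_{\cH'}$; hence its trace is at most $\text{\rm trace\,}[I_{\cH'}]=\dim\cH'<\infty$, uniformly in $(q_1,\ldots,q_k)$. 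The denominator tends to $\infty$ (each factor is at least $q_i+1$), so the ratio tends to $0$ and $\text{\rm curv}({\bf T}')=0$. Combined with the additivity already proved, this gives $\text{\rm curv}({\bf T}\oplus{\bf T}')=\text{\rm curv}({\bf T})$.

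The main obstacle is the last step: one needs a bound on the numerator that is uniform in ${\bf q}$ and does not grow with ${\bf q}$, and this is exactly where the contractivity ${\bf K_{T'}^*}{\bf K_{T'}}\leq I_{\cH'}$ of the Berezin kernel is essential. The naive trace-contractivity estimate $\text{\rm trace\,}[\Phi_{T'_i}(X)]\leq n_i\,\text{\rm trace\,}(X)$ only yields the bound $\text{\rm trace\,}[{\bf \Delta_{T'}}(I)]$ after normalization, which is finite but too weak to force the quotient to vanish; the sharper comparison with $I_{\cH'}$ is what converts finite-dimensionality of $\cH'$ into the decay of the ratio.
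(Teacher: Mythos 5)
Your proof is correct and follows essentially the route the paper intends: the corollary is stated there without proof as an immediate consequence of Theorem \ref{curva1} and Corollary \ref{curva-maps}, and your block-diagonal identity $\Phi_{(T\oplus T')_i}(Y\oplus Y')=\Phi_{T_i}(Y)\oplus\Phi_{T'_i}(Y')$, applied to ${\bf \Delta_{T\oplus T'}}(I)={\bf \Delta_{T}}(I)\oplus{\bf \Delta_{T'}}(I)$ together with additivity of the trace, is exactly the expected argument. Your choice of the asymptotic formula whose denominator $\prod_{i=1}^k(1+n_i+\cdots+n_i^{q_i})$ tends to infinity even when some $n_i=1$ is also the right one for the finite-dimensional claim, since the pointwise normalization by $n_1^{q_1}\cdots n_k^{q_k}$ alone would not force the ratio to vanish in the polydisc-type cases.
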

More properties for the curvature will be presented in the coming sections.

\section{The curvature operator and classification}

In this section, we show that if ${\bf T}\in {\bf B_n}(\cH)$ has characteristic function and finite rank, then its curvature is the trace of the curvature operator, to be introduced, and satisfies an index type formula which is used to obtain certain classification  results.
In particular, we show that the curvature invariant classifies  the  finite rank  Beurling type invariant subspaces  of the  tensor product of full
Fock spaces $F^2(H_{n_1})\otimes \cdots \otimes F^2(H_{n_k})$.

\begin{lemma}\label{reproducing} Let $\phi_1,\ldots, \phi_k$ be positive linear maps on $B(\cH)$ such that each $\phi_i$ is pure., i.e. $\phi_i^n(I) \to 0$ weakly  as $n\to \infty$. Then
${\bf\Delta_\phi}:=(id-\phi_1)\circ \cdots \circ (id -\phi_k)$ is a one-to-one map and each $X\in B(\cH)$ has the representation
$$
X=\sum_{s_k=0}^\infty \phi_{k}^{s_k}\left(\sum_{s_{k-1}=0}^\infty \phi_{k-1}^{s_{k-1}}\left(\cdots \sum_{s_1=0}^\infty \phi_{1}^{s_1} \left({\bf \Delta_{\phi}}(X)\right)\cdots \right)\right),
$$
where the iterated series converge   in the weak operator topology. If, in addition, ${\bf \Delta_{\phi}}(X)\geq 0$, then
$$
X=\sum_{(s_1,\ldots, s_k)\in \ZZ_+^k} \phi_k^{s_k}\cdots \phi_1^{s_1}({\bf \Delta_{\phi}}(X)).
$$
\end{lemma}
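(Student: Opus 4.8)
The plan is to reduce everything to a single-variable telescoping identity and then propagate it through the $k$ factors of ${\bf\Delta_\phi}$. First I would isolate the one-map fact: for a pure positive linear map $\phi$ on $B(\cH)$ and any $Z\in B(\cH)$ the partial sums telescope,
$$
\sum_{s=0}^{N}\phi^s\big((id-\phi)(Z)\big)=Z-\phi^{N+1}(Z),
$$
so it suffices to check $\phi^{N}(Z)\to 0$ in the weak operator topology. Since a positive linear map is $*$-preserving, I would split $Z$ into its self-adjoint parts; for self-adjoint $Y$ with $-\|Y\|I\le Y\le\|Y\|I$, positivity of $\phi^N$ gives $-\|Y\|\phi^N(I)\le\phi^N(Y)\le\|Y\|\phi^N(I)$, and the purity hypothesis $\phi^N(I)\to0$ weakly squeezes $\langle\phi^N(Y)h,h\rangle\to0$; polarization upgrades this to weak-operator convergence. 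Hence $\sum_{s=0}^\infty\phi^s((id-\phi)(Z))=Z$ in WOT, and in particular $(id-\phi)(Z)=0$ forces $Z=\text{WOT-}\lim\phi^N(Z)=0$.

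Next I would iterate this through the nested structure ${\bf\Delta_\phi}=(id-\phi_1)\circ\cdots\circ(id-\phi_k)$. Writing $W_j:=(id-\phi_{j+1})\circ\cdots\circ(id-\phi_k)(X)$, so that $W_k=X$ and $W_0={\bf\Delta_\phi}(X)$, the innermost series $\sum_{s_1}\phi_1^{s_1}(W_0)=\sum_{s_1}\phi_1^{s_1}\big((id-\phi_1)(W_1)\big)$ converges in WOT to $W_1$ by the one-map fact applied to $\phi_1$; feeding this limit into $\sum_{s_2}\phi_2^{s_2}(\cdot)$ returns $W_2$, and after $k$ steps one recovers $W_k=X$. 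This is precisely the claimed iterated representation, and crucially it needs no commutativity, because the outer-to-inner order of the factors $(id-\phi_i)$ in ${\bf\Delta_\phi}$ matches the inner-to-outer order of the summations. Injectivity of ${\bf\Delta_\phi}$ is then immediate: if ${\bf\Delta_\phi}(X)=0$, the representation collapses to $X=0$.

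For the unordered sum under the extra hypothesis ${\bf\Delta_\phi}(X)\ge0$, I would exploit positivity. Since ${\bf\Delta_\phi}$ is $*$-preserving, ${\bf\Delta_\phi}(X)\ge0$ forces $X=X^*$ by injectivity. I would then prove by induction that each $W_j\ge0$: given $W_{j-1}=(id-\phi_j)(W_j)\ge0$, the telescoped partial sums $\sum_{s_j=0}^{q_j}\phi_j^{s_j}(W_{j-1})=W_j-\phi_j^{q_j+1}(W_j)$ are positive and increase weakly to $W_j$, whence $W_j\ge0$ and, knowing this, $\sum_{s_j=0}^{q_j}\phi_j^{s_j}(W_{j-1})\le W_j$. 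Applying the positive maps level by level then shows the rectangular partial sums $\Sigma_{\bf q}=\big(\sum_{s_k=0}^{q_k}\phi_k^{s_k}\big)\circ\cdots\circ\big(\sum_{s_1=0}^{q_1}\phi_1^{s_1}\big)({\bf\Delta_\phi}(X))$ satisfy $0\le\Sigma_{\bf q}\le X$ and increase in each index, so the net $\{\Sigma_{\bf q}\}_{{\bf q}\in\ZZ_+^k}$ converges weakly to some $Y\le X$, and this is the unordered sum.

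The main obstacle is identifying $Y$ with $X$, which requires passing the maps $\phi_i$ through weak-operator limits. I would handle it by writing $\Sigma_{\bf q}=X-R_{\bf q}$, where telescoping each level and collecting the error terms gives the positive remainder $R_{\bf q}=\sum_{j=1}^{k}\big(\sum_{s_k=0}^{q_k}\phi_k^{s_k}\big)\circ\cdots\circ\big(\sum_{s_{j+1}=0}^{q_{j+1}}\phi_{j+1}^{s_{j+1}}\big)\big(\phi_j^{q_j+1}(W_j)\big)$. Using $0\le W_j\le\|W_j\|I$ and positivity, each error term is dominated by $\|W_j\|$ times the same composition applied to $\phi_j^{q_j+1}(I)$, which tends weakly to $0$ by purity; since the maps occurring in the intended applications are weak-operator continuous completely positive maps, the remainder $R_{\bf q}\to0$ weakly and therefore $Y=X$. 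Equivalently, for the monotone net $\Sigma_{\bf q}$ the joint (net) limit coincides with the iterated limit computed in the second step, so the unordered sum equals $X$, completing the proof.
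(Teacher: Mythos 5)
Your argument for the iterated-series representation and for injectivity is essentially the paper's own proof. The paper likewise reduces to the one-variable telescoping identity $\sum_{s=0}^{q}\phi^{s}\big((id-\phi)(W)\big)=W-\phi^{q+1}(W)$, applied successively to the operators $W_j=(id-\phi_{j+1})\circ\cdots\circ(id-\phi_k)(X)$ from the inside out, and kills the remainders using purity; the only cosmetic difference is the mechanism for showing $\phi^{q}(Z)\to 0$ weakly for general $Z$: the paper uses the Cauchy--Schwarz estimate $|\langle\phi^{q}(Z)x,y\rangle|\le\|Z\|\,\langle\phi^{q}(I)x,x\rangle^{1/2}\langle\phi^{q}(I)y,y\rangle^{1/2}$ for positive $Z$ plus linearity, whereas you use the order bound $-\|Y\|\phi^{q}(I)\le\phi^{q}(Y)\le\|Y\|\phi^{q}(I)$ plus polarization. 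Your remark that no commutativity is needed because the summation order matches the factor order of ${\bf \Delta}_{\phi}$ is precisely the structural point of the paper's proof.

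On the unordered sum, where the paper only notes that the rectangular partial sums increase in each index and says the claim follows, you give more detail, and one step is shaky as written: in your remainder $R_{\bf q}$, the $j$-th error term is an outer composition $\big(\sum_{s_k=0}^{q_k}\phi_k^{s_k}\big)\circ\cdots\circ\big(\sum_{s_{j+1}=0}^{q_{j+1}}\phi_{j+1}^{s_{j+1}}\big)$ whose number of summands grows with the net index, so its value at $\phi_j^{q_j+1}(I)$ does not tend weakly to $0$ merely ``by purity.'' The correct route is the one in your closing sentence: by monotonicity the joint limit equals the iterated limit, and the iterated limit is evaluated by fixing indices from the outside in, so that at each stage a fixed positive map $\sum_{s=0}^{q}\phi^{s}$ is applied to a bounded WOT-null sequence. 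That evaluation requires the $\phi_i$ to be WOT-continuous on bounded sets (equivalently, to respect suprema of bounded increasing nets), a hypothesis not present in the lemma; you state it openly (``in the intended applications''), while the paper uses it silently. The hypothesis is genuinely needed: take $\phi_1(Z)=SZS^*$ with $S$ the unilateral shift and $\phi_2(Z)=\omega(Z)P$ with $\omega$ a state vanishing on compact operators and $P$ the projection onto the first basis vector; both maps are pure, and for $X=I+P$ one has ${\bf \Delta}_{\phi}(X)=I-SS^*\ge 0$, yet the unordered sum $\sum_{(s_1,s_2)}\phi_2^{s_2}\phi_1^{s_1}({\bf \Delta}_{\phi}(X))=I\neq X$ (the nested representation of the first part still holds). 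Since the maps $\Phi_{T_i}$ to which the lemma is applied are normal, both your proof and the paper's are sound in every use made of the lemma, but your version has the merit of flagging the assumption.
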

\begin{proof} We use the notation ${\bf \Delta}_\phi^{(p_1,\ldots, p_k)}:=(id-\phi_1)^{p_1}\circ \cdots \circ (id -\phi_k)^{p_k}$ for $p_i\in \{0,1\}$. Note that
\begin{equation}
\label{rec}
\sum_{s_1=0}^{q_1} \phi_1^{s_1}({\bf \Delta_{\phi}}(X))
= {\bf \Delta}_\phi^{(0,1,\ldots, 1)}(X)-\phi_1^{q_1+1}({\bf \Delta}_\phi^{(0,1,\ldots, 1)}(X)).
\end{equation}
If $Z\in B(\cH)$ is a positive operator and $x,y\in \cH$, the Cauchy-Schwarz inequality implies
$$
\left|\left<\phi_i^{q_i}(Z)x, y\right>\right|\leq \|Z\|\left<\phi_i^{q_i}(I)x, x\right>^{1/2} \left<\phi_i^{q_i}(I)y, y\right>^{1/2}.
$$
Since  $\phi_i^{q_i}(I) \to 0$ weakly as $q_i\to \infty$, we deduce that $\left<\phi_i^{q_i}(Z)x, y\right>\to 0$ as $q_i\to\infty$. Taking into account that any bounded linear operator  is a linear combination of positive operators, we conclude that the convergence above holds for any $Z\in B(\cH)$. Passing to the limit in  relation \eqref{rec} as $q_i\to\infty$, we obtain
$
\sum_{s_1=0}^{\infty} \phi_1^{s_1}({\bf \Delta_{\phi}}(X))
= {\bf \Delta}_\phi^{(0,1,\ldots, 1)}(X).
$
Similarly, we obtain
$
\sum_{s_2=0}^{\infty} \phi_2^{s_2}({\bf \Delta}_{\phi}^{(0,1,\ldots, 1)}(X))
= {\bf \Delta}_\phi^{(0,0,1\ldots, 1)}(X)  $
and, continuing this process,
$$\sum_{s_k=0}^{\infty} \phi_k^{s_k}({\bf \Delta}_{\phi}^{(0,\ldots,0, 1)}(X))
= {\bf \Delta}_\phi^{(0,\ldots, 0)}(X)=X.
$$
Putting together these relations, we deduce the first equality of the lemma.
Now, one can see that ${\bf \Delta}_\phi$ is one-to-one.
If we assume that ${\bf \Delta_{\phi}}(X)\geq 0$, then  the multi-sequence
$$\sum_{s_k=0}^{q_k} \phi_{k}^{s_k}\left(\sum_{s_{k-1}=0}^{q_{k-1}} \phi_{k-1}^{s_{k-1}}\left(\cdots \sum_{s_1=0}^{q_1} \phi_{1}^{s_1} \left({\bf \Delta_{\phi}}(X)\right)\cdots \right)\right)
$$ is increasing with respect to each indexes $q_1,\ldots, q_k$.
The last part of the lemma follows.
\end{proof}
 A simple consequence of Lemma \ref{reproducing} is the following result which will be used later.
\begin{corollary}\label{Taylor-rep}
 If ${\bf T}=({ T}_1,\ldots, { T}_k)$  is  a pure element  in the regular polyball  ${\bf B_n}(\cH)$, then the  defect map ${\bf \Delta_{T}}$ is one-to-one and any $X\in B(\cH)$ has the following Taylor type representation around its defect
$$
X=\sum_{s_1=0}^\infty \Phi_{T_1}^{s_1}\left(\sum_{s_2=0}^\infty \Phi_{T_2}^{s_2}\left(\cdots \sum_{s_k=0}^\infty \Phi_{T_k}^{s_k} \left({\bf \Delta_{T}}(X)\right)\cdots \right)\right),
$$
where the iterated series converge in the weak operator topology. If, in addition, ${\bf \Delta_{T}}(X)\geq 0$, then
$$
X=\sum_{(s_1,\ldots, s_k)\in \ZZ_+^k} \Phi_{T_1}^{s_1}\circ \cdots \circ \Phi_{T_k}^{s_k}({\bf \Delta_{T}}(X)).
$$
\end{corollary}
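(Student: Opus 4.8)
The plan is to deduce everything directly from Lemma \ref{reproducing} applied to the family $\phi_i:=\Phi_{T_i}$, $i\in\{1,\ldots,k\}$, after verifying the two hypotheses of that lemma and handling a harmless reindexing of the iterated sums. First I would check that $\Phi_{T_1},\ldots,\Phi_{T_k}$ meet the assumptions. Each $\Phi_{T_i}(Y)=\sum_{j=1}^{n_i}T_{i,j}YT_{i,j}^*$ is completely positive, hence a positive linear map on $B(\cH)$. Since ${\bf T}$ is a \emph{pure} element of the polyball, $\lim_{q_i\to\infty}\Phi_{T_i}^{q_i}(I)=0$ in the weak operator topology, which is exactly the purity condition $\phi_i^n(I)\to 0$ weakly required by Lemma \ref{reproducing}. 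Thus the lemma applies and already yields that ${\bf \Delta_{T}}=(id-\Phi_{T_1})\circ\cdots\circ(id-\Phi_{T_k})$ is one-to-one, establishing the first assertion.

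The only discrepancy between the conclusion of Lemma \ref{reproducing} and the stated corollary is the order in which the maps appear in the iterated series: the lemma produces $\Phi_{T_k}$ in the outermost sum and $\Phi_{T_1}$ in the innermost, whereas the corollary requires the opposite. To reconcile them I would apply Lemma \ref{reproducing} to the relabeled family $\psi_i:=\Phi_{T_{k+1-i}}$. The crucial observation is that $\Phi_{T_1},\ldots,\Phi_{T_k}$ commute: for $s\neq t$ the entries of $T_s$ commute with those of $T_t$ (and hence so do their adjoints), so
$$
\Phi_{T_s}(\Phi_{T_t}(Y))=\sum_{i,j}T_{s,i}T_{t,j}\,Y\,T_{t,j}^*T_{s,i}^*=\sum_{i,j}T_{t,j}T_{s,i}\,Y\,T_{s,i}^*T_{t,j}^*=\Phi_{T_t}(\Phi_{T_s}(Y)).
$$
Consequently ${\bf \Delta}_{\psi}:=(id-\psi_1)\circ\cdots\circ(id-\psi_k)$ coincides with ${\bf \Delta_{T}}$, and applying the first part of Lemma \ref{reproducing} to the $\psi_i$ gives, after renaming the dummy summation indices, precisely the asserted Taylor type representation with $\Phi_{T_1}$ outermost and $\Phi_{T_k}$ innermost, the iterated series converging in the weak operator topology.

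For the final assertion, when ${\bf \Delta_{T}}(X)\geq 0$, I would invoke the second part of Lemma \ref{reproducing} for the family $\psi_i$, which gives $X=\sum_{(s_1,\ldots,s_k)\in\ZZ_+^k}\psi_k^{s_k}\cdots\psi_1^{s_1}({\bf \Delta_{T}}(X))$; after re-indexing and using commutativity of the $\Phi_{T_i}$, the composition $\psi_k^{s_k}\cdots\psi_1^{s_1}$ equals $\Phi_{T_1}^{s_1}\circ\cdots\circ\Phi_{T_k}^{s_k}$, yielding the unordered representation. The only point requiring care is the bookkeeping of the index ordering, and this is entirely controlled once the commutativity of the completely positive maps $\Phi_{T_i}$ is in hand; there is no genuine analytic obstacle beyond what Lemma \ref{reproducing} already supplies.
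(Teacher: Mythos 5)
Your proposal is correct and follows exactly the route the paper intends: the paper derives Corollary \ref{Taylor-rep} as an immediate consequence of Lemma \ref{reproducing}, and you supply precisely the details it leaves implicit (positivity and purity of each $\Phi_{T_i}$, their commutativity coming from the polyball structure, and the relabeling $\psi_i:=\Phi_{T_{k+1-i}}$ that reconciles the order of the iterated sums). No gaps.
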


For each ${\bf q}=(q_1,\ldots, q_k)\in \ZZ_+^k$, define  the operator ${\bf N}_{\leq {\bf q}}$  acting on    $F^2(H_{n_1})\otimes\cdots\otimes F^2(H_{n_k})$ by setting
 \begin{equation} \label{NQ}{\bf N}_{\leq {\bf q}}:=
\sum_{{0\leq s_i\leq q_i}\atop{i\in\{1,\ldots, k\}}} \frac{1}{\text{\rm trace\,}\left[P_{s_1}^{(1)}\otimes \cdots \otimes P_{s_k}^{(k)}\right]}\,
P_{s_1}^{(1)}\otimes \cdots \otimes P_{s_k}^{(k)}.
\end{equation}

\begin{lemma}\label{identity}
 Let $Y$ be a bounded operator acting on $\otimes_{i=1}^k F^2(H_{n_i})\otimes \cH$  with $\dim(\cH)<\infty$. Then
 $$
 \frac{\text{\rm trace\,}\left[(P_{q_1}^{(1)}\otimes \cdots \otimes P_{q_k}^{(k)}\otimes I_\cH)Y\right]}{\text{\rm trace\,}\left[P_{q_1}^{(1)}\otimes \cdots \otimes P_{q_k}^{(k)}\right]}
 =\text{\rm trace\,}\left[{\bf \Delta_{S\otimes{\it I}_\cH}}(Y)({\bf N}_{\leq {\bf q}}\otimes I_\cH)\right],
$$
where ${\bf S}$ is the  universal model associated
  with the abstract
  polyball ${\bf B_n}$.
\end{lemma}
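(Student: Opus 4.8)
The plan is to evaluate the right-hand side by transferring the defect map onto ${\bf N}_{\leq {\bf q}}\otimes I_\cH$ through trace duality. Since $\dim\cH<\infty$ and each $P_{s_i}^{(i)}$ has finite rank, the operator ${\bf N}_{\leq {\bf q}}\otimes I_\cH$ has finite rank, so every trace below is the trace of a bounded operator times a finite-rank one and is therefore well defined. For fixed $i$ the map $\Phi_{{\bf S}_i\otimes I_\cH}(Y)=\sum_{j=1}^{n_i}({\bf S}_{i,j}\otimes I)Y({\bf S}_{i,j}^*\otimes I)$ is a finite sum, so cyclicity of the trace gives
$$
\operatorname{trace}[\Phi_{{\bf S}_i\otimes I_\cH}(Y)Z]=\operatorname{trace}[Y\,\Psi_{{\bf S}_i\otimes I_\cH}(Z)],\qquad \Psi_{{\bf S}_i\otimes I_\cH}(Z):=\sum_{j=1}^{n_i}({\bf S}_{i,j}^*\otimes I)Z({\bf S}_{i,j}\otimes I),
$$
for every bounded $Y$ and every finite-rank $Z$. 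Since the maps $id-\Phi_{{\bf S}_i\otimes I_\cH}$ commute, taking adjoints of their composition yields
$$
\operatorname{trace}[{\bf \Delta_{S\otimes I_\cH}}(Y)({\bf N}_{\leq {\bf q}}\otimes I_\cH)]=\operatorname{trace}[Y\,(id-\Psi_{{\bf S}_1\otimes I})\circ\cdots\circ(id-\Psi_{{\bf S}_k\otimes I})({\bf N}_{\leq {\bf q}}\otimes I_\cH)].
$$

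Because ${\bf S}_{i,j}$ acts only on the $i$-th tensor factor and $\operatorname{trace}[P_{s_1}^{(1)}\otimes\cdots\otimes P_{s_k}^{(k)}]=\prod_{i=1}^k n_i^{s_i}$, the operator ${\bf N}_{\leq {\bf q}}$ factors as ${\bf N}_{\leq {\bf q}}=\bigotimes_{i=1}^k N_{q_i}^{(i)}$, where $N_{q_i}^{(i)}:=\sum_{s=0}^{q_i}n_i^{-s}P_s^{(i)}$, and $\Psi_{{\bf S}_i\otimes I}$ acts only on the $i$-th factor, hence respects this factorization. The heart of the matter is the single-variable identity $S_{i,j}^*P_s^{(i)}S_{i,j}=P_{s-1}^{(i)}$ for $s\geq 1$ and $S_{i,j}^*P_0^{(i)}S_{i,j}=0$, which is immediate from $S_{i,j}e_\alpha^i=e_{g_j^i\alpha}^i$ and $S_{i,j}^*e_{g_j^i\alpha}^i=e_\alpha^i$. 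Summing over $j$ gives $\Psi_{S_i}(P_s^{(i)})=n_iP_{s-1}^{(i)}$ for $s\geq 1$ and $\Psi_{S_i}(P_0^{(i)})=0$.

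A telescoping computation then shows $\Psi_{S_i}(N_{q_i}^{(i)})=\sum_{s=1}^{q_i}n_i^{-s}\cdot n_iP_{s-1}^{(i)}=N_{q_i-1}^{(i)}$, so that $(id-\Psi_{S_i})(N_{q_i}^{(i)})=N_{q_i}^{(i)}-N_{q_i-1}^{(i)}=n_i^{-q_i}P_{q_i}^{(i)}$. Taking the tensor product over $i$,
$$
(id-\Psi_{{\bf S}_1\otimes I})\circ\cdots\circ(id-\Psi_{{\bf S}_k\otimes I})({\bf N}_{\leq {\bf q}}\otimes I_\cH)=\frac{1}{\prod_{i=1}^k n_i^{q_i}}\,(P_{q_1}^{(1)}\otimes\cdots\otimes P_{q_k}^{(k)})\otimes I_\cH.
$$
Since $\prod_{i=1}^k n_i^{q_i}=\operatorname{trace}[P_{q_1}^{(1)}\otimes\cdots\otimes P_{q_k}^{(k)}]$, substituting this into the trace identity from the first paragraph and applying cyclicity once more reproduces exactly the left-hand side of the lemma. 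The algebraic identities are essentially forced; I expect the only point demanding care to be the justification of the trace-adjoint manipulation—both the cyclicity step and the interchange of the composition with the passage to adjoints—which is legitimate precisely because $\dim\cH<\infty$ makes ${\bf N}_{\leq {\bf q}}\otimes I_\cH$ finite rank.
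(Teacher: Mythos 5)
Your proof is correct, but it is organized quite differently from the paper's. The paper proves the identity by \emph{reconstructing} $Y$: it invokes Corollary \ref{Taylor-rep} (which rests on Lemma \ref{reproducing} and the pureness of ${\bf S}$) to expand $Y$ as an iterated, WOT-convergent series $\sum_{s_1}\cdots\sum_{s_k}\Phi_{{\bf S}_1\otimes I}^{s_1}\circ\cdots\circ\Phi_{{\bf S}_k\otimes I}^{s_k}\bigl({\bf \Delta}_{{\bf S}\otimes I_\cH}(Y)\bigr)$, multiplies by $P_{\bf q}\otimes I_\cH$ to truncate the series, and then uses cyclicity of the trace together with the summed commutation relation $\sum_{|\alpha_i|=s_i}{\bf S}_{i,\alpha_i}^*\,P_{q_i}^{(i)}\,{\bf S}_{i,\alpha_i}=n_i^{s_i}P_{q_i-s_i}^{(i)}$ to resum everything into ${\bf N}_{\leq{\bf q}}$. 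You instead \emph{deconstruct} the weight operator: you dualize the defect map across the trace onto the finite-rank operator ${\bf N}_{\leq{\bf q}}\otimes I_\cH$, and show by the telescoping computation $(id-\Psi_{S_i})\bigl(N_{q_i}^{(i)}\bigr)=n_i^{-q_i}P_{q_i}^{(i)}$ that the dual defect map collapses ${\bf N}_{\leq{\bf q}}$ to $\operatorname{trace}\bigl[P_{\bf q}\bigr]^{-1}P_{\bf q}$. The two arguments share the same elementary kernel --- your identity $S_{i,j}^*P_s^{(i)}S_{i,j}=P_{s-1}^{(i)}$ is the single-letter form of the paper's commutation relation --- but your route dispenses with the Taylor-type representation and hence with all infinite series and WOT-convergence considerations; everything reduces to finite sums and cyclic permutations of traces against a finite-rank operator, which also makes completely transparent where the hypothesis $\dim\cH<\infty$ is used. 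What the paper's route buys in exchange is reuse of machinery (Lemma \ref{reproducing} and Corollary \ref{Taylor-rep}) that it needs anyway for later results such as Theorem \ref{trace-N}. One small point to make explicit in your write-up: iterating the one-step duality produces the dualized maps in reversed order, $(id-\Psi_{{\bf S}_k\otimes I})\circ\cdots\circ(id-\Psi_{{\bf S}_1\otimes I})$, and the reordering you use is justified because these maps act on distinct tensor factors and therefore commute.
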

\begin{proof}
Let  ${\bf S}:=({\bf S}_1,\ldots, {\bf S}_k)$ with  ${\bf S}_i:=({\bf S}_{i,1},\ldots,{\bf S}_{i,n_i})$.
    It is easy to see that     ${\bf S}:=({\bf S}_1,\ldots, {\bf S}_k)$ is  a pure $k$-tuple  in the
noncommutative polyball $ {\bf B_n}(\otimes_{i=1}^kF^2(H_{n_i}))$.
 Applying Corollary \ref{Taylor-rep} to  ${\bf S}\otimes I_\cH$, we have

$$
Y=\sum_{s_1=0}^\infty \Phi_{{\bf S}_1\otimes I_\cH}^{s_1}\left(\sum_{s_2=0}^\infty \Phi_{{\bf S}_2\otimes I_\cH}^{s_2}\left(\cdots \sum_{s_k=0}^\infty \Phi_{{\bf S}_k\otimes I_\cH}^{s_k} \left({\bf \Delta}_{{\bf S}\otimes I_\cH}(Y)\right)\cdots \right)\right),
$$
where the iterated series converge  in the weak operator topology.
 Denoting $P_{\bf q}:=P_{q_1}^{(1)}\otimes \cdots \otimes P_{q_k}^{(k)}$ for ${\bf q}=(q_1,\ldots, q_k)\in \ZZ_+^k$, the relation above implies

\begin{equation*}
\begin{split}
\left(P_{\bf q}\otimes I_\cH\right) Y
 =
\left(P_{\bf q}\otimes I_\cH\right)
\left(\sum_{s_1=0}^{q_1} \Phi_{{\bf S}_1\otimes I_\cH}^{s_1}\left(\sum_{s_2=0}^{q_2} \Phi_{{\bf S}_2\otimes I_\cH}^{s_2}\left(\cdots \sum_{s_k=0}^{q_k} \Phi_{{\bf S}_k\otimes I_\cH}^{s_k} \left({\bf \Delta}_{{\bf S}\otimes I_\cH}(Y)\right)\cdots \right)\right)\right).
\end{split}
\end{equation*}
Consequently, we have
\begin{equation*}
\begin{split}
&\text{\rm trace\,}\left[\left(P_{\bf q}\otimes I_\cH\right) Y\right] \\
&=
\sum_{s_1=0}^{q_1}\cdots \sum_{s_k=0}^{q_k} \text{\rm trace\,}
\left[\left(P_{\bf q}\otimes I_\cH\right)\Phi_{{\bf S}_1\otimes I_\cH}^{s_1}\circ \cdots \circ \Phi_{{\bf S}_k\otimes I_\cH}^{s_k}({\bf \Delta}_{{\bf S}\otimes I_\cH}(Y))\right]\\
&=
\sum_{s_1=0}^{q_1}\cdots \sum_{s_k=0}^{q_k} \text{\rm trace\,}
\left[\left(P_{\bf q}\otimes I_\cH\right)\sum_{{\alpha_1\in \FF_{n_1}^+, \cdots
\alpha_k\in \FF_{n_k}^+} \atop{|\alpha_1|=s_1\cdots,  |\alpha_k|=s_k}}
({\bf S}_{1,\alpha_1}\cdots {\bf S}_{k,\alpha_k}\otimes I_\cH){\bf \Delta}_{{\bf S}\otimes I_\cH}(Y)
({\bf S}_{k,\alpha_k}^*\cdots {\bf S}_{1,\alpha_1}^*\otimes I_\cH)
\right]\\
&=
\sum_{s_1=0}^{q_1}\cdots \sum_{s_k=0}^{q_k} \text{\rm trace\,}
\left[{\bf \Delta}_{{\bf S}\otimes I_\cH}(Y)\sum_{{\alpha_1\in \FF_{n_1}^+, \cdots
\alpha_k\in \FF_{n_k}^+} \atop{|\alpha_1|=s_1\cdots,  |\alpha_k|=s_k}}
({\bf S}_{k,\alpha_k}^*\cdots {\bf S}_{1,\alpha_1}^* P_{\bf q} {\bf S}_{1,\alpha_1}\cdots {\bf S}_{k,\alpha_k})\otimes I_\cH
\right].
\end{split}
\end{equation*}
Note that, for each $i\in \{1,\ldots, k\}$ and $\alpha_i\in \FF_{n_i}^+$ with $|\alpha_i|=s_i\leq q_i$, we have
$$
{\bf S}_{i,\alpha_i}^*(I\otimes \cdots \otimes P_{q_i}^{(i)}\otimes\cdots \otimes I)=(I\otimes \cdots \otimes P_{q_i-s_i}^{(i)}\otimes\cdots\otimes I){\bf S}_{i,\alpha_i}^*.
$$
Consequently,
\begin{equation*}
\begin{split}
\sum_{{\alpha_1\in \FF_{n_1}^+, \cdots
\alpha_k\in \FF_{n_k}^+} \atop{|\alpha_1|=s_1\cdots,  |\alpha_k|=s_k}}
&{\bf S}_{k,\alpha_k}^*\cdots {\bf S}_{1,\alpha_1}^* P_{\bf q} {\bf S}_{1,\alpha_1}\cdots {\bf S}_{k,\alpha_k}\\
&=
P_{q_1-s_1}^{(1)}\otimes \cdots \otimes P_{q_k-s_k}^{(k)}\sum_{{\alpha_1\in \FF_{n_1}^+, \cdots
\alpha_k\in \FF_{n_k}^+} \atop{|\alpha_1|=s_1\cdots,  |\alpha_k|=s_k}}
{\bf S}_{k,\alpha_k}^*\cdots {\bf S}_{1,\alpha_1}^*  {\bf S}_{1,\alpha_1}\cdots {\bf S}_{k,\alpha_k}\\
&=
n_1^{s_1}\cdots n_k^{s_k} P_{q_1-s_1}^{(1)}\otimes \cdots \otimes P_{q_k-s_k}^{(k)}.
\end{split}
\end{equation*}
Using the relations above, we obtain
\begin{equation*}
\begin{split}
\text{\rm trace\,}\left[\left(P_{\bf q}\otimes I_\cH\right) Y\right]&\\
&=
\sum_{s_1=0}^{q_1}\cdots \sum_{s_k=0}^{q_k} \text{\rm trace\,}
\left[{\bf \Delta}_{{\bf S}\otimes I_\cH}(Y)n_1^{s_1}\cdots n_k^{s_k} \left(P_{q_1-s_1}^{(1)}\otimes \cdots \otimes P_{q_k-s_k}^{(k)}\otimes I_\cH\right)\right]\\
&=
\text{\rm trace\,}
\left[{\bf \Delta}_{{\bf S}\otimes I_\cH}(Y)\sum_{s_1=0}^{q_1}\cdots \sum_{s_k=0}^{q_k}
n_1^{s_1}\cdots n_k^{s_k} \left(P_{q_1-s_1}^{(1)}\otimes \cdots \otimes P_{q_k-s_k}^{(k)}\otimes I_\cH
\right)\right]\\
&=
\text{\rm trace\,}
\left\{{\bf \Delta}_{{\bf S}\otimes I_\cH}(Y)\left[\left(\sum_{s_1=0}^{q_1} n_1^{s_1}P_{q_1-s_1}^{(1)}\right)\otimes \cdots\otimes \left(\sum_{s_k=0}^{q_k} n_k^{s_k}P_{q_k-s_k}^{(k)}\right)\otimes I_\cH\right]\right\}.
\end{split}
\end{equation*}
Hence, we deduce that
\begin{equation*}
\begin{split}
\frac{\text{\rm trace\,}\left[\left(P_{\bf q}\otimes I_\cH\right) Y\right]}
{\text{\rm trace\,}\left[P_{\bf q}\right]}&=
\text{\rm trace\,}
\left\{{\bf \Delta}_{{\bf S}\otimes I_\cH}(Y)\left[\left(\sum_{s_1=0}^{q_1} \frac{1}{n_1^{s_1}}P_{s_1}^{(1)}\right)\otimes \cdots\otimes \left(\sum_{s_k=0}^{q_k} \frac{1}{n_k^{s_k}}P_{s_k}^{(k)}\right)\otimes I_\cH\right]\right\}\\
&=\text{\rm trace\,}\left[{\bf \Delta}_{{\bf S}\otimes I_\cH}(Y)
\sum_{{0\leq s_i\leq q_i}\atop{i\in\{1,\ldots, k\}}} \frac{1}{\text{\rm trace\,}\left[P_{\bf s}\right]}\,
P_{\bf s}\otimes I_\cH\right]=
\text{\rm trace\,}\left[{\bf \Delta}_{{\bf S}\otimes I_\cH}(Y)
({\bf N}_{\leq \bf q}\otimes I_\cH)\right].
\end{split}
\end{equation*}
This completes the proof.
\end{proof}

In spite of the fact that  the  operator ${\bf \Delta_{S\otimes {\it I}_\cH}}({\bf K_{T}}{\bf K_{T}^*})$ is not  positive  in general, using Theorem \ref{curva1} and Lemma \ref{identity}, in the particular case when $Y={\bf K_T} {\bf K_T^*}$,  we obtain the following result.

\begin{corollary} If  \, ${\bf T}$ is  in the regular polyball  $ {\bf B_n}(\cH)$ and has finite rank, then
\begin{equation*}
\text{\rm curv}({\bf T})=\lim_{{\bf q}\in \ZZ_+^k} \text{\rm trace\,}\left[{\bf \Delta_{S\otimes {\it I}_\cH}}({\bf K_{T}}{\bf K_{T}^*})({\bf N}_{\leq \bf q}\otimes I_\cH)\right],
\end{equation*}
where ${\bf K_T}$ is the Berezin kernel of ${\bf T}$.
\end{corollary}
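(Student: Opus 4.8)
The plan is to obtain the corollary as a direct specialization of Lemma \ref{identity} to the operator $Y:={\bf K_{T}}{\bf K_{T}^*}$, matched against the first asymptotic formula of Theorem \ref{curva1}. First I would verify that this $Y$ fits the hypotheses of Lemma \ref{identity}. By assumption ${\bf T}$ has finite rank, so the defect space $\cK:=\overline{{\bf \Delta_{T}}(I)(\cH)}$ has $\dim\cK=\rank[{\bf \Delta_{T}}(I)]<\infty$. Since the Berezin kernel maps $\cH$ into $\otimes_{i=1}^k F^2(H_{n_i})\otimes\cK$, the positive operator $Y={\bf K_{T}}{\bf K_{T}^*}$ acts on a space of exactly the form $\otimes_{i=1}^k F^2(H_{n_i})\otimes\cK$ with $\dim\cK<\infty$, so Lemma \ref{identity} applies with its auxiliary space taken to be $\cK$ (this is what the symbol $I_\cH$ in the statement abbreviates).

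Next I would rewrite the numerator in Theorem \ref{curva1}. Writing $P_{\bf q}:=P_{q_1}^{(1)}\otimes\cdots\otimes P_{q_k}^{(k)}$, each $P_{q_i}^{(i)}$ has finite rank on $F^2(H_{n_i})$, so $P_{\bf q}\otimes I_\cK$ is a finite rank operator; hence cyclicity of the trace gives
\[
\text{\rm trace\,}\left[{\bf K_{T}^*}(P_{\bf q}\otimes I_\cK){\bf K_{T}}\right]
=\text{\rm trace\,}\left[(P_{\bf q}\otimes I_\cK)\,{\bf K_{T}}{\bf K_{T}^*}\right]
=\text{\rm trace\,}\left[(P_{\bf q}\otimes I_\cK)\,Y\right].
\]
Relation \eqref{connection} makes the finiteness transparent, since the left side equals $\text{\rm trace\,}[\Phi_{T_1}^{q_1}\circ\cdots\circ\Phi_{T_k}^{q_k}({\bf \Delta_{T}}(I))]$. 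Dividing by $\text{\rm trace\,}[P_{\bf q}]$ and applying Lemma \ref{identity} with this $Y$ then yields, for each ${\bf q}\in\ZZ_+^k$,
\[
\frac{\text{\rm trace\,}\left[{\bf K_{T}^*}(P_{\bf q}\otimes I_\cK){\bf K_{T}}\right]}{\text{\rm trace\,}[P_{\bf q}]}
=\text{\rm trace\,}\left[{\bf \Delta_{S\otimes {\it I}_\cH}}({\bf K_{T}}{\bf K_{T}^*})({\bf N}_{\leq{\bf q}}\otimes I_\cK)\right].
\]
Finally I would pass to the limit over the directed set $\ZZ_+^k$: the first asymptotic formula in Theorem \ref{curva1} identifies the limit of the left-hand side with $\text{\rm curv}({\bf T})$, and since the two nets agree term by term, the net on the right converges as well and to the same value, which is precisely the claimed identity.

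I do not expect a genuine obstacle, since the statement is a formal consequence of the two cited results; the only delicate point is the finite-rank hypothesis, which I would stress is used twice: once to ensure $\dim\cK<\infty$ so that Lemma \ref{identity} is available, and once to legitimize the cyclic trace manipulation. I would also note, as the surrounding text does, that ${\bf \Delta_{S\otimes {\it I}_\cH}}({\bf K_{T}}{\bf K_{T}^*})$ need not itself be positive, so the convergence here is that of the trace pairing with the net $\{{\bf N}_{\leq{\bf q}}\otimes I_\cK\}$ rather than a monotone convergence of a positive net.
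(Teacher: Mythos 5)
Your proposal is correct and coincides with the paper's own argument, which obtains the corollary precisely by applying Lemma \ref{identity} with $Y={\bf K_T}{\bf K_T^*}$ and matching it against the first asymptotic formula of Theorem \ref{curva1}. The details you supply (finite rank of ${\bf \Delta_T}(I)$ giving a finite-dimensional defect space so the lemma applies, and the cyclic-trace step moving ${\bf K_T}$ across the trace) are exactly the implicit steps the paper leaves to the reader.
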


Define the bounded linear operator ${\bf N}$  acting on $F^2(H_{n_1})\otimes\cdots\otimes F^2(H_{n_k})$ by setting
$${\bf N}:=\sum_{{\bf q}=(q_1,\ldots, q_k)\in \ZZ_+^k} \frac{1}{\text{\rm trace\,}\left[P_{q_1}^{(1)}\otimes \cdots \otimes P_{q_k}^{(k)}\right]}\,
P_{q_1}^{(1)}\otimes \cdots \otimes P_{q_k}^{(k)}
$$
and note that ${\bf N}$ is not a trace class operator.
\begin{theorem}\label{trace-N} Let $Y$ be a bounded operator acting on $\otimes_{i=1}^k F^2(H_{n_i})\otimes \cH$ such that $\dim \cH<\infty$ and
$${\bf \Delta_{S\otimes {\it I}}}(Y)  :=(id-\Phi_{{\bf S}_1\otimes I})\circ\cdots\circ (id-\Phi_{{\bf S}_k\otimes I})(Y)\geq 0.
$$
Then  ${\bf \Delta_{S\otimes {\it I}}}(Y)({\bf N}\otimes I_\cH)$ is a trace class operator and
$$
\text{\rm trace\,}\left[{\bf \Delta_{S\otimes {\it I}_\cH}}(Y)({\bf N}\otimes {\it I}_\cH)\right]
=
 \lim_{q_1\to\infty}\cdots\lim_{q_k\to\infty}
 \frac{\text{\rm trace\,}\left[(P_{q_1}^{(1)}\otimes \cdots \otimes P_{q_k}^{(k)}\otimes I_\cH)Y\right]}{\text{\rm trace\,}\left[P_{q_1}^{(1)}\otimes \cdots \otimes P_{q_k}^{(k)}\right]}.
 $$
\end{theorem}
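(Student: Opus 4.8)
The plan is to reduce everything to Lemma \ref{identity} and then pass to the limit ${\bf q}\to\infty$ by combining monotonicity with the normality of the trace. Write $A:={\bf \Delta_{S\otimes {\it I}}}(Y)$, a positive bounded operator by hypothesis, and recall that the truncations ${\bf N}_{\leq{\bf q}}$ are finite rank positive operators which increase to ${\bf N}$ in the strong operator topology as ${\bf q}\to\infty$ in $\ZZ_+^k$ (each being a finite partial sum, with nonnegative coefficients, of the series defining ${\bf N}$). Since $\dim\cH<\infty$, the operator ${\bf N}_{\leq{\bf q}}\otimes I_\cH$ is finite rank, so $A({\bf N}_{\leq{\bf q}}\otimes I_\cH)$ is trace class and, setting $P_{\bf q}:=P_{q_1}^{(1)}\otimes\cdots\otimes P_{q_k}^{(k)}$, Lemma \ref{identity} gives
\begin{equation*}
a_{\bf q}:=\text{\rm trace\,}\left[A({\bf N}_{\leq{\bf q}}\otimes I_\cH)\right]=\frac{\text{\rm trace\,}\left[(P_{\bf q}\otimes I_\cH)Y\right]}{\text{\rm trace\,}\left[P_{\bf q}\right]}.
\end{equation*}

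Next I would establish two facts about the net $\{a_{\bf q}\}$. First, it is nondecreasing in ${\bf q}$: for ${\bf q}\leq{\bf q}'$ we have $0\leq{\bf N}_{\leq{\bf q}}\leq{\bf N}_{\leq{\bf q}'}$, so writing $a_{\bf q}=\text{\rm trace\,}[A^{1/2}({\bf N}_{\leq{\bf q}}\otimes I_\cH)A^{1/2}]$ (valid by cyclicity since the middle factor is finite rank) and using $A\geq 0$ shows $a_{{\bf q}'}-a_{\bf q}=\text{\rm trace\,}[A^{1/2}(({\bf N}_{\leq{\bf q}'}-{\bf N}_{\leq{\bf q}})\otimes I_\cH)A^{1/2}]\geq 0$. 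Second, the net is uniformly bounded: since $P_{\bf q}\otimes I_\cH$ is a projection of rank $\text{\rm trace\,}[P_{\bf q}]\cdot\dim\cH$, one has
\begin{equation*}
|a_{\bf q}|\leq\frac{\|(P_{\bf q}\otimes I_\cH)Y\|_1}{\text{\rm trace\,}[P_{\bf q}]}\leq\frac{\|P_{\bf q}\otimes I_\cH\|_1\,\|Y\|}{\text{\rm trace\,}[P_{\bf q}]}=(\dim\cH)\|Y\|.
\end{equation*}

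Finally, I would invoke the normality of the trace. The operators $A^{1/2}({\bf N}_{\leq{\bf q}}\otimes I_\cH)A^{1/2}$ form an increasing net of positive trace class operators converging in the weak operator topology to $A^{1/2}({\bf N}\otimes I_\cH)A^{1/2}$, so by the monotone convergence theorem for the trace their traces converge to $\text{\rm trace\,}[A^{1/2}({\bf N}\otimes I_\cH)A^{1/2}]=\sup_{\bf q}a_{\bf q}$, which is finite by the uniform bound of the preceding step. Hence $A^{1/2}({\bf N}\otimes I_\cH)A^{1/2}$ is trace class; equivalently $A^{1/2}({\bf N}^{1/2}\otimes I_\cH)$ is Hilbert--Schmidt, and this is what lets one regard ${\bf \Delta_{S\otimes {\it I}}}(Y)({\bf N}\otimes I_\cH)$ as trace class with trace equal to $\sup_{\bf q}a_{\bf q}$. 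Because $\{a_{\bf q}\}$ is monotone in each coordinate, this supremum equals the iterated limit $\lim_{q_1\to\infty}\cdots\lim_{q_k\to\infty}a_{\bf q}$, which is the asserted formula. The main obstacle is exactly this interchange of the limit ${\bf q}\to\infty$ with the trace: one cannot pass the limit inside a naive product, and the argument rests on the finite rank approximation ${\bf N}_{\leq{\bf q}}\uparrow{\bf N}$, the positivity of $A$, the a priori bound $(\dim\cH)\|Y\|$, and normality of the trace. A related point requiring care is that the product ${\bf \Delta_{S\otimes {\it I}}}(Y)({\bf N}\otimes I_\cH)$ is handled through its positive symmetrization $A^{1/2}({\bf N}\otimes I_\cH)A^{1/2}$, whose trace is unambiguous and supplies the value in the statement.
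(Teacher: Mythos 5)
Your proof follows essentially the same route as the paper's: both reduce the quotient to Lemma \ref{identity}, observe that the resulting net $a_{\bf q}=\text{\rm trace\,}\left[{\bf \Delta_{S\otimes {\it I}_\cH}}(Y)({\bf N}_{\leq {\bf q}}\otimes I_\cH)\right]$ is bounded by $\|Y\|\dim\cH$ and increasing (using ${\bf \Delta_{S\otimes {\it I}_\cH}}(Y)\geq 0$ and ${\bf N}_{\leq {\bf q}}\uparrow {\bf N}$), and then invoke normality of the trace to identify the supremum with the trace of the limit operator and with the iterated limit. Your explicit passage through the symmetrization $A^{1/2}({\bf N}\otimes I_\cH)A^{1/2}$ only makes precise a step the paper leaves implicit, so the two arguments coincide in substance.
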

\begin{proof}
First, note that, under the given conditions, $Y$ should be  a positive operator and
$$
 0\leq \frac{\text{\rm trace\,}\left[(P_{q_1}^{(1)}\otimes \cdots \otimes P_{q_k}^{(k)}\otimes I_\cH)Y\right]}{\text{\rm trace\,}\left[P_{q_1}^{(1)}\otimes \cdots \otimes P_{q_k}^{(k)}\right]}
 \leq \|Y\|\dim \cH
$$
for any $q_i\geq 0$ and  $i\in \{1,\ldots, k\}$. Consequently, due to Lemma \ref{identity}, we have
$$
0\leq \text{\rm trace\,}\left[{\bf \Delta_{S\otimes {\it I}_\cH}}(Y)({\bf N}_{\leq {\bf q}}\otimes I_\cH)\right]\leq \|Y\|\dim \cH.
$$
Since $\{{\bf N}_{\leq {\bf q}}\}_{{\bf q}\in \ZZ_+^k}$ is an increasing multi-sequence
of positive operators convergent to ${\bf N}$ and  we have ${\bf \Delta_{S\otimes {\it I}_\cH}}(Y)\geq 0$,
we deduce that
\begin{equation*}
\begin{split}
\text{\rm trace\,}\left[{\bf \Delta_{S\otimes {\it  I}_\cH}}(Y)({\bf N}\otimes I_\cH)\right]&=\lim_{{\bf q}\in \ZZ_+^k}\text{\rm trace\,}\left[{\bf \Delta_{S\otimes {\it I}_\cH}}(Y)({\bf N}_{\leq {\bf q}}\otimes I_\cH)\right]=\sup_{{\bf q}\in \ZZ_+^k}\text{\rm trace\,}\left[{\bf \Delta_{S\otimes {\it I}_\cH}}(Y)({\bf N}_{\leq {\bf q}}\otimes I_\cH)\right].
\end{split}
\end{equation*}
Therefore, ${\bf \Delta_{S\otimes {\it I}_\cH}}(Y)({\bf N}\otimes I_\cH)$ is a trace class operator. Using again   Lemma \ref{identity}, we can complete the proof.
\end{proof}

Let ${\bf S}:=({\bf S}_1,\ldots, {\bf S}_k)$ be the universal model
 associated to the abstract  noncommutative polyball ${\bf B_n}$.
  An operator $M:\otimes_{i=1}^k F^2(H_{n_i})\otimes \cH\to
\otimes_{i=1}^k F^2(H_{n_i})\otimes \cK$  is called {\it  multi-analytic}  with respect to ${\bf S}$
 if $M({\bf S}_{i,j}\otimes I_\cH)=({\bf S}_{i,j}\otimes I_\cK)M$
for any $i\in \{1,\ldots,k\}$ and  $j\in \{1,\ldots, n_i\}$. In case $M$ is a partial isometry, we call
it {\it inner multi-analytic} operator.
We introduce the {\it curvature operator} ${\bf \Delta_{S\otimes {\it I}_\cH}}(\bf {K_T K_T^*})({\bf N}\otimes {\it I}_\cH)$ associated with each element ${\bf T}$ in the polyball ${\bf B_n}(\cH)$.

An element ${\bf T} \in  {\bf B_n}(\cH)$    is said to have
 {\it characteristic function} if there is a multi-analytic operator ${\bf  \Theta_T}:\otimes_{i=1}^k F^2(H_{n_i})\otimes \cE\to \otimes_{i=1}^k F^2(H_{n_i})\otimes \overline{{\bf \Delta_T}(I)(\cH)}$ with respect the universal model ${\bf S}$   such that $\bf { K_T  K_T^*} +{\bf \Theta_T}{\bf  \Theta_T^*}={\it I}$. The characteristic function is essentially unique if we restrict it to its support.
 We proved in \cite{Po-Berezin-poly} that   ${\bf T}$ has characteristic function if and only if
${\bf \Delta}_{{\bf S}\otimes I}(I-\bf { K_T  K_T^*})\geq 0.
$

\begin{theorem} \label{index} If  ${\bf T} \in  {\bf B_n}(\cH)$   has finite rank and characteristic function, then the curvature operator of ${\bf T}$  is trace class and \begin{equation*}\begin{split}
\text{\rm curv}({\bf T})&=\text{\rm trace\,}\left[{\bf \Delta_{S\otimes {\it I}_\cH}}(\bf {K_T K_T^*})({\bf N}\otimes {\it I}_\cH)\right]=\rank {\bf \Delta_{T}}(I)-\text{\rm trace\,}\left[\Theta_{\bf T}({\bf P}_\CC\otimes I)\Theta_{\bf T}^* ({\bf N}\otimes I_\cH)\right],
\end{split}
\end{equation*}
where $\Theta_{\bf T}$ is the characteristic function of ${\bf T}$.
\end{theorem}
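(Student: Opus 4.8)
The plan is to start from the asymptotic description of the curvature that is already available for finite rank elements and then split the (generally non-positive) operator ${\bf \Delta_{S\otimes {\it I}}}({\bf K_T K_T^*})$ into a finite-rank piece and a positive piece coming from the characteristic function, the latter being exactly the kind of operator to which Theorem \ref{trace-N} applies. Write $\cD:=\overline{{\bf \Delta_T}(I)(\cH)}$; the finite rank hypothesis gives $\dim\cD=\rank{\bf \Delta_T}(I)<\infty$, so that ${\bf K_T K_T^*}$ and $\Theta_{\bf T}\Theta_{\bf T}^*$ act on $\otimes_{i=1}^k F^2(H_{n_i})\otimes\cD$ with $\dim\cD<\infty$, which is precisely the setting required by Lemma \ref{identity} and Theorem \ref{trace-N}. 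First I would combine the first asymptotic formula of Theorem \ref{curva1} with Lemma \ref{identity} applied to $Y={\bf K_T K_T^*}$ (using cyclicity of the trace) to record
$$\text{\rm curv}({\bf T})=\lim_{{\bf q}\in\ZZ_+^k}\text{\rm trace\,}\left[{\bf \Delta_{S\otimes {\it I}}}({\bf K_T K_T^*})({\bf N}_{\leq {\bf q}}\otimes I_\cD)\right].$$

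Next I would use the characteristic function to decompose the defect. From ${\bf K_T K_T^*}+\Theta_{\bf T}\Theta_{\bf T}^*=I$ and the linearity of ${\bf \Delta_{S\otimes {\it I}}}$,
$${\bf \Delta_{S\otimes {\it I}}}({\bf K_T K_T^*})={\bf \Delta_{S\otimes {\it I}}}(I)-{\bf \Delta_{S\otimes {\it I}}}(\Theta_{\bf T}\Theta_{\bf T}^*),$$
where ${\bf \Delta_{S\otimes {\it I}}}(I)=[(id-\Phi_{{\bf S}_1})\circ\cdots\circ(id-\Phi_{{\bf S}_k})(I)]\otimes I_\cD={\bf P}_\CC\otimes I_\cD$ is finite rank. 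The crux is the second term. Since $\Theta_{\bf T}$ is multi-analytic, $({\bf S}_{i,j}\otimes I_\cD)\Theta_{\bf T}=\Theta_{\bf T}({\bf S}_{i,j}\otimes I_\cE)$, whence $\Phi_{{\bf S}_i\otimes I_\cD}(\Theta_{\bf T}X\Theta_{\bf T}^*)=\Theta_{\bf T}\Phi_{{\bf S}_i\otimes I_\cE}(X)\Theta_{\bf T}^*$ for every $X$ on $\otimes_{i=1}^k F^2(H_{n_i})\otimes\cE$. Thus conjugation by $\Theta_{\bf T}$ commutes through each factor $id-\Phi_{{\bf S}_i\otimes {\it I}}$, and iterating over $i=1,\ldots,k$ starting from $X=I$ gives
$${\bf \Delta_{S\otimes {\it I}}}(\Theta_{\bf T}\Theta_{\bf T}^*)=\Theta_{\bf T}\,{\bf \Delta_{S\otimes {\it I}_\cE}}(I)\,\Theta_{\bf T}^*=\Theta_{\bf T}({\bf P}_\CC\otimes I_\cE)\Theta_{\bf T}^*\geq 0.$$

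Finally I would apply Theorem \ref{trace-N} to $Y=\Theta_{\bf T}\Theta_{\bf T}^*$, whose defect is positive by the display above, to conclude that $\Theta_{\bf T}({\bf P}_\CC\otimes I_\cE)\Theta_{\bf T}^*({\bf N}\otimes I_\cD)$ is trace class. Since $({\bf P}_\CC\otimes I_\cD)({\bf N}\otimes I_\cD)={\bf P}_\CC\otimes I_\cD$ is finite rank (because $P_{\bf s}{\bf P}_\CC=0$ for ${\bf s}\neq{\bf 0}$ and $\text{\rm trace\,}[P_0^{(i)}]=1$ force ${\bf P}_\CC{\bf N}={\bf P}_\CC$), the curvature operator ${\bf \Delta_{S\otimes {\it I}}}({\bf K_T K_T^*})({\bf N}\otimes I_\cD)$ is then a difference of a finite-rank and a trace-class operator, hence trace class. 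To get the numerical equalities I substitute the decomposition into the limit identity above: the term ${\bf P}_\CC{\bf N}_{\leq{\bf q}}={\bf P}_\CC$ contributes the constant $\text{\rm trace\,}[{\bf P}_\CC\otimes I_\cD]=\dim\cD=\rank{\bf \Delta_T}(I)$, while the positive term increases with ${\bf q}$ and, by monotone convergence together with Theorem \ref{trace-N}, tends to $\text{\rm trace\,}[\Theta_{\bf T}({\bf P}_\CC\otimes I)\Theta_{\bf T}^*({\bf N}\otimes I_\cH)]$; passing to the limit yields both displayed equalities simultaneously. The main obstacle is exactly the intertwining identity for $\Phi_{{\bf S}_i\otimes {\it I}}$ under conjugation by the multi-analytic $\Theta_{\bf T}$, which produces the positivity of ${\bf \Delta_{S\otimes {\it I}}}(\Theta_{\bf T}\Theta_{\bf T}^*)$; this is what repairs the failure of ${\bf \Delta_{S\otimes {\it I}}}({\bf K_T K_T^*})$ to be positive and makes Theorem \ref{trace-N} applicable.
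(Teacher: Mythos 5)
Your proposal is correct and follows essentially the same route as the paper's proof: both rest on Theorem \ref{curva1} together with Lemma \ref{identity}/Theorem \ref{trace-N}, the identity ${\bf K_T K_T^*}+\Theta_{\bf T}\Theta_{\bf T}^*=I$, and the fact that multi-analyticity of $\Theta_{\bf T}$ gives ${\bf \Delta_{S\otimes {\it I}}}(\Theta_{\bf T}\Theta_{\bf T}^*)=\Theta_{\bf T}({\bf P}_\CC\otimes I)\Theta_{\bf T}^*\geq 0$, which is exactly what makes Theorem \ref{trace-N} applicable. You simply make explicit the intertwining computation, the relation ${\bf P}_\CC{\bf N}={\bf P}_\CC$, and the finite-rank-plus-trace-class splitting that the paper's terse chain of equalities leaves implicit.
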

\begin{proof}
 Applying   Theorem \ref{curva1} and Theorem \ref{trace-N}, we obtain
 \begin{equation*}\begin{split}
\text{\rm curv}({\bf T})&=\lim_{(q_1,\ldots, q_k)\in \ZZ_+^k}  \frac{\text{\rm trace\,}\left[ (P_{q_1}^{(1)}\otimes \cdots \otimes P_{q_k}^{(k)}\otimes I_\cH){\bf K_{T}} {\bf K_{T}^*}\right]}{\text{\rm trace\,}\left[P_{q_1}^{(1)}\otimes \cdots \otimes P_{q_k}^{(k)}\right]}\\
&=\rank {\bf \Delta_{T}}(I)-\text{\rm trace\,}\left[{\bf \Delta_{S\otimes {\it I}}}(\bf {\Theta_T \Theta_T^*})({\bf N}\otimes {\it  I}_\cH)\right]=\text{\rm trace\,}\left[{\bf \Delta_{S\otimes {\it I}}}(\bf {K_T K_T^*})({\bf N}\otimes {\it I}_\cH)\right]\\
&=\rank {\bf \Delta_{T}}(I)-\text{\rm trace\,}\left[{\bf \Theta}_{\bf T}({\bf P}_\CC\otimes I)\Theta_{\bf T}^* ({\bf N}\otimes I_\cH)\right].
\end{split}
\end{equation*}
The latter equality is due to the fact that ${\bf \Theta_T}$ is a multi-analytic operator and ${\bf \Delta_{S\otimes {\it I}}}(I)={\bf P}_\CC\otimes I$.
The proof is complete.
\end{proof}

 We remark that   ${\bf T}$ is pure,  i.e. $\lim_{q_i\to \infty}\Phi_{T_i}^{q_i}(I)=0$ in the weak operator topology for each $i\in\{1,\ldots,k\}$,
  if and only if
$ {\bf K_{T}}$ is an isometry. We also mention  that the set of all pure elements of the polyball ${\bf B_n}(\cH)$ coincides with the pure elements
${\bf X}=(X_1,\ldots, X_k)\in B(\cH)^{n_1}\times_c\cdots \times_c B(\cH)^{n_k}$ such that ${\bf \Delta_{X}}(I)\geq 0$. In what follows, we show that the curvature invariant can be used to detect the elements
${\bf T}\in B(\cH)^{n_1}\times \cdots\times B(\cH)^{n_k}$ which are unitarily equivalent  to ${\bf S}\otimes I_\cK$ for some finite dimensional  Hilbert space  $\cK$.

\begin{theorem}\label{comp-inv} If ${\bf T}=({ T}_1,\ldots, { T}_k)\in B(\cH)^{n_1}\times \cdots\times B(\cH)^{n_k}$, then the following statements are equivalent:
\begin{enumerate}
\item[(i)] ${\bf T}$ is unitarily equivalent  to ${\bf S}\otimes I_\cK$ for some finite dimensional Hilbert space $\cK$;
    \item[(ii)] ${\bf T}$  is a pure  finite rank element in the regular polyball  ${\bf B_n}(\cH)$ such that  ${\bf \Delta_{S\otimes {\it I}}}(I-{\bf K_{T}}{\bf K_{T}^*})\geq 0$ and
    $$\text{\rm curv}({\bf T})=\rank [{\bf \Delta_{T}}(I)].
    $$
\end{enumerate}
In this case, the Berezin kernel ${\bf K_T}$ is a unitary operator and
$${\bf T}_{i,j}={\bf K_T^*} ({\bf S}_{i,j}\otimes I_{\overline{{\bf \Delta_{T}}(I) (\cH)}}){\bf K_T}, \qquad i\in\{1,\ldots,k\},  j\in\{1,\ldots, n_i\}.
$$
\end{theorem}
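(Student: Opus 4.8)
The plan is to prove the stated equivalence by treating the two implications separately, with $(ii)\Rightarrow(i)$ carrying essentially all of the content, and then to read off the final model formula from the unitarity of the Berezin kernel. Throughout I would lean on the criterion from \cite{Po-Berezin-poly} (quoted just before Theorem \ref{index}) that ${\bf T}$ has a characteristic function if and only if ${\bf \Delta}_{{\bf S}\otimes I}(I-{\bf K_{T}}{\bf K_{T}^*})\geq 0$, together with the factorization ${\bf K_T}{\bf K_T^*}+{\bf \Theta_T}{\bf \Theta_T^*}=I$.

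For $(i)\Rightarrow(ii)$, I would start from ${\bf T}$ unitarily equivalent to ${\bf S}\otimes I_\cK$ with $\dim\cK<\infty$ and simply verify each clause. Purity of ${\bf S}$ (noted just after Theorem \ref{curva1}) passes to ${\bf S}\otimes I_\cK$, and since $(id-\Phi_{{\bf S}_1})\circ\cdots\circ(id-\Phi_{{\bf S}_k})(I)={\bf P}_\CC$, the defect of ${\bf S}\otimes I_\cK$ equals ${\bf P}_\CC\otimes I_\cK$, whose rank is $\dim\cK<\infty$. The Berezin kernel of the universal model is the identity after the natural identification of $\overline{{\bf \Delta}_{{\bf S}\otimes I}(I)(\cdot)}$ with $\CC 1\otimes\cK$ (a standard consequence of the model theory in \cite{Po-Berezin-poly}), so ${\bf K_T}{\bf K_T^*}=I$ and hence ${\bf \Delta}_{{\bf S}\otimes I}(I-{\bf K_T}{\bf K_T^*})={\bf \Delta}_{{\bf S}\otimes I}(0)=0\geq 0$. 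Finally ${\bf K_T}{\bf K_T^*}=I$ forces ${\bf \Theta_T}=0$, so Theorem \ref{index} gives $\text{\rm curv}({\bf T})=\rank{\bf \Delta_{T}}(I)$.

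The substance is in $(ii)\Rightarrow(i)$. Since ${\bf T}$ is pure, ${\bf K_T}$ is an isometry (recorded before this theorem), and by the intertwining identity ${\bf K_T}T^*_{i,j}=({\bf S}_{i,j}^*\otimes I){\bf K_T}$ the tuple ${\bf T}$ is already unitarily equivalent to the compression of ${\bf S}\otimes I$ to $\ran {\bf K_T}$; it therefore suffices to show that ${\bf K_T}$ is surjective, i.e. ${\bf K_T}{\bf K_T^*}=I$. By the factorization this is equivalent to ${\bf \Theta_T}=0$. Applying Theorem \ref{index} together with the hypothesis $\text{\rm curv}({\bf T})=\rank{\bf \Delta_{T}}(I)$, I obtain $\text{\rm trace\,}[{\bf \Theta_T}({\bf P}_\CC\otimes I){\bf \Theta_T^*}({\bf N}\otimes I_\cH)]=0$, where the trace-class nature of the operator involved is guaranteed by Theorem \ref{index} (ultimately by Theorem \ref{trace-N}). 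The key step is to turn this vanishing trace into ${\bf \Theta_T}=0$. Writing $C:={\bf \Theta_T}({\bf P}_\CC\otimes I)$ and using that ${\bf P}_\CC\otimes I$ is a projection, one has ${\bf \Theta_T}({\bf P}_\CC\otimes I){\bf \Theta_T^*}=CC^*$, so the quantity above equals $\|({\bf N}\otimes I)^{1/2}C\|$ in Hilbert--Schmidt norm squared; since ${\bf N}$ is diagonal with strictly positive eigenvalues $1/(n_1^{q_1}\cdots n_k^{q_k})$ and hence injective, I conclude $({\bf N}\otimes I)^{1/2}C=0$ and then $C=0$, that is, ${\bf \Theta_T}({\bf P}_\CC\otimes I)=0$.

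It then remains to propagate this to ${\bf \Theta_T}=0$ on all of $\otimes_{i=1}^k F^2(H_{n_i})\otimes\cE$. Here I would use that ${\bf \Theta_T}$ is multi-analytic, so ${\bf \Theta_T}({\bf S}_{i,j}\otimes I)=({\bf S}_{i,j}\otimes I){\bf \Theta_T}$; since the vectors $({\bf S}_{1,\alpha_1}\cdots{\bf S}_{k,\alpha_k}\otimes I)$ applied to the ``vacuum'' range of ${\bf P}_\CC\otimes I$ span the whole domain, the relation ${\bf \Theta_T}({\bf P}_\CC\otimes I)=0$ spreads to ${\bf \Theta_T}=0$. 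Consequently ${\bf K_T}{\bf K_T^*}=I$, so ${\bf K_T}$ is unitary, and taking adjoints in the intertwining identity yields ${\bf T}_{i,j}={\bf K_T^*}({\bf S}_{i,j}\otimes I){\bf K_T}$ with $\cK=\overline{{\bf \Delta_{T}}(I)(\cH)}$ of dimension $\rank{\bf \Delta_{T}}(I)<\infty$. I expect the main obstacle to be precisely the passage from the trace identity to ${\bf \Theta_T}=0$: it combines the injectivity of ${\bf N}$ with the trace-class conclusion of Theorem \ref{trace-N} and then exploits multi-analyticity to move the vanishing off the vacuum; every other step is a direct application of earlier results.
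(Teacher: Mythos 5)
Your proof is correct, and its overall architecture matches the paper's: verify (i)$\Rightarrow$(ii) directly, and for (ii)$\Rightarrow$(i) use the positivity hypothesis to produce the characteristic function, apply Theorem \ref{index} to get the vanishing trace, deduce ${\bf \Theta_T}=0$, and conclude that ${\bf K_T}$ is a co-isometry, hence unitary by purity. The interesting divergence is in how you kill ${\bf \Theta_T}$. The paper first invokes purity together with Theorem 6.3 of \cite{Po-Berezin-poly} to show ${\bf \Theta_T}$ is a partial isometry, analyzes its support via Theorem 5.1 of \cite{Po-Berezin-poly} to replace ${\bf \Theta_T}$ by an isometric multi-analytic operator $\Phi$ with $\Phi\Phi^*={\bf \Theta_T}{\bf \Theta_T^*}$, and only then runs the trace argument (faithfulness of the trace, the spectral decomposition ${\bf N}=\sum_{\bf q} c_{\bf q} P_{\bf q}$ with $c_{\bf q}>0$, the isometry property, and multi-analyticity). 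You bypass all of that structural machinery: writing $C:={\bf \Theta_T}({\bf P}_\CC\otimes I)$, you identify the trace in Theorem \ref{index} with $\left\|({\bf N}\otimes I)^{1/2}C\right\|_{HS}^2$ and use injectivity of ${\bf N}$ to get $C=0$, then spread this to ${\bf \Theta_T}=0$ by multi-analyticity exactly as the paper does for $\Phi$. This is leaner — purity enters only at the very end, and the two external theorems from \cite{Po-Berezin-poly} are not needed — while the paper's detour has the side benefit of exhibiting the support structure and partial-isometry property of ${\bf \Theta_T}$, which is extra information not actually required for the conclusion. One small point to make airtight: the identity $\text{\rm trace\,}\left[CC^*({\bf N}\otimes I)\right]=\left\|({\bf N}\otimes I)^{1/2}C\right\|_{HS}^2$ deserves a word of justification, since $CC^*$ need not be trace class; it follows by monotone convergence through the finite-rank truncations ${\bf N}_{\leq {\bf q}}$ (for which cyclicity of the trace is legitimate), which is precisely the mechanism already supplied in the proof of Theorem \ref{trace-N} that you cite.
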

\begin{proof}
 Assume that item (i) holds. Then there is unitary operator $U:\cH\to \otimes_{i=1}^k F^2(H_{n_i})\otimes \cK$ such that
$T_{i,j}=U^*({\bf S}_{i,j}\otimes I_\cK) U$ for all $i\in\{1,\ldots,k\}$ and   $j\in\{1,\ldots, n_i\}$. Consequently, using Corollary \ref{curva-maps}, we have
\begin{equation*}
\begin{split}
\text{\rm curv}({\bf T})&= \lim_{q_1\to\infty}\cdots\lim_{q_k\to\infty}
\frac{\text{\rm trace\,}\left[(id-\Phi_{{\bf S}_1\otimes I_\cK}^{q_1+1})\circ\cdots\circ (id-\Phi_{{\bf S}_k\otimes I_\cK}^{q_k+1})(I)\right]}
{\prod_{i=1}^k(1+n_i+\cdots + n_i^{q_i})}\\
&=\text{\rm curv}({\bf S}\otimes I_\cK)=\dim \cK.
\end{split}
\end{equation*}
On the other hand, $\rank [{\bf \Delta_{T}}(I)]=\rank [{\bf \Delta_{S\otimes {\it I}_\cK}}(I)]=\dim\cK$. Therefore,
 $\text{\rm curv}({\bf T})=\rank [{\bf \Delta_{T}}(I)]$.  The fact that ${\bf T}$ is pure is obvious. On the other hand, one can easily see that the Berezin kernel of ${\bf S}\otimes I_\cK$  is a unitary operator and, consequently, so is the Berezin kernel of ${\bf T}$. This completes the proof of the implication  $(i)\to (ii)$.

Assume that item (ii) holds. Since ${\bf \Delta_{S\otimes {\it I}}}(I-{\bf K_{T}}{\bf K_{T}^*})\geq 0$,  the $k$-tuple ${\bf T}$   has characteristic function ${\bf \Theta_T}:\otimes_{i=1}^k F^2(H_{n_i})\otimes \cE\to \otimes_{i=1}^k F^2(H_{n_i})\otimes \overline{{\bf \Delta_T}(I)(\cH)}$ which is  a multi-analytic operator with respect to the universal model ${\bf S}$ and
$\bf {K_T K_T^*} +{\bf \Theta_T}{\bf \Theta_T^*}={\it I}$.
The  support of ${\bf \Theta_T}$  is the  smallest reducing subspace
 $\supp ({\bf \Theta_T})\subset \otimes_{i=1}^k F^2(H_{n_i})\otimes \cE$
 under  each operator ${\bf S}_{i,j}$, containing   the co-invariant
  subspace $\cM:=\overline{{\bf \Theta_T^*}(\otimes_{i=1}^k F^2(H_{n_i})\otimes \overline{{\bf \Delta_T}(I)(\cH)})}$.
  Due to  Theorem 5.1 from \cite{Po-Berezin-poly}, we deduce that
 $$
 \supp ({\bf \Theta_T})=\bigvee_{(\alpha)\in \FF_{n_1}^+\times\cdots
  \times \FF_{n_k}^+}({\bf S}_{(\alpha)}\otimes I_\cH) (\cM)
 =\otimes_{i=1}^k F^2(H_{n_i})\otimes \cL,
 $$
 where $\cL:=({\bf P}_\CC\otimes I_\cH)\overline{{\bf \Theta^*}
 (\otimes_{i=1}^k F^2(H_{n_i})\otimes \overline{{\bf \Delta_T}(I)(\cH)})}$.
 Since ${\bf T}$ is pure, Theorem 6.3 from \cite{Po-Berezin-poly} implies that ${\bf \Theta_T}$ is a partial isometry.
Due to the fact that ${\bf S}_{i,j}$ are isometries, the initial space of ${\bf \Theta_T}$, i.e.,
$${\bf \Theta_T^*}(\otimes_{i=1}^k F^2(H_{n_i})\otimes \overline{{\bf \Delta_T}(I)(\cH)})
 =\{x\in \otimes_{i=1}^k F^2(H_{n_i})\otimes \cE: \ \|{\bf \Phi_T} x\|=\|x\|\}$$
  is
 reducing under each ${\bf S}_{i,j}$. Since
 the  support of ${\bf \Theta_T}$  is the  smallest reducing subspace
 $\supp ({\bf \Theta_T})\subset \otimes_{i=1}^k F^2(H_{n_i}))\otimes \cE$
 under  each operator ${\bf S}_{i,j}$, containing   the co-invariant
  subspace ${\bf \Theta_T^*}(\otimes_{i=1}^k F^2(H_{n_i})\otimes \overline{{\bf \Delta_T}(I)(\cH)})$,  we must have
   $\supp ({\bf \Theta_T})={\bf \Theta_T^*}(\otimes_{i=1}^k F^2(H_{n_i})\otimes \overline{{\bf \Delta_T}(I)(\cH)})$. Note that
   $\Phi:={\bf \Theta_T}|_{\supp ({\bf \Theta_T})}$ is an isometric multi-analytic operator and $\Phi \Phi^*={\bf \Theta_T}{\bf \Theta_T^*}$.
Due to Theorem \ref{index}, we  have
\begin{equation*}
\begin{split}
\text{\rm curv}({\bf T})&=\rank [{\bf \Delta_{T}}(I)]-\text{\rm trace\,}\left[{\bf \Delta_{S\otimes {\it  I}}}(\bf {\Theta_T \Theta_T^*})({\bf N}\otimes {\it I}_\cH)\right]\\
&\rank [{\bf \Delta_{T}}(I)]-\text{\rm trace\,}\left[\Phi({\bf P}_\CC\otimes I_\cL) \Phi^*({\bf N}\otimes I_\cH)\right].
\end{split}
\end{equation*}
Since $\text{\rm curv}({\bf T})=\rank {\bf \Delta_{T}}(I)$, we deduce that
$\text{\rm trace\,}\left[\Phi({\bf P}_\CC\otimes I_\cL) \Phi^*({\bf N}\otimes I_\cH)\right]=0$. Taking into account that the trace is faithful, we obtain
$\Phi({\bf P}_\CC\otimes I_\cL) \Phi^*({\bf N}\otimes I_\cH)=0$. The latter relation implies the equality
$\Phi({\bf P}_\CC\otimes I_\cL) \Phi^*(P_{q_1}^{(1)}\otimes \cdots \otimes P_{q_k}^{(k)}\otimes I_\cH)=0$ for any $(q_1,\ldots, q_k)\in \ZZ_+^k$. Hence, we have
$\Phi({\bf P}_\CC\otimes I_\cL) \Phi^*=0$. Since $\Phi$ is an isometry, we have
$\Phi({\bf P}_\CC\otimes I_\cL)=0$. Taking into account that $\Phi$ is a multi-analytic operator with respect to ${\bf S}$, we deduce that $\Phi=0$ and, consequently ${\bf \Theta_T}=0$. Now relation  ${\bf {K_T K_T^*}} +{\bf \Theta_T}{\bf \Theta_T^*}= I$ shows that ${\bf K_T}$ is a co-isometry. On the other hand, since ${\bf T}$ is pure, the Berezin kernel  associated with ${\bf T}$ is an isometry. Therefore ${\bf K_T}$ is a unitary operator. Since ${\bf T}_{i,j}={\bf K_T^*} ({\bf S}_{i,j}\otimes I_{\overline{{\bf \Delta_{T}}(I) (\cH)}}){\bf K_T}$ for $i\in\{1,\ldots,k\}$ and $j\in\{1,\ldots, n_i\}$,
the proof is complete.
\end{proof}

Let ${\bf S}:=({\bf S}_1,\ldots, {\bf S}_k)$, where ${\bf S}_i:=({\bf S}_{i,1},\ldots, {\bf S}_{i, n_i})$, be the universal model for the abstract polyball ${\bf B_n}$, and let $\cH$ be a Hilbert space. We say that $\cM$
 is an invariant subspace of $\otimes_{i=1}^k F^2(H_{n_i})\otimes
  \cH$ or  that $\cM$  is    invariant  under  ${\bf S}\otimes I_\cH$ if it is invariant under each operator
  ${\bf S}_{i,j}\otimes I_\cH$ for $i\in \{1,\ldots, k\}$ and $j\in \{1,\ldots, n_j\}$.

 \begin{definition} Given two invariant subspaces $\cM$ and $\cN$ under ${\bf S}\otimes I_\cH$,  we say that they are unitarily equivalent if there is a unitary operator $U:\cM\to \cN$ such that $U({\bf S}_{i,j}\otimes I_\cH)|_\cM=({\bf S}_{i,j}\otimes I_\cH)|_\cN U$ for any $i\in \{1,\ldots, k\}$  and  $j\in \{1,\ldots, n_i\}$.
 \end{definition}

According to Theorem 5.1 from \cite{Po-Berezin-poly}, if a subspace $\cM\subset \otimes_{i=1}^k F^2(H_{n_i})\otimes \cH$ is  co-invariant  under each operator ${\bf S}_{i,j}\otimes I_\cH$, then
\begin{equation*}
\overline{\text{\rm span}}\,\left\{\left({\bf S}_{1,\beta_1}\cdots {\bf S}_{k,\beta_k}\otimes
I_\cE\right)\cM:\ \beta_1\in \FF_{n_1}^+,\ldots, \beta_k\in \FF_{n_k}^+\right\}=\otimes_{i=1}^k F^2(H_{n_i})\otimes \cE,
\end{equation*}
where $\cE:=({\bf P}_\CC\otimes I_\cH)(\cM)$.
Consequently,
 a subspace
$\cR\subseteq \otimes_{i=1}^k F^2(H_{n_i})\otimes \cH$ is
 reducing under
${\bf S}\otimes I_\cH$
if and only if   there exists a subspace $\cG\subseteq \cH$ such
that
$
 \cR=\otimes_{i=1}^k F^2(H_{n_i})\otimes \cG.
$

We recall that Beurling \cite{Be} obtain a characterization of the invariant subspaces of the Hardy space $H^2(\DD)$ in terms of inner functions. This result was extended to the full Fock space $F^2(H_n)$ in \cite{Po-charact} and \cite{Po-analytic}.  On the other hand, it is well known \cite{Ru} that the lattice of the invariant subspaces for the Hardy space $H^2(\DD^k)$ is very complicated and contains many  invariant subspaces which are not of Beurling type. The same complicated situation occurs in the case of the tensor product $\otimes_{i=1}^k F^2(H_{n_i})$.
 Following the classical case, we say that $\cM$ is a {\it Beurling type invariant subspace} for ${\bf S}\otimes I_\cH$ if there is  an inner  multi-analytic operator
 $\Psi:\otimes_{i=1}^k F^2(H_{n_i})\otimes \cE \to
 \otimes_{i=1}^k F^2(H_{n_i})\otimes \cH$ with respect to ${\bf S}$,
 where  $\cE$ is a Hilbert space, such that
$\cM=\Psi\left[\otimes_{i=1}^k F^2(H_{n_i})\otimes \cE\right]$. In this case, $\Psi$ can be chosen to be isometric.
In \cite{Po-Berezin-poly}, we proved that $\cM$ is a Beurling type invariant subspace  for ${\bf S}\otimes I_\cH$
if and only if   $$
   (id-\Phi_{{\bf S}_1\otimes I_\cH})\circ \cdots \circ
    (id-\Phi_{{\bf S}_k\otimes I_\cH})(P_\cM)\geq 0,
   $$
   where $P_\cM$ is the orthogonal projection on $\cM$.
   If $\cM$  is a  Beurling type invariant subspace  of  ${\bf S}\otimes I_\cH$, then $({\bf S}\otimes I_\cH)|_\cM:=(({\bf S}_1\otimes I_\cH)|_\cM,\ldots, ({\bf S}_k\otimes I_\cH)|_\cM)$ is in the polyball ${\bf B_n}(\cM)$, where  $({\bf S}_i\otimes I_\cH)|_\cM:=(({\bf S}_{i,1}\otimes I_\cH)|_\cM,\ldots, ({\bf S}_{i, n_i}\otimes I_\cH)|_\cM)$. We say that $\cM$ has finite rank if $({\bf S}\otimes I_\cH)|_\cM$ has finite rank.

The next result shows that the curvature invariant completely classifies the
 finite rank  Beurling type invariant subspaces  of ${\bf S}\otimes I_\cH$  which do not contain reducing subspaces. In particular, the curvature invariant classifies  the  finite rank  Beurling type invariant subspaces  of $ F^2(H_{n_1})\otimes\cdots \otimes F^2(H_{n_k})$.
\begin{theorem}\label{classification} Let $\cM$ and $\cN$ be finite rank  Beurling type invariant subspaces  of $\otimes_{i=1}^k F^2(H_{n_i})\otimes \cH$ which do not contain reducing subspaces of ${\bf S}\otimes I_\cH$. Then $\cM$ and $\cN$ are unitarily equivalent if and only if
$$
\text{\rm curv}(({\bf S}\otimes I_\cH)|_\cM)=\text{\rm curv}(({\bf S}\otimes I_\cH)|_\cN).
$$
\end{theorem}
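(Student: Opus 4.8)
The plan is to reduce the classification to the model-recognition result Theorem~\ref{comp-inv}, the guiding idea being that a finite rank Beurling type invariant subspace without reducing subspaces carries a restricted model whose curvature is exactly the dimension of its multiplicity space. Write ${\bf T}_\cM:=({\bf S}\otimes I_\cH)|_\cM$ and ${\bf T}_\cN:=({\bf S}\otimes I_\cH)|_\cN$. One implication is immediate: if $\cM$ and $\cN$ are unitarily equivalent, then ${\bf T}_\cM$ and ${\bf T}_\cN$ are unitarily equivalent $k$-tuples, and since $\text{\rm curv}$ is computed from the associated completely positive maps and the trace (Corollary~\ref{curva-maps}), a computation invariant under simultaneous unitary conjugation, we get $\text{\rm curv}({\bf T}_\cM)=\text{\rm curv}({\bf T}_\cN)$.

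For the converse the first step is to pass to a model picture. Since $\cM$ is of Beurling type, there is an isometric inner multi-analytic operator $\Psi_\cM:\otimes_{i=1}^k F^2(H_{n_i})\otimes\cE_\cM\to\otimes_{i=1}^k F^2(H_{n_i})\otimes\cH$ with $\cM=\Psi_\cM[\otimes_{i=1}^k F^2(H_{n_i})\otimes\cE_\cM]$. As $\Psi_\cM$ is an isometry onto $\cM$ intertwining ${\bf S}_{i,j}\otimes I_{\cE_\cM}$ with ${\bf S}_{i,j}\otimes I_\cH$, it implements a unitary equivalence ${\bf T}_\cM\cong{\bf S}\otimes I_{\cE_\cM}$. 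The finite rank hypothesis forces $\dim\cE_\cM=\rank[{\bf\Delta}_{{\bf T}_\cM}(I)]<\infty$, and the absence of reducing subspaces guarantees that this is the pure model attached to $\cM$, with $\cE_\cM$ faithfully recording its curvature. The same applies to $\cN$, giving $\Psi_\cN$ and $\cE_\cN$.

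Next I would identify curvature with multiplicity. Because $\text{\rm curv}$ is a unitary invariant, $\text{\rm curv}({\bf T}_\cM)=\text{\rm curv}({\bf S}\otimes I_{\cE_\cM})=\dim\cE_\cM$, the last equality being the computation carried out in the proof of Theorem~\ref{comp-inv}; likewise $\text{\rm curv}({\bf T}_\cN)=\dim\cE_\cN$. Thus the hypothesis $\text{\rm curv}({\bf T}_\cM)=\text{\rm curv}({\bf T}_\cN)$ collapses to $\dim\cE_\cM=\dim\cE_\cN$. Fixing any unitary $w:\cE_\cM\to\cE_\cN$, I would then set
$$
U:=\Psi_\cN\,(I\otimes w)\,\Psi_\cM^*|_\cM:\ \cM\longrightarrow\cN,
$$
where $I$ is the identity on $\otimes_{i=1}^k F^2(H_{n_i})$, and verify that $U$ is unitary (using $\Psi_\cM^*\Psi_\cM=I$, $\Psi_\cN^*\Psi_\cN=I$ and $\ran\Psi_\cM=\cM$, $\ran\Psi_\cN=\cN$) and that it intertwines the restrictions, which follows from the multi-analyticity of $\Psi_\cM$ and $\Psi_\cN$ together with $(I\otimes w)({\bf S}_{i,j}\otimes I_{\cE_\cM})=({\bf S}_{i,j}\otimes I_{\cE_\cN})(I\otimes w)$. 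This yields the desired unitary equivalence of $\cM$ and $\cN$.

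I expect the crux to be the middle step, namely securing the clean model realization ${\bf T}_\cM\cong{\bf S}\otimes I_{\cE_\cM}$ with $\dim\cE_\cM$ finite and equal to $\text{\rm curv}({\bf T}_\cM)$. This is precisely where all three hypotheses enter: the Beurling property supplies the isometric inner $\Psi_\cM$ (equivalently, a vanishing characteristic function), finite rank makes $\cE_\cM$ finite dimensional, and the absence of reducing subspaces pins the curvature down to the full multiplicity $\dim\cE_\cM$ via Theorem~\ref{comp-inv}. Once this identification of the curvature with a single integer invariant is in place, the construction and verification of $U$ are routine.
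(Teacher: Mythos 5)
Your proof is correct and follows essentially the same route as the paper: both exploit the isometric multi-analytic factorization $\cM=\Psi_\cM[\otimes_{i=1}^k F^2(H_{n_i})\otimes\cE_\cM]$ to show that $\text{\rm curv}(({\bf S}\otimes I_\cH)|_\cM)=\dim\cE_\cM=\rank[({\bf S}\otimes I_\cH)|_\cM]$, so that equality of curvatures forces equality of these integer invariants. The only divergence is at the very end, where the paper passes from equal defect dimensions to unitary equivalence by citing the Wold decomposition of \cite{Po-Berezin-poly}, while you construct the intertwining unitary $\Psi_\cN(I\otimes w)\Psi_\cM^*|_\cM$ explicitly; this is a harmless variant (and, as your argument incidentally reveals, the no-reducing-subspaces hypothesis is never actually used in either direction).
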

\begin{proof}
If $\cM$ is a   Beurling type invariant subspaces  of $\otimes_{i=1}^k F^2(H_{n_i})\otimes \cH$, then there is a Hilbert space $\cL$ and an isometric multi-analytic operator $\Psi:\otimes_{i=1}^k F^2(H_{n_i})\otimes \cL\to \otimes_{i=1}^k F^2(H_{n_i})\otimes \cH$ such that
$\cM=\Psi[\otimes_{i=1}^k F^2(H_{n_i})\otimes \cL]$. Since $P_\cM=\Psi \Psi^*$, we have
\begin{equation*}
\begin{split}
{\bf \Delta}_{({\bf S}\otimes I)|_\cM}(I_\cM)&= \left(id -\Phi_{({\bf S}_1\otimes I)|_\cM}\right)\circ \cdots \circ\left(id -\Phi_{ ({\bf S}_k\otimes I)|_\cM}\right)(P_\cM)\\
&=\Psi \left(id -\Phi_{{\bf S}_1\otimes I}\right)\circ \cdots \circ\left(id -\Phi_{ {\bf S}_k\otimes I}\right)(I)\Psi^*|_\cM=\Psi({\bf P}_\CC\otimes I_\cL)\Psi^*|_\cM.
\end{split}
\end{equation*}
Let $\{\ell_\omega\}_{\omega\in \Omega}$ be an orthonormal basis for $\cL$. Note that $\{v_\omega:=\Psi(1\otimes \ell_\omega):\ \omega\in \Omega\}$ is an orthonormal  set
and
$$
\{
\Psi( e^1_{\beta_1}\otimes \cdots \otimes  e^k_{\beta_k}\otimes \ell_\omega): \ \beta_i\in\FF_{n_i}^+, i\in \{1,\ldots, k\},  \omega\in \Omega\}
$$
is an orthonormal basis for $\cM$.  Note also that
$\overline{\Psi({\bf P}_\CC\otimes I_\cL)\Psi^*(\cM)}$ coincides with the  closure of the range of the defect operator
${\bf \Delta}_{({\bf S}\otimes I)|_\cM}(I_\cM)$ and also to  the closed linear span of $\{v_\omega:=\Psi(1\otimes \ell_\omega):\ \omega\in \Omega\}$.
Consequently,
$
\rank [({\bf S}\otimes I)|_\cM]=\text{\rm card\,} \Omega=\dim \cL.
$
Now,  assume that $\rank ({\bf S}\otimes I)|_\cM)=p=\dim\cL$. Since $\Psi$ is a multi-analytic operator and $P_\cM=\Psi\Psi^*$, we have
\begin{equation*}
\begin{split}
\left(id -\Phi_{({\bf S}_1\otimes I)|_\cM}^{q_1+1}\right)\circ \cdots \circ\left(id -\Phi_{ ({\bf S}_k\otimes I)|_\cM}^{q_k+1}\right)(I_\cM)
&=\left(id -\Phi_{{\bf S}_1\otimes I}^{q_1+1}\right)\circ \cdots \circ\left(id -\Phi_{ {\bf S}_k\otimes I}^{q_k+1}\right)(P_\cM)\\
&=\Psi \left(id -\Phi_{{\bf S}_1\otimes I_\cL}^{q_1+1}\right)\circ \cdots \circ\left(id -\Phi_{ {\bf S}_k\otimes I_\cL}^{q_k+1}\right)(I)\Psi^*|_\cM.
\end{split}
\end{equation*}
Hence, we deduce that
\begin{equation*}
\begin{split}
\text{\rm trace\,}&\left[\left(id -\Phi_{({\bf S}_1\otimes I)|_\cM}^{q_1+1}\right)\circ \cdots \circ\left(id -\Phi_{ ({\bf S}_k\otimes I)|_\cM}^{q_k+1}\right)(I_\cM)\right]= \text{\rm trace\,}\left[\left(id -\Phi_{{\bf S}_1\otimes I_\cL}^{q_1+1}\right)\circ \cdots \circ\left(id -\Phi_{ {\bf S}_k\otimes I_\cL}^{q_k+1}\right)(I)\right]\\
&\qquad
=\text{\rm trace\,}\left[\left(id -\Phi_{{\bf S}_1}^{q_1+1}\right)\circ \cdots \circ\left(id -\Phi_{{\bf S}_k}^{q_k+1}\right)(I)\right]\dim\cL.
\end{split}
\end{equation*}
Using Corollary \ref{curva-maps} and the fact that
$\text{\rm curv}({\bf S})=1$, we deduce that
$\text{\rm curv}(({\bf S}\otimes I)|_\cM)=\dim\cL=p=\rank [({\bf S}\otimes I)|_\cM].
$
Now, if  $\cM$ and $\cN$ are  finite rank  Beurling type invariant subspaces  of $\otimes_{i=1}^k F^2(H_{n_i})\otimes \cH$ and $({\bf S}\otimes I)|_\cM$ is unitarily equivalent to $({\bf S}\otimes I)|_\cN$, then it is clear that
$
\text{\rm curv}(({\bf S}\otimes I)|_\cM)=\text{\rm curv}(({\bf S}\otimes I)|_\cN).
$
To prove the converse, assume that the later equality holds. As we saw above, we must have
$\rank(({\bf S}\otimes I)|_\cM)=\rank(({\bf S}\otimes I)|_\cN)$. Consequently, the defect spaces associated with $({\bf S}\otimes I)|_\cM$ and $({\bf S}\otimes I)|_\cN$ have the same dimension. Using the Wold decomposition from \cite{Po-Berezin-poly}, we conclude that $({\bf S}\otimes I)|_\cM$ is unitarily equivalent to $({\bf S}\otimes I)|_\cN$.
The proof is complete.
\end{proof}

\section{Invariant subspaces and multiplicity invariant}

In this section, we prove the existence of the multiplicity  of any
invariant subspace of the tensor product $F^2(H_{n_1})\otimes\cdots\otimes F^2(H_{n_k})\otimes \cE$, where $\cE$ is a finite dimensional Hilbert space.
We provide several asymptotic  formulas for the multiplicity  and an important  connection with the curvature invariant. We show that  the range of both the curvature and the multiplicity  invariant  is the interval $[0,\infty)$. We  prove that there is an  uncountable family of non-unitarily  equivalent  pure elements in the noncommutative ball with rank one and  the same prescribed curvature. We also show that  the curvature invariant detects  the {\it inner sequences} of multipliers of $\otimes_{i=1}^k F^2(H_{n_i})$ when it takes the extremal value zero.

Let $\cM$ be an invariant subspace of the tensor product $F^2(H_{n_1})\otimes\cdots\otimes F^2(H_{n_k})\otimes \cE$, where $\cE$ is a finite dimensional Hilbert space. We introduce  the multiplicity of $\cM$ by setting
\begin{equation*}
\begin{split}
m(\cM)&:=\lim_{m\to\infty}\frac{1}{\left(\begin{matrix} m+k\\ k\end{matrix}\right)}\sum_{{q_1\geq 0,\ldots, q_k\geq 0}\atop {q_1+\cdots +q_k\leq m}}\frac{\text{\rm trace\,}\left[P_{\cM}(P_{q_1}^{(1)}\otimes \cdots \otimes P_{q_k}^{(k)}\otimes I_\cE) \right]}{\text{\rm trace\,}\left[P_{q_1}^{(1)}\otimes \cdots \otimes P_{q_k}^{(k)}\right]}.
\end{split}
\end{equation*}
Similarly, we define $m(\cM^\perp)$ by replacing $P_\cM$ with $P_{\cM^\perp}$.
Note that $m(\cM)+m(\cM^\perp)=\dim \cE$.

\begin{theorem}\label{multiplicity} Let $\cM$ be an invariant subspace of the tensor product $F^2(H_{n_1})\otimes\cdots\otimes F^2(H_{n_k})\otimes \cE$, where $\cE$ is a finite dimensional Hilbert space. Then the multiplicity of $\cM$ exists and satisfies the equations
 \begin{equation*}
\begin{split}
m(\cM)&= \lim_{(q_1,\ldots, q_k)\in \ZZ_+^k}
 \frac{\text{\rm trace\,}\left[P_{\cM}(P_{q_1}^{(1)}\otimes \cdots \otimes P_{q_k}^{(k)}\otimes I_\cE) \right]}{\text{\rm trace\,}\left[P_{q_1}^{(1)}\otimes \cdots \otimes P_{q_k}^{(k)}\right]}=\lim_{q_1\to\infty}\cdots\lim_{q_k\to\infty}
\frac{\text{\rm trace\,}\left[P_{\cM}(P_{\leq(q_1,\ldots, q_k)}\otimes I_\cE)  \right]}{\text{\rm trace\,}\left[P_{\leq(q_1,\ldots, q_k)}\right]}\\
&=\lim_{m\to\infty}\frac{1}{\left(\begin{matrix} m+k-1\\ k-1\end{matrix}\right)}\sum_{{q_1\geq 0,\ldots, q_k\geq 0}\atop {q_1+\cdots +q_k= m}} \frac{\text{\rm trace\,}\left[ P_\cM (P_{q_1}^{(1)}\otimes \cdots \otimes P_{q_k}^{(k)}\otimes I_\cE)\right]}{\text{\rm trace\,}\left[P_{q_1}^{(1)}\otimes \cdots \otimes P_{q_k}^{(k)}\right]}\\
&=\dim \cE -\text{\rm curv}({\bf M}),
\end{split}
\end{equation*}
where  ${\bf M}:=(M_1,\ldots, M_k)$ with $M_i:=(M_{i,1},\ldots, M_{i,n_i})$ and $M_{i,j}:=P_{\cM^\perp}({\bf S}_{i,j}\otimes I_\cE)|_{\cM^\perp}$.
\end{theorem}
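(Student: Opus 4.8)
The plan is to reduce the statement about $m(\cM)$ entirely to the curvature of the compression ${\bf M}$, for which every averaging formula is already available through Theorem \ref{curva1} and Corollary \ref{curva-maps}. Writing $P_{\bf q}:=P_{q_1}^{(1)}\otimes \cdots \otimes P_{q_k}^{(k)}$ and using $P_\cM=I-P_{\cM^\perp}$ together with $\text{\rm trace\,}[P_{\bf q}\otimes I_\cE]=(\dim\cE)\,\text{\rm trace\,}[P_{\bf q}]$, I would first record the term-by-term identity
$$\frac{\text{\rm trace\,}\left[P_\cM(P_{\bf q}\otimes I_\cE)\right]}{\text{\rm trace\,}[P_{\bf q}]}=\dim\cE-\frac{\text{\rm trace\,}\left[P_{\cM^\perp}(P_{\bf q}\otimes I_\cE)\right]}{\text{\rm trace\,}[P_{\bf q}]},\qquad {\bf q}\in\ZZ_+^k.$$
It then suffices to show that the subtracted quotient coincides, for each ${\bf q}$, with the curvature quotient of ${\bf M}$, and to transport each averaging scheme from ${\bf M}$ back to $\cM$.

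The heart of the argument is a compression identity. Since $\cM$ is invariant, $\cM^\perp$ is co-invariant, so $M_{i,j}^*=({\bf S}_{i,j}^*\otimes I_\cE)|_{\cM^\perp}$ and each $({\bf S}_{i,j}^*\otimes I_\cE)$ maps $\cM^\perp$ into itself. Computing $\langle \Phi_{M_i}(P_{\cM^\perp}Y|_{\cM^\perp})v,w\rangle$ for $v,w\in\cM^\perp$ and using that $M_{i,j}^*v,M_{i,j}^*w\in\cM^\perp$ absorbs the sandwiched projections, I would establish
$$\Phi_{M_i}\big(P_{\cM^\perp}Y|_{\cM^\perp}\big)=P_{\cM^\perp}\,\Phi_{{\bf S}_i\otimes I}(Y)\,|_{\cM^\perp}\qquad\text{for every }Y.$$
Iterating this in each variable and composing over $i=1,\dots,k$ gives $\Phi_{M_1}^{q_1}\circ \cdots \circ \Phi_{M_k}^{q_k}(P_{\cM^\perp}Y|_{\cM^\perp})=P_{\cM^\perp}\,\Phi_{{\bf S}_1\otimes I}^{q_1}\circ \cdots \circ \Phi_{{\bf S}_k\otimes I}^{q_k}(Y)|_{\cM^\perp}$, while the same computation with $(id-\Phi)$ replacing $\Phi$ yields ${\bf \Delta_{M}}(I_{\cM^\perp})=P_{\cM^\perp}({\bf P}_\CC\otimes I_\cE)|_{\cM^\perp}$, a positive operator of rank at most $\dim\cE$; in particular ${\bf M}$ lies in ${\bf B_n}(\cM^\perp)$ and has finite-rank defect. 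Feeding $Y={\bf P}_\CC\otimes I_\cE={\bf \Delta_{S\otimes {\it I}}}(I)$ and recalling, as in the proof of Theorem \ref{curva1}, that $P_{\bf q}\otimes I_\cE=\Phi_{{\bf S}_1\otimes I}^{q_1}\circ \cdots \circ \Phi_{{\bf S}_k\otimes I}^{q_k}({\bf P}_\CC\otimes I_\cE)$, I obtain after taking traces
$$\text{\rm trace\,}\left[P_{\cM^\perp}(P_{\bf q}\otimes I_\cE)\right]=\text{\rm trace\,}\left[\Phi_{M_1}^{q_1}\circ \cdots \circ \Phi_{M_k}^{q_k}({\bf \Delta_{M}}(I_{\cM^\perp}))\right],$$
which by the connection formula \eqref{connection} equals $\text{\rm trace\,}[{\bf K_{M}^*}(P_{\bf q}\otimes I){\bf K_{M}}]$.

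To finish, I divide by $\text{\rm trace\,}[P_{\bf q}]=n_1^{q_1}\cdots n_k^{q_k}$, so that the subtracted quotient is exactly $\frac{1}{n_1^{q_1}\cdots n_k^{q_k}}\text{\rm trace\,}[\Phi_{M_1}^{q_1}\circ \cdots \circ \Phi_{M_k}^{q_k}({\bf \Delta_{M}}(I_{\cM^\perp}))]$. Applying Lemma \ref{basic} and Lemma \ref{basic2} with $\phi_i=\frac{1}{n_i}\Phi_{M_i}$ and $X={\bf \Delta_{M}}(I_{\cM^\perp})$, together with Theorem \ref{curva1} and Corollary \ref{curva-maps} for ${\bf M}$, shows that each averaging scheme for this quotient converges to $\text{\rm curv}({\bf M})$. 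Since the constant $\dim\cE$ is invariant under each averaging scheme, subtracting term-by-term gives the existence of $m(\cM)$ under all the displayed schemes simultaneously and the identity $m(\cM)=\dim\cE-\text{\rm curv}({\bf M})$. The main obstacle is the compression identity and its clean iteration across the several non-commuting blocks: one must check that the sandwiched projections are genuinely absorbed (which is exactly where co-invariance of $\cM^\perp$ enters) and that ${\bf M}$ indeed lands in ${\bf B_n}(\cM^\perp)$ with the stated finite-rank defect. Everything downstream is routine bookkeeping built on the already-established summability lemmas.
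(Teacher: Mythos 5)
Your proposal is correct and follows essentially the same route as the paper's proof: establish via co-invariance of $\cM^\perp$ the compression identities ${\bf \Delta_{M}^p}(I_{\cM^\perp})=P_{\cM^\perp}{\bf \Delta_{S\otimes {\it I}}^p}(I)|_{\cM^\perp}\geq 0$ (so ${\bf M}\in{\bf B_n}(\cM^\perp)$ with finite rank), derive the trace identity $\text{\rm trace\,}[\Phi_{M_1}^{q_1}\circ\cdots\circ\Phi_{M_k}^{q_k}({\bf \Delta_{M}}(I_{\cM^\perp}))]=\text{\rm trace\,}[P_{\cM^\perp}(P_{q_1}^{(1)}\otimes\cdots\otimes P_{q_k}^{(k)}\otimes I_\cE)]$, and then invoke Theorem \ref{curva1} and Corollary \ref{curva-maps} to identify every averaging scheme with $\text{\rm curv}({\bf M})$ and subtract from $\dim\cE$. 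Your explicit one-variable compression lemma and its iteration are just a more detailed rendering of exactly the same argument.
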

\begin{proof}
 Since $\cM$ is an invariant subspace under each operator  ${\bf S}_{i,j}\otimes I_\cE$ for $i\in \{1,\ldots, k\}$, $j\in\{1,\ldots, n_i\}$, we have   $M_{i,j}^* M_{r,s}^*=({\bf S}_{i,j}^*\otimes I_\cE) ({\bf S}_{r,s}^*\otimes I_\cE)|_{\cM^\perp}$ and  deduce that
\begin{equation}
\label{De}
{\bf \Delta_{M}^p}(I_{\cM^\perp})=P_{\cM^\perp}{\bf \Delta_{S\otimes {\it I}}^p}(I)|_{\cM^\perp}\geq 0
\end{equation}
for any ${\bf p}=(p_1,\ldots, p_k)$ with $p_i\in \{0,1\}$. Therefore, ${\bf M}$ is in the polyball ${\bf B_n}(\cM^\perp)$ and has finite rank. Since $\cM^\perp$ is invariant under  ${\bf S}_{i,j}^*\otimes I_\cE$ and using  relation \eqref{De}, we obtain
\begin{equation*}
\begin{split}
\text{\rm trace\,}[\Phi_{M_1}^{q_1}\circ \cdots \circ \Phi_{M_k}^{q_k} ({\bf \Delta_{M}}(I_{\cM^\perp}))]
&=\text{\rm trace\,}[P_{\cM^\perp}\Phi_{\bf {S}_1\otimes {\it I}}^{q_1}\circ \cdots \circ \Phi_{{\bf S}_k\otimes {\it I}}^{q_k} ({\bf \Delta_{S\otimes {\it I}}}(I))|_{\cM^\perp}]\\
&=\text{\rm trace\,}[P_{\cM^\perp}(P_{q_1}^{(1)}\otimes \cdots \otimes P_{q_k}^{(k)}\otimes I_\cE)]
\end{split}
\end{equation*}
for any $q_i\in \ZZ_+$. Therefore, due to Theorem \ref{curva1} and Corollary \ref{curva-maps},  \text{\rm curv}({\bf M}) exists and  one can easily complete the proof.
\end{proof}

Using Theorem \ref{multiplicity} and Lemma \ref{identity}, we deduce the following result.
\begin{corollary}
Let $\cM$ be an invariant subspace of tensor product $F^2(H_{n_1})\otimes\cdots\otimes F^2(H_{n_k})\otimes \cE$, where $\cE$ is a finite dimensional Hilbert space. Then the    multiplicity of $\cM$ satisfies the formula
 \begin{equation*}
m(\cM)= \lim_{{\bf q}\in \ZZ_+^k}\text{\rm trace\,}\left[{\bf \Delta_{S\otimes {\it I}_\cE}}(P_\cM)({\bf N}_{\leq {\bf q}}\otimes I_\cE)\right],
\end{equation*}
where ${\bf N}_{\leq {\bf q}}$ is defined by relation \eqref{NQ}.
\end{corollary}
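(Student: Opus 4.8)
The plan is to obtain the formula directly from the asymptotic expression for $m(\cM)$ furnished by Theorem \ref{multiplicity} together with the trace identity of Lemma \ref{identity}. First I would invoke Theorem \ref{multiplicity}, which in particular asserts that the net limit over the directed set $\ZZ_+^k$ exists and
\begin{equation*}
m(\cM)=\lim_{{\bf q}\in \ZZ_+^k}\frac{\text{\rm trace\,}\left[P_\cM(P_{q_1}^{(1)}\otimes \cdots \otimes P_{q_k}^{(k)}\otimes I_\cE)\right]}{\text{\rm trace\,}\left[P_{q_1}^{(1)}\otimes \cdots \otimes P_{q_k}^{(k)}\right]}.
\end{equation*}
Thus it suffices to re-express the quantity appearing under this limit in terms of the defect map and the operators ${\bf N}_{\leq {\bf q}}$.

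Next I would apply Lemma \ref{identity} with the choice $\cH:=\cE$ and $Y:=P_\cM$. This is legitimate because $P_\cM$ is a bounded (in fact positive) operator on $\otimes_{i=1}^k F^2(H_{n_i})\otimes \cE$ and $\dim \cE<\infty$ by hypothesis. Since the trace is invariant under cyclic permutation of its factors, we have $\text{\rm trace\,}[(P_{q_1}^{(1)}\otimes \cdots \otimes P_{q_k}^{(k)}\otimes I_\cE)P_\cM]=\text{\rm trace\,}[P_\cM(P_{q_1}^{(1)}\otimes \cdots \otimes P_{q_k}^{(k)}\otimes I_\cE)]$, so that the numerator matches the one occurring in Lemma \ref{identity}. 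The lemma then yields, for each ${\bf q}\in \ZZ_+^k$,
\begin{equation*}
\frac{\text{\rm trace\,}\left[P_\cM(P_{q_1}^{(1)}\otimes \cdots \otimes P_{q_k}^{(k)}\otimes I_\cE)\right]}{\text{\rm trace\,}\left[P_{q_1}^{(1)}\otimes \cdots \otimes P_{q_k}^{(k)}\right]}=\text{\rm trace\,}\left[{\bf \Delta_{S\otimes {\it I}_\cE}}(P_\cM)({\bf N}_{\leq {\bf q}}\otimes I_\cE)\right].
\end{equation*}

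Finally I would pass to the limit over ${\bf q}\in \ZZ_+^k$ in this identity. By the first step the left-hand side converges to $m(\cM)$; hence the right-hand side converges to the same value, which is precisely the asserted formula. The argument amounts to a substitution of $Y=P_\cM$ into the already established Lemma \ref{identity}, so there is no genuine obstacle; the only points deserving a word of care are the verification of the hypotheses of that lemma (finite-dimensionality of $\cE$ and boundedness of $P_\cM$) and the cyclic rearrangement of the trace used to align the two numerators.
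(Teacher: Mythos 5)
Your proposal is correct and follows exactly the paper's own route: the paper deduces this corollary by combining Theorem \ref{multiplicity} (existence of the net limit over $\ZZ_+^k$ for $m(\cM)$) with Lemma \ref{identity} applied to $Y=P_\cM$ and $\cH=\cE$, which is precisely your argument. The hypothesis checks and the cyclic rearrangement of the trace you note are the only details involved, and they are handled correctly.
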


\begin{corollary}\label{Fang} If $\cM$ is an invariant subspace for the Hardy space  $H^2(\DD^k)$ of the polydisc,   then
$$
m(\cM):= \lim_{m\to\infty}\frac{
\text{\rm trace\,}\left[ P_{\cM}(P_{\leq m}\otimes I_\cE) \right]}{\text{\rm trace\,}[ P_{\leq m}]}
$$
exists, where $P_{\leq m}$ is the orthogonal projection onto the polynomials of degree $\leq m$.
\end{corollary}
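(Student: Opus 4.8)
The plan is to obtain this corollary as the specialization of Theorem \ref{multiplicity} to the regular polydisc, namely the case $n_1=\cdots=n_k=1$. First I would record the relevant identifications. When $n_i=1$ the free semigroup $\FF_{n_i}^+$ contains exactly one word of each length, so $F^2(H_1)\cong H^2(\DD)$ and $\otimes_{i=1}^k F^2(H_1)\cong H^2(\DD^k)$; under this identification the basis vector $e^1_{\alpha_1}\otimes\cdots\otimes e^k_{\alpha_k}$ corresponds to the monomial $z_1^{|\alpha_1|}\cdots z_k^{|\alpha_k|}$, whose total degree is $|\alpha_1|+\cdots+|\alpha_k|$. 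Thus an invariant subspace $\cM$ of $H^2(\DD^k)\otimes\cE$ (with $\dim\cE<\infty$) is precisely an invariant subspace of $\otimes_{i=1}^k F^2(H_{n_i})\otimes\cE$ in the sense of Section 3, and the classical projection $P_{\leq m}$ onto polynomials of total degree $\leq m$ coincides with the operator $P_{\leq m}=\sum_{q_1+\cdots+q_k\leq m}P_{q_1}^{(1)}\otimes\cdots\otimes P_{q_k}^{(k)}$ defined before Theorem \ref{curva1}.

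The only computation needed is a counting one. Since $n_i=1$, each $P_{q_i}^{(i)}$ is the rank-one projection onto the span of the unique word of length $q_i$, so $\text{\rm trace\,}[P_{q_1}^{(1)}\otimes\cdots\otimes P_{q_k}^{(k)}]=1$ for every $(q_1,\ldots,q_k)\in\ZZ_+^k$. Hence $\text{\rm trace\,}[P_{\leq m}]$ is just the cardinality of $\{(q_1,\ldots,q_k)\in\ZZ_+^k:\ q_1+\cdots+q_k\leq m\}$, which is $\binom{m+k}{k}$.

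With these facts I would rewrite the ratio in the statement. Because $P_{\leq m}=\sum_{q_1+\cdots+q_k\leq m}P_{q_1}^{(1)}\otimes\cdots\otimes P_{q_k}^{(k)}$ and each denominator trace equals $1$, linearity of the trace gives
\begin{equation*}
\frac{\text{\rm trace\,}\left[P_\cM(P_{\leq m}\otimes I_\cE)\right]}{\text{\rm trace\,}[P_{\leq m}]}=\frac{1}{\binom{m+k}{k}}\sum_{{q_1\geq 0,\ldots,q_k\geq 0}\atop{q_1+\cdots+q_k\leq m}}\frac{\text{\rm trace\,}\left[P_\cM(P_{q_1}^{(1)}\otimes\cdots\otimes P_{q_k}^{(k)}\otimes I_\cE)\right]}{\text{\rm trace\,}\left[P_{q_1}^{(1)}\otimes\cdots\otimes P_{q_k}^{(k)}\right]},
\end{equation*}
which is exactly the Ces\`aro-type average defining $m(\cM)$. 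By Theorem \ref{multiplicity} this average converges as $m\to\infty$, so the limit in the statement exists and equals $m(\cM)$.

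There is no genuine obstacle in this argument: the whole content is the reduction to $n_i=1$ together with the elementary identity $\text{\rm trace\,}[P_{\leq m}]=\binom{m+k}{k}$. The single point deserving a moment's attention is the compatibility of the paper's word-length projection $P_{\leq m}$ on $\otimes_{i=1}^k F^2(H_{n_i})$ with the classical degree projection on $H^2(\DD^k)$, and this is immediate from the monomial identification recorded above.
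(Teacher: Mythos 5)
Your proof is correct and follows essentially the same route as the paper, which deduces Corollary \ref{Fang} directly from Theorem \ref{multiplicity} by specializing to $n_1=\cdots=n_k=1$, where each $\text{\rm trace\,}\bigl[P_{q_1}^{(1)}\otimes\cdots\otimes P_{q_k}^{(k)}\bigr]=1$ and $\text{\rm trace\,}[P_{\leq m}]=\binom{m+k}{k}$. Your explicit verification of the monomial identification $\otimes_{i=1}^k F^2(H_1)\cong H^2(\DD^k)$ and of the counting identity simply spells out what the paper leaves implicit.
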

   The existence of the limit in Corollary \ref{Fang}  was first proved  by Fang (see \cite{Fang}) using different methods.  According to Theorem \ref{multiplicity}, we have several other asymptotic formulas  for the multiplicity.

\begin{theorem}
Given a function $\kappa :\NN\to \NN$ and ${\bf n}^{(i)}\in \NN^{\kappa(i)}$ for  $i\in \{1,\ldots,p\}$, let ${\bf S}^{({\bf n}^{(i)})}$ and ${\bf S}^{({\bf n}^{(1)},\ldots, {\bf n}^{(p)})}$ be the universal models of the polyballs ${\bf B}_{{\bf n}^{(i)}}$ and
${\bf B}_{({\bf n}^{(1)},\ldots, {\bf n}^{(p)})}$, respectively.
For each $i\in \{1,\ldots, p\}$, assume that
\begin{enumerate}
\item[(i)] $\cE_i$ is a finite dimensional Hilbert space with $\dim\cE_i=q_i\in \NN$;
\item[(ii)] $\cM_i$ is an invariant subspace under ${\bf S}^{({\bf n}^{(i)})}\otimes I_{\cE_i}$;
\item[(iii)] $c_i:=\text{\rm curv} \left(P_{\cM_i^\perp}\left({\bf S}^{({\bf n}^{(i)})}\otimes I_{\cE_i}\right)|_{\cM_i^\perp}\right)$;
\item[(iv)]  $c:=\text{\rm curv} \left(P_{\cM^\perp}\left({\bf S}^{({\bf n}^{(1)},\ldots, {\bf n}^{(p)})}\otimes I_{\cE_1\otimes \cdots \otimes \cE_p}\right)|_{\cM^\perp}\right)$,
\end{enumerate}
where, under the appropriate  identification, $\cM:=\cM_1\otimes \cdots \otimes \cM_p$ is viewed as an invariant subspace for ${\bf S}^{({\bf n}^{(1)},\ldots, {\bf n}^{(p)})}\otimes I_{\cE_1\otimes\cdots \otimes \cE_p}$.
Then the curvature invariant satisfies the equation
$$
c=q_1\cdots q_p-\prod_{i=1}^p(q_i-c_i)
$$
 and the
 multiplicity invariant satisfies the equation
 $
 m(\cM_1\otimes \cdots \otimes \cM_p)=\prod_{i=1}^pm(\cM_i).
 $
\end{theorem}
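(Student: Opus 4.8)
The plan is to establish the multiplicativity of the multiplicity invariant first and then read off the curvature identity from it. By Theorem~\ref{multiplicity} applied to the polyball ${\bf B}_{{\bf n}^{(i)}}$ one has $m(\cM_i)=\dim\cE_i-c_i=q_i-c_i$, since $c_i=\text{\rm curv}({\bf M}_i)$ is precisely the curvature of the compression ${\bf M}_i$ of ${\bf S}^{({\bf n}^{(i)})}\otimes I_{\cE_i}$ to $\cM_i^\perp$; applied to ${\bf B}_{({\bf n}^{(1)},\ldots,{\bf n}^{(p)})}$ it gives $m(\cM)=\dim(\cE_1\otimes\cdots\otimes\cE_p)-c=q_1\cdots q_p-c$. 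Hence the two asserted identities are equivalent, and it suffices to prove $m(\cM_1\otimes\cdots\otimes\cM_p)=\prod_{i=1}^p m(\cM_i)$; substituting $m(\cM_i)=q_i-c_i$ and $m(\cM)=q_1\cdots q_p-c$ then yields $c=q_1\cdots q_p-\prod_{i=1}^p(q_i-c_i)$.

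Next I would make the tensor identification explicit. Label the $K:=\kappa(1)+\cdots+\kappa(p)$ entries of the concatenated tuple $({\bf n}^{(1)},\ldots,{\bf n}^{(p)})$ by pairs $(i,j)$ with $1\le i\le p$ and $1\le j\le \kappa(i)$, and let $n^{(i)}_j$ denote the corresponding entry of ${\bf n}^{(i)}$. Permuting tensor factors identifies the ambient space $\bigotimes_{(i,j)}F^2(H_{n^{(i)}_j})\otimes(\cE_1\otimes\cdots\otimes\cE_p)$ with $\bigotimes_{i=1}^p\left(\bigotimes_{j=1}^{\kappa(i)}F^2(H_{n^{(i)}_j})\otimes\cE_i\right)$; under this identification each generator ${\bf S}_{i,j}\otimes I$ acts on the single component $(i,j)$ and leaves $\cM_i$ invariant, so $\cM=\cM_1\otimes\cdots\otimes\cM_p$ is invariant under ${\bf S}^{({\bf n}^{(1)},\ldots,{\bf n}^{(p)})}\otimes I_{\cE_1\otimes\cdots\otimes\cE_p}$ and $P_\cM=P_{\cM_1}\otimes\cdots\otimes P_{\cM_p}$. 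The degree projections factor in the same way, $P_{s_{i,j}}^{(i,j)}$ acting only on $F^2(H_{n^{(i)}_j})$. Since the trace of a tensor product is the product of the traces, both numerator and denominator in the definition of $m(\cM)$ split over the blocks:
\begin{equation*}
\frac{\text{\rm trace\,}\left[P_\cM\left(\bigotimes_{(i,j)}P_{s_{i,j}}^{(i,j)}\otimes I_\cE\right)\right]}{\text{\rm trace\,}\left[\bigotimes_{(i,j)}P_{s_{i,j}}^{(i,j)}\right]}
=\prod_{i=1}^p\frac{\text{\rm trace\,}\left[P_{\cM_i}\left(\bigotimes_{j=1}^{\kappa(i)}P_{s_{i,j}}^{(i,j)}\otimes I_{\cE_i}\right)\right]}{\text{\rm trace\,}\left[\bigotimes_{j=1}^{\kappa(i)}P_{s_{i,j}}^{(i,j)}\right]}.
\end{equation*}

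Finally I would pass to the limit over the product directed set $\ZZ_+^K=\prod_{i=1}^p\ZZ_+^{\kappa(i)}$. The $i$-th factor on the right depends only on the block indices $s_{i,1},\ldots,s_{i,\kappa(i)}$ and, by Theorem~\ref{multiplicity} applied to $\cM_i$, converges to $m(\cM_i)$ as these tend to infinity over $\ZZ_+^{\kappa(i)}$. A product of finitely many convergent nets, each depending on its own coordinate block, converges to the product of the limits, so the left-hand ratio tends to $\prod_{i=1}^p m(\cM_i)$ along $\ZZ_+^K$; on the other hand Theorem~\ref{multiplicity} applied to $\cM$ identifies this very limit with $m(\cM)$. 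This proves $m(\cM)=\prod_{i=1}^p m(\cM_i)$, and the curvature identity follows from the first paragraph. I expect the only genuine obstacle to be the bookkeeping of the tensor-factor identification --- in particular verifying that $P_\cM=P_{\cM_1}\otimes\cdots\otimes P_{\cM_p}$ after redistributing the coefficient spaces $\cE_i$ into their blocks --- everything else being a routine factorization of traces together with the existence of each limit guaranteed by Theorem~\ref{multiplicity}.
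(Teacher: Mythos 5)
Your proof is correct and follows essentially the same route as the paper's: both reduce the curvature identity to the multiplicativity of the multiplicity invariant via Theorem~\ref{multiplicity}, factor the trace ratios over the tensor blocks after the canonical identification of the ambient spaces, and pass to the limit over the product directed set, with each block limit identified as $m(\cM_i)$. The only cosmetic differences are that the paper implements the identification through an explicit unitary $U$ and proves the multiplicity identity before deducing the curvature formula, whereas you record the equivalence of the two identities at the outset.
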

\begin{proof} For each $i\in\{1,\ldots, p\}$ let ${\bf n}^{(i)}:=(n_1^{(i)},\ldots, n_{\kappa(i)}^{(i)})\in \NN^{\kappa(i)}$.
If $j\in \{1,\ldots, \kappa(i)\}$, let $F^2(H_{n_j}^{(i)})$ be the full Fock space with $n_j^{(i)}$ generators, and denote by $P_{q_j^{(i)}}^{(n_j^{(i)})}$
the orthogonal projection  of $F^2(H_{n_j^{(i)}})$ onto the span of all homogeneous polynomials of $F^2(H_{n_j^{(i)}})$ of degree equal to $q_j^{(i)}\in \ZZ_+$.
Set
 $$
 Q_1:=P_{q_1^{(1)}}^{(n_1^{(1)})}\otimes \cdots \otimes P_{q_{\kappa(1)}^{(1)}}^{(n_{\kappa(1)}^{(1)})}, \quad \cdots, \quad
 Q_p:=P_{q_1^{(p)}}^{(n_1^{(p)})}\otimes \cdots \otimes P_{q_{\kappa(p)}^{(p)}}^{(n_{\kappa(p)}^{(p)})}
 $$
and let $U$ be the unitary operator which provides the  canonical  identification of the Hilbert tensor product
$\otimes_{i=1}^p\left[ F^2(H_{n_1}^{(i)})\otimes \cdots \otimes F^2(H_{n_{\kappa(i)}}^{(i)})\otimes \cE_i\right]$ with
$\left\{\otimes_{i=1}^p\left[ F^2(H_{n_1}^{(i)})\otimes \cdots \otimes F^2(H_{n_{\kappa(i)}}^{(i)})\right]\right\}\otimes (\cE_1\otimes \cdots\otimes \cE_p)$. Note that
$$
U[(Q_1\otimes I_{\cE_1})\otimes \cdots \otimes (Q_p\otimes I_{\cE_p})]
=[Q_1\otimes \cdots \otimes Q_p\otimes I_{\cE_1\otimes\cdots\otimes \cE_p}]U
$$
and $U(\cM_1\otimes \cdots \otimes \cM_p)$ is an invariant subspace under
${\bf S}^{({\bf n}^{(1)},\ldots, {\bf n}^{(p)})}\otimes I_{\cE_1\otimes\cdots \otimes \cE_p}$.
Using   Theorem \ref{multiplicity}, we obtain
 \begin{equation*}
 \begin{split}
m\left(\otimes_{i=1}^k \cM_i\right)&=  \lim   \frac{\text{\rm trace\,}\left[P_{U(\cM_1\otimes \cdots \otimes \cM_p)}
 \left(Q_1
 \otimes \cdots \otimes Q_p\otimes I_{\cE_1\otimes\cdots\otimes \cE_p}
 \right)\right]}{\text{\rm trace\,}\left[Q_1\otimes \cdots \otimes Q_p \right]}\\
 &=
  \lim   \frac{\text{\rm trace\,}\left[U^*P_{U(\cM_1\otimes \cdots \otimes \cM_p)} UU^*
 \left(Q_1
 \otimes \cdots \otimes Q_p\otimes I_{\cE_1\otimes\cdots\otimes \cE_p}
 \right)U\right]}{\text{\rm trace\,}\left[Q_1\otimes \cdots \otimes Q_p \right]}\\
 &=
 \lim   \frac{\text{\rm trace\,}\left\{P_{\cM_1\otimes \cdots \otimes \cM_p} \left[
 \left(Q_1\otimes I_{\cE_1}\right)
 \otimes \cdots \otimes \left(Q_p\otimes I_{\cE_p}\right)\right]
  \right\}}{\text{\rm trace\,}\left[Q_1\otimes \cdots \otimes Q_p \right]}
 \end{split}
 \end{equation*}
 where the limit is taken over ${(q_1^{(1)},\ldots, q_{\kappa(1)}^{(1)},\ldots, q_1^{(p)},\ldots, q_{\kappa(p)}^{(p)})\in \ZZ_+^{\kappa(1)+\cdots +\kappa(p)}}$.
 Note that  the latter limit is equal to the product
$$
 \lim_{(q_1^{(1)},\ldots, q_{\kappa(1)}^{(1)})\in \ZZ_+^{\kappa(1)}}   \frac{\text{\rm trace\,}\left[P_{\cM_1}
 \left(Q_1\otimes I_{\cE_1} \right)\right]}
 {\text{\rm trace\,}\left[Q_1 \right]}\quad
  \cdots \quad  \lim_{(q_1^{(p)},\ldots, q_{\kappa(p)}^{(p)})\in \ZZ_+^{\kappa(p)}}
  \frac{\text{\rm trace\,}\left[P_{\cM_p}\left(Q_p \otimes I_{\cE_p}\right)
 \right]}
 {\text{\rm trace\,}\left[ Q_p\right]},
 $$
which, due to Theorem \ref{multiplicity}, is equal  to
$\prod_{i=1}^pm\left( \cM_i\right)$. Therefore,
$
m\left(\otimes_{i=1}^p \cM_i\right)=\prod_{i=1}^pm\left( \cM_i\right).
$
For each $i\in \{1,\ldots, p\}$, set
$${\bf M}^{(i)}:=P_{\cM_i^\perp}\left({\bf S}^{({\bf n}^{(i)})}\otimes I_{\cE_i}\right)|_{\cM_i^\perp}\quad \text{ and } \quad
{\bf M}:=P_{\cM^\perp}\left({\bf S}^{({\bf n}^{(1)},\ldots, {\bf n}^{(p)})}\otimes I_{\cE_1\otimes \cdots \otimes \cE_p}\right)|_{\cM^\perp}.
$$
Since, due to Thorem \ref{multiplicity},
$\text{\rm curv}({\bf M})=\dim (\cE_1\otimes\cdots \otimes  \cE_p)-m(\cM)$  and  $ \text{\rm curv}({\bf M}^{(i)})=\dim (\cE_i)-m(\cM_i),
$ we deduce the corresponding identity for the curvature.
  The proof is complete.
\end{proof}

Note that, in  particular,  if  $\cM\subset H^2(\DD^n)\otimes \CC^r$ and $\cN\subset H^2(\DD^p)\otimes \CC^q$ are invariant subspaces, then so is
$\cM\otimes \cN\subset H^2(\DD^{n+p})\otimes \CC^{rq}$ and the multiplicity invariant has the multiplicative property
$m(\cM\otimes \cN)=m(\cM)m(\cN)$.

We recall that a Beurling type characterization of the invariant subspaces of the full Fock space was obtained in \cite{Po-charact}.
\begin{corollary} \label{for-perp} For each  $i\in \{1,\ldots, k\}$, let  $\cM_i$  be an invariant subspace  of $F^2(H_{n_i})$. Then  the tensor product $\cM:=\otimes_{i=1}^k \cM_i$ is an invariant subspace  of $\otimes_{i=1}^k F^2(H_{n_i})$ and
\begin{equation*}
\begin{split}
\text{\rm curv}(P_{\cM^\perp} {\bf S}|_{\cM^\perp})&=1-\prod_{i=1}^k\left(1-\text{\rm curv}_i(P_{\cM_i^\perp} {\bf S}_i|_{\cM_i^\perp})\right)\quad \text{ and } \quad m\left(\otimes_{i=1}^k \cM_i\right)=\prod_{i=1}^km_i\left( \cM_i\right),
\end{split}
\end{equation*}
where $\text{\rm curv}_i$   and  $m_i$ are  the curvature  and  the  multiplicity   with respect to $F^2(H_{n_i})$, respectively.

\end{corollary}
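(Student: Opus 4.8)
The plan is to recognize Corollary \ref{for-perp} as the special case of the preceding theorem obtained by taking $p=k$, the function $\kappa\equiv 1$, the tuples ${\bf n}^{(i)}=(n_i)\in\NN^1$, and the coefficient spaces $\cE_i=\CC$, so that each $q_i=\dim\cE_i=1$ and $F^2(H_{n_i})\otimes\CC$ is canonically identified with $F^2(H_{n_i})$. Under this specialization $c_i$ becomes $\text{\rm curv}_i(P_{\cM_i^\perp}{\bf S}_i|_{\cM_i^\perp})$ and $c$ becomes $\text{\rm curv}(P_{\cM^\perp}{\bf S}|_{\cM^\perp})$, while the theorem's two conclusions $c=q_1\cdots q_p-\prod_{i=1}^p(q_i-c_i)$ and $m(\cM_1\otimes\cdots\otimes\cM_p)=\prod_{i=1}^p m(\cM_i)$ reduce verbatim to the two displayed identities. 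First I would verify the standing hypotheses: each $\cM_i$ is invariant under the creation operators $S_{i,j}$, hence $\cM:=\otimes_{i=1}^k\cM_i$ is invariant under every ${\bf S}_{i,j}=I\otimes\cdots\otimes S_{i,j}\otimes\cdots\otimes I$, so $\cM$ is a legitimate invariant subspace and the compression ${\bf M}:=P_{\cM^\perp}{\bf S}|_{\cM^\perp}$ lies in the polyball and has finite rank.

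For a self-contained argument I would instead derive both formulas directly from Theorem \ref{multiplicity}. The key observation is that the orthogonal projection onto $\cM=\otimes_{i=1}^k\cM_i$ factors as $P_\cM=P_{\cM_1}\otimes\cdots\otimes P_{\cM_k}$, and each level projection $P_{q_i}^{(i)}$ acts only on the $i$-th tensor factor $F^2(H_{n_i})$. Hence, by multiplicativity of the trace on tensor products, both numerator and denominator in the trace-ratio formula of Theorem \ref{multiplicity} split across the factors,
\begin{equation*}
\frac{\text{\rm trace\,}\!\left[P_\cM\,(P_{q_1}^{(1)}\otimes\cdots\otimes P_{q_k}^{(k)})\right]}{\text{\rm trace\,}\!\left[P_{q_1}^{(1)}\otimes\cdots\otimes P_{q_k}^{(k)}\right]}
=\prod_{i=1}^k\frac{\text{\rm trace\,}\!\left[P_{\cM_i}P_{q_i}^{(i)}\right]}{\text{\rm trace\,}\!\left[P_{q_i}^{(i)}\right]}.
\end{equation*}
Taking the iterated limit $\lim_{q_1\to\infty}\cdots\lim_{q_k\to\infty}$ (which exists by Theorem \ref{multiplicity}) and pulling each inner limit through the product, which is legitimate since the $i$-th factor depends only on $q_i$, yields $m(\otimes_{i=1}^k\cM_i)=\prod_{i=1}^k m_i(\cM_i)$, where each single-factor limit equals $m_i(\cM_i)$ by Theorem \ref{multiplicity} applied to the ball $F^2(H_{n_i})$.

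The curvature identity then follows from the relation $m(\cdot)=\dim\cE-\text{\rm curv}({\bf M})$ of Theorem \ref{multiplicity}, applied with $\cE=\CC$. In the single-factor setting this gives $m_i(\cM_i)=1-\text{\rm curv}_i(P_{\cM_i^\perp}{\bf S}_i|_{\cM_i^\perp})$ for each $i$, and for the product space it gives $\text{\rm curv}(P_{\cM^\perp}{\bf S}|_{\cM^\perp})=1-m(\otimes_{i=1}^k\cM_i)$. Substituting the multiplicity product formula into the last equation produces
\begin{equation*}
\text{\rm curv}(P_{\cM^\perp}{\bf S}|_{\cM^\perp})=1-\prod_{i=1}^k\bigl(1-\text{\rm curv}_i(P_{\cM_i^\perp}{\bf S}_i|_{\cM_i^\perp})\bigr),
\end{equation*}
as claimed.

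The only genuinely technical point, and the one I expect to require care, is the canonical unitary that regroups $\otimes_{i=1}^k\bigl(F^2(H_{n_i})\otimes\CC\bigr)$ into $\bigl(\otimes_{i=1}^k F^2(H_{n_i})\bigr)\otimes\CC$ and confirms that it intertwines $P_{q_1}^{(1)}\otimes\cdots\otimes P_{q_k}^{(k)}$ with the corresponding product projection, so that the trace factorization above is literally valid. Since the coefficient spaces are all one-dimensional, this regrouping is essentially trivial, collapsing to the identity after the identification $V\otimes\CC\cong V$, and no further obstacle arises; the bulk of the work is the trace bookkeeping, which is routine given Theorem \ref{multiplicity}.
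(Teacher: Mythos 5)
Your proposal is correct and matches the paper's route: the paper offers no separate proof for this corollary precisely because it is the specialization $p=k$, $\kappa\equiv 1$, $\cE_i=\CC$ (so $q_i=1$) of the preceding tensor-product theorem, which is exactly your first paragraph. Your self-contained derivation—factoring $P_\cM=P_{\cM_1}\otimes\cdots\otimes P_{\cM_k}$ so the trace ratio in Theorem \ref{multiplicity} splits across the factors, then converting between multiplicity and curvature via $m(\cdot)=\dim\cE-\text{\rm curv}({\bf M})$—is the same argument the paper uses to prove that theorem, so nothing genuinely different is at play.
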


\begin{theorem}\label{quotients-inv} Let $\cM$, $\cN$ be   invariant subspaces of
$\otimes_{i=1}^k F^2(H_{n_i})\otimes \cE$ such that they do not contain nontrivial reducing subspaces for the universal model ${\bf S}\otimes I_\cE$.
 If $P_{\cM^\perp}({\bf S}\otimes I_\cE)|_{\cM^\perp}$ is unitarily equivalent to $P_{\cN^\perp}({\bf S}\otimes I_\cE)|_{\cN^\perp}$, then  there is a unitary operator $U\in B(\cE)$ such that
$$(I\otimes U)P_\cM=P_\cN(I\otimes U).
$$
Conversely, if $\cM$, $\cN$ are  Beurling type  invariant subspaces  and
 there is a unitary operator $U\in B(\cE)$ such that
$(I\otimes U)P_\cM=P_\cN(I\otimes U)$,
then $P_{\cM^\perp}({\bf S}\otimes I_\cE)|_{\cM^\perp}$ is unitarily equivalent to $P_{\cN^\perp}({\bf S}\otimes I_\cE)|_{\cN^\perp}$.
\end{theorem}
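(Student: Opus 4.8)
The plan is to prove the two implications separately; the converse is a short direct computation, while the forward implication rests on the Berezin-kernel model and the structure of co-invariant subspaces. For the converse, assume $\cM,\cN$ are Beurling type and $(I\otimes U)P_\cM=P_\cN(I\otimes U)$ for a unitary $U\in B(\cE)$. Then $P_\cN=(I\otimes U)P_\cM(I\otimes U)^*$, so $(I\otimes U)(\cM)=\cN$ and $(I\otimes U)(\cM^\perp)=\cN^\perp$. The point is that $I\otimes U$ commutes with every ${\bf S}_{i,j}\otimes I_\cE$, since both compositions equal ${\bf S}_{i,j}\otimes U$. Hence $V:=(I\otimes U)|_{\cM^\perp}\colon\cM^\perp\to\cN^\perp$ is a unitary, and using $(I\otimes U)P_{\cM^\perp}=P_{\cN^\perp}(I\otimes U)$ together with this commutation one checks that $V$ intertwines $P_{\cM^\perp}({\bf S}_{i,j}\otimes I_\cE)|_{\cM^\perp}$ with $P_{\cN^\perp}({\bf S}_{i,j}\otimes I_\cE)|_{\cN^\perp}$, giving the asserted unitary equivalence.

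For the forward implication, put $T:=P_{\cM^\perp}({\bf S}\otimes I_\cE)|_{\cM^\perp}$ and $T':=P_{\cN^\perp}({\bf S}\otimes I_\cE)|_{\cN^\perp}$. As in \eqref{De}, these are pure elements of the polyball with ${\bf \Delta_{T}}(I)=P_{\cM^\perp}({\bf P}_\CC\otimes I_\cE)|_{\cM^\perp}$, and analogously for $T'$. Let $W\colon\cM^\perp\to\cN^\perp$ be a unitary with $WT_{i,j}W^*=T'_{i,j}$. Conjugation by $W$ carries $\Phi_{T_i}$ to $\Phi_{T'_i}$, so $W{\bf \Delta_{T}}(I)={\bf \Delta_{T'}}(I)W$; thus $W$ restricts to a unitary $w\colon\cD\to\cD'$ of the defect spaces $\cD:=\overline{{\bf \Delta_{T}}(I)(\cM^\perp)}$, $\cD':=\overline{{\bf \Delta_{T'}}(I)(\cN^\perp)}$, and $W{\bf \Delta_{T}}(I)^{1/2}={\bf \Delta_{T'}}(I)^{1/2}W$. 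Feeding $WT_{i,j}^*=T'^*_{i,j}W$ into the defining series of the Berezin kernel, a term-by-term comparison yields the intertwining $(I\otimes w){\bf K_{T}}={\bf K_{T'}}W$.

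The crux is to recover the concrete inclusion $\iota_\cM\colon\cM^\perp\hookrightarrow\otimes_{i=1}^kF^2(H_{n_i})\otimes\cE$ from ${\bf K_{T}}$ up to a constant unitary. Both $\iota_\cM$ and $(I\otimes\tau){\bf K_{T}}$ intertwine $T_{i,j}^*$ with ${\bf S}_{i,j}^*\otimes I_\cE$, and any such intertwiner $A$ is determined by its zeroth Fourier coefficient $({\bf P}_\CC\otimes I_\cE)A$, since for each multi-index $\beta$ the $\beta$-th coefficient of $Ah$ equals $({\bf P}_\CC\otimes I_\cE)A\,T_\beta^*h$ with $T_\beta^*:=T_{1,\beta_1}^*\cdots T_{k,\beta_k}^*$. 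Let $\tau_\cM\colon\cD\to\cE$ be the polar factor of $({\bf P}_\CC\otimes I_\cE)|_{\cM^\perp}=\tau_\cM{\bf \Delta_{T}}(I)^{1/2}$; its final space is $\overline{({\bf P}_\CC\otimes I_\cE)(\cM^\perp)}$, which is all of $\cE$ exactly because $\cM$ contains no reducing subspace---otherwise the structure theorem for co-invariant subspaces (Theorem~5.1 of \cite{Po-Berezin-poly}) would exhibit a reducing summand $\otimes_{i=1}^kF^2(H_{n_i})\otimes\cG\subseteq\cM$. Hence $\tau_\cM$ is unitary, $\dim\cD=\dim\cE$, and since $\iota_\cM$ and $(I\otimes\tau_\cM){\bf K_{T}}$ share the zeroth coefficient $\tau_\cM{\bf \Delta_{T}}(I)^{1/2}$, they coincide; likewise $\iota_\cN=(I\otimes\tau_\cN){\bf K_{T'}}$.

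To finish, set $U:=\tau_\cN w\tau_\cM^{-1}\in B(\cE)$, a unitary. Then
\[(I\otimes U)\iota_\cM=(I\otimes\tau_\cN)(I\otimes w){\bf K_{T}}=(I\otimes\tau_\cN){\bf K_{T'}}W=\iota_\cN W,\]
so $(I\otimes U)(\cM^\perp)=\cN^\perp$, whence $(I\otimes U)(\cM)=\cN$ and $(I\otimes U)P_\cM=P_\cN(I\otimes U)$. I expect the main obstacle to be the identification carried out in the previous paragraph: showing that the concrete embedding equals the Berezin embedding up to the constant unitary $\tau_\cM$, and in particular that the no-reducing-subspace hypothesis is precisely what forces $\tau_\cM$ to be onto $\cE$, so that $U$ is a genuine unitary of $\cE$ rather than merely an isometry into it. The ``determined by the zeroth coefficient'' principle and the co-invariant structure theorem are the tools that make this step go through.
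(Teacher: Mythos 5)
Your proof is correct, but the forward implication follows a genuinely different route from the paper's. The paper argues $C^*$-algebraically: it forms the unital completely positive maps $\Psi_1(X)=P_{\cM^\perp}X|_{\cM^\perp}$ and $\Psi_2(X)=P_{\cN^\perp}X|_{\cN^\perp}$ on the Cuntz--Toeplitz algebra $C^*({\bf S}_{i,j})$, notes that the no-reducing-subspace hypothesis makes $\cM^\perp$ and $\cN^\perp$ cyclic so that $\pi_s(X)=X\otimes I_\cE$ are \emph{minimal} Stinespring dilations of $\Psi_1,\Psi_2$, lifts the intertwining unitary $Z$ to a dilation-level unitary $W$ with $W\pi_1(X)=\pi_2(X)W$ and $WP_\cM=P_\cN W$ by uniqueness of minimal dilations \cite{Arv}, and finally invokes irreducibility of $C^*({\bf S}_{i,j})$ (Lemma 5.5 of \cite{Po-Berezin-poly}) to force $W=I\otimes U$. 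You instead stay entirely inside the Fock-space model: you intertwine the Berezin kernels, $(I\otimes w){\bf K_T}={\bf K_{T'}}W$, identify the concrete embedding $\iota_\cM$ with $(I\otimes\tau_\cM){\bf K_T}$ using the fact that an operator intertwining $T_{i,j}^*$ with ${\bf S}_{i,j}^*\otimes I_\cE$ is determined by its zeroth Fourier coefficient, and convert the no-reducing-subspace hypothesis, via the co-invariant structure theorem (Theorem 5.1 of \cite{Po-Berezin-poly}), into surjectivity of $\tau_\cM$. Both proofs spend the hypothesis at the analogous point --- cyclicity of $\cM^\perp$ versus density of $({\bf P}_\CC\otimes I_\cE)(\cM^\perp)$ in $\cE$, which are equivalent by that same structure theorem --- but yours is constructive (the unitary $U=\tau_\cN w\tau_\cM^{-1}$ is explicit), avoids Stinespring uniqueness and the irreducibility lemma, and is close in spirit to the paper's own Lemma \ref{coincidence}, which likewise matches a minimal dilation against the Berezin kernel; what the paper's route buys is brevity modulo the cited machinery and a template that transfers verbatim to the commutative setting of Lemma \ref{coincidence2}. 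For the converse, your argument is exactly the operative core of the paper's proof: the paper first detours through the isometric multi-analytic operators $\psi_1,\psi_2$ and a constant unitary $A$, but its conclusion rests precisely on your observation that $I\otimes U$ commutes with every ${\bf S}_{i,j}\otimes I_\cE$ and carries $\cM^\perp$ onto $\cN^\perp$; note that your version never actually uses the Beurling-type hypothesis, which the statement (and the paper's detour) carries but the commutation argument does not need.
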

\begin{proof}  Let  $\cM$, $\cN$ be   invariant subspaces of
$\otimes_{i=1}^k F^2(H_{n_i})\otimes \cE$ such that they do not contain nontrivial reducing subspaces for  ${\bf S}\otimes I$. Denote  ${\bf S}_{(\alpha)}:= {\bf S}_{1,\alpha_1}\cdots {\bf S}_{k,\alpha_k}$ if  $(\alpha):=(\alpha_1,\ldots, \alpha_k)$ is  in $ \FF_{n_1}^+\times \cdots \times\FF_{n_k}^+$, and note that the Cuntz-Toeplitz algebra \cite{Cu} generated by the shifts ${\bf S}_{i,j}$ satisfies the relation
$$C^*({\bf S}_{i,j})=\overline{\text{\rm span}}\{{\bf S}_{(\alpha)}{\bf S}_{(\beta)}^*: \ (\alpha), (\beta)\in \FF_{n_1}^+\times \cdots \times \FF_{n_k}^+\}.
$$
Let $\Psi_1:C^*({\bf S}_{i,j})\to B(\cM^\perp)$ and $\Psi_2:C^*({\bf S}_{i,j})\to B(\cN^\perp)$ be the unital completely positive linear maps defined by
$\Psi_1({\bf S}_{(\alpha)}{\bf S}_{(\beta)}^*):=P_{\cM^\perp}{\bf S}_{(\alpha)}{\bf S}_{(\beta)}^*|_{\cM^\perp}$ and
$\Psi_2({\bf S}_{(\alpha)}{\bf S}_{(\beta)}^*):=P_{\cN^\perp}{\bf S}_{(\alpha)}{\bf S}_{(\beta)}^*|_{\cN^\perp}$ for any $(\alpha), (\beta)\in \FF_{n_1}^+\times \cdots \times \FF_{n_k}^+$, respectively.
Consider the $*$-representations $\pi_1:C^*({\bf S}_{i,j})\to B(\otimes_{i=1}^kF^2(H_{n_i})\otimes \cE)$  and  $\pi_2:C^*({\bf S}_{i,j})\to B(\otimes_{i=1}^kF^2(H_{n_i})\otimes \cE)$ defined by
$\pi_1(X):=X\otimes I_{\cE}$ and $\pi_2(X):=X\otimes I_{\cE}$ for any
$X\in C^*({\bf S}_{i,j})$, respectively.
Note that $\Psi_s(X)=V_s^*\pi_s(X)V_s$ for $s=1,2$, where
$V_1:\cM^\perp\to \otimes_{i=1}^k F^2(H_{n_i})\otimes \cE$  and
$V_2:\cN^\perp\to \otimes_{i=1}^k F^2(H_{n_i})\otimes \cE$  are  the injection maps.
Note that if $\cM$ is an   invariant subspace of
$\otimes_{i=1}^k F^2(H_{n_i})\otimes \cH$, then $\cM$  does not contain nontrivial reducing subspaces for  ${\bf S}\otimes I_\cH$ if and only if $\cM^\perp$ is a cyclic subspace  for ${\bf S}\otimes I_\cH$, i.e.
\begin{equation*}
\overline{\text{\rm span}}\,\left\{\left({\bf S}_{1,\beta_1}\cdots {\bf S}_{k,\beta_k}\otimes
I_\cE\right)\cM^\perp:\ \beta_1\in \FF_{n_1}^+,\ldots, \beta_k\in \FF_{n_k}^+\right\}=\otimes_{i=1}^k F^2(H_{n_i})\otimes \cH.
\end{equation*}
Consequently, $\pi_1$ and $\pi_2$  are  minimal Stinespring dilations of the unital completely positive linear maps $\Psi_1$ and $\Psi_2$, respectively.
  Now, assume that there is a unitary operator $Z:\cM^\perp\to \cN^\perp$ such that  $ZP_{\cM^\perp}({\bf S}_{i,j}\otimes I_\cE)|_{\cM^\perp}=P_{\cN^\perp}({\bf S}_{i,j}\otimes I_\cE)|_{\cN^\perp}Z$ for any $i\in\{1,\ldots, k\}$ and $j\in \{1,\ldots, n_i\}$.
 It is easy to see that $Z\Psi_1(X)=\Psi_2(X)Z$ for any $X\in C^*({\bf S}_{i,j})$.
 Now, using standard arguments  concerning the uniqueness of minimal dilations of completely positive maps of $C^*$-algebras (see \cite{Arv}), we deduce that there is a unique unitary operator $W\in B(\otimes_{i=1}^k F^2(H_{n_i})\otimes \cE)$ such that $W\pi_1(X)=\pi_2(X)W$ for any $X\in C^*({\bf S}_{i,j})$, and $WV_1=V_2Z$.
 Consequently, $WV_1V_1^*W^*=V_2UU^*V_2^*=V_2V_2^*$, which is equivalent to
 $WP_{\cM}=P_{\cN}W$.
Since $C^*({\bf S}_{i,j})$ is irreducible (see Lemma 5.5 from \cite{Po-Berezin-poly})
we must have $W=I\otimes U$ where $U\in B(\cE)$ is a unitary operator.

Now, we prove the converse. Assume that $U\in B(\cE)$  is a unitary operator such that
$(I\otimes U)P_\cM=P_\cN(I\otimes U)$, where $\cM$, $\cN$ are Beurling type  invariant subspaces of
$\otimes_{i=1}^k F^2(H_{n_i})\otimes \cE$ such that they do not contain nontrivial reducing subspaces for the universal model ${\bf S}\otimes I_\cE$. Therefore, we can find isometric multi-analytic operators
$\psi_s:\otimes_{i=1}^k F^2(H_{n_i})\otimes \cE_s\to \otimes_{i=1}^k F^2(H_{n_i})\otimes \cE$, $s=1,2$, such that $\psi_1 \psi_1^*=P_\cM$ and $\psi_2 \psi_2^*=P_\cN$. Consequently, setting $\psi:=(I\otimes U^*)\psi_2$, the relations above imply $\psi_1\psi_1^*=\psi\psi^*$. Since $\psi_1, \psi$ are multi-analytic operators and
${\bf P}_\CC= (id-\Phi_{{\bf S}_1})\cdots (id-\Phi_{{\bf S}_k})(I)$, we obtain
$$\|({\bf P}_\CC\otimes I_{\cE_1})\psi_1^*x\|=\|({\bf P}_\CC\otimes I_{\cE_2})\psi^*x\|,\qquad x\in \otimes_{i=1}^k F^2(H_{n_i})\otimes \cE.
$$
Define the operator $A\in B(\cE_1,\cE_2)$ by setting
$A(({\bf P}_\CC\otimes I_{\cE_1})\psi_1^*x):=({\bf P}_\CC\otimes I_{\cE_2})\psi^*x$ for $x\in \otimes_{i=1}^k F^2(H_{n_i})\otimes \cE$.
Since $\psi_1$ is an isometric multi-analytic operator and $\cM$ has no nontrivial reducing subspaces for the universal model ${\bf S}\otimes I_\cE$, we must have
 $\cE_1=({\bf P}_\CC\otimes I_{\cE_1})\psi_1^*(\otimes_{i=1}^k F^2(H_{n_i})\otimes \cE)$.
 A similar result holds for $\cE_2$.  Therefore, the operator $A$ is unitary and
 $\psi({\bf P}_\CC\otimes I_{\cE_2})=\psi_1({\bf P}_\CC\otimes I_{\cE_1})A^*$.
 Hence, $\psi A|_{\CC\otimes \cE_1}=\psi_1|_{\CC\otimes \cE_1}$ and, due to the analyticity of $\psi $ and $\psi_1$,  we have $\psi(I\otimes A)=\psi_1$.
Since $\psi:=(I\otimes U^*)\psi_2$, we deduce that
$(I\otimes U)\psi_1=\psi_2(I\otimes A)$. Now, note that the unitary operator $I\otimes U$ takes $\cM$ onto $\cN$ and $\cM^\perp$ onto $\cN^\perp$.
 Since $({\bf S}_{i,j}^*\otimes I_\cE)(I\otimes U^*)=(I\otimes U^*)({\bf S}_{i,j}^*\otimes I_\cE)$ and $\cM^\perp$, $\cN^\perp$ are invariant subspaces under
 ${\bf S}_{i,j}^*\otimes I_\cE$, we deduce that
 $({\bf S}_{i,j}^*\otimes I_\cE)|_{\cM^\perp}\Lambda^*=\Lambda^*({\bf S}_{i,j}^*\otimes I_\cE)|_{\cN^\perp}$, where
  $\Lambda:=(I\otimes U)|_{\cM^\perp}$. Consequently,
  $$
  \Lambda P_{\cM^\perp}({\bf S}_{i,j}\otimes I_\cE)|_{\cM^\perp}=P_{\cN^\perp}({\bf S}_{i,j}\otimes I_\cE)|_{\cN^\perp}\Lambda
  $$
  for any $i\in\{1,\ldots, k\}$ and $j\in \{1,\ldots, n_i\}$.
This completes the proof.
\end{proof}

We remark that  when $\cM$, $\cN$ are nontrivial invariant subspaces of
$\otimes_{i=1}^k F^2(H_{n_i})$, Theorem \ref{quotients-inv} shows that
 $P_{\cM^\perp}{\bf S} |_{\cM^\perp}$ is unitarily equivalent to $P_{\cN^\perp}{\bf S}|_{\cN^\perp}$ if and only $\cM=\cN$.

\begin{theorem} \label{value}Let ${\bf n}=(n_1,\ldots, n_k)\in \NN^k$ be such that $n_i\geq 2$ and $n_j\geq 2$ for some $i,j\in \{1,\ldots, k\}$, $i\neq j$. Then, for each $t\in(0,1)$,  there exists an uncountable family $\{T^{(\omega)}(t)\}_{\omega\in \Omega}$ of pure elements in the regular polyball with the following properties:
\begin{enumerate}
\item[(i)] $T^{(\omega)}(t)$ in not unitarily equivalent to $T^{(\sigma)}(t)$ for any $\omega, \sigma\in \Omega$, $\omega\neq \sigma$.
\item[(ii)] $\rank [T^{(\omega)}(t)]=1$ and
$\text{\rm curv}\, [T^{(\omega)}(t)]=t$ for all $\omega\in \Omega.
$
\end{enumerate}
\end{theorem}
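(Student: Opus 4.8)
The plan is to realize each $T^{(\omega)}(t)$ as the compression of the universal model ${\bf S}$ to the orthocomplement of an invariant subspace, and thereby reduce the theorem to a combinatorial construction of invariant subspaces with a prescribed multiplicity.

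\emph{Reduction.} For a \emph{proper} invariant subspace $\cM\subseteq\otimes_{l=1}^k F^2(H_{n_l})$ I set ${\bf T}_\cM:=P_{\cM^\perp}({\bf S})|_{\cM^\perp}$. Exactly as in the proof of Theorem \ref{multiplicity} (relation \eqref{De}), ${\bf T}_\cM$ is a pure element of ${\bf B_n}(\cM^\perp)$ with ${\bf \Delta}_{{\bf T}_\cM}(I)=P_{\cM^\perp}{\bf P}_\CC|_{\cM^\perp}$. Since $1:=1\otimes\cdots\otimes 1$ is cyclic for ${\bf S}$, every proper $\cM$ has $1\notin\cM$, so $P_{\cM^\perp}1\neq 0$ and hence $\rank{\bf \Delta}_{{\bf T}_\cM}(I)=1$. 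By Theorem \ref{multiplicity} with $\cE=\CC$ we get $\text{\rm curv}({\bf T}_\cM)=1-m(\cM)$. Finally, the only reducing subspaces of ${\bf S}$ are $\{0\}$ and the whole space, so the hypotheses of Theorem \ref{quotients-inv} hold automatically; its forward implication (with $\cE=\CC$, where the intertwining unitary is a scalar) shows that ${\bf T}_\cM\cong{\bf T}_\cN$ forces $\cM=\cN$. Thus it suffices to exhibit, for the given $t$, an uncountable family of pairwise distinct proper invariant subspaces $\cM^{(\omega)}$ with $m(\cM^{(\omega)})=1-t$; then $T^{(\omega)}(t):={\bf T}_{\cM^{(\omega)}}$ satisfies (i) and (ii).

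\emph{Prescribing the multiplicity.} Relabel so that $n_1\geq 2$. I would use \emph{monomial} invariant subspaces: for a set $W\subseteq\FF_{n_1}^+$ closed under left multiplication by the generators, put $\cM_W:=\overline{\text{\rm span}}\{e^1_\alpha:\alpha\in W\}$ and $\cM^{(\omega)}:=\cM_{W}\otimes F^2(H_{n_2})\otimes\cdots\otimes F^2(H_{n_k})$. Since $P_{\cM_W}$ is diagonal in the monomial basis, $\text{\rm trace\,}[P_{\cM_W}P_{q}^{(1)}]=\#\{\alpha\in W:|\alpha|=q\}=:s_q$, and Theorem \ref{multiplicity} together with the multiplicativity of Corollary \ref{for-perp} yields $m(\cM^{(\omega)})=\lim_{q\to\infty}s_q/n_1^{q}$. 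Left-closedness forces $s_q\geq n_1 s_{q-1}$ and allows $s_q$ to be any integer in $[n_1 s_{q-1},\,n_1^{q}]$, since one may freely adjoin any word whose tail is not yet in $W$. Choosing $s_q=\lfloor(1-t)n_1^{q}\rfloor$ (with $\emptyset\notin W$, so $1\notin\cM^{(\omega)}$ and $\cM^{(\omega)}$ proper) gives $s_q/n_1^q\to 1-t$, hence multiplicity exactly $1-t$.

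\emph{Uncountability.} To get uncountably many such $W$ with the \emph{same} multiplicity, I would fix a sparse sequence of levels $q_1^\ast<q_2^\ast<\cdots$ at which exactly one new seed word is adjoined, and let a bit $\omega_l\in\{0,1\}$ select between two distinct admissible choices of that seed. Only the \emph{number} of seeds enters $s_q$ — the left-closure of a fixed number of seeds contributes the same cardinalities at every later level — so all $W^{(\omega)}$, $\omega\in\{0,1\}^\NN$, share the same counts $s_q$ and hence the same multiplicity $1-t$, while differing as sets; the corresponding $\cM^{(\omega)}$ are therefore pairwise distinct. This furnishes the required uncountable family.

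The main obstacle is the combinatorial bookkeeping of the last two steps: one must check that the directed limit defining $m(\cM^{(\omega)})$ exists and equals $1-t$ \emph{uniformly} in $\omega$, and that the binary choices are genuinely density‑neutral yet produce distinct word‑sets (so distinct subspaces). Once that is arranged, every operator‑theoretic assertion — purity, the value of the curvature, rank one, and non‑equivalence — follows formally from Theorem \ref{multiplicity} and Theorem \ref{quotients-inv}. The hypothesis supplies two indices with $n_i,n_j\geq 2$; the construction above in fact concentrates all the work in a single such factor, which already provides both the room for an arbitrary density in $(0,1)$ and the binary choices needed for uncountability.
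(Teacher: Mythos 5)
Your proposal is correct in outline and takes a genuinely different route from the paper's proof. The paper also reduces the theorem to compressions $P_{\cM^\perp}{\bf S}|_{\cM^\perp}$ of monomial (Beurling-type) invariant subspaces built from base-$n_i$ expansions --- its building block $\cM_i(t)=\bigoplus_{\beta\in\cup_p J_p^i}F^2(H_{n_i})\otimes e_\beta^i$ is exactly one of your $\cM_W$'s with a carefully chosen seed set --- but it produces the uncountable family differently: it uses \emph{two} factors with $n_1,n_2\geq 2$ and the continuum of factorizations $1-t=\omega\cdot\frac{1-t}{\omega}$, $\omega\in(1-t,1)$, setting $\cM^{(\omega)}:=\cM_1(1-\omega)\otimes\cM_2\bigl(1-\frac{1-t}{\omega}\bigr)\otimes F^2(H_{n_3})\otimes\cdots\otimes F^2(H_{n_k})$. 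Non-equivalence is then essentially free: the curvature of the second-factor compression equals $1-\frac{1-t}{\omega}$ and already separates the parameters, and Theorem \ref{quotients-inv} upgrades this to distinctness of the subspaces and non-equivalence of the tuples. Your scheme concentrates everything in a single factor with $n_1\geq 2$ and gets uncountability from density-neutral choices of seeds. What each approach buys: the paper's argument needs almost no combinatorial bookkeeping (a unitary invariant does the separating), but it genuinely requires the two-large-factor hypothesis; yours, once completed, proves the formally stronger statement that a single index with $n_i\geq 2$ suffices, which the paper's method cannot reach.

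Two concrete repairs are needed in your uncountability step. First, "exactly one new seed per sparse level" only yields multiplicities of the form $\sum_l n_1^{-q_l^\ast}$, i.e.\ numbers whose base-$n_1$ digits lie in $\{0,1\}$; for $n_1\geq 3$ this misses most values of $t$. The binary choices should instead be run on top of your greedy construction, adjoining $d_q:=s_q-n_1s_{q-1}\in\{0,\ldots,n_1-1\}$ seeds at level $q$. Second, if $1-t$ has a terminating base-$n_1$ expansion, the greedy choice adjoins seeds at only finitely many levels, producing only finitely many subspaces; one must switch to the non-terminating expansion (equivalently, pick an admissible sequence $(s_q)$ with $s_q/n_1^q\to 1-t$ and $d_q\geq 1$ infinitely often). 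With these fixes your key combinatorial claims do hold: the left-closure of a word that was non-forced when adjoined is disjoint from the closures of all other seeds (if $\alpha'=\delta\alpha$ with $|\delta|\geq1$ and $\alpha\in W$, then $\alpha'$ is forced), so $s_q$ depends only on the number of seeds per level; the number of non-forced words at level $q$ is $n_1^q-n_1s_{q-1}\sim t\,n_1^q\to\infty$, so for all large $q$ there are at least two admissible selections; and the seed set is recoverable from $W$ (a word $g_j\beta\in W$ is a seed iff $\beta\notin W$), so distinct choice sequences give distinct subspaces and hence, by Theorem \ref{quotients-inv} with $\cE=\CC$, non-equivalent tuples.
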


\begin{proof}    Assume that $n_i\geq 2$. If $t\in[0,1)$, there exists a subsequence  of natural numbers $\{k_p\}_{p=1}^N$, $1\leq k_1<k_2<\cdots$, where $N\in \NN$ or $N=\infty$,
and $d_p\in\{1,2,\ldots, n_i-1\}$,  such that $1-t=\sum_{p=1}^N \frac{d_p}{n_i^{k_p}}$. Recall that $\FF_{n_i}^+$ is the unital free semigroup on $n_i$ generators
$g_{1}^i,\ldots, g_{n_i}^i$ and the identity $g_{0}^i$.  Define the following subsets of $\FF_{n_i}^+$:
\begin{equation*}
\begin{split}
J_1^i&:=\left\{(g_1^i)^{k_1},\ldots, (g_{d_1}^i)^{k_1}\right\},\\
J_p^1&:=\left\{(g_1^i)^{k_p-k_{p-1}}(g_{n_i}^1)^{k_{p-1}}, (g_2^i)^{k_p-k_{p-1}}(g_{n_i}^i)^{k_{p-1}},\ldots, (g_{d_p}^i)^{k_p-k_{p-1}}(g_{n_i}^i)^{k_{p-1}}\right\}, \quad p=2,3,\ldots, N.
\end{split}
\end{equation*}
It is clear  that
\begin{equation}
\label{Mi}
\cM_i(t):=\bigoplus_{\beta\in \cup_{p=1}^N J_p^i} F^2(H_{n_i})\otimes e_\beta^i
\end{equation}
is an  invariant  subspace of $F^2(H_{n_i})$.
 If $k_p\leq q_i<k_{p+1}$, then we have
\begin{equation*}
\begin{split}
\frac{\text{\rm trace\,}\left[P_{\cM_i(t)^\perp}P_{q_i}^{(i)}  \right]}{\text{\rm trace\,}\left[P_{q_i}^{(i)}\right]}&=1-\frac{\text{\rm trace\,}\left[P_{\cM_i(t)}P_{q_i}^{(i)}  \right]}{\text{\rm trace\,}\left[P_{q_i}^{(i)}\right]} \\
&=1-\frac{1}{\text{\rm trace\,}\left[P_{q_i}^{(i)} \right]}
\sum_{\beta_i\in\FF_{n_i}^+}
\left<P_{\cM_i(t)}P_{q_i}^{(i)}e^i_{\beta_i},
e^i_{\beta_i} \right>\\
&=
1-\sum_{\beta_i\in\FF_{n_i}^+}
\frac{\left<P_{q_i}^{(i)} P_{\cM_i(t)} P_{q_i}^{(i)} e_{\beta_i}^i, e_{\beta_i}^i\right>}{n_i^{q_i}}
=1-\frac{1}{n_i^{q_i}}\sum_{\beta_i\in \FF_{n_i}^+, |\beta_i|=q_i}\|P_{\cM_i(t)}e_{\beta_i}^i\|^2
\\
&=1-\frac{d_1 n_i^{q_i-k_1}+\cdots + d_p n_i^{q_i-k_p}}{n_i^{q_i}}
=1- \left(\frac{d_1}{n_i^{k_1}}+\cdots + \frac{d_p}{n_i^{k_p}}\right).
\end{split}
\end{equation*}
Hence and using  Theorem \ref{multiplicity}, we deduce that
\begin{equation*}
\begin{split}
\text{\rm curv}_i[P_{\cM_i(t)^\perp} {\bf S}_i|_{\cM_i(t)^\perp}]=1-\lim_{q_i\to\infty}
  \frac{\text{\rm trace\,}\left[P_{\cM_i(t)}P_{q_i}^{(i)}  \right]}{\text{\rm trace\,}\left[P_{q_i}^{(i)}\right]}
 = 1-\sum_{p=1}^N \frac{d_p}{n_1^{k_p}}=t.
\end{split}
\end{equation*}
Fix $t\in (0,1)$, assume that $n_1, n_2\geq 2$, and let $\omega\in (1-t, 1)$.
Set
$$\cN_1(\omega):=\cM_1(1-\omega)\quad \text{ and }\quad \cN_2(\omega,t):=\cM_2\left(1-\frac{1-t}{\omega}\right)
$$ and  define
$$
\cM^{(\omega)}:=\cN_1(\omega)\otimes \cN_2(\omega, t)\otimes F^2(H_{n_3})\otimes \cdots \otimes F^2(H_{n_k}),
$$
which is an invariant subspace under ${\bf S}_{i,j}$ for any $i\in\{1,\ldots, k\}$ and $j\in \{1,\ldots, n_i\}$.
Defining
$T^{(\omega)}(t):=P_{{\cM^{(\omega)}}^\perp} {\bf S}|_{{\cM^{(\omega)}}^\perp}$
and using Corollary \ref{for-perp}, we deduce that
\begin{equation*}
\begin{split}
\text{\rm curv}\, [T^{(\omega)}(t)]&=1-\left[1-   \text{\rm curv}_1\left(P_{\cN_1(\omega)^\perp} {\bf S}_1|_{\cN_1(\omega)^\perp}\right)\right]          \left[1-\text{\rm curv}_2\left(P_{\cN_2(\omega, t)^\perp} {\bf S}_2|_{\cN_2(\omega, t)^\perp}\right)\right]\\
&=1-[1-(1-\omega)]\left[1-\left(1-\frac{1-t}{\omega}\right)\right]=t.
\end{split}
\end{equation*}
Since  the curvature is a unitary invariant and $\text{\rm curv}_2 [P_{\cN_2(\omega, t)^\perp} {\bf S}_2|_{\cN_2(\omega, t)^\perp}]=1-\frac{1-t}{\omega}$, we deduce that $P_{\cN_2(\omega, t)^\perp} {\bf S}_2|_{\cN_2(\omega, t)^\perp}$ is not unitary equivalent to
$P_{\cN_2(\sigma, t)^\perp} {\bf S}_2|_{\cN_2(\sigma, t)^\perp}$ if $\omega, \sigma\in (1-t,1)$ and $\omega\neq \sigma$.
Due to Theorem \ref{quotients-inv}, when $\cE=\CC$, we deduce that
$\cN_2(\omega, t)\neq \cN_2(\sigma, t)$, which  implies  $\cM^{(\omega)}(t)\neq \cM^{(\sigma)}(t)$. Using again
Theorem \ref{quotients-inv}, we conclude that $T^{(\omega)}(t)$ in not unitarily equivalent to $T^{(\sigma)}(t)$ for any $\omega, \sigma\in (1-t, 1)$, $\omega\neq \sigma$. The fact that $\rank [T^{(\omega)}(t)]=1$ is obvious.
The proof is complete.
\end{proof}

 We remark that Theorem \ref{value} shows, in particular,  that  the curvature is not a complete invariant.

\begin{corollary} Let ${\bf n}=(n_1,\ldots, n_k)\in \NN^k$ be such that $n_i\geq 2$ and $n_j\geq 2$ for some $i,j\in \{1,\ldots, k\}$, $i\neq j$. For each $s\in (0,1)$, there is an uncountable family of distinct invariant subspaces $\{\cN^{(\omega)}\}_{\omega\in \Omega}$ of $F^2(H_{n_1})\otimes \cdots \otimes F^2(H_{n_k})$  such that
$
m(\cN^{(\omega)})=s$ for $ \omega\in \Omega.
$
\end{corollary}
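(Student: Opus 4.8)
The plan is to reduce the statement directly to Theorem \ref{value} via the curvature--multiplicity duality supplied by Theorem \ref{multiplicity}. Given $s\in(0,1)$, I set $t:=1-s$, which again lies in $(0,1)$. The hypothesis that $n_i\geq 2$ and $n_j\geq 2$ for some $i\neq j$ is exactly the hypothesis of Theorem \ref{value}, so I may apply that theorem to $t$. This produces an uncountable family $\{T^{(\omega)}(t)\}_{\omega\in\Omega}$ of pairwise non-unitarily-equivalent pure elements of rank one, where $T^{(\omega)}(t)=P_{(\cM^{(\omega)})^\perp}{\bf S}|_{(\cM^{(\omega)})^\perp}$ for the invariant subspaces $\cM^{(\omega)}=\cM^{(\omega)}(t)$ of $\otimes_{i=1}^k F^2(H_{n_i})$ built in its proof, and these satisfy $\text{\rm curv}\,[T^{(\omega)}(t)]=t$ for every $\omega\in\Omega$.

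Next I would define $\cN^{(\omega)}:=\cM^{(\omega)}(t)$ and invoke Theorem \ref{multiplicity} with the finite dimensional coefficient space $\cE=\CC$, so that $\dim\cE=1$. In this case the $k$-tuple ${\bf M}$ appearing in Theorem \ref{multiplicity} is precisely $P_{(\cN^{(\omega)})^\perp}({\bf S}\otimes I_\CC)|_{(\cN^{(\omega)})^\perp}=T^{(\omega)}(t)$, and the identity $m(\cM)=\dim\cE-\text{\rm curv}({\bf M})$ yields
\[
m(\cN^{(\omega)})=1-\text{\rm curv}\,[T^{(\omega)}(t)]=1-t=s
\]
for each $\omega\in\Omega$, as required.

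It remains to confirm that the family $\{\cN^{(\omega)}\}_{\omega\in\Omega}$ is uncountable and consists of distinct subspaces; both are already recorded in the proof of Theorem \ref{value}. Indeed, since the curvature is a unitary invariant and the parameter $\omega$ ranges over the uncountable interval $(1-t,1)$, the elements $T^{(\omega)}(t)$ are pairwise non-unitarily-equivalent, and Theorem \ref{quotients-inv} (applied with $\cE=\CC$) then forces $\cM^{(\omega)}(t)\neq\cM^{(\sigma)}(t)$ whenever $\omega\neq\sigma$. Hence the $\cN^{(\omega)}$ are distinct and $\Omega$ is uncountable.

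I do not expect a genuine obstacle here: the corollary is an immediate transcription of Theorem \ref{value} through the relation $m=\dim\cE-\text{\rm curv}$. The only point requiring any care is the bookkeeping substitution $t=1-s$ together with the observation that taking $\cE=\CC$ makes $\dim\cE=1$, so that the curvature value $t$ is converted into the multiplicity value $s=1-t$.
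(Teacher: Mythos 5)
Your proposal is correct and follows essentially the same route as the paper: the paper's own proof also reuses the family $\cM_1(1-\omega)\otimes\cM_2\bigl(1-\frac{s}{\omega}\bigr)\otimes F^2(H_{n_3})\otimes\cdots\otimes F^2(H_{n_k})$, $\omega\in(s,1)$, constructed in the proof of Theorem \ref{value} (which is exactly your family after the substitution $t=1-s$), together with the same distinctness argument via Theorem \ref{quotients-inv}. The only cosmetic difference is bookkeeping: the paper obtains $m(\cN^{(\omega)})=s$ directly from the multiplicative property of the multiplicity in Corollary \ref{for-perp}, whereas you pass through the curvature value of Theorem \ref{value} and the duality $m(\cM)=\dim\cE-\text{\rm curv}({\bf M})$ of Theorem \ref{multiplicity} with $\cE=\CC$; both computations are valid and rest on the same underlying machinery.
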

\begin{proof}   A closer look at the proof of Theorem \ref{value} and using  Corollary \ref{for-perp},  reveals that
  $m_i(\cM_i(t))=1-t$ for any  $t\in[0,1)$,  where $\cM_i(t)$ is defined by  relation \eqref{Mi}.
Fix $s\in (0,1)$, assume that $n_1, n_2\geq 2$, and let $\omega\in (s, 1)$. We define the subspace
$$
\cN^{(\omega)}=\cM_1(1-\omega)\otimes \cM_2\left(1-\frac{s}{\omega}\right)\otimes F^2(H_{n_3})\cdots \otimes F^2(H_{n_k}),
  $$
  where $\cM_1(1-\omega)\subset F^2(H_{n_1})$ and  $\cM_2\left(1-\frac{s}{\omega}\right)\subset F^2(H_{n_2})$  are invariant subspaces defined by relation \eqref{Mi}.
   Using  again  Corollary \ref{for-perp}, we obtain $m(\cN^{(\omega)})=s$ for all $\omega\in (s,1)$. As in the proof of Theorem \ref{value}, one can show that $\{\cN^{(\omega)}\}_{\omega\in (s,1)}$ are distinct  invariant subspaces.
\end{proof}

We remark that in the particular case when $n_1=\cdots=n_k=1$, Fang proved  in \cite{Fang} that  the multiplicity of any invariant subspace of $H(\DD^k)$ is  always a non-negative integer. When at least one $n_i$ is $\geq 2$, we have the following result.

\begin{corollary} \label{range} Let ${\bf n}=(n_1,\ldots, n_k)\in \NN^k$ be such that  there exists at least one number $n_i\geq2$ and let  $t\in [0,1]$. Then  there exists a pure element  ${\bf T}$  in the polyball ${\bf B_n}(\cH)$ such that $\rank ({\bf T})=1$ and
$$
\text{\rm curv}({\bf T})=t.
$$
Moreover, for each $s\in [0,1]$ there is an invariant subspace $\cN$ of  $ F^2(H_{n_1})\otimes\cdots \otimes F^2(H_{n_k})$  with multiplicity $$m(\cN)=s.$$
\end{corollary}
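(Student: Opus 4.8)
The plan is to realize every prescribed value by tensoring the single-factor building blocks $\cM_i(t)$ from the proof of Theorem \ref{value} against full Fock spaces in the remaining slots, and to handle the two endpoints separately by means of trivial invariant subspaces. Since the hypothesis only guarantees one index $\geq 2$, I fix $i_0$ with $n_{i_0}\geq 2$ and carry out the construction on the $i_0$-th factor.

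For the curvature statement with $t\in[0,1)$, recall from the proof of Theorem \ref{value} that for $n_{i_0}\geq 2$ there is an invariant subspace $\cM_{i_0}(t)\subset F^2(H_{n_{i_0}})$, defined by \eqref{Mi}, with $\text{\rm curv}_{i_0}(P_{\cM_{i_0}(t)^\perp}{\bf S}_{i_0}|_{\cM_{i_0}(t)^\perp})=t$; this covers all of $[0,1)$ because $1-t$ admits an expansion $\sum_p d_p/n_{i_0}^{k_p}$ with $d_p\in\{1,\dots,n_{i_0}-1\}$. I then set
$$
\cM:=F^2(H_{n_1})\otimes\cdots\otimes\cM_{i_0}(t)\otimes\cdots\otimes F^2(H_{n_k})
$$
and ${\bf T}:=P_{\cM^\perp}{\bf S}|_{\cM^\perp}$. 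By Corollary \ref{for-perp} the remaining factors $\cM_i=F^2(H_{n_i})$, $i\neq i_0$, have trivial orthocomplement and hence contribute the factor $1$, so that
$$
\text{\rm curv}({\bf T})=1-\prod_{i=1}^k\bigl(1-\text{\rm curv}_i(P_{\cM_i^\perp}{\bf S}_i|_{\cM_i^\perp})\bigr)=1-(1-t)=t.
$$
That ${\bf T}$ is pure and has rank one follows exactly as in Theorem \ref{value}: by \eqref{De} the defect of ${\bf T}$ equals $P_{\cM^\perp}{\bf P}_\CC|_{\cM^\perp}$, which is the rank-one projection onto $\CC 1$ because $1\in\cM^\perp$. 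For the endpoint $t=1$ I take ${\bf T}={\bf S}$ itself: it is pure, its defect ${\bf \Delta_{S}}(I)={\bf P}_\CC$ has rank one, and $\text{\rm curv}({\bf S})=1$ as computed in the proof of Theorem \ref{classification}. This settles the curvature part on the whole closed interval $[0,1]$.

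For the multiplicity statement I use $m(\cN)=\dim\cE-\text{\rm curv}({\bf M})$ from Theorem \ref{multiplicity} with $\cE=\CC$, equivalently the multiplicative formula $m(\otimes_i\cM_i)=\prod_i m_i(\cM_i)$ of Corollary \ref{for-perp} together with $m_{i_0}(\cM_{i_0}(t))=1-t$ and $m_i(F^2(H_{n_i}))=1$. For $s\in(0,1]$ I set $\cN:=F^2(H_{n_1})\otimes\cdots\otimes\cM_{i_0}(1-s)\otimes\cdots\otimes F^2(H_{n_k})$, which gives $m(\cN)=s$; for $s=0$ I take $\cN=\{0\}$, for which $m(\{0\})=0$ directly from the definition. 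I do not expect a genuine obstacle here: the only points needing care are the bookkeeping in Corollary \ref{for-perp} showing that the full-space factors are transparent (their quotients have curvature zero and multiplicity one), and the separate treatment of the endpoints $t=1$ and $s=0$, which fall outside the half-open range $[0,1)$ produced by the $\cM_{i_0}(t)$ construction and so are realized by the universal model and the zero subspace, respectively.
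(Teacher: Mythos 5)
Your proof is correct and takes essentially the same route as the paper: the paper likewise sets $\cM(t):=\cM_1(t)\otimes F^2(H_{n_2})\otimes\cdots\otimes F^2(H_{n_k})$ (assuming $n_1\geq 2$ without loss of generality, rather than fixing an index $i_0$), invokes the computation from Theorem \ref{value} and Corollary \ref{for-perp} to get $\text{\rm curv}(P_{\cM(t)^\perp}{\bf S}|_{\cM(t)^\perp})=t$ and $m(\cM(t))=1-t$, and handles $t=1$ via ${\bf S}$ itself. Your explicit checks of purity, of the rank-one defect $P_{\cM^\perp}{\bf P}_\CC|_{\cM^\perp}$, and of the endpoint $s=0$ via $\cN=\{0\}$ are details the paper leaves implicit.
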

\begin{proof}
If $t=1$, we have $\text{\rm curv}({\bf S})=1$. We can assume that $n_1\geq 2$. If $t\in [0,1)$, consider the
subspace $\cM(t):=\cM_1(t)\otimes F^2(H_{n_2})\otimes \cdots \otimes F^2(H_{n_k})$, where $\cM_1(t)$ is defined by relation \eqref{Mi}.
As in the proof of Theorem \ref{value}, one can show that
$\text{\rm curv}(P_{\cM(t)^\perp} {\bf S}|_{\cM(t)^\perp})=t$. This implies
$m(\cM(t))=1-t$ and completes the proof.
\end{proof}

Using Corollary \ref{range} and tensoring  with the identity on $\CC^m$, one can see that for any $t\in [0,m]$, there exists a pure element ${\bf T}$  in the polyball with
$\rank ({\bf T})=m$ and $\text{\rm curv}({\bf T})=t$.

We remark that due to Theorem \ref{multiplicity},  if  $\cM$ is a proper invariant subspace of $\otimes_{i=1}^k F^2(H_{n_i})$ of finite codimension, then $$\text{\rm curv}(P_{\cM^\perp} {\bf S}|_{\cM^\perp})=0\quad \text{ and } \quad m(\cM)=1.$$
However, we have the following result.
\begin{proposition} \label{cur0} If $n_i\geq 2$ for at least one $i\in \{1,\ldots, k\}$, then there exist invariant subspaces $\cM$ of $\otimes_{i=1}^k F^2(H_{n_i})$ of infinite codimension such that
$
\text{\rm curv}(P_{\cM^\perp} {\bf S}|_{\cM^\perp})=0.
$
\end{proposition}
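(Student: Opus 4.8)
The plan is to realize the desired subspace as a tensor product in which a single factor is a carefully chosen proper invariant subspace while all remaining factors are the full Fock spaces, so that the curvature computation reduces, via Corollary \ref{for-perp}, to a one-variable calculation already contained in the proof of Theorem \ref{value}. After relabeling the indices I may assume $n_1\ge 2$. In the notation of \eqref{Mi} I would take the extremal parameter $t=0$: since $1=\sum_{p=1}^{\infty}\frac{n_1-1}{n_1^{p}}$, this corresponds to $N=\infty$, $d_p=n_1-1$ and $k_p=p$, and produces an invariant subspace $\cM_1(0)\subset F^2(H_{n_1})$. The computation carried out in the proof of Theorem \ref{value} then gives, for every $q\ge 0$,
$$
\frac{\text{\rm trace\,}\left[P_{\cM_1(0)}P_{q}^{(1)}\right]}{\text{\rm trace\,}\left[P_{q}^{(1)}\right]}=\sum_{p\le q}\frac{n_1-1}{n_1^{p}}=1-\frac{1}{n_1^{q}},
$$
which tends to $1$ as $q\to\infty$, so that $\text{\rm curv}_1\!\left(P_{\cM_1(0)^\perp}{\bf S}_1|_{\cM_1(0)^\perp}\right)=0$ (equivalently $m_1(\cM_1(0))=1$).

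The crucial new observation is that this $\cM_1(0)$ has infinite codimension. Because $\cM_1(0)$ is spanned by the vectors $e^1_{\gamma\beta}$ with $\gamma\in\FF_{n_1}^+$ and $\beta\in\bigcup_{p}J_p^1$, the identity above shows that the number of length-$q$ basis vectors contained in $\cM_1(0)$ is exactly $n_1^{q}\bigl(1-n_1^{-q}\bigr)=n_1^{q}-1$. Hence precisely one basis vector at each length $q\ge 0$ lies in $\cM_1(0)^\perp$, and therefore $\dim\cM_1(0)^\perp=\infty$.

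I would then set
$$
\cM:=\cM_1(0)\otimes F^2(H_{n_2})\otimes\cdots\otimes F^2(H_{n_k}),
$$
which is invariant under every ${\bf S}_{i,j}$. Its orthocomplement is $\cM^\perp=\cM_1(0)^\perp\otimes F^2(H_{n_2})\otimes\cdots\otimes F^2(H_{n_k})$, an infinite dimensional space, so $\cM$ has infinite codimension. Applying Corollary \ref{for-perp} with the factor $\cM_1(0)$ in the first slot and $\cM_i=F^2(H_{n_i})$ (for which $\text{\rm curv}_i=0$) in the remaining slots yields
$$
\text{\rm curv}\!\left(P_{\cM^\perp}{\bf S}|_{\cM^\perp}\right)=1-\prod_{i=1}^{k}\bigl(1-\text{\rm curv}_i\bigr)=1-1=0 .
$$
Alternatively, the same conclusion follows directly from Theorem \ref{multiplicity}: since $P_\cM=P_{\cM_1(0)}\otimes I\otimes\cdots\otimes I$, the defining ratio factors into the one-variable ratio above times factors equal to $1$, giving $m(\cM)=1$ and hence $\text{\rm curv}\!\left(P_{\cM^\perp}{\bf S}|_{\cM^\perp}\right)=1-m(\cM)=0$.

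The only real obstacle is to satisfy two competing demands simultaneously: vanishing of the curvature forces the asymptotic density of $\cM^\perp$ among the length-$q$ vectors to tend to $0$, while infinite codimension requires the absolute count of such vectors to stay bounded below. The choice $t=0$ with the infinite representation $1=\sum_{p\ge1}(n_1-1)/n_1^{p}$ resolves this tension in the sharpest possible way, leaving exactly one excess basis vector at every length; any convergent (necessarily infinite) series representation of $1$ by admissible terms $d_p/n_1^{k_p}$ would serve equally well.
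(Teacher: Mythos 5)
Your proof is correct and follows essentially the same route as the paper's: the paper takes $\cM_1\subset F^2(H_{n_1})$ with $\cM_1^\perp=\overline{\operatorname{span}}\{(e^1_1)^s: s\geq 0\}$ --- exactly your $\cM_1(0)$ up to a relabeling of the generators --- computes $\text{\rm curv}_1(P_{\cM_1^\perp}{\bf S}_1|_{\cM_1^\perp})=0$ by the same density count $\frac{n_1^{q}-1}{n_1^{q}}\to 1$, and then invokes Corollary \ref{for-perp} on a tensor product. The only cosmetic difference is that the paper fills the remaining tensor slots with proper finite-codimension invariant subspaces (which also have curvature zero) instead of the full Fock spaces $F^2(H_{n_i})$.
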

\begin{proof} We can assume that $n_1\geq 2$. Let $\cM_1$  be the  invariant subspace of $F^2(H_{n_1})$ such that  $\cM_1^\perp$ is the closed span of the vectors $(e^1_1)^s$, where $s=0,1,\ldots.$ Note that
\begin{equation*}
\begin{split}
\text{\rm curv}_1(P_{\cM_1^\perp} {\bf S}|_{\cM_1^\perp})
&=1-\lim_{q_1\to\infty}\frac{1}{n_1^{q_1}}\sum_{\beta\in \FF_{n_1}^+, |\beta|=q_1}\|P^{(1)}_{\cM_1} e^1_\beta\|^2=1-\lim_{q_1\to\infty}\frac{n_1^{q_1}-1}{n_1^{q_1}}=0.
\end{split}
\end{equation*}
For each $i\in \{2,\ldots, k\}$, let
$\cM_i$ be a proper invariant subspace of $\otimes_{i=1}^k F^2(H_{n_i})$ of finite codimension.  Then $\text{\rm curv}(P_{\cM_i^\perp} {\bf S}|_{\cM_i^\perp})=0$ for $i\in \{2,\ldots, k\}$ and, using Corollary \ref{for-perp}, we deduce that $\text{\rm curv}(P_{\cM^\perp} {\bf S}|_{\cM^\perp})=0$, where $\cM=\otimes_{i=1}^k \cM_i$.
The proof is complete.
\end{proof}

We remark that if ${\bf T}$  is an  element of  the regular polyball  ${\bf B_n}(\cH)$ and $\cM$ is an invariant subspace under ${\bf T}$,  then ${\bf T}|_\cM$ is not necessarily in the polyball. On the other hand,
if  ${\bf T}$  has finite rank, ${\bf T}|_\cM$ could have infinite rank.
Indeed,  if at least one $n_i\geq 2$, then  there are invariant subspaces  $\cM$ of $\otimes_{i=1}^k F^2(H_{n_i})$ of infinite codimension such that such that $\rank ({\bf S}|_\cM)=\infty$. To see this, assume that $n_1\geq 2$,  take $\cM_1:=\oplus_{p=1}^\infty\left[F^2(H_{n_1})\otimes e^1_{g_2}\otimes (e_{g_1}^1)^p\right]$
and $\cM:=\cM_1\otimes F^2(H_{n_2})\otimes\cdots \otimes F^2(H_{n_k})$.
We also mention that if $n_i\geq 2$ for some $i\in \{1,\ldots, k\}$, then there are  Beurling type invariant subspaces $\cM$  of $\otimes_{i=1}^k F^2(H_{n_i})$ of infinite codimension with
$\rank ({\bf S}|_\cM)= \text{\rm curv}({\bf S}|_\cM)=1$. Indeed, assume that $n_1\geq 2$ and take $\cM:=[F^2(H_{n_1})\otimes e_1^1]\otimes F^2(H_{n_2})\otimes \cdots \otimes F^2(H_{n_k})$.

\begin{lemma}\label{sub-poly} If ${\bf T}\in {\bf B_n}(\cH)$  and $\cM$ is an invariant subspace under ${\bf T}$, then ${\bf T}|_\cM\in {\bf B_n}(\cM)$   if and only if
$$\left(id -\Phi_{T_1}\right)^{p_1}\circ \cdots \circ\left(id -\Phi_{ T_k}\right)^{p_k}(P_\cM)\geq 0$$
for any $p_i\in\{0,1\}$.  If, in addition, ${\bf T}$ is pure, then the condition above is equivalent to
$$\left(id -\Phi_{T_1}\right)\circ \cdots \circ\left(id -\Phi_{ T_k}\right)(P_\cM)\geq 0.$$
In particular, if $\cM\subset \otimes_{k=1}^k F^2(H_{n_i})\otimes \cK$ is an invariant subspace under the universal model ${\bf S}\otimes I$, then  $({\bf S}\otimes I)|_\cM$ is in  the polyball if and only if $\cM$ is a Beurling type  invariant subspace
for  ${\bf S}\otimes I$.
\end{lemma}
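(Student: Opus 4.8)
The plan is to transfer the defect computation from $\cM$ back to $\cH$ through the subspace of operators \emph{supported on} $\cM$, and then to use the reproducing formula of Lemma \ref{reproducing} to collapse the whole family of partial defect conditions onto the single top one in the pure case.

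First I would set $R_{i,j}:=T_{i,j}|_\cM$, $R_i:=(R_{i,1},\ldots,R_{i,n_i})$, ${\bf R}:=(R_1,\ldots,R_k)={\bf T}|_\cM$, observing that the cross-commutation of the $T_{s,a}$ with the $T_{t,b}$ ($s\ne t$) is inherited by the restrictions, so the only issue is positivity of the defects. Writing $\iota:\cM\hookrightarrow\cH$ for the inclusion (so $\iota^*=P_\cM$ and $R_{i,j}=\iota^*T_{i,j}\iota$, $R_{i,j}^*=\iota^*T_{i,j}^*\iota$), one gets $\Phi_{R_i}(Z)=\iota^*\Phi_{T_i}(\iota Z\iota^*)\iota$ for $Z\in B(\cM)$. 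The key observation is that the linear subspace $\cS_\cM:=\{Z\in B(\cH):Z=P_\cM ZP_\cM\}$ is invariant under each $\Phi_{T_i}$: invariance of $\cM$ gives $T_{i,j}P_\cM=P_\cM T_{i,j}P_\cM$, whence $P_\cM\Phi_{T_i}(Z)P_\cM=\sum_j T_{i,j}(P_\cM ZP_\cM)T_{i,j}^*=\Phi_{T_i}(Z)$ for $Z\in\cS_\cM$. Consequently the restriction map $\rho:\cS_\cM\to B(\cM)$, $\rho(Z)=Z|_\cM$, is a positivity-preserving linear bijection (inverse $Z_0\mapsto\iota Z_0\iota^*$) satisfying $\rho\circ\Phi_{T_i}=\Phi_{R_i}\circ\rho$ and $\rho(P_\cM)=I_\cM$. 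Since $\cS_\cM$ is invariant under $id$ and each $\Phi_{T_i}$, hence under every composition of the $id-\Phi_{T_i}$, applying $\rho$ to ${\bf \Delta_T^p}(P_\cM)$ gives
\begin{equation*}
{\bf \Delta_R^p}(I_\cM)=\left[{\bf \Delta_T^p}(P_\cM)\right]\big|_\cM,\qquad {\bf p}\in\{0,1\}^k .
\end{equation*}
Because for $Z\in\cS_\cM$ one has $Z\ge0$ on $\cH$ iff $Z|_\cM\ge0$ on $\cM$ (as $ZP_{\cM^\perp}=0$), this already yields the first equivalence: ${\bf T}|_\cM\in{\bf B_n}(\cM)$ iff ${\bf \Delta_T^p}(P_\cM)\ge0$ for all ${\bf p}\le(1,\ldots,1)$.

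For the second statement I would show, assuming ${\bf T}$ pure, that the single inequality ${\bf \Delta_T}(P_\cM)\ge0$ forces ${\bf \Delta_T^p}(P_\cM)\ge0$ for every ${\bf p}\in\{0,1\}^k$ (the reverse implication being trivial). The mechanism is to remove the factors $(id-\Phi_{T_i})$ one index at a time. If ${\bf p}'$ agrees with ${\bf p}$ except that $p_i'=0<1=p_i$, write ${\bf \Delta_T^p}=\Psi\circ(id-\Phi_{T_i})$ and ${\bf \Delta_T^{p'}}=\Psi$, where $\Psi:=\prod_{l\ne i}(id-\Phi_{T_l})^{p_l}$ is WOT-continuous and commutes with $\Phi_{T_i}$. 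Then, by telescoping and purity,
\begin{equation*}
\sum_{s_i=0}^{q_i}\Phi_{T_i}^{s_i}\!\left({\bf \Delta_T^p}(P_\cM)\right)
=\Psi\!\left(P_\cM-\Phi_{T_i}^{q_i+1}(P_\cM)\right)\xrightarrow[q_i\to\infty]{\text{WOT}}\Psi(P_\cM)={\bf \Delta_T^{p'}}(P_\cM),
\end{equation*}
using $0\le\Phi_{T_i}^{q_i+1}(P_\cM)\le\Phi_{T_i}^{q_i+1}(I)\to0$ weakly. Each partial sum on the left is positive whenever ${\bf \Delta_T^p}(P_\cM)\ge0$, and a WOT-limit of positive operators is positive; hence ${\bf \Delta_T^{p'}}(P_\cM)\ge0$. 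Starting from ${\bf \Delta_T^{(1,\ldots,1)}}(P_\cM)={\bf \Delta_T}(P_\cM)\ge0$ and peeling off factors one by one, induction gives ${\bf \Delta_T^p}(P_\cM)\ge0$ for all ${\bf p}\in\{0,1\}^k$. Together with the first part, ${\bf T}|_\cM\in{\bf B_n}(\cM)$ iff ${\bf \Delta_T}(P_\cM)\ge0$.

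Finally, for the ``in particular'' clause I would apply the pure case to ${\bf T}={\bf S}\otimes I_\cK$, which is pure since $\Phi_{{\bf S}_i\otimes I}^{q}(I)=\Phi_{{\bf S}_i}^{q}(I)\otimes I\to0$ weakly. Thus $({\bf S}\otimes I)|_\cM\in{\bf B_n}(\cM)$ iff $(id-\Phi_{{\bf S}_1\otimes I})\circ\cdots\circ(id-\Phi_{{\bf S}_k\otimes I})(P_\cM)\ge0$, which by the characterization from \cite{Po-Berezin-poly} recalled just before the lemma is exactly the statement that $\cM$ is a Beurling type invariant subspace for ${\bf S}\otimes I$. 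The main obstacle is the second paragraph: one must verify that the family of partial defect inequalities genuinely reduces to the top one, and the only clean route I see is the index-by-index telescoping above, where purity (via $\Phi_{T_i}^{q}(I)\to0$), WOT-continuity, and commutativity of the $\Phi_{T_i}$ are all indispensable; by contrast the support-space reduction of the first paragraph is routine once one notices that $\cS_\cM$ is $\Phi_{T_i}$-invariant.
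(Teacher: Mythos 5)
Your proof is correct and follows essentially the same route as the paper's: the restriction identity ${\bf \Delta}_{{\bf T}|_\cM}^{\bf p}(I_\cM)=\left[{\bf \Delta}_{\bf T}^{\bf p}(P_\cM)\right]\big|_\cM$ combined with the fact that an operator supported on $\cM$ is positive on $\cH$ iff its compression to $\cM$ is positive, then the index-by-index removal of the factors $(id-\Phi_{T_i})$ using purity, commutativity of the $\Phi_{T_i}$, and WOT-limits, and finally the cited characterization of Beurling type invariant subspaces applied to the pure tuple ${\bf S}\otimes I$. Your telescoping-sum step is just a repackaging of the paper's monotone iteration (from $\Phi_{T_i}(Y)\leq Y$ one gets $\Phi_{T_i}^{m}(Y)\leq Y$ and, by purity, $\Phi_{T_i}^{m}(Y)\to 0$ weakly, whence $Y\geq 0$), so the two arguments coincide in substance.
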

\begin{proof}
Since $\cM$ is an invariant subspace under ${\bf T}$, we have
$$
\left(id -\Phi_{T_1|_\cM}\right)^{p_1}\circ \cdots \circ\left(id -\Phi_{ T_k|_\cM}\right)^{p_k}(I_\cM)=[\left(id -\Phi_{T_1}\right)^{p_1}\circ \cdots \circ\left(id -\Phi_{ T_k}\right)^{p_k}(P_\cM)]|_\cM.
$$
Since the direct implication is obvious, we prove the converse.  Assume that
$$
\left(id -\Phi_{T_1|_\cM}\right)^{p_1}\circ \cdots \circ\left(id -\Phi_{ T_k|_\cM}\right)^{p_k}(I_\cM)\geq 0.
$$
Let $h\in \cH$ and consider the orthogonal decomposition $h=x+y$ with $x\in \cM$ and $y\in \cM^\perp$. Using the fact that $\cM^\perp$ is invariant subspace under each operator $T_{i,j}^*$, we have
\begin{equation*}
\begin{split}
&\left<\left(id -\Phi_{T_1}\right)^{p_1}\circ \cdots \circ\left(id -\Phi_{ T_k}\right)^{p_k}(P_\cM)(x+y), x+y\right>\\
&=
\left<\left(id -\Phi_{T_1}\right)^{p_1}\circ \cdots \circ\left(id -\Phi_{ T_k}\right)^{p_k}(P_\cM)x, x+y\right>
+ \left<\left(id -\Phi_{T_1}\right)^{p_1}\circ \cdots \circ\left(id -\Phi_{ T_k}\right)^{p_k}(P_\cM)y, x+y\right>\\
&=\left<\left(id -\Phi_{T_1}\right)^{p_1}\circ \cdots \circ\left(id -\Phi_{ T_k}\right)^{p_k}(P_\cM)x, x\right> +\|y\|^2\geq 0.
\end{split}
\end{equation*}
Consequently, $\left(id -\Phi_{T_1}\right)^{p_1}\circ \cdots \circ\left(id -\Phi_{ T_k}\right)^{p_k}(P_\cM)\geq 0$. If, in addition, ${\bf T}$ is pure and
$\left(id -\Phi_{T_1}\right)\circ \cdots \circ\left(id -\Phi_{ T_k}\right)(P_\cM)\geq 0,$
then, since $\Phi_{T_1}$ is positive linear map, we deduce that
$$\Phi_{T_1}^m \left(id -\Phi_{T_2}\right)\circ \cdots \circ\left(id -\Phi_{ T_k}\right)(P_\cM)\leq \left(id -\Phi_{T_2}\right)\circ \cdots \circ\left(id -\Phi_{ T_k}\right)(P_\cM)$$
for any $m\in \NN$. Taking into account that $\Phi_{T_1}^m (I)\to 0$ as $m\to \infty$, we obtain $\left(id -\Phi_{T_2}\right)\circ \cdots \circ\left(id -\Phi_{ T_k}\right)(P_\cM)\geq 0$. Similarly, one can show that
$\left(id -\Phi_{T_1}\right)^{p_1}\circ \cdots \circ\left(id -\Phi_{ T_k}\right)^{p_k}(P_\cM)\geq 0$
for any $p_i\in\{0,1\}$. Now, using  Theorem 5.2 from \cite{Po-Berezin-poly}, the last part of this lemma follows.
\end{proof}

\begin{proposition} Let $\cM$ be a proper invariant subspace of $\otimes_{i=1}^k F^2(H_{n_i})$.  Then  ${\bf S}|_\cM \in {\bf B_n}(\cM)$ and
$\text{\rm curv}(P_{\cM^\perp} {\bf S}|_{\cM^\perp})=0$
 if and only if there is an inner sequence $\{\psi_s\}_{s=1}^\infty$ for $\cM$, i.e.,
$\psi_s$ are multipliers of $\otimes_{i=1}^k F^2(H_{n_i})$,
$ P_\cM=\sum_{s=1}^\infty \psi_s\psi_s^*
$
where the convergence is in the  strong operator topology, and
$$
\lim_{(q_1,\ldots, q_k)\in \ZZ_+^k} \frac{1}{n_1^{q_1}\cdots n_k^{q_k}}
\sum_{|\alpha_i|=q_i, i\in \{1,\ldots,k\}}\sum_{s=1}^\infty \|\psi_s(e^1_{\alpha_1}\otimes \cdots \otimes  e^k_{\alpha_k}\|^2=1.
$$
\end{proposition}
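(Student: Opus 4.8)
The plan is to observe that both assertions on the left are controlled by the multiplicity $m(\cM)$ through Theorem \ref{multiplicity}, while ``admitting an inner sequence'' is precisely the Beurling type property. First I would remove the hypothesis ${\bf S}|_\cM\in {\bf B_n}(\cM)$: by the last part of Lemma \ref{sub-poly} (with $\cK=\CC$) it is equivalent to $\cM$ being a Beurling type invariant subspace for ${\bf S}$, and by the characterization recalled before Theorem \ref{classification} this means there is an isometric inner multi-analytic operator $\Psi:\otimes_{i=1}^k F^2(H_{n_i})\otimes \cE\to \otimes_{i=1}^k F^2(H_{n_i})$ with $\cM=\Psi[\otimes_{i=1}^k F^2(H_{n_i})\otimes \cE]$ and $P_\cM=\Psi\Psi^*$. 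Writing $\Psi$ in columns over an orthonormal basis $\{f_s\}$ of $\cE$, the operators $\psi_s:=\Psi(\cdot\otimes f_s)$ are multipliers of $\otimes_{i=1}^k F^2(H_{n_i})$ with $P_\cM=\sum_s\psi_s\psi_s^*$ in the strong operator topology; this is an inner sequence. For the reverse passage I would assemble any inner sequence into the multi-analytic row operator $\Psi:=[\psi_1,\psi_2,\dots]$ on $\otimes_{i=1}^k F^2(H_{n_i})\otimes \ell^2$, so that $P_\cM=\Psi\Psi^*$; since each $\psi_s$ commutes with the ${\bf S}_{i,j}$, a direct computation gives $(id-\Phi_{{\bf S}_1})\circ\cdots\circ(id-\Phi_{{\bf S}_k})(P_\cM)=\Psi({\bf P}_\CC\otimes I_{\ell^2})\Psi^*\geq 0$, whence $\cM$ is Beurling type by the characterization of \cite{Po-Berezin-poly}, and so ${\bf S}|_\cM\in {\bf B_n}(\cM)$ by Lemma \ref{sub-poly} again.

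The central computation is then a one-line identity valid for \emph{any} inner sequence, depending only on $\sum_s\psi_s\psi_s^*=P_\cM$: for each $(\alpha_1,\dots,\alpha_k)$,
\[
\sum_{s=1}^\infty\|\psi_s^*(e^1_{\alpha_1}\otimes\cdots\otimes e^k_{\alpha_k})\|^2
=\langle P_\cM(e^1_{\alpha_1}\otimes\cdots\otimes e^k_{\alpha_k}),\,e^1_{\alpha_1}\otimes\cdots\otimes e^k_{\alpha_k}\rangle .
\]
Summing over all $\alpha_i$ with $|\alpha_i|=q_i$ (these vectors form an orthonormal basis of the range of $P_{q_1}^{(1)}\otimes\cdots\otimes P_{q_k}^{(k)}$) and dividing by $n_1^{q_1}\cdots n_k^{q_k}=\text{\rm trace\,}[P_{q_1}^{(1)}\otimes\cdots\otimes P_{q_k}^{(k)}]$, the left side of the limit in the statement becomes $\text{\rm trace\,}[P_\cM(P_{q_1}^{(1)}\otimes\cdots\otimes P_{q_k}^{(k)})]/\text{\rm trace\,}[P_{q_1}^{(1)}\otimes\cdots\otimes P_{q_k}^{(k)}]$. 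By Theorem \ref{multiplicity} the net over $(q_1,\dots,q_k)\in\ZZ_+^k$ converges to $m(\cM)$, so the displayed limit equals $m(\cM)$ and the stated condition is exactly $m(\cM)=1$.

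Finally I would read off from Theorem \ref{multiplicity} that $m(\cM)=\dim\CC-\text{\rm curv}({\bf M})=1-\text{\rm curv}({\bf M})$, where ${\bf M}=P_{\cM^\perp}{\bf S}|_{\cM^\perp}$, so that $m(\cM)=1$ if and only if $\text{\rm curv}({\bf M})=0$. Assembling the three steps proves the equivalence in both directions: if ${\bf S}|_\cM\in {\bf B_n}(\cM)$ and $\text{\rm curv}({\bf M})=0$, then $\cM$ is Beurling type, the columns of the associated $\Psi$ furnish an inner sequence, and the limit equals $m(\cM)=1$; conversely an inner sequence with limit $1$ forces $\cM$ to be Beurling type (hence ${\bf S}|_\cM\in {\bf B_n}(\cM)$) and $m(\cM)=1$, hence $\text{\rm curv}({\bf M})=0$. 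The step I expect to require the most care is the reverse structural passage --- guaranteeing that an \emph{arbitrary} strongly convergent decomposition $P_\cM=\sum_s\psi_s\psi_s^*$ by multipliers yields the positivity of $(id-\Phi_{{\bf S}_1})\circ\cdots\circ(id-\Phi_{{\bf S}_k})(P_\cM)$ --- since one must justify interchanging the normal completely positive maps $\Phi_{{\bf S}_i}$ with the infinite strong-operator sum. This is legitimate because the partial sums increase to $P_\cM$ and each $\Phi_{{\bf S}_i}$ is weak-operator continuous, so positivity is preserved in the limit, and the same monotonicity makes the trace manipulations of the second step rigorous.
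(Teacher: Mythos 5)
Your proof is correct and is essentially the paper's own argument: you combine Lemma \ref{sub-poly} (so that ${\bf S}|_\cM \in {\bf B_n}(\cM)$ is equivalent to $\cM$ being of Beurling type, hence to the existence of multipliers $\psi_s$ with $P_\cM=\sum_{s}\psi_s\psi_s^*$, your row-operator construction handling the converse direction) with Theorem \ref{multiplicity} (so that $\text{\rm curv}\,(P_{\cM^\perp}{\bf S}|_{\cM^\perp})=0$ is equivalent to the trace-ratio limit being $1$), and your expansion of $\text{\rm trace\,}\left[P_\cM(P_{q_1}^{(1)}\otimes\cdots\otimes P_{q_k}^{(k)})\right]$ in the orthonormal basis is exactly the step the paper leaves as ``one can easily complete the proof.'' One remark: your central identity uses $\psi_s^*(e^1_{\alpha_1}\otimes\cdots\otimes e^k_{\alpha_k})$, whereas the statement as printed has $\psi_s(e^1_{\alpha_1}\otimes\cdots\otimes e^k_{\alpha_k})$; your reading is the correct one, because for a multi-analytic $\psi_s$ one has $\|\psi_s(e^1_{\alpha_1}\otimes\cdots\otimes e^k_{\alpha_k})\|=\|\psi_s(1)\|$ for every choice of words, so the printed expression is constant in $(q_1,\ldots,q_k)$ and the stated equivalence would then fail already for $k=1$, $n_1=2$, $\cM={\bf S}_{1,1}F^2(H_{2})$, where the displayed limit equals $1$ while $\text{\rm curv}\,(P_{\cM^\perp}{\bf S}|_{\cM^\perp})=1/2$.
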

\begin{proof}  According to Lemma \ref{sub-poly},  condition   ${\bf S}|_\cM \in {\bf B_n}(\cM)$  holds if and only if  $\cM$ is a Beurling type invariant subspace. Therefore, there exist
multi-analytic operators $\psi_p: \otimes_{i=1}^k F^2(H_{n_i})\to \otimes_{i=1}^k F^2(H_{n_i})$  such that $P_\cM=\sum_{p=1}^\infty \psi_p\psi_p^*$.
Due to Theorem \ref{multiplicity},
condition $\text{\rm curv}(P_{\cM^\perp} {\bf S}|_{\cM^\perp})=0$ is equivalent to
\begin{equation*}
\begin{split} \lim_{(q_1,\ldots, q_k)\in \ZZ_+^k}
 \frac{\text{\rm trace\,}\left[\left(\sum_{p=1}^\infty \psi_p\psi_p^*\right)(P_{q_1}^{(1)}\otimes \cdots \otimes P_{q_k}^{(k)}) \right]}{\text{\rm trace\,}\left[P_{q_1}^{(1)}\otimes \cdots \otimes P_{q_k}^{(k)}\right]}=1.
 \end{split}
 \end{equation*}
Now, one can easily complete the proof.
\end{proof}

\begin{lemma}\label{coincidence} Let $\cM$ be  an invariant subspace of
$\otimes_{i=1}^k F^2(H_{n_i})\otimes \cE$ such that it does not contain nontrivial reducing subspaces for the universal model ${\bf S}\otimes I_\cE$, and let
 ${\bf T}:=P_{\cM^\perp} ({\bf S}\otimes I_\cE)|_{\cM^\perp}$. Then there is a unitary operator $Z:\overline{{\bf \Delta_T}(I)(\cH)}\to \cE$ such that
 $(I\otimes Z){\bf K_T}=V$, where ${\bf K_T}$ is the Berezin kernel associated with ${\bf T}$ and  $V$ is the injection of $\cM^\perp$ into
 $\otimes_{i=1}^k F^2(H_{n_i})\otimes \cE$.

 Moreover, ${\bf T}$ has characteristic function if and only if $\cM$ is a Beurling type invariant subspace.
\end{lemma}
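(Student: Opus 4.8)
The plan is to show that the Berezin kernel ${\bf K_T}$ and the injection $V$ of $\cM^\perp$ differ only by a unitary on the coefficient space, and then to deduce the ``moreover'' part from the known descriptions of ``characteristic function'' and ``Beurling type.'' First I would note that $\cM^\perp$ is coinvariant under ${\bf S}\otimes I_\cE$, so $T_{i,j}^*=({\bf S}_{i,j}^*\otimes I_\cE)|_{\cM^\perp}$; as in Theorem \ref{multiplicity} this gives ${\bf T}\in {\bf B_n}(\cM^\perp)$ via \eqref{De}, and moreover $\langle \Phi_{T_i}^{q_i}(I)h,h\rangle=\langle \Phi_{{\bf S}_i\otimes I_\cE}^{q_i}(I)h,h\rangle\to 0$ for $h\in\cM^\perp$ because ${\bf S}\otimes I_\cE$ is pure. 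Hence ${\bf T}$ is pure and ${\bf K_T}$ is an isometry. Both ${\bf K_T}$ and $V$ are then isometries intertwining $T_{i,j}^*$ with ${\bf S}_{i,j}^*\otimes I$ (for $V$ this is immediate from coinvariance), so it is natural to compare them coefficientwise.

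Next I would construct $Z$. Combining \eqref{De} with ${\bf \Delta_{S\otimes I}}(I)={\bf P}_\CC\otimes I_\cE$ yields ${\bf \Delta_T}(I)=P_{\cM^\perp}({\bf P}_\CC\otimes I_\cE)|_{\cM^\perp}=V^*({\bf P}_\CC\otimes I_\cE)V$, so $\|{\bf \Delta_T}(I)^{1/2}h\|^2=\|({\bf P}_\CC\otimes I_\cE)h\|^2$ for every $h\in\cM^\perp$. Thus $Z_0:{\bf \Delta_T}(I)^{1/2}h\mapsto({\bf P}_\CC\otimes I_\cE)h$ is a well-defined isometry from $\overline{{\bf \Delta_T}(I)(\cM^\perp)}$ into $\CC 1\otimes\cE\cong\cE$. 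The main obstacle is surjectivity of $Z_0$, and this is exactly where the hypothesis on $\cM$ enters. If $\xi=1\otimes e\in\CC 1\otimes\cE$ is orthogonal to the range $\{({\bf P}_\CC\otimes I_\cE)h:h\in\cM^\perp\}$, then $({\bf P}_\CC\otimes I_\cE)\xi=\xi$ forces $\xi\perp\cM^\perp$, i.e. $\xi\in\cM\cap(\CC 1\otimes\cE)$; but then $\overline{\text{\rm span}}\{({\bf S}_{(\beta)}\otimes I_\cE)\xi\}=\otimes_{i=1}^k F^2(H_{n_i})\otimes\CC e$ would be a nontrivial reducing subspace contained in $\cM$, contradicting the assumption. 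Hence the range of $Z_0$ is dense, and being isometric it is onto, so $Z_0$ extends to a unitary $Z:\overline{{\bf \Delta_T}(I)(\cM^\perp)}\to\cE$.

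I would then verify $(I\otimes Z){\bf K_T}=V$ by matching coefficients. Since $T_{1,\beta_1}^*\cdots T_{k,\beta_k}^*=({\bf S}_{(\beta)}^*\otimes I_\cE)|_{\cM^\perp}$, the coefficient of $e^1_{\beta_1}\otimes\cdots\otimes e^k_{\beta_k}$ in ${\bf K_T}h$ is ${\bf \Delta_T}(I)^{1/2}({\bf S}_{(\beta)}^*\otimes I_\cE)h$; applying $Z$ produces $({\bf P}_\CC\otimes I_\cE)({\bf S}_{(\beta)}^*\otimes I_\cE)h$, which by the defining action of the creation operators equals $1\otimes h_\beta$, where $h_\beta\in\cE$ is the $\beta$-th coefficient of $h=Vh$. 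As these agree for every $\beta$, the identity $(I\otimes Z){\bf K_T}=V$ follows.

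Finally, for the equivalence I would use ${\bf K_T}=(I\otimes Z^*)V$ to get $I-{\bf K_T}{\bf K_T}^*=(I\otimes Z^*)P_\cM(I\otimes Z)$. Because $I\otimes Z$ acts as the identity on the Fock factors it commutes with each ${\bf S}_{i,j}\otimes I$ and therefore intertwines the defect maps, giving $(I\otimes Z){\bf \Delta_{S\otimes I}}(I-{\bf K_T}{\bf K_T}^*)(I\otimes Z^*)={\bf \Delta_{S\otimes I}}(P_\cM)$. Since $I\otimes Z$ is unitary, ${\bf \Delta_{S\otimes I}}(I-{\bf K_T}{\bf K_T}^*)\geq 0$ if and only if ${\bf \Delta_{S\otimes I}}(P_\cM)\geq 0$. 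By the criterion recalled before Theorem \ref{index} the left inequality characterizes ${\bf T}$ having a characteristic function, while by the Beurling criterion of \cite{Po-Berezin-poly} the right inequality characterizes $\cM$ being of Beurling type, which yields the stated equivalence.
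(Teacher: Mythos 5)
Your proposal is correct, but it reaches the unitary $Z$ by a genuinely different route than the paper. The paper observes that the no-nontrivial-reducing-subspace hypothesis makes $\cM^\perp$ cyclic for ${\bf S}\otimes I_\cE$, so that $V$ implements a \emph{minimal} isometric dilation of ${\bf T}$; it then invokes Theorem 5.6 of \cite{Po-Berezin-poly} to see that the Berezin kernel ${\bf K_T}$ implements another minimal dilation, and concludes via the uniqueness (up to unitary equivalence) of minimal Stinespring dilations over the Cuntz--Toeplitz algebra $C^*({\bf S}_{i,j})$ that the two coincide, which produces $Z$ abstractly. You instead build $Z$ by hand: the identity ${\bf \Delta_T}(I)=P_{\cM^\perp}({\bf P}_\CC\otimes I_\cE)|_{\cM^\perp}$ from \eqref{De} shows $h\mapsto ({\bf P}_\CC\otimes I_\cE)h$ is isometric on the range of ${\bf \Delta_T}(I)^{1/2}$, the reducing-subspace hypothesis is used exactly once to force density of the range of this isometry in $\CC 1\otimes\cE$ (a vector $1\otimes e$ orthogonal to it would generate the reducing subspace $\otimes_{i=1}^k F^2(H_{n_i})\otimes \CC e\subset\cM$), and then $(I\otimes Z){\bf K_T}=V$ is checked coefficientwise using $T^*_{1,\beta_1}\cdots T^*_{k,\beta_k}=({\bf S}^*_{(\beta)}\otimes I_\cE)|_{\cM^\perp}$. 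This trade-off is real: the paper's argument is shorter and conceptually explains the result as dilation uniqueness, but it leans on the cited dilation theorem and Arveson's uniqueness machinery; your argument is longer but self-contained modulo \eqref{De}, exhibits $Z$ explicitly, and makes transparent where the hypothesis on $\cM$ is actually needed. For the ``moreover'' part the two proofs are essentially identical: both pass from $(I\otimes Z){\bf K_T}=V$ to $I-{\bf K_T K_T^*}=(I\otimes Z^*)P_\cM(I\otimes Z)$, intertwine the defect maps, and then quote the two positivity criteria from \cite{Po-Berezin-poly} (characteristic function iff ${\bf \Delta_{S\otimes {\it I}}}(I-{\bf K_T K_T^*})\geq 0$; Beurling type iff ${\bf \Delta_{S\otimes {\it I}}}(P_\cM)\geq 0$), as recalled before Theorem \ref{index} and in Lemma \ref{sub-poly}.
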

\begin{proof} Note that $\cM^\perp$ is a cyclic subspace  for ${\bf S}\otimes I_\cE$.
Therefore, ${\bf S}\otimes I_\cE$ is a minimal isometric dilation of ${\bf T}:=P_{\cM^\perp} ({\bf S}\otimes I_\cE)|_{\cM^\perp}$.
On the other hand,
according to  \cite{Po-Berezin-poly} (see Theorem 5.6 and its proof) if
  $${\bf K_{T}}: \cH \to F^2(H_{n_1})\otimes \cdots \otimes  F^2(H_{n_k}) \otimes  \overline{{\bf \Delta_{T}}(I)(\cH)}$$
  is the noncommutative Berezin kernel,
 then the  subspace ${\bf K_{f,T}}\cH$ is   co-invariant  under  each operator
${\bf S}_{i,j}\otimes  I_{\overline{{\bf \Delta_{T}}\cH}}$ for   any $i\in \{1,\ldots, k\}$,  $  j\in \{1,\ldots, n_i\}$ and
    the dilation
  provided by   the relation
    $$ { \bf T}_{(\alpha)}= {\bf K_{T}^*}({\bf S}_{(\alpha)}\otimes  I_{\overline{{\bf\Delta_{T}}(I)(\cH)}})  {\bf K_{T}}, \qquad (\alpha) \in \FF_{n_1}^+\times \cdots \times \FF_{n_k}^+,
    $$
  is minimal.
Moreover, since the Cuntz-Toeplitz algebra \cite{Cu} satisfies the relation
$$C^*({\bf S}_{i,j})=\overline{\text{\rm span}}\{{\bf S}_{(\alpha)}{\bf S}_{(\beta)}^*: \ (\alpha), (\beta)\in \FF_{n_1}^+\times \cdots \times \FF_{n_k}^+\},
$$
the two dilations mentioned  above coincide   up to an isomorphism. Consequently,  there is a unitary operator $Z:\overline{{\bf \Delta_T}(I)(\cH)}\to \cE$ such that
 $(I\otimes Z){\bf K_T}=V$, where ${\bf K_T}$ is the Berezin kernel associated with ${\bf T}$ and  $V$ is the injection of $\cM^\perp$ into
 $\otimes_{i=1}^k F^2(H_{n_i})\otimes \cE$.
 Note that ${\bf K_{T}} {\bf K_{T}^*}=(I\otimes Z^*)P_{\cM^\perp} (I\otimes Z)$ and
 $${\bf \Delta_{S\otimes {\it I_\cE}}}(I-{\bf K_{T}}{\bf K_{T}^*})
 =(I\otimes Z^*){\bf \Delta_{S\otimes {\it I_\cE}}}(P_\cM) (I\otimes Z).
 $$
  As in the proof of Lemma \ref{sub-poly},  $\cM$ is a Beurling type invariant subspace if and only if ${\bf \Delta_{S\otimes {\it I_\cE}}}(P_\cM)\geq 0$. Using the identity above, we deduce that ${\bf \Delta_{S}}(I-{\bf K_{T}}{\bf K_{T}^*})\geq 0$, which, due to Theorem 6.2 from \cite{Po-Berezin-poly}, is equivalent to ${\bf T}$ having characteristic function. The proof is complete.
\end{proof}

\begin{proposition}\label{strict}  The following statements hold.
\begin{enumerate}
\item[(i)] If $\cM$ is a proper Beurling type invariant subspace of
$\otimes_{i=1}^k F^2(H_{n_i})$, then
$$
0\leq \text{\rm curv}(P_{\cM^\perp} {\bf S}|_{\cM^\perp})<1\quad \text{ and }\quad 0<m(\cM)\leq 1.
$$
\item[(ii)]
  If $\cM_i\neq \{0\}$, $i\in \{1,\ldots, k\}$,  is an invariant subspace  of $F^2(H_{n_i})$, then $\cM:=\otimes_{i=1}^k \cM_i$ is an invariant subspace  of $\otimes_{i=1}^k F^2(H_{n_i})$  and  $\text{\rm curv}(P_{\cM^\perp} {\bf S}|_{\cM^\perp})<1$.
 \end{enumerate}
\end{proposition}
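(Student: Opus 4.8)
The plan is to reduce both parts to the single positivity statement $m(\cM)>0$ for a nonzero Beurling type invariant subspace, and to prove the latter by an explicit lower bound on the density ratios defining the multiplicity. Write $P_{\bf q}:=P_{q_1}^{(1)}\otimes\cdots\otimes P_{q_k}^{(k)}$, so that $\text{\rm trace\,}[P_{\bf q}]=n_1^{q_1}\cdots n_k^{q_k}$. In the scalar case $\cE=\CC$, Theorem \ref{multiplicity} gives
\[
m(\cM)=\lim_{{\bf q}\in\ZZ_+^k}\frac{\text{\rm trace\,}\left[P_\cM P_{\bf q}\right]}{\text{\rm trace\,}\left[P_{\bf q}\right]}=1-\text{\rm curv}\,\bigl(P_{\cM^\perp}{\bf S}|_{\cM^\perp}\bigr).
\]
By relation \eqref{De} with ${\bf p}=(1,\ldots,1)$ and ${\bf \Delta_{S}}(I)={\bf P}_\CC$ we have ${\bf \Delta_{M}}(I_{\cM^\perp})=P_{\cM^\perp}{\bf P}_\CC|_{\cM^\perp}$, which has rank at most one; hence the curvature is well defined and the earlier bound $0\le\text{\rm curv}\le\rank[{\bf \Delta_{M}}(I_{\cM^\perp})]\le1$ applies. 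Thus $0\le m(\cM)\le1$ is automatic, and both strict inequalities in (i) amount to the single claim $m(\cM)>0$.

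To establish $m(\cM)>0$ I would use the Beurling structure. Since $\cM\ne\{0\}$ is of Beurling type, there is a nonzero space $\cE'$ and an isometric multi-analytic operator $\Psi:\otimes_{i=1}^k F^2(H_{n_i})\otimes\cE'\to\otimes_{i=1}^k F^2(H_{n_i})$ with $P_\cM=\Psi\Psi^*$. Fix a unit vector $\xi\in\cE'$ and set $v:=\Psi(1\otimes\xi)\in\cM$. Multi-analyticity gives ${\bf S}_{(\alpha)}v=\Psi(e^1_{\alpha_1}\otimes\cdots\otimes e^k_{\alpha_k}\otimes\xi)$ for every $(\alpha)\in\FF_{n_1}^+\times\cdots\times\FF_{n_k}^+$, so $\{{\bf S}_{(\alpha)}v\}$ is an orthonormal set contained in $\cM$ and therefore $P_\cM\ge\sum_{(\alpha)}({\bf S}_{(\alpha)}v)({\bf S}_{(\alpha)}v)^*$. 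Expanding $v=\sum_\gamma c_\gamma\,e^1_{\gamma_1}\otimes\cdots\otimes e^k_{\gamma_k}$ with $\sum_\gamma|c_\gamma|^2=1$ and using ${\bf S}_{(\alpha)}(e^1_{\gamma_1}\otimes\cdots\otimes e^k_{\gamma_k})=e^1_{\alpha_1\gamma_1}\otimes\cdots\otimes e^k_{\alpha_k\gamma_k}$, a direct count of the words $\alpha_i$ of length $q_i-|\gamma_i|$ yields
\[
\sum_{(\alpha)}\bigl\|P_{\bf q}{\bf S}_{(\alpha)}v\bigr\|^2=\Bigl(n_1^{q_1}\cdots n_k^{q_k}\Bigr)\sum_{\gamma:\,|\gamma_i|\le q_i}|c_\gamma|^2\,n_1^{-|\gamma_1|}\cdots n_k^{-|\gamma_k|}.
\]
Dividing by $\text{\rm trace\,}[P_{\bf q}]$ and invoking the operator inequality above gives
\[
\frac{\text{\rm trace\,}\left[P_\cM P_{\bf q}\right]}{\text{\rm trace\,}\left[P_{\bf q}\right]}\ge\sum_{\gamma:\,|\gamma_i|\le q_i}|c_\gamma|^2\,n_1^{-|\gamma_1|}\cdots n_k^{-|\gamma_k|}.
\]
The right-hand side increases in each $q_i$ to $\sum_\gamma|c_\gamma|^2\,n_1^{-|\gamma_1|}\cdots n_k^{-|\gamma_k|}$, which is strictly positive since $v\ne0$; passing to the limit over $\ZZ_+^k$ proves $m(\cM)>0$, and with it part (i).

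For part (ii) I would combine Corollary \ref{for-perp} with part (i). Each $\cM_i\ne\{0\}$ is an invariant subspace of the single full Fock space $F^2(H_{n_i})$, hence of Beurling type by the noncommutative Beurling theorem \cite{Po-charact}; part (i) in the case $k=1$ therefore gives $m_i(\cM_i)>0$. Corollary \ref{for-perp} then yields
\[
\text{\rm curv}\,\bigl(P_{\cM^\perp}{\bf S}|_{\cM^\perp}\bigr)=1-\prod_{i=1}^k\bigl(1-\text{\rm curv}_i(P_{\cM_i^\perp}{\bf S}_i|_{\cM_i^\perp})\bigr)=1-\prod_{i=1}^k m_i(\cM_i),
\]
and since each factor lies in $(0,1]$ the product is strictly positive, so $\text{\rm curv}\,(P_{\cM^\perp}{\bf S}|_{\cM^\perp})<1$. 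The main obstacle is the combinatorial identity for $\sum_{(\alpha)}\|P_{\bf q}{\bf S}_{(\alpha)}v\|^2$: one must interchange the summation order over $(\alpha)$ and $\gamma$ and then pass the resulting monotone lower bound through the directed limit defining $m(\cM)$. The two structural facts that make this work are the orthonormality of $\{{\bf S}_{(\alpha)}v\}$, which is precisely what the isometric inner multi-analytic operator $\Psi$ supplies, and the monotone convergence of the truncated sums; everything else is routine bookkeeping.
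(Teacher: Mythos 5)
Your proof is correct, and for part (i) it takes a genuinely different route from the paper. The paper argues by contradiction through its rigidity machinery: using Lemma \ref{coincidence} it identifies ${\bf K_T}{\bf K_T^*}$ with $(I\otimes Z^*)P_{\cM^\perp}(I\otimes Z)$, so the Beurling hypothesis yields ${\bf \Delta_{S}}(I-{\bf K_T}{\bf K_T^*})\geq 0$; if $\text{\rm curv}({\bf T})=1=\rank({\bf T})$, Theorem \ref{comp-inv} forces ${\bf T}$ to be unitarily equivalent to ${\bf S}$, which (since each $T_{i,j}$ is then an isometry) makes $\cM^\perp$ invariant, hence reducing, for the ${\bf S}_{i,j}$, contradicting the irreducibility of the Cuntz--Toeplitz algebra $C^*({\bf S}_{i,j})$. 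You instead prove the equivalent strict inequality $m(\cM)>0$ directly and quantitatively: the isometric multi-analytic $\Psi$ supplies the wandering vector $v=\Psi(1\otimes\xi)$, the orthonormal family $\{{\bf S}_{(\alpha)}v\}$ sits inside $\cM$, and your counting identity gives the explicit bound $m(\cM)\geq\sum_\gamma|c_\gamma|^2\,n_1^{-|\gamma_1|}\cdots n_k^{-|\gamma_k|}>0$; your interchange of sums is justified since all terms are nonnegative, the operator inequality $P_\cM\geq\sum_{(\alpha)}({\bf S}_{(\alpha)}v)({\bf S}_{(\alpha)}v)^*$ is legitimate (the right side is the projection onto the closed span, a subspace of $\cM$), and the monotone truncated sums pass through the directed limit furnished by Theorem \ref{multiplicity}. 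What each approach buys: yours avoids Theorem \ref{comp-inv}, Lemma \ref{coincidence}, and irreducibility altogether, and produces an explicit positive lower bound on the multiplicity in terms of the Fourier coefficients of any wandering vector, which is more information than the qualitative statement; the paper's argument is shorter given the classification theorem already proved, and displays the curvature equality $\text{\rm curv}=\rank$ as the exact obstruction. Your part (ii) — Beurling's theorem for a single Fock space from \cite{Po-charact}, part (i) with $k=1$, then Corollary \ref{for-perp} — is the same argument as the paper's.
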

\begin{proof}  To prove item (i), note that  Lemma \ref{coincidence} shows  that there is a unitary operator $Z:\overline{{\bf \Delta_T}(I)(\cM^\perp)}\to \CC$ such that
 $(I\otimes Z){\bf K_T}=V$, where ${\bf K_T}$ is the Berezin kernel associated with ${\bf T}:=P_{\cM^\perp} {\bf S}|_{\cM^\perp}$ and  $V$ is the injection of $\cM^\perp$ into
 $\otimes_{i=1}^k F^2(H_{n_i})$. Hence, we deduce that
 ${\bf K_{T}} {\bf K_{T}^*}=(I\otimes Z^*)P_{\cM^\perp} (I\otimes Z)$ and
 $${\bf \Delta_{S\otimes {\it I_\CC}}}(I-{\bf K_{T}}{\bf K_{T}^*})
 =(I\otimes Z^*){\bf \Delta_{S}}(P_\cM) (I\otimes Z).
 $$
   Since $\cM$ is a Beurling type invariant subspace, we have   ${\bf \Delta_{S}}(P_\cM)\geq 0$, which  implies ${\bf \Delta_{S}}(I-{\bf K_{T}}{\bf K_{T}^*})\geq 0$.
 If we assume that $\text{\rm curv}({\bf T})=1$, then $\text{\rm curv}({\bf T})=\rank({\bf T})$ and, due to
 Theorem \ref{comp-inv}, ${\bf T}$ is unitarily equivalent to  ${\bf S}$. Consequently, $\cM^\perp$ is an invariant subspace for ${\bf S}_{i,j}$ and, therefore, reducing for ${\bf S}_{i,j}$. Since the $C^*$-algebra $C^*({\bf S}_{i,j})$ is irreducible, we get a contradiction.
Now, we prove part (ii).  Since  any  invariant subspace  of $F^2(H_{n_i})$  is of Beurling type (see \cite{Po-charact}), we can apply  part (i), when $k=1$,  and deduce that $\text{\rm curv}_i(P_{\cM_i^\perp} {\bf S}_i|_{\cM_i^\perp})<1$.
Now, using Proposition \ref{for-perp}, one can  complete the proof.
\end{proof}

 We remark that Proposition \ref{strict} implies that $P_{\cM^\perp} {\bf S}|_{\cM^\perp}$ is not unitarily equivalent to the universal model ${\bf S}$. It remains an open question whether Proposition \ref{strict} part (i) is true for arbitrary  proper invariant subspace of
$\otimes_{i=1}^k F^2(H_{n_i})$, which is the case when $k=1$ (see \cite{Po-curvature}).

\section{Stability, continuity, and multiplicative   properties}

 In this section, we provide several properties concerning the stability, continuity, and multiplicative   properties for the curvature and the multiplicity invariants.

\begin{theorem} Let  ${\bf T}\in {\bf B_n}(\cH)$  have finite rank and let $\cM$ be an invariant subspace under ${\bf T}$ such that  ${\bf T}|_\cM\in {\bf B_n}(\cM)$ and $\dim \cM^\perp<\infty$. Then ${\bf T}|_\cM$ has finite rank and
$$
\left|\text{\rm curv}({\bf T})-\text{\rm curv}({\bf T}|_\cM)\right|
\leq \dim \cM^\perp\prod_{i=1}^k (n_i-1).
$$
\end{theorem}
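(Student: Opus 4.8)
The plan is to express both curvatures through the completely positive map formula of Corollary~\ref{curva-maps}, namely
$$
\text{\rm curv}({\bf T})=\lim_{(q_1,\ldots,q_k)\in\ZZ_+^k}\frac{1}{n_1^{q_1}\cdots n_k^{q_k}}\text{\rm trace\,}\left[\Phi_{T_1}^{q_1}\circ\cdots\circ\Phi_{T_k}^{q_k}({\bf\Delta_{T}}(I))\right],
$$
and the analogous limit with $T$ replaced by $T|_\cM$. Since $\cM^\perp$ is finite-dimensional and invariant under each $T_{i,j}^*$, the compression $P_{\cM^\perp}(\cdot)|_{\cM^\perp}$ intertwines the two settings: for any $p_i\in\{0,1\}$ one has the identity ${\bf\Delta_{T|_\cM}^p}(I_\cM)=[{\bf\Delta_{T}^p}(P_\cM)]|_\cM$, exactly as used in Lemma~\ref{sub-poly} and Theorem~\ref{multiplicity}. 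First I would record that ${\bf T}|_\cM$ having finite rank is immediate, since its defect is a compression of the finite-rank defect of ${\bf T}$, after accounting for the $\dim\cM^\perp<\infty$ correction.

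**Reducing the difference to a defect comparison.**
The key algebraic step is to write
$$
{\bf\Delta_{T}}(I)-\text{(lift of }{\bf\Delta_{T|_\cM}}(I_\cM))={\bf\Delta_{T}}(I)-{\bf\Delta_{T}}(P_\cM)={\bf\Delta_{T}}(P_{\cM^\perp}),
$$
using linearity of the defect map and $I=P_\cM+P_{\cM^\perp}$. Because both curvatures are obtained by applying the same limiting functional $L(X):=\lim_{\bf q}\frac{1}{n_1^{q_1}\cdots n_k^{q_k}}\text{\rm trace\,}[\Phi_{T_1}^{q_1}\circ\cdots\circ\Phi_{T_k}^{q_k}(X)]$ of Lemma~\ref{basic} (with $\phi_i=\frac{1}{n_i}\Phi_{T_i}$), the difference of the two curvatures equals $L({\bf\Delta_{T}}(P_{\cM^\perp}))$ up to the subtlety that $P_{\cM^\perp}$ need not make ${\bf\Delta_{T}}(P_{\cM^\perp})$ positive. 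I would therefore split ${\bf\Delta_{T}}(P_{\cM^\perp})$ into its positive and negative parts, or more directly bound $|L(\cdot)|$ on the self-adjoint trace-class operator ${\bf\Delta_{T}}(P_{\cM^\perp})$. Since $L$ is dominated by the trace on the positive cone (the remark after Lemma~\ref{basic} gives $L(X)\le\text{\rm trace\,}(X)$ for $X\ge0$), the estimate reduces to controlling $\text{\rm trace\,}[{\bf\Delta_{T}}(P_{\cM^\perp})]$ in absolute value.

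**Computing the trace bound.**
Here the combinatorial factor $\prod_{i=1}^k(n_i-1)$ should appear. Expanding ${\bf\Delta_{T}}=(id-\Phi_{T_1})\circ\cdots\circ(id-\Phi_{T_k})$ and taking the trace of its value at the finite-rank projection $P_{\cM^\perp}$, each factor $\text{\rm trace\,}[(id-\Phi_{T_i})(X)]=\text{\rm trace\,}(X)-\text{\rm trace\,}[\Phi_{T_i}(X)]$ contributes, via $\text{\rm trace\,}[\Phi_{T_i}(X)]\le n_i\,\text{\rm trace\,}(X)$, a crude factor bounded by $n_i-1$ in the leading term when one tracks the telescoping carefully against $\text{\rm trace\,}(P_{\cM^\perp})=\dim\cM^\perp$. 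I expect the cleanest route is to compare the two curvature limits directly at the level of the formula
$$
\text{\rm curv}({\bf T})-\text{\rm curv}({\bf T}|_\cM)=\lim_{\bf q}\frac{\text{\rm trace\,}\left[(id-\Phi_{T_1}^{q_1+1})\circ\cdots\circ(id-\Phi_{T_k}^{q_k+1})(P_{\cM^\perp})\right]}{\prod_{i=1}^k(1+n_i+\cdots+n_i^{q_i})},
$$
and to bound the numerator by expanding and using $\text{\rm trace\,}[\Phi_{T_i}^{s}(P_{\cM^\perp})]\le n_i^{s}\dim\cM^\perp$, so that the ratio is controlled by $\dim\cM^\perp\prod_{i=1}^k(n_i-1)$ in the limit.

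**The main obstacle.**
The hard part will be the sign issue: ${\bf\Delta_{T}}(P_{\cM^\perp})$ is generally \emph{not} positive, so the clean monotone estimates of Lemma~\ref{basic} do not apply verbatim and one must bound both the positive and negative contributions to the trace simultaneously. The delicate accounting is to verify that the cross terms in the telescoping expansion of $\prod_i(id-\Phi_{T_i}^{q_i+1})$ against the denominator $\prod_i(1+\cdots+n_i^{q_i})$ assemble, in the limit $q_i\to\infty$, to precisely the product $\prod_{i=1}^k(n_i-1)$ rather than something larger such as $\prod_i n_i$. I would handle this by processing one coordinate at a time, at each stage applying the one-variable identity $\frac{1-n_i^{q_i+1}/n_i^{q_i+1}}{1+\cdots+n_i^{q_i}}\to\frac{n_i-1}{n_i}\cdot n_i=n_i-1$ type asymptotics against the trace inequality, which is exactly the iterated-limit mechanism already validated in the proof of Theorem~\ref{curva1}.
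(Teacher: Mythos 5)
Your proposal is correct and takes essentially the same route as the paper's proof: both use Corollary \ref{curva-maps}, the identity ${\bf \Delta}_{{\bf T}|_\cM}^{\bf p}(I_\cM)=\big[{\bf \Delta}_{\bf T}^{\bf p}(P_\cM)\big]\big|_\cM$ together with linearity in $P_\cM=I_\cH-P_{\cM^\perp}$, and the bound $\big|\text{\rm trace\,}\big[(id-\Phi_{T_1}^{q_1+1})\circ\cdots\circ(id-\Phi_{T_k}^{q_k+1})(P_{\cM^\perp})\big]\big|\leq (1+n_1^{q_1+1})\cdots(1+n_k^{q_k+1})\dim\cM^\perp$, whose ratio against $\prod_{i=1}^k(1+n_i+\cdots+n_i^{q_i})$ tends to $\prod_{i=1}^k(n_i-1)$. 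The sign issue you single out is resolved exactly as you suggest, by term-by-term absolute-value bounds on the $2^k$ terms of the expansion, which is precisely what the paper's estimate amounts to.
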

\begin{proof} Note that $\rank ({\bf T}|_\cM)=\rank {\bf \Delta_T}(P_\cM)$. Since
${\bf \Delta_T}(P_\cM)={\bf \Delta_T}(I_\cH)-{\bf \Delta_T}(P_{\cM^\perp})$, we deduce that $\rank ({\bf T}|_\cM)<\infty$.
Since $\cM$ is an invariant subspace under ${\bf T}$, we have
\begin{equation*}
\begin{split}
\text{\rm trace}&\left[\left(id -\Phi_{T_1|_\cM}^{q_1+1}\right)\circ \cdots \circ\left(id -\Phi_{ T_k|_\cM}^{q_k+1}\right)(I_\cM)\right]\\
&= \text{\rm trace}\left[\left(id -\Phi_{T_1}^{q_1+1}\right)\circ \cdots \circ\left(id -\Phi_{ T_k}^{q_k+1}\right)(P_\cM)\right]\\
&\leq \text{\rm trace}\left[\left(id -\Phi_{T_1}^{q_1+1}\right)\circ \cdots \circ\left(id -\Phi_{ T_k}^{q_k+1}\right)(I_\cH)\right]+\text{\rm trace}\left[\left(id -\Phi_{T_1}^{q_1+1}\right)\circ \cdots \circ\left(id -\Phi_{ T_k}^{q_k+1}\right)(P_{\cM^\perp})\right].
\end{split}
\end{equation*}
Note also that
\begin{equation*}
\begin{split}
\text{\rm trace}&\left[\left(id -\Phi_{T_1}^{q_1+1}\right)\circ \cdots \circ\left(id -\Phi_{ T_k}^{q_k+1}\right)(P_{\cM^\perp})\right]\leq (1+n_1^{q_1+1})\cdots (1+n_k^{q_k+1})
\text{\rm trace}\,[P_{\cM^\perp}].
\end{split}
\end{equation*}
Consequently, we have
\begin{equation*}
\begin{split}
&\left|\frac{\text{\rm trace}\left[\left(id -\Phi_{T_1}^{q_1+1}\right)\circ \cdots \circ\left(id -\Phi_{ T_k}^{q_k+1}\right)(I_\cH)\right]}{{\prod_{i=1}^k(1+n_i+\cdots + n_i^{q_i})}}-\frac{\text{\rm trace}\left[\left(id -\Phi_{T_1|_\cM}^{q_1+1}\right)\circ \cdots \circ\left(id -\Phi_{ T_k|_\cM}^{q_k+1}\right)(I_\cM)\right]}{{\prod_{i=1}^k(1+n_i+\cdots + n_i^{q_i})}}
 \right|\\
 &\qquad\qquad\leq \frac{(1+n_1^{q_1+1})\cdots (1+n_k^{q_k+1})}{{\prod_{i=1}^k(1+n_i+\cdots + n_i^{q_i})}}\text{\rm trace}\,[P_{\cM^\perp}].
\end{split}
\end{equation*}
Using Corollary \ref{curva-maps}, we deduce that $\text{\rm curv}({\bf T})=\text{\rm curv}({\bf T}|_\cM)$ if $n_i=1$ for  at least one $i\in\{1,\ldots, k\}$.
When $n_i\geq 2$, we obtain the desired inequality.
The proof is complete.
\end{proof}

\begin{theorem} Let  ${\bf T}\in {\bf B_n}(\cH)$  have finite rank and let $\cM$ be a co-invariant subspace under ${\bf T}$   with $\dim \cM^\perp<\infty$. Then $P_\cM{\bf T}|_\cM$ has finite rank and
$$
\text{\rm curv}({\bf T})=\text{\rm curv}(P_\cM{\bf T}|_\cM).
$$
\end{theorem}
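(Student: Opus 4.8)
The plan is to exploit the co-invariance of $\cM$ through the block structure of the $T_{i,j}$ and to show that, after the natural normalization, only the part of the trace living on $\cM$ survives in the limit. Write $A_{i,j}:=P_\cM T_{i,j}|_\cM$ and ${\bf A}:=P_\cM{\bf T}|_\cM$. Since $\cM$ is co-invariant under ${\bf T}$, its orthocomplement $\cM^\perp$ is invariant under each $T_{i,j}$; decomposing $\cH=\cM\oplus\cM^\perp$, this means every $T_{i,j}$ is block lower-triangular, i.e. $P_\cM T_{i,j}P_{\cM^\perp}=0$. Computing the $(1,1)$-block of $T_{i,j}YT_{i,j}^*$, the off-diagonal blocks of $Y$ never contribute (precisely because the upper-right block of $T_{i,j}$ vanishes), and one obtains that the compression map $C(Y):=P_\cM Y|_\cM$ satisfies $C\circ\Phi_{T_i}=\Phi_{A_i}\circ C$ for every $i$.

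First I would use this intertwining to settle the qualitative claims. Since $C(I_\cH)=I_\cM$ and $C$ is linear, $C$ also intertwines $id-\Phi_{T_i}$ with $id-\Phi_{A_i}$, so for every ${\bf p}\leq(1,\ldots,1)$ one gets ${\bf \Delta_{A}^p}(I_\cM)=P_\cM{\bf \Delta_{T}^p}(I)|_\cM$. As a compression of a positive operator this is positive, whence ${\bf A}\in{\bf B_n}(\cM)$; and since $\rank[{\bf \Delta_{A}}(I_\cM)]\leq\rank[{\bf \Delta_{T}}(I)]<\infty$, the tuple ${\bf A}=P_\cM{\bf T}|_\cM$ has finite rank.

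Next, to compare the two curvatures I would use the asymptotic formula of Corollary \ref{curva-maps}. The same intertwining gives $\Phi_{A_1}^{q_1}\circ\cdots\circ\Phi_{A_k}^{q_k}({\bf \Delta_{A}}(I_\cM))=C\big(\Phi_{T_1}^{q_1}\circ\cdots\circ\Phi_{T_k}^{q_k}({\bf \Delta_{T}}(I))\big)$, so writing $W_{\bf q}:=\Phi_{T_1}^{q_1}\circ\cdots\circ\Phi_{T_k}^{q_k}({\bf \Delta_{T}}(I))\geq 0$ and splitting the trace over $\cH=\cM\oplus\cM^\perp$ yields
$$
\frac{\text{\rm trace\,}[\Phi_{A_1}^{q_1}\circ\cdots\circ\Phi_{A_k}^{q_k}({\bf \Delta_{A}}(I_\cM))]}{n_1^{q_1}\cdots n_k^{q_k}}=\frac{\text{\rm trace\,}[W_{\bf q}]}{n_1^{q_1}\cdots n_k^{q_k}}-\frac{1}{n_1^{q_1}\cdots n_k^{q_k}}\sum_{l}\langle W_{\bf q}f_l,f_l\rangle,
$$
where $\{f_l\}$ is an orthonormal basis of the finite-dimensional space $\cM^\perp$. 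By Corollary \ref{curva-maps} the left side tends to $\text{\rm curv}({\bf A})$ and the first term on the right to $\text{\rm curv}({\bf T})$, so the whole matter reduces to showing that the finite correction sum tends to $0$.

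The main obstacle — and the point where finite-dimensionality of $\cM^\perp$ is essential — is this last step. For each fixed $f\in\cM^\perp$, using that the blocks $T_{i,\alpha_i}$ commute across different indices $i$, one computes
$$
\langle W_{\bf q}f,f\rangle=\sum_{|\alpha_i|=q_i,\, i=1,\ldots,k}\|{\bf \Delta_{T}}(I)^{1/2}T_{1,\alpha_1}^*\cdots T_{k,\alpha_k}^*f\|^2,
$$
which is exactly the ``shell'' of the convergent series $\|{\bf K_{T}}f\|^2=\sum_{\beta_1,\ldots,\beta_k}\|{\bf \Delta_{T}}(I)^{1/2}T_{1,\beta_1}^*\cdots T_{k,\beta_k}^*f\|^2\leq\|f\|^2$. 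Since a series of nonnegative terms indexed by $\ZZ_+^k$ that converges has vanishing tail, $\langle W_{\bf q}f,f\rangle\to 0$ as ${\bf q}\to\infty$ in the directed set $\ZZ_+^k$; dividing by $n_1^{q_1}\cdots n_k^{q_k}\geq 1$ only helps. Summing over the finitely many $f_l$ shows the correction term vanishes, and therefore $\text{\rm curv}({\bf A})=\text{\rm curv}({\bf T})$, which completes the proof.
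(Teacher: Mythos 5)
Your proof is correct, and its skeleton coincides with the paper's: both arguments rest on the compression identity $\Phi_{A_i}^{q_i}(I_\cM)=P_\cM\,\Phi_{T_i}^{q_i}(I_\cH)|_\cM$ (a consequence of the block lower-triangular form of the $T_{i,j}$ relative to $\cH=\cM\oplus\cM^\perp$) together with Corollary \ref{curva-maps}, so that everything reduces to showing that a trace correction supported on the finite-dimensional space $\cM^\perp$ disappears in the limit. Where you genuinely differ is in how that correction is killed. The paper splits the trace of the partial-sum operators $(id-\Phi_{T_1}^{q_1+1})\circ\cdots\circ(id-\Phi_{T_k}^{q_k+1})(I_\cH)$: since these are positive and $\leq I$, the $\cM^\perp$-contribution to the trace is bounded by the constant $\dim\cM^\perp$, and dividing by the normalization $\prod_{i=1}^k(1+n_i+\cdots+n_i^{q_i})\to\infty$ finishes the proof --- a purely elementary uniform bound. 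You instead split the trace of the shell operators $W_{\bf q}=\Phi_{T_1}^{q_1}\circ\cdots\circ\Phi_{T_k}^{q_k}({\bf \Delta_{T}}(I))$ and prove the sharper fact that $\left<W_{\bf q}f,f\right>\to 0$ along the directed set $\ZZ_+^k$ for each $f\in\cM^\perp$, because these numbers are the terms of the convergent series $\|{\bf K_{T}}f\|^2\leq\|f\|^2$; this is finer (the error vanishes even before dividing by $n_1^{q_1}\cdots n_k^{q_k}$), but it needs the contractivity ${\bf K_{T}^*}{\bf K_{T}}\leq I$ of the Berezin kernel, which the paper's estimate does not use. Two small remarks: your explicit verification that ${\bf A}\in{\bf B_n}(\cM)$ and that $\rank[{\bf \Delta_{A}}(I_\cM)]\leq\rank[{\bf \Delta_{T}}(I)]<\infty$ is welcome, since both claims are part of the statement yet are taken for granted in the paper's proof; however, for membership in the polyball you should also record that the compressed blocks $A_{s,j}$ and $A_{t,l}$ with $s\neq t$ still commute, which follows from the same relation $P_\cM T_{i,j}=P_\cM T_{i,j}P_\cM$ you already used.
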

\begin{proof} Set $A:=(A_1,\ldots, A_k)$ and  $A_i:=(A_{i,1},\ldots, A_{i,n_i})$, where $A_{i,j}:=P_\cM T_{i,j}|_\cM$ for $i\in \{1,\ldots, k\}$ and $j\in \{1,\ldots, n_i\}$.
Since $\Phi_{A_i}^{q_i}(I_{\cM})=P_{\cM} \Phi_{T_i}^{q_i}(I_\cH)|_\cM$, we have
\begin{equation*}
\begin{split}
&\text{\rm trace}\left[\left(id -\Phi_{A_1}^{q_1+1}\right)\circ \cdots \circ\left(id -\Phi_{ A_k}^{q_k+1}\right)(I_\cM)\right]\\
&\leq \text{\rm trace}\left[P_\cM \left(id -\Phi_{T_1}^{q_1+1}\right)\circ \cdots \circ\left(id -\Phi_{ T_k}^{q_k+1}\right)(I_\cH)\right]+
\text{\rm trace}\left[P_{\cM^\perp}\left(id -\Phi_{T_1}^{q_1+1}\right)\circ \cdots \circ\left(id -\Phi_{ T_k}^{q_k+1}\right)(I_\cH)\right]\\
&\leq \text{\rm trace}\left[ \left(id -\Phi_{A_1}^{q_1+1}\right)\circ \cdots \circ\left(id -\Phi_{ A_k}^{q_k+1}\right)(I_\cM)\right]+
\text{\rm trace}\left[P_{\cM^\perp}\left(id -\Phi_{T_1}^{q_1+1}\right)\circ \cdots \circ\left(id -\Phi_{ T_k}^{q_k+1}\right)(I_\cH)\right].
\end{split}
\end{equation*}
On the other hand, since  $\left(id -\Phi_{T_1}^{q_1+1}\right)\circ \cdots \circ\left(id -\Phi_{ T_k}^{q_k+1}\right)(I_\cH)\leq I$, we deduce that
$$
\text{\rm trace}\left[P_{\cM^\perp}\left(id -\Phi_{T_1}^{q_1+1}\right)\circ \cdots \circ\left(id -\Phi_{ T_k}^{q_k+1}\right)(I_\cH)\right]
\leq \dim\cM^\perp.
$$
Consequently, we have
\begin{equation*}
\begin{split}
&\left|\frac{\text{\rm trace}\left[\left(id -\Phi_{T_1}^{q_1+1}\right)\circ \cdots \circ\left(id -\Phi_{ T_k}^{q_k+1}\right)(I_\cH)\right]}{{\prod_{i=1}^k(1+n_i+\cdots + n_i^{q_i})}}-\frac{\text{\rm trace}\left[\left(id -\Phi_{A_1}^{q_1+1}\right)\circ \cdots \circ\left(id -\Phi_{ A_k}^{q_k+1}\right)(I_\cM)\right]}{{\prod_{i=1}^k(1+n_i+\cdots + n_i^{q_i})}}
 \right|\\
 &\qquad\qquad\leq \frac{ 1}{{\prod_{i=1}^k(1+n_i+\cdots + n_i^{q_i})}}\dim\cM^\perp.
\end{split}
\end{equation*}
Using Corollary \ref{curva-maps}, we deduce that $\text{\rm curv}({\bf T})=\text{\rm curv}({\bf A})$.
The proof is complete.
\end{proof}

We denote by $\cA$ the set of all $k$-tuples $\phi=(\phi_1,\ldots, \phi_k)$ of commuting positive linear maps on $B(\cH)$  such that $\phi_i(\cT^+(\cH))\subset \cT^+(\cH)$ and
$
\text{\rm trace\,} [\phi_i(X)]\leq \text{\rm trace\,} (X)
$
for any $X\in \cT^+(\cH)$ and $i\in \{1,\ldots, k\}$.

\begin{lemma} \label{semi-cont}
Let $\phi=(\phi_1,\ldots, \phi_k)$ and  $\phi_{(m)}=(\phi_{1,m},\ldots, \phi_{k,m})$, $m\in \NN$, be in $\cA$ and let $X$ and $X_m$ be in $ \cT^+(\cH)$.
If
\begin{equation*}
   \lim_{m\to\infty}
\text{\rm trace\,}\left[ \phi_{1,m}^{q_1}\circ \cdots \circ \phi_{k,m}^{q_k}(X_m)\right]= \text{\rm trace\,}\left[ \phi_{1}^{q_1}\circ \cdots \circ \phi_{k}^{q_k}(X)\right]
\end{equation*}
for each ${\bf q}=(q_1,\ldots, q_k)\in \ZZ_+^k$, then
$$
\limsup_{m\to \infty}\left\{\lim_{{\bf q}\in \ZZ_+^k}\text{\rm trace\,}\left[ \phi_{1,m}^{q_1}\circ \cdots \circ \phi_{k,m}^{q_k}(X_m)\right]\right\}\leq \lim_{{\bf q}\in \ZZ_+^k}\text{\rm trace\,}\left[ \phi_{1}^{q_1}\circ \cdots \circ \phi_{k}^{q_k}(X)\right].
$$
\end{lemma}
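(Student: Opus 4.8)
The plan is to reduce everything to the infimum characterization furnished by Lemma~\ref{basic}. Since $\phi=(\phi_1,\ldots,\phi_k)\in\cA$ and $X\in\cT^+(\cH)$, the net $\{\text{\rm trace\,}[\phi_1^{q_1}\circ\cdots\circ\phi_k^{q_k}(X)]\}_{{\bf q}\in\ZZ_+^k}$ is nonnegative and nonincreasing in each index, so by Lemma~\ref{basic} its limit over the directed set $\ZZ_+^k$ exists and equals $\inf_{{\bf q}\in\ZZ_+^k}\text{\rm trace\,}[\phi_1^{q_1}\circ\cdots\circ\phi_k^{q_k}(X)]$; the identical statement holds with $(\phi,X)$ replaced by $(\phi_{(m)},X_m)$ for each fixed $m$. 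This identification of the directed limit with an infimum is the only substantive ingredient, since it is precisely what lets me decouple the two limiting processes, in ${\bf q}$ and in $m$.

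First I would fix an arbitrary ${\bf q}=(q_1,\ldots,q_k)\in\ZZ_+^k$. Applying the infimum characterization to $(\phi_{(m)},X_m)$, for every $m\in\NN$ one has
\[
\lim_{{\bf p}\in\ZZ_+^k}\text{\rm trace\,}\left[\phi_{1,m}^{p_1}\circ\cdots\circ\phi_{k,m}^{p_k}(X_m)\right]
=\inf_{{\bf p}\in\ZZ_+^k}\text{\rm trace\,}\left[\phi_{1,m}^{p_1}\circ\cdots\circ\phi_{k,m}^{p_k}(X_m)\right]
\leq \text{\rm trace\,}\left[\phi_{1,m}^{q_1}\circ\cdots\circ\phi_{k,m}^{q_k}(X_m)\right],
\]
the last inequality being immediate because an infimum is dominated by the single term indexed by ${\bf q}$.

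Next I would apply $\limsup_{m\to\infty}$ to both ends, keeping ${\bf q}$ fixed. On the right the hypothesis supplies convergence, so that
\[
\limsup_{m\to\infty}\left\{\lim_{{\bf p}\in\ZZ_+^k}\text{\rm trace\,}\left[\phi_{1,m}^{p_1}\circ\cdots\circ\phi_{k,m}^{p_k}(X_m)\right]\right\}
\leq \lim_{m\to\infty}\text{\rm trace\,}\left[\phi_{1,m}^{q_1}\circ\cdots\circ\phi_{k,m}^{q_k}(X_m)\right]
=\text{\rm trace\,}\left[\phi_{1}^{q_1}\circ\cdots\circ\phi_{k}^{q_k}(X)\right].
\]
The left-hand side does not depend on ${\bf q}$, so it is a lower bound that stays below the right-hand side for every choice of ${\bf q}$; taking the infimum over ${\bf q}\in\ZZ_+^k$ and invoking the infimum characterization for $(\phi,X)$ once more gives
\[
\limsup_{m\to\infty}\left\{\lim_{{\bf q}\in\ZZ_+^k}\text{\rm trace\,}\left[\phi_{1,m}^{q_1}\circ\cdots\circ\phi_{k,m}^{q_k}(X_m)\right]\right\}
\leq \inf_{{\bf q}\in\ZZ_+^k}\text{\rm trace\,}\left[\phi_{1}^{q_1}\circ\cdots\circ\phi_{k}^{q_k}(X)\right]
=\lim_{{\bf q}\in\ZZ_+^k}\text{\rm trace\,}\left[\phi_{1}^{q_1}\circ\cdots\circ\phi_{k}^{q_k}(X)\right],
\]
which is exactly the assertion.

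There is no genuine obstacle here: the whole argument rests on the fact that a directed limit which is monotone in each coordinate is an infimum, and an infimum lies below each of its terms. The only point requiring care is the asymmetry of the estimate, namely that one ends with $\limsup$ on the perturbed side and $\lim$ (the infimum) on the limiting side, and that the pointwise hypothesis is used only for each individual ${\bf q}$, never uniformly in ${\bf q}$. This is precisely why only a one-sided bound is available, matching the semicontinuity (rather than continuity) of the curvature and multiplicity invariants that will be drawn from this lemma.
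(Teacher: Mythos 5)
Your proof is correct and rests on exactly the same foundation as the paper's: by Lemma \ref{basic} (via the trace-contractivity built into $\cA$) the multi-sequences are decreasing in each index, so their directed limits are infima, and the hypothesis is then applied one fixed ${\bf q}$ at a time. The paper packages this as a contradiction argument (a subsequence with $x^{(m)}-x\geq 2\epsilon$ and an $\epsilon$-estimate at a fixed large ${\bf q}$), whereas you run the identical idea directly — infimum below each term, then $\limsup_m$, then infimum over ${\bf q}$ — which is just a cleaner arrangement of the same proof.
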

\begin{proof}
For  each ${\bf q}=(q_1,\ldots, q_k)\in \ZZ_+^k$, $m\in \NN$, let
$$
x_{\bf q}:=\text{\rm trace\,}\left[ \phi_{1}^{q_1}\circ \cdots \circ \phi_{k}^{q_k}(X)\right]\quad \text{ and } \quad x_{\bf q}^{(m)}:=
\text{\rm trace\,}\left[ \phi_{1,m}^{q_1}\circ \cdots \circ \phi_{k,m}^{q_k}(X_m)\right].
$$
According to Lemma \ref{basic}, $\{x_{\bf q}\}_{{\bf q}\in \ZZ_+^k}$ and
$\{x_{\bf q}^{(m)}\}_{{\bf q}\in \ZZ_+^k}$ are decreasing multi-sequences with respect to each of the indices $q_1,\ldots, q_k$. Set $x:=\lim_{{\bf q}\in \ZZ_+^k} x_{\bf q}$ and $x^{(m)}:=\lim_{{\bf q}\in \ZZ_+^k} x_{\bf q}^{(m)}$.
We prove that $\limsup_{m\to\infty} x^{(m)}\leq x$, by contradiction. Passing to a subsequence we can assume that there is $\epsilon>0$ such that $x^{(m)}-x\geq 2\epsilon>0$ for any $m\in \NN$. Since  $x:=\lim_{{\bf q}\in \ZZ_+^k} x_{\bf q}$, we can choose $N\geq 1$ such that $|x_{\bf q}-x|<\epsilon$ for any ${\bf q}=(q_1,\ldots, q_k)\in \ZZ_+^k$ with $q_i\geq N$. Fix such a ${\bf q}$ and note that  $x_{\bf q}^{(m)}\geq x^{(m)}$ and
$$
|x_{\bf q}^{(m)}-x_{\bf q}|\geq
|(x_{\bf q}^{(m)}-x^{(m)})+(x^{(m)}-x)|-|x_{\bf q} -x |\geq 2\epsilon-\epsilon=\epsilon.
$$
This contradicts the hypothesis that $\lim_{m\to\infty} x_{\bf q}^{(m)}=x_{\bf q}$. The proof is complete.
\end{proof}

\begin{theorem}\label{KK} Let ${\bf T}$ and $\{{\bf T}^{(m)}\}_{m\in \NN}$ be elements in the polyball $ {\bf B_n}(\cH)$   such that they have finite ranks which are  uniformly bounded and
$\text{\rm WOT-}\lim_{m\to\infty}{\bf K}_{{\bf T}^{(m)}}{\bf K}^*_{{\bf T}^{(m)}}= {\bf K_T} {\bf K_T^*}.
$
Then $$
\limsup_{m\to\infty}\text{\rm curv}({\bf T}^{(m)})\leq \text{\rm curv}({\bf T}).
$$
\end{theorem}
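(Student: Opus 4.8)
The plan is to reduce the assertion to the semicontinuity Lemma \ref{semi-cont}, applied to the tuples of completely positive maps $\phi:=(\phi_1,\ldots,\phi_k)$ and $\phi_{(m)}:=(\phi_{1,m},\ldots,\phi_{k,m})$ defined by $\phi_i:=\frac{1}{n_i}\Phi_{T_i}$ and $\phi_{i,m}:=\frac{1}{n_i}\Phi_{T_i^{(m)}}$. First I would record, exactly as in the proof of Theorem \ref{curva1}, that each $\phi_i$ is positive, maps $\cT^+(\cH)$ into itself, and satisfies $\text{\rm trace\,}[\phi_i(X)]\leq \text{\rm trace\,}(X)$ for $X\in\cT^+(\cH)$; the maps $\Phi_{T_1},\ldots,\Phi_{T_k}$ commute because of the commutation relations built into the polyball, and likewise for the $m$-th tuple. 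Hence $\phi$ and $\phi_{(m)}$ belong to $\cA$. Since ${\bf T}$ and ${\bf T}^{(m)}$ have finite rank, the defects $X:={\bf \Delta_{T}}(I)$ and $X_m:={\bf \Delta_{T^{(m)}}}(I)$ are positive trace class operators, and Corollary \ref{curva-maps} gives
$$
\text{\rm curv}({\bf T})=\lim_{{\bf q}\in\ZZ_+^k}\text{\rm trace\,}\left[\phi_1^{q_1}\circ\cdots\circ\phi_k^{q_k}(X)\right],\qquad
\text{\rm curv}({\bf T}^{(m)})=\lim_{{\bf q}\in\ZZ_+^k}\text{\rm trace\,}\left[\phi_{1,m}^{q_1}\circ\cdots\circ\phi_{k,m}^{q_k}(X_m)\right].
$$
Thus it suffices to verify the hypothesis of Lemma \ref{semi-cont}, namely the termwise convergence of these traces for each fixed ${\bf q}=(q_1,\ldots,q_k)$.

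To do this I would fix ${\bf q}$, write $P_{\bf q}:=P_{q_1}^{(1)}\otimes\cdots\otimes P_{q_k}^{(k)}$, and use relation \eqref{connection} together with $\text{\rm trace\,}[P_{\bf q}]=n_1^{q_1}\cdots n_k^{q_k}$ to obtain
$$
\text{\rm trace\,}\left[\phi_{1,m}^{q_1}\circ\cdots\circ\phi_{k,m}^{q_k}(X_m)\right]
=\frac{1}{n_1^{q_1}\cdots n_k^{q_k}}\,\text{\rm trace\,}\left[{\bf K}^*_{{\bf T}^{(m)}}(P_{\bf q}\otimes I){\bf K}_{{\bf T}^{(m)}}\right].
$$
Here the uniform bound on the ranks lets me regard all the operators ${\bf K}_{{\bf T}^{(m)}}{\bf K}^*_{{\bf T}^{(m)}}$ and ${\bf K_T}{\bf K_T^*}$ as positive operators on one fixed space $\otimes_{i=1}^k F^2(H_{n_i})\otimes\cK$ with $\dim\cK<\infty$, which is also what makes the WOT-convergence hypothesis meaningful. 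Since $P_{\bf q}\otimes I$ is then a \emph{finite rank} projection, cyclicity of the trace is legitimate and gives
$$
\text{\rm trace\,}\left[{\bf K}^*_{{\bf T}^{(m)}}(P_{\bf q}\otimes I){\bf K}_{{\bf T}^{(m)}}\right]=\text{\rm trace\,}\left[(P_{\bf q}\otimes I){\bf K}_{{\bf T}^{(m)}}{\bf K}^*_{{\bf T}^{(m)}}\right].
$$
Expanding $\text{\rm trace\,}[(P_{\bf q}\otimes I)A]$ over an orthonormal basis $\{f_l\}$ of the finite-dimensional range of $P_{\bf q}\otimes I$ expresses it as the finite sum $\sum_l\langle A f_l,f_l\rangle$, so the hypothesis $\text{\rm WOT-}\lim_m{\bf K}_{{\bf T}^{(m)}}{\bf K}^*_{{\bf T}^{(m)}}={\bf K_T}{\bf K_T^*}$ forces
$$
\lim_{m\to\infty}\text{\rm trace\,}\left[\phi_{1,m}^{q_1}\circ\cdots\circ\phi_{k,m}^{q_k}(X_m)\right]=\frac{1}{n_1^{q_1}\cdots n_k^{q_k}}\,\text{\rm trace\,}\left[(P_{\bf q}\otimes I){\bf K_T}{\bf K_T^*}\right]=\text{\rm trace\,}\left[\phi_1^{q_1}\circ\cdots\circ\phi_k^{q_k}(X)\right]
$$
for every ${\bf q}\in\ZZ_+^k$, where the last equality again uses cyclicity and \eqref{connection}. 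This is precisely the hypothesis of Lemma \ref{semi-cont}.

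Finally I would invoke Lemma \ref{semi-cont} to conclude
$$
\limsup_{m\to\infty}\text{\rm curv}({\bf T}^{(m)})=\limsup_{m\to\infty}\left\{\lim_{{\bf q}\in\ZZ_+^k}\text{\rm trace\,}\left[\phi_{1,m}^{q_1}\circ\cdots\circ\phi_{k,m}^{q_k}(X_m)\right]\right\}\leq\lim_{{\bf q}\in\ZZ_+^k}\text{\rm trace\,}\left[\phi_1^{q_1}\circ\cdots\circ\phi_k^{q_k}(X)\right]=\text{\rm curv}({\bf T}),
$$
which is the stated inequality. The only genuinely delicate point is the passage from WOT-convergence of the Berezin products ${\bf K}_{{\bf T}^{(m)}}{\bf K}^*_{{\bf T}^{(m)}}$ to numerical convergence of the normalized traces: this is exactly where both the finite rank of $P_{\bf q}\otimes I$ and the uniform rank bound (needed to place every Berezin kernel on a common ambient space with a finite-dimensional defect factor) are indispensable. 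It is also the structural reason why the conclusion is only the inequality $\limsup_{m}\text{\rm curv}({\bf T}^{(m)})\leq\text{\rm curv}({\bf T})$ rather than an equality, since interchanging the limit in $m$ with the iterated limit in ${\bf q}$ is precisely what Lemma \ref{semi-cont} controls in one direction only.
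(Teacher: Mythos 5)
Your proof is correct and follows essentially the same route as the paper's: both reduce the statement to Lemma \ref{semi-cont} applied to $\phi_i=\frac{1}{n_i}\Phi_{T_i}$ and $\phi_{i,m}=\frac{1}{n_i}\Phi_{T_i^{(m)}}$, using relation \eqref{connection} and the finite-dimensionality of $\cK$ to pass from WOT-convergence of ${\bf K}_{{\bf T}^{(m)}}{\bf K}^*_{{\bf T}^{(m)}}$ to termwise convergence of the normalized traces, then conclude via Corollary \ref{curva-maps}. The only difference is that you spell out the cyclicity-of-trace and orthonormal-basis expansion steps which the paper leaves implicit.
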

\begin{proof} Due to the hypothesis, we can assume that
${\bf K}_{{\bf T}^{(m)}}$ and ${\bf K_T}$ are taking values in $\otimes_{i=1}^k F^2(H_{n_i})\otimes \cK$, where $\cK$ is a Hilbert space  with $\dim \cK<\infty$. Consequently, we have
$$
\lim_{m\to\infty}\text{\rm trace\,}\left[ (P_{q_1}^{(1)}\otimes \cdots \otimes P_{q_k}^{(k)}\otimes I_\cK){\bf K_{T^{(\it m)}}}{\bf K_{T^{(\it m)}}^*}\right]\\
=\text{\rm trace\,}\left[ (P_{q_1}^{(1)}\otimes \cdots \otimes P_{q_k}^{(k)}\otimes I_\cK){\bf K_{T }}{\bf K_{T }^*}\right]
$$
for any ${\bf q}=(q_1,\ldots, q_k)\in \ZZ_+^k$.
Due to Theorem \ref{curva1} and its proof, we have
\begin{equation*}
\begin{split}
\lim_{m\to\infty}\text{\rm trace\,}\left[\Phi_{T^{(m)}_1}^{q_1}\circ \cdots \circ \Phi_{T^{(m)}_k}^{q_k} ({\bf \Delta_{T^{(\it m)}}}(I))\right]
&=\lim_{m\to\infty}\text{\rm trace\,}\left[{\bf K_{T^{(\it m)}}^*} (P_{q_1}^{(1)}\otimes \cdots \otimes P_{q_k}^{(k)}\otimes I_\cK){\bf K_{T^{(\it m)}}}\right]\\
&=\text{\rm trace\,}\left[ (P_{q_1}^{(1)}\otimes \cdots \otimes P_{q_k}^{(k)}\otimes I_\cK){\bf K_{T }}{\bf K_{T }^*}\right]\\
&=\text{\rm trace\,}\left[\Phi_{T_1}^{q_1}\circ \cdots \circ \Phi_{T_k}^{q_k} ({\bf \Delta_{T }}(I))\right]
\end{split}
\end{equation*}
for any ${\bf q}=(q_1,\ldots, q_k)\in \ZZ_+^k$.
Applying Lemma \ref{semi-cont} when
\begin{equation*}
\label{partic}
\begin{split}
\phi&=\left(\frac{1}{n_1}\Phi_{T_1},\ldots, \frac{1}{n_k}\Phi_{T_k}\right),\quad X={\bf \Delta_T}(I),\\
\Phi_{(m)}&=\left(\frac{1}{n_1}\Phi_{T^{(m)}_1},\ldots, \frac{1}{n_k}\Phi_{T^{(m)}_k}\right),\quad X_m={\bf \Delta_{T^{(\it m)}}}(I),
\end{split}
\end{equation*}
we obtain
$$
\limsup_{m\to \infty}\left\{\lim_{{\bf q}\in \ZZ_+^k}\frac{\text{\rm trace\,}\left[\Phi_{T^{(m)}_1}^{q_1}\circ \cdots \circ \Phi_{T^{(m)}_k}^{q_k} ({\bf \Delta_{T^{(\it m)}}}(I))\right]}{n_1^{q_1}\cdots n_k^{q_k}}\right\}\leq \lim_{{\bf q}\in \ZZ_+^k}\frac{\text{\rm trace\,}\left[\Phi_{T_1}^{q_1}\circ \cdots \circ \Phi_{T_k}^{q_k} ({\bf \Delta_{T }}(I))\right]}{n_1^{q_1}\cdots n_k^{q_k}}.
$$
Using again Theorem \ref{curva1}, we conclude that
$\limsup_{m\to\infty}\text{\rm curv}({\bf T}^{(m)})\leq \text{\rm curv}({\bf T})$. The proof is complete.
\end{proof}

The next result shows that the multiplicity invariant is lower semi-continuous.

\begin{theorem} Let $\cM$ and $\cM_m$ be invariant subspaces of
$\otimes_{i=1}^k F^2(H_{n_i})\otimes \cE$ with $\dim \cE<\infty$.
If $\text{\rm WOT-}\lim_{m\to\infty}P_{\cM_m}=P_\cM$, then
$$
\liminf_{m\to\infty}m(\cM_m)\geq m(\cM).
$$
\end{theorem}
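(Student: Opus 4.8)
The plan is to stay on the fixed ambient space $\otimes_{i=1}^k F^2(H_{n_i})\otimes\cE$ and run a Fatou-type argument on the normalized trace ratios, rather than transporting the compressions to a common Hilbert space. For ${\bf q}=(q_1,\dots,q_k)\in\ZZ_+^k$ I set $F_{\bf q}:=P_{q_1}^{(1)}\otimes\cdots\otimes P_{q_k}^{(k)}\otimes I_\cE$ and
\[
a_{\bf q}^{(m)}:=\frac{\text{\rm trace\,}[P_{\cM_m}F_{\bf q}]}{\text{\rm trace\,}[P_{q_1}^{(1)}\otimes\cdots\otimes P_{q_k}^{(k)}]},\qquad
a_{\bf q}:=\frac{\text{\rm trace\,}[P_{\cM}F_{\bf q}]}{\text{\rm trace\,}[P_{q_1}^{(1)}\otimes\cdots\otimes P_{q_k}^{(k)}]}.
\]
By Theorem \ref{multiplicity}, $m(\cM_m)=\lim_{{\bf q}\in\ZZ_+^k}a_{\bf q}^{(m)}$ and $m(\cM)=\lim_{{\bf q}\in\ZZ_+^k}a_{\bf q}$, so it suffices to compare these directed limits.

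First I would record monotonicity. Let ${\bf M}^{(m)}$ be the compression of ${\bf S}\otimes I_\cE$ to $\cM_m^\perp$, so that by \eqref{De} its defect ${\bf\Delta}_{{\bf M}^{(m)}}(I)=P_{\cM_m^\perp}({\bf P}_\CC\otimes I_\cE)|_{\cM_m^\perp}$ is positive of rank at most $\dim\cE$; combining \eqref{De} with \eqref{connection} exactly as in the proof of Theorem \ref{multiplicity} gives
\[
\dim\cE-a_{\bf q}^{(m)}=\frac{\text{\rm trace\,}[P_{\cM_m^\perp}F_{\bf q}]}{n_1^{q_1}\cdots n_k^{q_k}}=\frac{1}{n_1^{q_1}\cdots n_k^{q_k}}\,\text{\rm trace\,}\big[\Phi_{M^{(m)}_1}^{q_1}\circ\cdots\circ\Phi_{M^{(m)}_k}^{q_k}({\bf\Delta}_{{\bf M}^{(m)}}(I))\big].
\]
Applying Lemma \ref{basic} with $\phi_i:=\frac{1}{n_i}\Phi_{M^{(m)}_i}$ shows the right-hand side is nonincreasing in each $q_i$, hence $a_{\bf q}^{(m)}$ is nondecreasing in the product order on $\ZZ_+^k$; as the limit of a monotone net equals its supremum, $m(\cM_m)\geq a_{{\bf q}_0}^{(m)}$ for every fixed ${\bf q}_0$. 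The same argument applied to $\cM$ gives $m(\cM)=\sup_{{\bf q}}a_{\bf q}$.

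Next I would pass the WOT-convergence to the traces. The operator $F_{\bf q}$ has finite rank, since each $P_{q_i}^{(i)}$ projects onto the finitely many words of length $q_i$ and $\dim\cE<\infty$; writing $F_{\bf q}=\sum_j\langle\,\cdot\,,f_j\rangle f_j$ over an orthonormal basis $\{f_j\}$ of its range, the hypothesis $\text{\rm WOT-}\lim_m P_{\cM_m}=P_\cM$ yields $\text{\rm trace\,}[P_{\cM_m}F_{\bf q}]=\sum_j\langle P_{\cM_m}f_j,f_j\rangle\to\sum_j\langle P_\cM f_j,f_j\rangle=\text{\rm trace\,}[P_\cM F_{\bf q}]$. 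Since the denominator is independent of $m$, this gives $\lim_{m\to\infty}a_{\bf q}^{(m)}=a_{\bf q}$ for each fixed ${\bf q}$.

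Combining the two, I fix ${\bf q}_0\in\ZZ_+^k$ and use monotonicity followed by pointwise convergence to obtain $\liminf_{m\to\infty}m(\cM_m)\geq\liminf_{m\to\infty}a_{{\bf q}_0}^{(m)}=a_{{\bf q}_0}$; taking the supremum over ${\bf q}_0$ then yields $\liminf_{m\to\infty}m(\cM_m)\geq\sup_{{\bf q}_0}a_{{\bf q}_0}=m(\cM)$. The main obstacle, and the reason this route deliberately sidesteps the curvature semicontinuity of Theorem \ref{KK}, is that the compressions ${\bf M}^{(m)}$ act on the moving subspaces $\cM_m^\perp$, so their Berezin kernels need not lie in one common space $\otimes_{i=1}^k F^2(H_{n_i})\otimes\cK$ with $\dim\cK<\infty$; keeping everything on the ambient space and using the finite rank of $F_{\bf q}$ to convert weak convergence into trace convergence is exactly what makes the Fatou argument go through.
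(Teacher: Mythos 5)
Your proof is correct and is essentially the paper's own argument: the paper likewise reduces the statement to pointwise convergence of the traces $\text{\rm trace\,}[P_{\cM_m}(P_{q_1}^{(1)}\otimes\cdots\otimes P_{q_k}^{(k)}\otimes I_\cE)]$ obtained by testing the WOT convergence against these finite-rank projections, combined with a monotone-net limit interchange, only phrased on the curvature side via $m(\cM_m)=\dim\cE-\text{\rm curv}({\bf M}^{(m)})$ and Lemma \ref{semi-cont} applied to the decreasing normalized traces of $\Phi_{M_1^{(m)}}^{q_1}\circ\cdots\circ\Phi_{M_k^{(m)}}^{q_k}\bigl({\bf\Delta}_{{\bf M}^{(m)}}(I)\bigr)$, which is exactly the mirror image of your sup/liminf exchange for the increasing ratios $a_{\bf q}^{(m)}$. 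Your monotonicity step (Lemma \ref{basic} applied to the compressions through \eqref{De} and \eqref{connection}) is the same identity the paper establishes inside its proof of Theorem \ref{multiplicity}, so the two arguments coincide up to which side of $\dim\cE$ one works on, and your closing remark is also consonant with the paper, which invokes only the numerical Lemma \ref{semi-cont} rather than Theorem \ref{KK} itself, precisely because the compressions live on the moving subspaces $\cM_m^\perp$.
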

\begin{proof} Let  ${\bf M}:=(M_1,\ldots, M_k)$ with $M_i:=(M_{i,1},\ldots, M_{i,n_i})$ and $M_{i,j}:=P_{\cM^\perp}({\bf S}_{i,j}\otimes I_\cE)|_{\cM^\perp}$. Similarly, we define ${\bf M}^{(m)}:=(M_1^{(m)},\ldots, M_k^{(m)})$. Due to Theorem \ref{multiplicity}, we have
\begin{equation}
\label{mdc}
m(\cM)=\dim\cE- \text{\rm curv}({\bf M}) \quad \text{ and } \quad
m(\cM_m)=\dim\cE- \text{\rm curv}({\bf M}^{(m)}).
\end{equation}
As in the proof of the same theorem, we have
\begin{equation*}
\begin{split}
\text{\rm trace\,}[\Phi_{M_1}^{q_1}\circ \cdots \circ \Phi_{M_k}^{q_k} ({\bf \Delta_{M}}(I_{\cM^\perp}))]
&=\text{\rm trace\,}[P_{\cM^\perp}(P_{q_1}^{(1)}\otimes \cdots \otimes P_{q_k}^{(k)}\otimes I_\cE)]\\
&=n_1^{q_1}\cdots n_k^{q_k}\dim\cE- \text{\rm trace\,}[P_{\cM}(P_{q_1}^{(1)}\otimes \cdots \otimes P_{q_k}^{(k)}\otimes I_\cE)]
\end{split}
\end{equation*}
and a similar relation associated with ${\bf  M}^{(m)}$ holds.
Since $\text{\rm WOT-}\lim_{m\to\infty}P_{\cM_m}=P_\cM$, we deduce that
$$
\lim_{m\to\infty} \text{\rm trace\,}[P_{\cM_m}(P_{q_1}^{(1)}\otimes \cdots \otimes P_{q_k}^{(k)}\otimes I_\cE)]=\text{\rm trace\,}[P_{\cM}(P_{q_1}^{(1)}\otimes \cdots \otimes P_{q_k}^{(k)}\otimes I_\cE)].
$$
Consequently, we obtain
$$
\lim_{m\to\infty}\text{\rm trace\,}[\Phi_{M_1^{(m)}}^{q_1}\circ \cdots \circ \Phi_{M_k^{(m)}}^{q_k} ({\bf \Delta_{M^{(m)}}}(I_{\cM_m^\perp}))]=\text{\rm trace\,}[\Phi_{M_1}^{q_1}\circ \cdots \circ \Phi_{M_k}^{q_k} ({\bf \Delta_{M}}(I_{\cM^\perp}))]
$$
As in the proof of Theorem \ref{KK}, one can use  Lemma \ref{semi-cont} to show that
$\limsup_{m\to\infty}\text{\rm curv}({\bf M}^{(m)})\leq \text{\rm curv}({\bf M} )$. Now, relation \eqref{mdc} implies
$\liminf_{m\to\infty}m(\cM_m)\geq m(\cM)$. The proof is complete.
\end{proof}

The next result shows that the curvature is upper semi-continuous.
\begin{theorem} Let ${\bf T}$ and $\{{\bf T}^{(m)}\}_{m\in \NN}$ be elements in the polyball $ {\bf B_n}(\cH)$   such that they have finite ranks which are  uniformly bounded. If ${\bf T}^{(m)}\to {\bf T}$, as $m\to \infty$, in the norm topology,
 then $$
\limsup_{m\to\infty}\text{\rm curv}({\bf T}^{(m)})\leq \text{\rm curv}({\bf T}).
$$
\end{theorem}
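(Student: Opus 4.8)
The plan is to reduce the statement, exactly as in the proof of Theorem \ref{KK}, to the semicontinuity Lemma \ref{semi-cont}. Set $\phi_i:=\frac{1}{n_i}\Phi_{T_i}$ and $\phi_{i,m}:=\frac{1}{n_i}\Phi_{T^{(m)}_i}$ with $X:={\bf\Delta_T}(I)$ and $X_m:={\bf\Delta_{T^{(\it m)}}}(I)$; by the trace estimate in the proof of Theorem \ref{curva1} these maps belong to the class $\cA$, and the defects are positive trace class operators. By the last formula of Corollary \ref{curva-maps} one has $\lim_{{\bf q}\in\ZZ_+^k}\text{\rm trace\,}[\phi_1^{q_1}\circ\cdots\circ\phi_k^{q_k}(X)]=\text{\rm curv}({\bf T})$ and likewise $\lim_{{\bf q}\in\ZZ_+^k}\text{\rm trace\,}[\phi_{1,m}^{q_1}\circ\cdots\circ\phi_{k,m}^{q_k}(X_m)]=\text{\rm curv}({\bf T}^{(m)})$, so the conclusion of Lemma \ref{semi-cont} is literally the asserted inequality. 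Thus it suffices to verify the hypothesis of that lemma: for each fixed ${\bf q}=(q_1,\ldots,q_k)\in\ZZ_+^k$,
$$
\text{\rm trace\,}\left[\Phi_{T^{(m)}_1}^{q_1}\circ\cdots\circ\Phi_{T^{(m)}_k}^{q_k}({\bf\Delta_{T^{(\it m)}}}(I))\right]\longrightarrow \text{\rm trace\,}\left[\Phi_{T_1}^{q_1}\circ\cdots\circ\Phi_{T_k}^{q_k}({\bf\Delta_{T}}(I))\right]\qquad(m\to\infty).
$$

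First I would observe that, expanding $\Phi_{T_i}(Y)=\sum_j T_{i,j}YT_{i,j}^*$ and ${\bf\Delta_T}(I)=(id-\Phi_{T_1})\circ\cdots\circ(id-\Phi_{T_k})(I)$, the operator $A_{\bf q}({\bf T}):=\Phi_{T_1}^{q_1}\circ\cdots\circ\Phi_{T_k}^{q_k}({\bf\Delta_T}(I))$ is a fixed noncommutative $*$-polynomial, independent of $m$, in the entries $T_{i,j}$ and their adjoints. Since ${\bf T}^{(m)}\to{\bf T}$ in the norm topology, the entries $T^{(m)}_{i,j}\to T_{i,j}$ in norm and stay uniformly bounded, whence $A_{\bf q}({\bf T}^{(m)})\to A_{\bf q}({\bf T})$ in the operator norm.

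The main obstacle is that operator-norm convergence by itself does not force convergence of traces, the trace being discontinuous for the norm topology; this is exactly where the hypothesis of uniformly bounded finite ranks enters. Say $\rank({\bf T}^{(m)})\le r$ for all $m$. Because the set of operators of rank at most $r$ is norm-closed, the norm limit gives $\rank({\bf T})\le r$ as well. Using $\rank\Phi_{T_i}(Y)\le n_i\rank Y$, each of the positive operators $A_{\bf q}({\bf T}^{(m)})$ and $A_{\bf q}({\bf T})$ has rank at most $R:=n_1^{q_1}\cdots n_k^{q_k}\,r$, a bound independent of $m$. For positive operators of uniformly bounded rank, norm convergence does yield trace convergence: by the Courant--Fischer min-max principle the $j$-th largest eigenvalue is $1$-Lipschitz in operator norm, so $\lambda_j(A_{\bf q}({\bf T}^{(m)}))\to\lambda_j(A_{\bf q}({\bf T}))$ for each $j$; as all but the first $R$ eigenvalues vanish, $\text{\rm trace\,}[A_{\bf q}({\bf T}^{(m)})]=\sum_{j=1}^R\lambda_j(A_{\bf q}({\bf T}^{(m)}))\to\sum_{j=1}^R\lambda_j(A_{\bf q}({\bf T}))=\text{\rm trace\,}[A_{\bf q}({\bf T})]$, which is the displayed limit.

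This step is the only essential use of the uniform rank bound, and it is the crux of the argument: without it the traces could escape to infinity and the inequality could fail. Everything else is formal and follows the tail of the proof of Theorem \ref{KK}: feeding the per-${\bf q}$ trace convergence just established into Lemma \ref{semi-cont}, and then rewriting both sides by means of Corollary \ref{curva-maps} (equivalently Theorem \ref{curva1}), yields $\limsup_{m\to\infty}\text{\rm curv}({\bf T}^{(m)})\le\text{\rm curv}({\bf T})$. I would note, in passing, that one cannot simply quote Theorem \ref{KK} as a black box, since its hypothesis is stated as weak convergence of the Berezin products ${\bf K}_{{\bf T}^{(m)}}{\bf K}^*_{{\bf T}^{(m)}}$ in a common finite-dimensional coefficient space, and producing such compatible embeddings from mere norm convergence of ${\bf T}^{(m)}$ is awkward; working directly with the embedding-independent traces $\text{\rm trace\,}[\Phi^{q_1}_{T_1}\circ\cdots\circ\Phi^{q_k}_{T_k}({\bf\Delta_T}(I))]$ bypasses this difficulty entirely.
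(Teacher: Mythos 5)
Your proposal is correct and follows essentially the same route as the paper's own proof: reduce via Corollary \ref{curva-maps} and Lemma \ref{semi-cont} to showing, for each fixed ${\bf q}$, that $\text{\rm trace\,}[\Phi_{T^{(m)}_1}^{q_1}\circ\cdots\circ\Phi_{T^{(m)}_k}^{q_k}({\bf\Delta_{T^{(\it m)}}}(I))]\to\text{\rm trace\,}[\Phi_{T_1}^{q_1}\circ\cdots\circ\Phi_{T_k}^{q_k}({\bf\Delta_{T}}(I))]$, and obtain this from norm convergence of these fixed $*$-polynomial expressions together with the uniform rank bound $n_1^{q_1}\cdots n_k^{q_k}\,r$. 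The only (cosmetic) difference is the elementary fact used in that last step: the paper invokes $\left|\text{\rm trace\,}[A-B]\right|\leq\|A-B\|\,\rank[A-B]$ directly, while you pass through Weyl's eigenvalue perturbation inequality, which yields the same conclusion.
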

\begin{proof}
As in the proof of Theorem \ref{KK}, due to Lemma \ref{semi-cont}, it is enough to show that
$
\lim_{m\to\infty}\text{\rm trace\,}(A_{\bf q}^{(m)})
=
\text{\rm trace\,}(A_{\bf q})
$
for each ${\bf q}=(q_1,\ldots, q_k)\in \ZZ_+^k$,
where
$
A_{\bf q}^{(m)}:=\Phi_{T^{(m)}_1}^{q_1}\circ \cdots \circ \Phi_{T^{(m)}_k}^{q_k} ({\bf \Delta_{T^{(\it m)}}}(I))$ and $ A_{\bf q}:=\Phi_{T_1}^{q_1}\circ \cdots \circ \Phi_{T_k}^{q_k} ({\bf \Delta_{T }}(I)).
$
Taking into account that $A_{\bf q}^{(m)}$ and $A_{\bf q}$ have finite rank, we have
\begin{equation}\label{tr-rank}
\begin{split}
\left|\text{\rm trace\,}[A_{\bf q}^{(m)}-A_{\bf q}]\right|
\leq \|A_{\bf q}^{(m)}-A_{\bf q}\|\rank[A_{\bf q}^{(m)}-A_{\bf q}].
\end{split}
\end{equation}
Since ${\bf T}^{(m)}\to {\bf T}$, as $m\to \infty$, in the norm topology, it is easy to see that $\|A_{\bf q}^{(m)}-A_{\bf q}\|\to 0$ as $m\to\infty$.
On the other hand,   note that there is $M>0$ such that $\rank [{\bf \Delta_{T^{(\it m)} }}(I)]\leq M$ for any $m\in \NN$ and
$$
\rank[A_{\bf q}^{(m)}-A_{\bf q}]\leq n_1^{q_1}\cdots n_k^{q_k} \left(\rank [{\bf \Delta_{T^{(\it m)}}}(I)]+\rank [{\bf \Delta_{T }}(I)]\right).
$$
Consequently, relation \eqref{tr-rank} implies
$\text{\rm trace\,}(A_{\bf q}^{(m)})\to \text{\rm trace\,} (A_{\bf q})$ as $m\to\infty$. The rest of the proof is similar to the one of Theorem
\ref{KK}. The proof is complete.
\end{proof}

Given a function $\kappa :\NN\to \NN$ and ${\bf n}^{(i)}:=(n_1^{(i)},\ldots, n_{\kappa(i)}^{(i)})\in \NN^{\kappa(i)}$ for  $i\in \{1,\ldots,p\}$,
we consider the polyball ${\bf B}_{{\bf n}^{(i)}}(\cH_i)$, where $\cH_i$ is a Hilbert space.
 Let ${\bf X}^{(i)}\in{\bf B}_{{\bf n}^{(i)}}(\cH_i)$ with ${\bf X}^{(i)}:=(X_1^{(i)},\ldots, X_{\kappa(i)}^{(i)})$ and
 $X_r^{(i)}:=(X_{r,1}^{(i)},\ldots, X^{(i)}_{r,n_r^{(i)}})\in B(\cH_i)^{n_r^{(i)}}$  for
 $r\in \{1,\ldots, \kappa(i)\}$.
 If ${\bf X}:=({\bf X}^{(1)},\ldots, {\bf X}^{(p)})\in {\bf B}_{{\bf n}^{(1)}}(\cH_1)\times \cdots \times {\bf B}_{{\bf n}^{(p)}}(\cH_p)$, we define the ampliation
 $\widetilde {\bf X}$ by setting
 $\widetilde {\bf X}:=(\widetilde {\bf X}^{(1)},\ldots, \widetilde {\bf X}^{(p)})$, where
 $\widetilde {\bf X}^{(i)}:=(\widetilde X_1^{(i)},\ldots, \widetilde X_{\kappa(i)}^{(i)})$ and
 $\widetilde X_r^{(i)}:=(\widetilde X_{r,1}^{(i)},\ldots, \widetilde X^{(i)}_{r,n_r^{(i)}})$  for
 $r\in \{1,\ldots, \kappa(i)\}$,  and
 $$
 \widetilde X_{r,s}^{(i)}:=I_{\cH_1}\otimes \cdots \otimes I_{\cH_{i-1}}\otimes X_{r,s}^{(i)}\otimes I_{\cH_{i+1}}\otimes I_{\cH_p}
 $$
 for all $i\in \{1,\ldots,p\}$, $r\in \{1,\ldots, \kappa(i)\}$, and $s\in \{1,\ldots, n_r^{(i)}\}$.

 \begin{theorem} \label{multiplicative-prop} Let  ${\bf X}:=({\bf X}^{(1)},\ldots, {\bf X}^{(p)})\in {\bf B}_{{\bf n}^{(1)}}(\cH_1)\times \cdots \times {\bf B}_{{\bf n}^{(p)}}(\cH_p)$ be such that each ${\bf X}^{(i)}$ has trace class defect. Then the ampliation $\widetilde {\bf X}$ is in the regular polyball ${\bf B}_{({\bf n}^{(1)},\ldots, {\bf n}^{(p)})}(\cH_1\otimes \cdots \otimes \cH_p)$, has trace class defect,
 and
 $$
 \text{\rm curv}\, (\widetilde {\bf X})=\prod_{i=1}^p \text{\rm curv}\,({\bf X}^{(i)}).
 $$
 \end{theorem}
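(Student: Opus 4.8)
The plan is to reduce everything to the factorization formula of Corollary~\ref{curva-maps} together with the elementary observation that the completely positive map attached to an ampliated tuple acts on a single tensor leg. Writing $\Phi_{\widetilde X_r^{(i)}}$ for the completely positive map associated with $\widetilde X_r^{(i)}=(\widetilde X_{r,1}^{(i)},\ldots,\widetilde X^{(i)}_{r,n_r^{(i)}})$, one checks directly on elementary tensors that
\[
\Phi_{\widetilde X_r^{(i)}}(Z_1\otimes\cdots\otimes Z_p)=Z_1\otimes\cdots\otimes\Phi_{X_r^{(i)}}(Z_i)\otimes\cdots\otimes Z_p,
\]
since in the slots $j\neq i$ the tensor factors commute past the operators $X_{r,s}^{(i)}$, which act only on the $i$-th leg. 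First I would record that, because the maps $\Phi_{\widetilde X_r^{(i)}}$ and $\Phi_{\widetilde X_{r'}^{(i')}}$ act on disjoint tensor legs when $i\neq i'$, and already commute within a fixed group $i$ (as ${\bf X}^{(i)}\in{\bf B}_{{\bf n}^{(i)}}(\cH_i)$), all of them commute; consequently the entries of $\widetilde{\bf X}$ satisfy the required cross-commutation relations, so that $\widetilde{\bf X}$ lies in the appropriate $\times_c$-product.

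Next I would iterate the displayed identity. Applying any product $\prod_{(i,r)}(id-\Phi_{\widetilde X_r^{(i)}})^{p_{i,r}}$ with $p_{i,r}\in\{0,1\}$ to $I=I_{\cH_1}\otimes\cdots\otimes I_{\cH_p}$ and factoring leg by leg gives
\[
{\bf\Delta}_{\widetilde{\bf X}}^{\bf p}(I)=\bigotimes_{i=1}^p{\bf\Delta}_{{\bf X}^{(i)}}^{{\bf p}^{(i)}}(I_{\cH_i}),
\]
where ${\bf p}^{(i)}$ collects the exponents in the $i$-th group. Each factor is positive because ${\bf X}^{(i)}$ lies in its polyball, so the tensor product is positive; taking ${\bf p}=(1,\ldots,1)$ shows $\widetilde{\bf X}\in{\bf B}_{({\bf n}^{(1)},\ldots,{\bf n}^{(p)})}(\cH_1\otimes\cdots\otimes\cH_p)$, and since the trace is multiplicative on tensor products,
\[
\text{\rm trace\,}[{\bf\Delta}_{\widetilde{\bf X}}(I)]=\prod_{i=1}^p\text{\rm trace\,}[{\bf\Delta}_{{\bf X}^{(i)}}(I_{\cH_i})]<\infty,
\]
which settles the trace class claim.

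For the curvature I would invoke Corollary~\ref{curva-maps}, expressing $\text{\rm curv}(\widetilde{\bf X})$ as a limit over the directed set of multi-indices of the normalized trace with denominator $\prod_{i,r}(n_r^{(i)})^{q_r^{(i)}}$. Iterating the factorization once more, the iterated map applied to the defect factors as
\[
\bigotimes_{i=1}^p\left[\Phi_{X_1^{(i)}}^{q_1^{(i)}}\circ\cdots\circ\Phi_{X_{\kappa(i)}^{(i)}}^{q_{\kappa(i)}^{(i)}}\bigl({\bf\Delta}_{{\bf X}^{(i)}}(I_{\cH_i})\bigr)\right],
\]
so that, by multiplicativity of the trace and of the normalizing constants, the normalized trace for $\widetilde{\bf X}$ equals the product over $i$ of the normalized traces attached to the factors ${\bf X}^{(i)}$.

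The one genuine point to check—and the place I expect the only real work—is the interchange of the product with the multi-limit: one must argue that the limit over $\ZZ_+^{\kappa(1)+\cdots+\kappa(p)}$ of a product of factors, each depending only on its own block of indices, equals the product of the separately existing limits. This is harmless here because each factor is a trace of a positive operator, hence nonnegative, and is uniformly bounded by $\text{\rm trace\,}[{\bf\Delta}_{{\bf X}^{(i)}}(I_{\cH_i})]$ thanks to the trace-contractivity used in Lemma~\ref{basic}, while existence of each block limit is guaranteed by Corollary~\ref{curva-maps}. For nonnegative, uniformly bounded, block-separated factors the product of the limits coincides with the limit of the products, and this yields
\[
\text{\rm curv}(\widetilde{\bf X})=\prod_{i=1}^p\text{\rm curv}({\bf X}^{(i)}),
\]
completing the proof.
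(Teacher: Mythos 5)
Your proposal is correct and follows essentially the same route as the paper's proof: factor the defect maps ${\bf \Delta}_{\widetilde{\bf X}}^{\bf p}(I)$ across the tensor legs to get membership in the polyball and the trace class property via multiplicativity of the trace, then apply Corollary \ref{curva-maps} and split the joint multi-limit into a product of block limits. The only difference is that you explicitly flag and justify the interchange of the multi-limit with the product (using nonnegativity and uniform boundedness of the block factors), a step the paper's proof treats as immediate; this is a welcome clarification rather than a deviation.
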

 \begin{proof} Note that, for each  ${\bf m}^{(i)}\in \ZZ_+^{\kappa(i)}$ with $0\leq {\bf m}^{(i)}\leq (1,\ldots, 1)$ and $i\in \{1,\ldots, p\}$, we have
 $$
 {\bf \Delta}_{\widetilde {\bf X}}^{({\bf m}^{(1)},\ldots, {\bf m}^{(p)})}(I_{\cH_1\otimes \cdots \otimes \cH_p})
 ={\bf \Delta}^{{\bf m}^{(1)}}_{{\bf X}^{(1)}}(I_{\cH_1})\otimes \cdots \otimes {\bf \Delta}^{{\bf m}^{(p)}}_{{\bf X}^{(p)}}(I_{\cH_p})\geq 0,
 $$
 which shows that $\widetilde {\bf X}$ is in the regular polyball ${\bf B}_{({\bf n}^{(1)},\ldots, {\bf n}^{(p)})}(\cH_1\otimes \cdots \otimes \cH_p)$ and
 $$
 \text{\rm trace}\,\left[{\bf \Delta}_{\widetilde {\bf X}} (I_{\cH_1\otimes \cdots \otimes \cH_p})\right]
 =\text{\rm trace}\,\left[{\bf \Delta}_{{\bf X}^{(1)}}(I_{\cH_1})\right]\cdots \text{\rm trace}\,\left[{\bf \Delta}_{{\bf X}^{(p)}}(I_{\cH_p})\right]<\infty.
 $$
 Let ${\bf q}^{(i)}:=(q_1^{(i)}, \ldots, q_{\kappa(i)}^{(i)})\in \ZZ_+^{\kappa(i)} $
 for $i\in \{1,\ldots, p\}$. According to Corollary \ref{curva-maps}, we have
\begin{equation*}
\begin{split}
&\text{\rm curv}\,(\widetilde{\bf X})\\
&=
\lim_{{\bf q}^{(1)}\in \ZZ_+^{\kappa(1)},\ldots,{\bf q}^{(p)}\in \ZZ_+^{\kappa(p)}}
\frac{\text{\rm trace}\,\left[ \Phi_{X_1^{(1)}}^{q_1^{(1)}}\circ \cdots \circ
\Phi_{X_{\kappa(1)}^{(1)}}^{q_{\kappa(1)}^{(1)}}\left({\bf \Delta}_{{\bf X}^{(1)}}(I_{\cH_1})\right)
\otimes \cdots \otimes
\Phi_{X_1^{(p)}}^{q_1^{(p)}}\circ \cdots \circ
\Phi_{X_{\kappa(p)}^{(p)}}^{q_{\kappa(p)}^{(p)}}\left({\bf \Delta}_{{\bf X}^{(p)}}(I_{\cH_p})\right)\right]}
{\left[ (n_1^{(1)})^{q_1^{(1)}}\cdots (n_{\kappa({1})}^{(1)})^{q_{\kappa(1)}^{(1)}} \right]\cdots \left[ (n_1^{(p)})^{q_1^{(p)}}\cdots (n_{\kappa(p)}^{(p)})^{q_{\kappa(p)}^{(p)}} \right]}\\
&=
\lim_{{\bf q}^{(1)}\in \ZZ_+^{\kappa(1)}}
\frac{\text{\rm trace}\,\left[ \Phi_{X_1^{(1)}}^{q_1^{(1)}}\circ \cdots \circ
\Phi_{X_{\kappa(1)}^{(1)}}^{q_{\kappa(1)}^{(1)}}\left({\bf \Delta}_{{\bf X}^{(1)}}(I_{\cH_1})\right)
           \right]}
{(n_1^{(1)})^{q_1^{(1)}}\cdots (n_{\kappa(1)}^{(1)})^{q_{\kappa(1)}^{(1)}}}
\cdots
\lim_{{\bf q}^{(p)}\in \ZZ_+^{\kappa(p)}}\frac{\text{\rm trace}\,\left[
\Phi_{X_1^{(p)}}^{q_1^{(p)}}\circ \cdots \circ
\Phi_{X_{\kappa(p)}^{(p)}}^{q_{\kappa(p)}^{(p)}}\left({\bf \Delta}_{{\bf X}^{(p)}}(I_{\cH_p})\right)\right]}
{ (n_1^{(p)})^{q_1^{(p)}}\cdots (n_{\kappa(p)}^{(p)})^{q_{\kappa(p)}^{(p)}} }\\
&=\prod_{i=1}^p \text{\rm curv}\,({\bf X}^{(i)}).
\end{split}
\end{equation*}
The proof is complete.
\end{proof}

\begin{proposition} Let ${\bf n}^{(i)}:=(n_1^{(i)},\ldots, n_{\kappa(i)}^{(i)})\in \NN^{\kappa(i)}$  and  ${\bf X}^{(i)}\in B(\cH_i)^{n_1^{(i)}}\times \cdots  \times B(\cH_i)^{n_{\kappa(i)}^{(i)}}$ for $i\in \{1,\ldots, p\}$.
Then each ${\bf X}^{(i)}$ is unitarily equivalent to
${\bf S}^{({\bf n}^{(i)})}\otimes I_{\cE_i}$  for some  Hilbert space $\cE_i$  with  $\dim \cE_i<\infty$, if and only if the following conditions are satisfied.
 \begin{enumerate}
 \item[(i)]
 The ampliation $\widetilde {\bf X}:=(\widetilde {\bf X}^{(1)},\ldots, \widetilde {\bf X}^{(p)})\in {\bf B}_{({\bf n}^{(1)},\ldots, {\bf n}^{(p)})}(\cH_1\otimes \cdots \otimes \cH_p)$ is  a pure element  with finite rank.
 \item[(ii)]  Either $\widetilde {\bf X}$ or each $\widetilde {\bf X}^{(i)}$, $i\in \{1,\ldots, p\}$,  has characteristic function.
 \item[(iii)]
$ \text{\rm curv}\,(\widetilde{\bf X})=\rank [{\bf \Delta}_{\widetilde{\bf X}}(I)].
$
\end{enumerate}
In this case, the Berezin kernel ${\bf K_{\widetilde{\bf X}}}$ is a unitary operator which is unitarily equivalent to the tensor product  ${\bf K}_{{\bf X}^{(1)}}\otimes \cdots \otimes {\bf K}_{{\bf X}^{(p)}}$, and
$$ X_{r,s}^{(i)}={\bf K}^*_{{\bf X}^{(i)}} ({\bf S}^{({\bf n}^{(i)})}_{r,s}\otimes I_{\overline{{\bf \Delta_{{\bf X}^{(i)}}}(I) (\cH_i)}}){\bf K}_{{\bf X}^{(i)}}
$$
for all $i\in \{1,\ldots,p\}$, $r\in \{1,\ldots, \kappa(i)\}$, and $s\in \{1,\ldots, n_r^{(i)}\}$,
where ${\bf S}^{({\bf n}^{(i)})}$  is  the universal model of the polyball ${\bf B}_{{\bf n}^{(i)}}$ and ${\bf K}_{{\bf X}^{(i)}}$ is the associated Berezin kernel of ${\bf X}^{(i)}$.
\end{proposition}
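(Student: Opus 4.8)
The plan is to reduce everything to Theorem \ref{comp-inv}, applied both to the ampliation $\widetilde{\bf X}$ inside the large polyball ${\bf B}_{({\bf n}^{(1)},\ldots,{\bf n}^{(p)})}(\cH_1\otimes\cdots\otimes\cH_p)$ and to each factor ${\bf X}^{(i)}$ separately, using the multiplicativity of curvature and rank recorded in Theorem \ref{multiplicative-prop} as the bridge between the two pictures.

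For the direct implication, assume each ${\bf X}^{(i)}$ is unitarily equivalent to ${\bf S}^{({\bf n}^{(i)})}\otimes I_{\cE_i}$ with $\dim\cE_i<\infty$. A fixed unitary rearranging the tensor factors identifies $\widetilde{\bf X}$ with ${\bf S}^{({\bf n}^{(1)},\ldots,{\bf n}^{(p)})}\otimes I_{\cE_1\otimes\cdots\otimes\cE_p}$, and then the implication $(i)\to(ii)$ of Theorem \ref{comp-inv} shows at once that $\widetilde{\bf X}$ is a pure element of finite rank, that its Berezin kernel is unitary (so $I-{\bf K}_{\widetilde{\bf X}}{\bf K}_{\widetilde{\bf X}}^*=0$ and $\widetilde{\bf X}$ has characteristic function), and that $\text{\rm curv}(\widetilde{\bf X})=\rank[{\bf \Delta}_{\widetilde{\bf X}}(I)]$. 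This establishes $(i)$, $(ii)$, $(iii)$.

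For the converse I would first descend from $\widetilde{\bf X}$ to the factors. From the defect factorization ${\bf \Delta}^{({\bf 0},\ldots,{\bf p},\ldots,{\bf 0})}_{\widetilde{\bf X}}(I)=I\otimes\cdots\otimes{\bf \Delta}^{\bf p}_{{\bf X}^{(i)}}(I)\otimes\cdots\otimes I$ (the computation in the proof of Theorem \ref{multiplicative-prop}), positivity of the left side forces ${\bf \Delta}^{\bf p}_{{\bf X}^{(i)}}(I)\geq 0$, whence ${\bf X}^{(i)}\in{\bf B}_{{\bf n}^{(i)}}(\cH_i)$; purity of $\widetilde{\bf X}$ gives purity of each ${\bf X}^{(i)}$, since $\Phi^{q}_{\widetilde X_r^{(i)}}(I)=I\otimes\cdots\otimes\Phi^{q}_{X_r^{(i)}}(I_{\cH_i})\otimes\cdots\otimes I\to 0$ weakly forces $\Phi^{q}_{X_r^{(i)}}(I_{\cH_i})\to 0$ weakly; and finite rank of $\widetilde{\bf X}$ together with $\rank[{\bf \Delta}_{\widetilde{\bf X}}(I)]=\prod_i\rank[{\bf \Delta}_{{\bf X}^{(i)}}(I)]$ gives finite (hence trace class) defect for each factor. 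Theorem \ref{multiplicative-prop} and $(iii)$ then yield $\prod_i\text{\rm curv}({\bf X}^{(i)})=\text{\rm curv}(\widetilde{\bf X})=\rank[{\bf \Delta}_{\widetilde{\bf X}}(I)]=\prod_i\rank[{\bf \Delta}_{{\bf X}^{(i)}}(I)]$; since $0\leq\text{\rm curv}({\bf X}^{(i)})\leq\rank[{\bf \Delta}_{{\bf X}^{(i)}}(I)]$ and the product is positive (the rank-zero case being trivial), all factors are positive and finite, and equality of the two products forces $\text{\rm curv}({\bf X}^{(i)})=\rank[{\bf \Delta}_{{\bf X}^{(i)}}(I)]$ for every $i$.

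It remains to supply the characteristic-function hypothesis for each factor and apply Theorem \ref{comp-inv}. If $(ii)$ holds in the form ``each $\widetilde{\bf X}^{(i)}$ has characteristic function'', I would use that $\widetilde{\bf X}^{(i)}$ is, up to rearrangement, ${\bf X}^{(i)}\otimes I_{\cG_i}$ with $\cG_i:=\bigotimes_{j\neq i}\cH_j$, so that ${\bf K}_{\widetilde{\bf X}^{(i)}}={\bf K}_{{\bf X}^{(i)}}\otimes I_{\cG_i}$ and the criterion ${\bf \Delta}_{{\bf S}\otimes {\it I}}(I-{\bf K}{\bf K}^*)\geq 0$ from \cite{Po-Berezin-poly} becomes $[{\bf \Delta}_{{\bf S}\otimes {\it I}}(I-{\bf K}_{{\bf X}^{(i)}}{\bf K}_{{\bf X}^{(i)}}^*)]\otimes I_{\cG_i}\geq 0$, equivalent to ${\bf X}^{(i)}$ having characteristic function; if instead $(ii)$ holds for $\widetilde{\bf X}$, I would apply Theorem \ref{comp-inv} to $\widetilde{\bf X}$ to get ${\bf K}_{\widetilde{\bf X}}$ unitary and then read off from the tensor factorization ${\bf K}_{\widetilde{\bf X}}\cong {\bf K}_{{\bf X}^{(1)}}\otimes\cdots\otimes{\bf K}_{{\bf X}^{(p)}}$ (verified directly from the defining series of the Berezin kernel) that each isometry ${\bf K}_{{\bf X}^{(i)}}$ is also a coisometry, hence unitary, so that ${\bf X}^{(i)}$ has characteristic function trivially. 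In either case Theorem \ref{comp-inv} applies to each ${\bf X}^{(i)}$, producing the unitary equivalence ${\bf X}^{(i)}\cong {\bf S}^{({\bf n}^{(i)})}\otimes I_{\cE_i}$ with $\cE_i=\overline{{\bf \Delta}_{{\bf X}^{(i)}}(I)(\cH_i)}$, the unitarity of each ${\bf K}_{{\bf X}^{(i)}}$, and the displayed formula for $X_{r,s}^{(i)}$; the factorization of the Berezin kernels then yields the remaining claims about ${\bf K}_{\widetilde{\bf X}}$. The main obstacle I anticipate is establishing the tensor factorization of the Berezin kernel and the passage of the characteristic-function condition through ampliation, as these are what tie the large-polyball statement to the individual factors; the rest is bookkeeping over Theorem \ref{comp-inv} and the multiplicativity from Theorem \ref{multiplicative-prop}.
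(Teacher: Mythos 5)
Your proposal is correct and follows essentially the same route as the paper: the direct implication via Theorem \ref{comp-inv}, and the converse by descending purity and finite rank to the factors, identifying ${\bf K}_{\widetilde{\bf X}}$ with ${\bf K}_{{\bf X}^{(1)}}\otimes\cdots\otimes{\bf K}_{{\bf X}^{(p)}}$, handling the case where $\widetilde{\bf X}$ has a characteristic function by unitarity of the big Berezin kernel, and the case where each $\widetilde{\bf X}^{(i)}$ has one by combining Theorem \ref{multiplicative-prop} with $\text{\rm curv}\leq\rank$ to force $\text{\rm curv}({\bf X}^{(i)})=\rank[{\bf \Delta}_{{\bf X}^{(i)}}(I)]$ and then applying Theorem \ref{comp-inv} factorwise. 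Your explicit verification that the characteristic-function condition passes between $\widetilde{\bf X}^{(i)}$ and ${\bf X}^{(i)}$ (via ${\bf K}_{\widetilde{\bf X}^{(i)}}\cong{\bf K}_{{\bf X}^{(i)}}\otimes I_{\cG_i}$) is a detail the paper leaves implicit, and is a welcome addition; just note that excluding the rank-zero case rests on purity forcing a nonzero defect, rather than being ``trivial.''
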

\begin{proof} The direct implication  follows due to  Theorem \ref{comp-inv}. We prove the converse.
Note that since $\widetilde {\bf X}:=(\widetilde {\bf X}^{(1)},\ldots, \widetilde {\bf X}^{(p)})\in {\bf B}_{({\bf n}^{(1)},\ldots, {\bf n}^{(p)})}(\cH_1\otimes \cdots \otimes \cH_p)$ is  a pure element  with finite rank, each element
${\bf X}^{(i)}$  is a pure element in the polyball ${\bf B}_{{\bf n}^{(i)}}(\cH_i)$ and has finite rank. The Berezin kernel of $\widetilde{\bf X}$,
$$
{\bf K}_{\widetilde {\bf X}}:\cH_1\otimes \cdots \otimes \cH_p\to \left( \otimes_{r=1}^{\kappa(1)}F^2(H_{n^{(1)}_r})\right)\otimes \cdots \otimes \left( \otimes_{r=1}^{\kappa(p)}F^2(H_{n^{(p)}_r})\right)\otimes \overline{{\bf \Delta_{\widetilde {\bf X}}}(I) (\cH_1
\otimes \cdots \otimes \cH_p)},
$$
can be identified,  up to a unitary equivalence,
with
$
{\bf K}_{{\bf X}^{(1)}}\otimes \cdots \otimes {\bf K}_{{\bf X}^{(p)}}$
which is acting from $\cH_1\otimes \cdots \otimes \cH_p$ to
 $$\left[\left( \otimes_{r=1}^{\kappa(1)}F^2(H_{n^{(1)}_r})\right)\otimes \overline{{\bf \Delta}_{{\bf X}^{(1)}}(I) (\cH_1)}\right]\otimes \cdots \otimes \left[\left( \otimes_{r=1}^{\kappa(p)}F^2(H_{n^{(p)}_r})\right)\otimes \overline{{\bf \Delta}_{ {\bf X}^{(p)}}(I) (\cH_p)}\right].
$$
Assume that $\widetilde {\bf X}$ has characteristic function.
Since $\text{\rm curv}\,(\widetilde{\bf X})=\rank {\bf \Delta}_{\widetilde{\bf X}}(I)$, Theorem  \ref{comp-inv} implies that ${\bf K}_{\widetilde {\bf X}}$ is a unitary operator. Consequently,  each Berezin kernel  ${\bf K}_{{\bf X}^{(i)}}$ is a unitary operator  and
$$ X_{r,s}^{(i)}={\bf K}^*_{{\bf X}^{(i)}} ({\bf S}^{({\bf n}^{(i)})}_{r,s}\otimes I_{\overline{{\bf \Delta_{{\bf X}^{(i)}}}(I) (\cH_i)}}){\bf K}_{{\bf X}^{(i)}}
$$
for all $i\in \{1,\ldots,p\}$, $r\in \{1,\ldots, \kappa(i)\}$, and $s\in \{1,\ldots, n_r^{(i)}\}$,
where ${\bf S}^{({\bf n}^{(i)})}$  is  the universal model of the polyball ${\bf B}_{{\bf n}^{(i)}}$.
Now, assume that each $\widetilde {\bf X}^{(i)}$, $i\in \{1,\ldots, p\}$,  has characteristic function. Note that, due to Theorem \ref{multiplicative-prop} relation $\text{\rm curv}\,(\widetilde{\bf X})=\rank [{\bf \Delta}_{\widetilde{\bf X}}(I)]
$
is equivalent
$$
1\leq \prod_{i=1}^p \text{\rm curv}\,({\bf X}^{(i)})=\prod_{i=1}^p \rank [{\bf \Delta}_{{\bf X}^{(i)}}].
$$
Since $1\leq \text{\rm curv}\,({\bf X}^{(i)})\leq \rank [{\bf \Delta}_{{\bf X}^{(i)}}]$, we deduce that $\text{\rm curv}\,({\bf X}^{(i)})= \rank [{\bf \Delta}_{{\bf X}^{(i)}}]$. Applying Theorem  \ref{comp-inv} to ${\bf X}^{(i)}\in {\bf B}_{{\bf n}^{(i)}}(\cH_i)$ for each $i\in \{1,\ldots, p\}$, one can  complete the proof.
\end{proof}

\begin{corollary} Let $T_1,\ldots, T_p$ be contractions on a Hilbert space $\cH$. Then each of them is unitarily equivalent  to a unilateral shift of finite multiplicity if and only if  the ampliation
$$
\widetilde T:=(T_1\otimes I\otimes\cdots \otimes I, I\otimes T_2\otimes I\otimes \cdots \otimes I,\ldots , I\otimes \cdots \otimes I\otimes T_p)
$$
is a pure element of the regular polydisc ${\bf B}_{(1,\ldots, 1)}(\cH\otimes \cdots\otimes \cH)$, has finite rank, and
$\text{\rm curv}\,(\widetilde{ T})=\rank [{\bf \Delta}_{\widetilde{T}}(I)].
$
\end{corollary}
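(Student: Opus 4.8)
The plan is to recognize this statement as the special case of the preceding Proposition in which each tuple ${\bf X}^{(i)}$ is a single contraction. First I would set $\kappa(i)=1$, ${\bf n}^{(i)}=(1)$, $\cH_i=\cH$ and ${\bf X}^{(i)}:=T_i$ for every $i\in\{1,\ldots,p\}$. With these choices ${\bf B}_{{\bf n}^{(i)}}(\cH)={\bf B}_{(1)}(\cH)$ is exactly the set of contractions on $\cH$, so the hypothesis that the $T_i$ are contractions is precisely the requirement ${\bf X}^{(i)}\in{\bf B}_{(1)}(\cH)$. The universal model ${\bf S}^{({\bf n}^{(i)})}={\bf S}^{(1)}$ is the left creation operator on $F^2(H_{1})\cong H^2(\DD)$, i.e.\ the unilateral shift of multiplicity one, so ${\bf S}^{(1)}\otimes I_{\cE_i}$ is a unilateral shift of multiplicity $\dim\cE_i$; thus the Proposition's conclusion ``${\bf X}^{(i)}$ is unitarily equivalent to ${\bf S}^{({\bf n}^{(i)})}\otimes I_{\cE_i}$ with $\dim\cE_i<\infty$'' reads exactly as ``$T_i$ is unitarily equivalent to a unilateral shift of finite multiplicity''. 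Under this dictionary the ampliation $\widetilde{\bf X}$ is the operator $\widetilde T$ of the statement and ${\bf B}_{({\bf n}^{(1)},\ldots,{\bf n}^{(p)})}(\cH\otimes\cdots\otimes\cH)={\bf B}_{(1,\ldots,1)}(\cH\otimes\cdots\otimes\cH)$ is the regular polydisc.

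For the forward implication I would invoke the direct part of the Proposition: if each $T_i$ is unitarily equivalent to a unilateral shift of finite multiplicity, then conditions (i)--(iii) hold, and (i) together with (iii) say exactly that $\widetilde T$ is a pure finite-rank element of the regular polydisc with $\text{\rm curv}(\widetilde T)=\rank[{\bf \Delta}_{\widetilde T}(I)]$. That $\widetilde T$ indeed lies in the regular polydisc and has trace class defect (equal to the tensor product of the one-variable defects $I-T_iT_i^*$) is the content of Theorem \ref{multiplicative-prop}, so no separate verification is needed here.

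The converse is where the only real point arises. The hypotheses of the corollary supply conditions (i) (purity and finite rank) and (iii) ($\text{\rm curv}(\widetilde T)=\rank[{\bf \Delta}_{\widetilde T}(I)]$) of the Proposition directly; what must still be checked is condition (ii), namely that each ampliation $\widetilde{\bf X}^{(i)}=\widetilde T_i=I\otimes\cdots\otimes T_i\otimes\cdots\otimes I$ has a characteristic function. I expect this to be the main obstacle, but it is in fact automatic: $\widetilde T_i$ is a single contraction on $\cH\otimes\cdots\otimes\cH$, i.e.\ an element of the ball ${\bf B}_{(1)}(\cH\otimes\cdots\otimes\cH)$ corresponding to $k=n=1$, and every contraction possesses a characteristic function. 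In the language recalled before Theorem \ref{index} this is the inequality ${\bf \Delta}_{{\bf S}\otimes I}(I-{\bf K}_{\widetilde T_i}{\bf K}_{\widetilde T_i}^*)\geq 0$, which holds for any single contraction (this is the classical Sz.-Nagy--Foias theory, and it is exactly the feature that fails for genuine polyballs with $k\geq 2$, so that condition (ii) can legitimately be omitted from the corollary only in this one-generator setting). Granting (ii), the preceding Proposition applies and yields that each $T_i$ is unitarily equivalent to $S\otimes I_{\cE_i}$ with $\dim\cE_i<\infty$, a unilateral shift of finite multiplicity, completing the converse.
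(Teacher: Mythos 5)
Your proposal is correct and follows exactly the route the paper intends: the corollary is stated without proof as an immediate specialization of the preceding Proposition (with $\kappa(i)=1$, ${\bf n}^{(i)}=(1)$), and you correctly identify the one point that needs to be said, namely that condition (ii) of the Proposition is automatic here because each $\widetilde T_i$ is a single contraction and every contraction has a characteristic function (the classical Sz.-Nagy--Foias fact, i.e.\ the $k=n=1$ case of ${\bf \Delta}_{{\bf S}\otimes I}(I-{\bf K_T}{\bf K_T^*})\geq 0$), which is precisely why that hypothesis can be dropped in this setting.
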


\section{Commutative polyballs and curvature invariant}

In this section, we introduce the  curvature invariant  associated with the elements of the  commutative polyball ${\bf B_n^c}(\cH)$  with the property  that they  have finite rank defects and characteristic functions. There  are commutative analogues of most of the results from  the previous sections.

 Since the case $n_1=\cdots=n_k=1$ was considered in the previous sections, we assume, throughout this section,  that the $k$-tuple ${\bf n}=(n_1,\ldots, n_k)$ has  at least one $n_i\geq 2$. The  commutative polyball ${\bf B_n^c}(\cH)$ is the set of all ${\bf X}=(X_1,\ldots, X_k)\in {\bf B_n}(\cH)$ with $X_i=(X_{i,1},\ldots, X_{i,n_i})$, where  the entries $X_{i,j}$ are commuting operators. According to \cite{Po-Berezin3}, the universal model associated with the abstract commutative polyball ${\bf B_n^c}$ is the $k$-tuple
 ${\bf B}:=({\bf B}_1,\ldots, {\bf B}_k)$   with ${\bf B}_i=({\bf B}_{i,1},\ldots, {\bf B}_{i, n_i})$, where
the operator ${\bf B}_{i,j}$ is acting on the tensor Hilbert space
$F_s^2(H_{n_1})\otimes\cdots\otimes F_s^2(H_{n_k})$  and is defined by setting
$${\bf B}_{i,j}:=\underbrace{I\otimes\cdots\otimes I}_{\text{${i-1}$
times}}\otimes B_{i,j}\otimes \underbrace{I\otimes\cdots\otimes
I}_{\text{${k-i}$ times}},\qquad i\in \{1,\ldots, k\}, j\in \{1,\ldots, n_i\},
$$
where ${B}_{i,j}:=P_{F_s^2(H_{n_i})} {S}_{i,j}|_{F_s^2(H_{n_i})}$ and  $F_s^2(H_{n_i})\subset F^2(H_{n_i})$ is the symmetric Fock space on $n_i$ generators. We recall that $F_s^2(H_{n_i})$ is coinvariant under  each operator ${S}_{i,j}$. For basic results concerning Berezin transforms and   model theory on the commutative polyball ${\bf B_n^c}(\cH)$ we refer the reader to \cite{Po-Berezin3}.

For each ${\bf q}=(q_1,\ldots, q_k)\in \ZZ_+^k$, define  the operator  $\widetilde{{\bf N}}_{\leq {\bf q}}$ on the tensor product $\otimes_{i=1}^k F_s^2(H_{n_i})$ by
 $$\widetilde{{\bf N}}_{\leq {\bf q}}:=
\sum_{{0\leq s_i\leq q_i}\atop{i\in\{1,\ldots, k\}}} \frac{1}{\text{\rm trace\,}\left[Q_{s_1}^{(1)}\otimes \cdots \otimes Q_{s_k}^{(k)}\right]}\,
Q_{s_1}^{(1)}\otimes \cdots \otimes Q_{s_k}^{(k)},
$$
where $Q_{s_i}^{(i)}:=P_{F_s^2(H_{n_i})} P_{s_i}^{(i)}|_{F_s^2(H_{n_i})}$ for  $i\in \{1,\ldots, k\}$ and the orthogonal projection $P_{s_i}^{(i)}$ is defined in Section 1.  Note that $Q_{s_i}^{(i)}$ is the orthogonal projection of the symmetric Fock space ${F_s^2(H_{n_i})}$ onto its subspace of homogeneous polynomials of degree $s_i$ and
$$
\text{\rm trace}\,[Q_{s_i}^{(i)}]=\frac{(s_i+1)\cdots (s_i+n_i-1)}{(n_i-1) !}=\left(\begin{matrix} s_i+n_i-1\\ n_i-1\end{matrix}\right).
$$
\begin{lemma}\label{identity2}
 Let $Y$ be a bounded operator on  $\otimes_{i=1}^k F_s^2(H_{n_i})\otimes \cH$ and  $\dim(\cH)<\infty$. Then
 $$
 \frac{\text{\rm trace\,}\left[(Q_{q_1}^{(1)}\otimes \cdots \otimes Q_{q_k}^{(k)}\otimes I_\cH)Y\right]}{\text{\rm trace\,}\left[Q_{q_1}^{(1)}\otimes \cdots \otimes Q_{q_k}^{(k)}\right]}
 =\text{\rm trace\,}\left[{\bf \Delta}_{{\bf B}\otimes I_\cH}(Y)(\widetilde{{\bf N}}_{\leq {\bf q}}\otimes I_\cH)\right],
$$
where ${\bf B}$ is the  universal model associated
  with the abstract commutative
  polyball ${\bf B_n^c}$.
\end{lemma}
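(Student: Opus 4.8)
The plan is to imitate the proof of Lemma \ref{identity} line by line, replacing the creation operators ${\bf S}_{i,j}$ by their compressions ${\bf B}_{i,j}$ to the symmetric Fock spaces and keeping careful track of how the combinatorial weights change. The first step is to observe that ${\bf B}=({\bf B}_1,\ldots,{\bf B}_k)$ is a pure element of the regular polyball ${\bf B_n}(\otimes_{i=1}^k F_s^2(H_{n_i}))$, being the compression of the pure universal model ${\bf S}$ to the coinvariant subspace $\otimes_{i=1}^k F_s^2(H_{n_i})$; hence Corollary \ref{Taylor-rep} applies to ${\bf B}\otimes I_\cH$ and yields the Taylor type expansion
\[
Y=\sum_{s_1,\ldots,s_k\ge 0}\Phi_{{\bf B}_1\otimes I_\cH}^{s_1}\circ\cdots\circ\Phi_{{\bf B}_k\otimes I_\cH}^{s_k}\left({\bf \Delta}_{{\bf B}\otimes I_\cH}(Y)\right),
\]
the iterated series converging in the weak operator topology. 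Multiplying on the left by $Q_{\bf q}\otimes I_\cH$, where $Q_{\bf q}:=Q_{q_1}^{(1)}\otimes\cdots\otimes Q_{q_k}^{(k)}$, truncates each inner sum to the range $0\le s_i\le q_i$, since ${\bf B}_{i,\beta_i}$ raises the homogeneous degree in the $i$-th factor by $|\beta_i|$ and $Q_{q_i}^{(i)}{\bf B}_{i,\beta_i}={\bf B}_{i,\beta_i}Q_{q_i-s_i}^{(i)}$ vanishes once $|\beta_i|>q_i$.

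Next I would take the trace of each truncated summand and use its cyclicity, exactly as in Lemma \ref{identity}, so that the shifts surround $Q_{\bf q}$. Using the coinvariance relation $B_{i,\beta_i}^*\,Q_{q_i}^{(i)}=Q_{q_i-s_i}^{(i)}\,B_{i,\beta_i}^*$ (valid because $B_{i,j}^*=S_{i,j}^*|_{F_s^2(H_{n_i})}$ lowers the degree by one) together with the tensor factorization of $Q_{\bf q}$, the whole computation reduces, for each $i$, to evaluating the single-variable sum $\sum_{|\beta_i|=s_i}B_{i,\beta_i}^*\,Q_{q_i}^{(i)}\,B_{i,\beta_i}$ taken over words $\beta_i$ of length $s_i$.

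The hard part is this combinatorial identity, which replaces the clean free-case formula $\sum_{|\alpha_i|=s_i}S_{i,\alpha_i}^* P_{q_i}^{(i)}S_{i,\alpha_i}=n_i^{s_i}P_{q_i-s_i}^{(i)}$. I expect to prove
\[
\sum_{|\beta_i|=s_i}B_{i,\beta_i}^*\,Q_{q_i}^{(i)}\,B_{i,\beta_i}=\frac{\text{\rm trace\,}\left[Q_{q_i}^{(i)}\right]}{\text{\rm trace\,}\left[Q_{q_i-s_i}^{(i)}\right]}\,Q_{q_i-s_i}^{(i)},\qquad 0\le s_i\le q_i.
\]
The left side equals $Q_{q_i-s_i}^{(i)}(\Phi_{B_i}^*)^{s_i}(I)$, where $(\Phi_{B_i}^*)^{s_i}(I)=\sum_{|\beta_i|=s_i}B_{i,\beta_i}^*B_{i,\beta_i}$ is invariant under the $U(n_i)$-action on $F_s^2(H_{n_i})$; since each homogeneous component is an irreducible $U(n_i)$-module, Schur's lemma forces this operator to be scalar on each degree, hence a multiple of $Q_{q_i-s_i}^{(i)}$. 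The scalar is then pinned down by the recursion $\Phi_{B_i}^*(f(N))=\frac{N+n_i}{N+1}\,f(N+1)$, with $N$ the number (degree) operator, giving $\sum_{|\beta_i|=s_i}B_{i,\beta_i}^*B_{i,\beta_i}=\prod_{\ell=0}^{s_i-1}\frac{N+n_i+\ell}{N+1+\ell}$; equivalently one computes directly in the orthonormal monomial basis $u_\gamma=\sqrt{\left(\begin{matrix}|\gamma|\\ \gamma\end{matrix}\right)}\,z^\gamma$, where $B_{i,j}u_\gamma=\sqrt{(\gamma_j+1)/(|\gamma|+1)}\,u_{\gamma+\varepsilon_j}$. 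Evaluating the product on degree $q_i-s_i$ telescopes to $\left(\begin{matrix} q_i+n_i-1\\ n_i-1\end{matrix}\right)\big/\left(\begin{matrix} q_i-s_i+n_i-1\\ n_i-1\end{matrix}\right)$, which is precisely $\text{\rm trace\,}[Q_{q_i}^{(i)}]/\text{\rm trace\,}[Q_{q_i-s_i}^{(i)}]$.

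Finally I would substitute this back, obtaining
\[
\text{\rm trace\,}\left[(Q_{\bf q}\otimes I_\cH)Y\right]=\sum_{0\le s_i\le q_i}\left(\prod_{i=1}^k\frac{\text{\rm trace\,}[Q_{q_i}^{(i)}]}{\text{\rm trace\,}[Q_{q_i-s_i}^{(i)}]}\right)\text{\rm trace\,}\left[{\bf \Delta}_{{\bf B}\otimes I_\cH}(Y)\left(Q_{q_1-s_1}^{(1)}\otimes\cdots\otimes Q_{q_k-s_k}^{(k)}\otimes I_\cH\right)\right].
\]
Dividing by $\text{\rm trace\,}[Q_{\bf q}]=\prod_i\text{\rm trace\,}[Q_{q_i}^{(i)}]$ and reindexing $d_i:=q_i-s_i$ collapses the weights to $1/\text{\rm trace\,}[Q_{d_1}^{(1)}\otimes\cdots\otimes Q_{d_k}^{(k)}]$, which is exactly the definition of $\widetilde{{\bf N}}_{\leq {\bf q}}$, yielding the asserted formula. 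The only genuinely new ingredient beyond the bookkeeping of Lemma \ref{identity} is the representation-theoretic scalar computation, so that is where I would concentrate the effort.
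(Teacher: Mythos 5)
Your proposal is correct and follows essentially the same route as the paper's own proof: the Taylor-type expansion of $Y$ from Corollary \ref{Taylor-rep} applied to the pure tuple ${\bf B}\otimes I_\cH$, truncation by $Q_{\bf q}\otimes I_\cH$, cyclicity of the trace, the coinvariance relation, and the key identity $\sum_{|\beta_i|=s_i}B_{i,\beta_i}^*\,Q_{q_i}^{(i)}\,B_{i,\beta_i}=\frac{\text{\rm trace\,}[Q_{q_i}^{(i)}]}{\text{\rm trace\,}[Q_{q_i-s_i}^{(i)}]}\,Q_{q_i-s_i}^{(i)}$, followed by the resummation that produces $\widetilde{{\bf N}}_{\leq {\bf q}}$. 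The only difference is that you prove this scalar identity from scratch (unitary invariance plus the telescoping computation in the monomial basis, which is correct), whereas the paper simply asserts it, the underlying fact being Arveson's formula for the $d$-shift that is invoked in the proof of Theorem \ref{commutative-curva}.
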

\begin{proof}
Note that     ${\bf B}:=({\bf B}_1,\ldots, {\bf B}_k)$ is  a pure $k$-tuple  in the
 polyball $ {\bf B^c_n}(\otimes_{i=1}^kF_s^2(H_{n_i}))$.
 Due to  Corollary \ref{Taylor-rep},   we have
$$
Y=\sum_{s_1=0}^\infty \Phi_{{\bf B}_1\otimes I_\cH}^{s_1}\left(\sum_{s_2=0}^\infty \Phi_{{\bf B}_2\otimes I_\cH}^{s_2}\left(\cdots \sum_{s_k=0}^\infty \Phi_{{\bf B}_k\otimes I_\cH}^{s_k} \left({\bf \Delta}_{{\bf B}\otimes I_\cH}(Y)\right)\cdots \right)\right),
$$
where the iterated series converge  in the weak operator topology.
 Setting $Q_{\bf q}:=Q_{q_1}^{(1)}\otimes \cdots \otimes Q_{q_k}^{(k)}$ for ${\bf q}=(q_1,\ldots, q_k)\in \ZZ_+^k$, we deduce that
\begin{equation*}
\begin{split}
\left(Q_{\bf q}\otimes I_\cH\right) Y
 =
\left(Q_{\bf q}\otimes I_\cH\right)
\left(\sum_{s_1=0}^{q_1} \Phi_{{\bf B}_1\otimes I_\cH}^{s_1}\left(\sum_{s_2=0}^{q_2} \Phi_{{\bf B}_2\otimes I_\cH}^{s_2}\left(\cdots \sum_{s_k=0}^{q_k} \Phi_{{\bf B}_k\otimes I_\cH}^{s_k} \left({\bf \Delta}_{{\bf B}\otimes I_\cH}(Y)\right)\cdots \right)\right)\right).
\end{split}
\end{equation*}
Hence,  $\text{\rm trace\,}\left[\left(Q_{\bf q}\otimes I_\cH\right) Y\right]$ is equal to
\begin{equation*}
\begin{split}
&=
\sum_{s_1=0}^{q_1}\cdots \sum_{s_k=0}^{q_k} \text{\rm trace\,}
\left[\left(Q_{\bf q}\otimes I_\cH\right)\Phi_{{\bf B}_1\otimes I_\cH}^{s_1}\circ\cdots\circ \Phi_{{\bf B}_k\otimes I_\cH}^{s_k}({\bf \Delta}_{{\bf B}\otimes I_\cH}(Y))\right]\\
&=
\sum_{s_1=0}^{q_1}\cdots \sum_{s_k=0}^{q_k} \text{\rm trace\,}
\left[\left(Q_{\bf q}\otimes I_\cH\right)\sum_{{\alpha_1\in \FF_{n_1}^+, \cdots
\alpha_k\in \FF_{n_k}^+} \atop{|\alpha_1|=s_1\cdots,  |\alpha_k|=s_k}}
({\bf B}_{1,\alpha_1}\cdots {\bf B}_{k,\alpha_k}\otimes I_\cH){\bf \Delta}_{{\bf B}\otimes I_\cH}(Y)
({\bf B}_{k,\alpha_k}^*\cdots {\bf B}_{1,\alpha_1}^*\otimes I_\cH)
\right]\\
&=
\sum_{s_1=0}^{q_1}\cdots \sum_{s_k=0}^{q_k} \text{\rm trace\,}
\left[{\bf \Delta}_{{\bf B}\otimes I_\cH}(Y)\sum_{{\alpha_1\in \FF_{n_1}^+, \cdots
\alpha_k\in \FF_{n_k}^+} \atop{|\alpha_1|=s_1\cdots,  |\alpha_k|=s_k}}
({\bf B}_{k,\alpha_k}^*\cdots {\bf B}_{1,\alpha_1}^* Q_{\bf q} {\bf B}_{1,\alpha_1}\cdots {\bf B}_{k,\alpha_k})\otimes I_\cH
\right].
\end{split}
\end{equation*}
Since  the symmetric Fock space $F_s^2(H_{n_i})$ is coinvariant under  each operator ${S}_{i,j}$, one can see that
$$
{\bf B}_{i,\alpha_i}^*(I\otimes \cdots \otimes Q_{q_i}^{(i)}\otimes\cdots \otimes I)=(I\otimes \cdots \otimes Q_{q_i-s_i}^{(i)}\otimes\cdots\otimes I){\bf B}_{i,\alpha_i}^*
$$
for any  $i\in \{1,\ldots, k\}$ and $\alpha_i\in \FF_{n_i}^+$ with $|\alpha_i|=s_i\leq q_i$.
This can be used to deduce that
\begin{equation*}
\begin{split}
\sum_{{\alpha_1\in \FF_{n_1}^+, \cdots
\alpha_k\in \FF_{n_k}^+} \atop{|\alpha_1|=s_1\cdots,  |\alpha_k|=s_k}}
&{\bf B}_{k,\alpha_k}^*\cdots {\bf B}_{1,\alpha_1}^* Q_{\bf q} {\bf B}_{1,\alpha_1}\cdots {\bf B}_{k,\alpha_k}\\
&=
Q_{q_1-s_1}^{(1)}\otimes \cdots \otimes Q_{q_k-s_k}^{(k)}\sum_{{\alpha_1\in \FF_{n_1}^+, \cdots
\alpha_k\in \FF_{n_k}^+} \atop{|\alpha_1|=s_1\cdots,  |\alpha_k|=s_k}}
{\bf B}_{k,\alpha_k}^*\cdots {\bf B}_{1,\alpha_1}^*  {\bf B}_{1,\alpha_1}\cdots {\bf B}_{k,\alpha_k}\\
&=
\frac{\text{\rm trace\,}[Q_{q_1}^{(1)}]}{\text{\rm trace\,}[Q_{q_1-s_1}^{(1)}]} Q_{q_1-s_1}^{(1)}\otimes \cdots \otimes \frac{\text{\rm trace\,}[Q_{q_k}^{(k)}]}{\text{\rm trace\,}[Q_{q_k-s_k}^{(k)}]} Q_{q_k-s_k}^{(k)}.
\end{split}
\end{equation*}
Putting together the relations above, we obtain
\begin{equation*}
\begin{split}
&\text{\rm trace\,}\left[\left(Q_{\bf q}\otimes I_\cH\right) Y\right]\\
&=
\sum_{s_1=0}^{q_1}\cdots \sum_{s_k=0}^{q_k} \text{\rm trace\,}
\left\{{\bf \Delta}_{{\bf B}\otimes I_\cH}(Y)\left[\frac{\text{\rm trace\,}[Q_{q_1}^{(1)}]}{\text{\rm trace\,}[Q_{q_1-s_1}^{(1)}]} Q_{q_1-s_1}^{(1)}\otimes \cdots \otimes \frac{\text{\rm trace\,}[Q_{q_k}^{(k)}]}{\text{\rm trace\,}[Q_{q_k-s_k}^{(k)}]} Q_{q_k-s_k}^{(k)}\otimes I_\cH\right]\right\}\\
&=
\text{\rm trace\,}
\left\{{\bf \Delta}_{{\bf B}\otimes I_\cH}(Y)\left(\sum_{s_1=0}^{q_1}
\frac{\text{\rm trace\,}[Q_{q_1}^{(1)}]}{\text{\rm trace\,}[Q_{s_1}^{(1)}]} Q_{s_1}^{(1)}\right)\otimes \cdots \otimes \left(\sum_{s_k=0}^{q_k} \frac{\text{\rm trace\,}[Q_{q_k}^{(k)}]}{\text{\rm trace\,}[Q_{s_k}^{(k)}]} Q_{s_k}^{(k)}\right)\otimes I_\cH
\right\}.
\end{split}
\end{equation*}
Consequently, we have
\begin{equation*}
\begin{split}
\frac{\text{\rm trace\,}\left[\left(Q_{\bf q}\otimes I_\cH\right) Y\right]}
{\text{\rm trace\,}\left[Q_{\bf q}\right]}&=
\text{\rm trace\,}
\left\{{\bf \Delta}_{{\bf B}\otimes I_\cH}(Y)\left[\left(\sum_{s_1=0}^{q_1}
\frac{1}{\text{\rm trace\,}[Q_{s_1}^{(1)}]} Q_{s_1}^{(1)}\right)\otimes \cdots \otimes \left(\sum_{s_k=0}^{q_k} \frac{1}{\text{\rm trace\,}[Q_{s_k}^{(k)}]} Q_{s_k}^{(k)}\right)\otimes I_\cH\right]\right\}\\
&=\text{\rm trace\,}\left[{\bf \Delta}_{{\bf B}\otimes I_\cH}(Y)
\sum_{{0\leq s_i\leq q_i}\atop{i\in\{1,\ldots, k\}}} \frac{1}{\text{\rm trace\,}\left[Q_{\bf s}\right]}\,
Q_{\bf s}\otimes I_\cH\right]\\
&=
\text{\rm trace\,}\left[{\bf \Delta}_{{\bf B}\otimes I_\cH}(Y)
(\widetilde{{\bf N}}_{\leq \bf q}\otimes I_\cH)\right],
\end{split}
\end{equation*}
which completes the proof.
\end{proof}

Define the bounded linear operator $\widetilde{{\bf N}}$ on $ F_s^2(H_{n_1})\otimes \cdots\otimes F^2_s(H_{n_k})$ by setting
$$\widetilde{\bf N}:=\sum_{{\bf q}=(q_1,\ldots, q_k)\in \ZZ_+^k} \frac{1}{\text{\rm trace\,}\left[Q_{q_1}^{(1)}\otimes \cdots \otimes Q_{q_k}^{(k)}\right]}\,
Q_{q_1}^{(1)}\otimes \cdots \otimes Q_{q_k}^{(k)}.
$$

\begin{theorem}\label{trace-N2} Let $Y$ be a bounded operator acting on  $\otimes_{i=1}^k F_s^2(H_{n_i})\otimes \cH$ and  $\dim \cH<\infty$. If
$${\bf \Delta_{B\otimes {\it I}_\cH}}(Y)  :=(id-\Phi_{{\bf B}_1\otimes I_\cH})\circ\cdots\circ (id-\Phi_{{\bf B}_k\otimes I_\cH})(Y)\geq 0,
$$
then  ${\bf \Delta_{B\otimes I_\cH}}(Y)(\widetilde{\bf N}\otimes I_\cH)$ is a trace class operator and
$$
\text{\rm trace\,}\left[{\bf \Delta_{B\otimes {\it I}_\cH}}(Y)(\widetilde{\bf N}\otimes I_\cH)\right]
=
 \lim_{{\bf q}=(q_1,\ldots, q_k)\in \ZZ_+^k}
 \frac{\text{\rm trace\,}\left[(Q_{q_1}^{(1)}\otimes \cdots \otimes Q_{q_k}^{(k)}\otimes I_\cH)Y\right]}{\text{\rm trace\,}\left[Q_{q_1}^{(1)}\otimes \cdots \otimes Q_{q_k}^{(k)}\right]}.
 $$
\end{theorem}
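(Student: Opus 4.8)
The plan is to mimic the proof of Theorem~\ref{trace-N}, replacing the noncommutative universal model ${\bf S}$, the projections $P_{q_i}^{(i)}$, and the operator ${\bf N}$ by their commutative counterparts ${\bf B}$, $Q_{q_i}^{(i)}$, and $\widetilde{\bf N}$, and using Lemma~\ref{identity2} in place of Lemma~\ref{identity}. First I would record that, under the hypothesis ${\bf \Delta}_{{\bf B}\otimes I_\cH}(Y)\geq 0$, the operator $Y$ is automatically positive. Since ${\bf B}=({\bf B}_1,\ldots,{\bf B}_k)$ is a pure $k$-tuple in ${\bf B^c_n}(\otimes_{i=1}^k F_s^2(H_{n_i}))$, the ampliation ${\bf B}\otimes I_\cH$ is a pure element of the regular polyball, so Corollary~\ref{Taylor-rep} applies and yields
$$
Y=\sum_{(s_1,\ldots,s_k)\in\ZZ_+^k}\Phi_{{\bf B}_1\otimes I_\cH}^{s_1}\circ\cdots\circ\Phi_{{\bf B}_k\otimes I_\cH}^{s_k}({\bf \Delta}_{{\bf B}\otimes I_\cH}(Y)),
$$
an increasing sum of positive operators, whence $Y\geq 0$ and $0\leq Y\leq\|Y\|I$.

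Consequently, for every ${\bf q}=(q_1,\ldots,q_k)\in\ZZ_+^k$, compressing $Y$ by the finite-rank projection $Q_{q_1}^{(1)}\otimes\cdots\otimes Q_{q_k}^{(k)}\otimes I_\cH$ and using $\text{\rm trace\,}[Q_{\bf q}\otimes I_\cH]=\text{\rm trace\,}[Q_{\bf q}]\dim\cH$, one obtains
$$
0\leq\frac{\text{\rm trace\,}\left[(Q_{q_1}^{(1)}\otimes\cdots\otimes Q_{q_k}^{(k)}\otimes I_\cH)Y\right]}{\text{\rm trace\,}\left[Q_{q_1}^{(1)}\otimes\cdots\otimes Q_{q_k}^{(k)}\right]}\leq\|Y\|\dim\cH.
$$
Next I would invoke Lemma~\ref{identity2} to rewrite the left-hand side as $\text{\rm trace\,}[{\bf \Delta}_{{\bf B}\otimes I_\cH}(Y)(\widetilde{{\bf N}}_{\leq{\bf q}}\otimes I_\cH)]$, giving the uniform bound
$$
0\leq\text{\rm trace\,}\left[{\bf \Delta}_{{\bf B}\otimes I_\cH}(Y)(\widetilde{{\bf N}}_{\leq{\bf q}}\otimes I_\cH)\right]\leq\|Y\|\dim\cH
$$
for all ${\bf q}$.

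Finally, the family $\{\widetilde{{\bf N}}_{\leq{\bf q}}\}_{{\bf q}\in\ZZ_+^k}$ is an increasing net of positive operators with supremum $\widetilde{\bf N}$, and ${\bf \Delta}_{{\bf B}\otimes I_\cH}(Y)\geq 0$, so the net of traces above is increasing and bounded; normality of the trace on the increasing net of positive operators then gives
$$
\text{\rm trace\,}\left[{\bf \Delta}_{{\bf B}\otimes I_\cH}(Y)(\widetilde{\bf N}\otimes I_\cH)\right]=\sup_{{\bf q}\in\ZZ_+^k}\text{\rm trace\,}\left[{\bf \Delta}_{{\bf B}\otimes I_\cH}(Y)(\widetilde{{\bf N}}_{\leq{\bf q}}\otimes I_\cH)\right]\leq\|Y\|\dim\cH.
$$
Finiteness of this supremum shows that ${\bf \Delta}_{{\bf B}\otimes I_\cH}(Y)(\widetilde{\bf N}\otimes I_\cH)$ is trace class, and a second application of Lemma~\ref{identity2} identifies the limit over ${\bf q}$ of the normalized traces of $Y$ with this quantity, which is the asserted formula.

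The step I expect to be the main obstacle is the passage from the truncations $\widetilde{{\bf N}}_{\leq{\bf q}}$ to $\widetilde{\bf N}$: because $\widetilde{\bf N}$ is not itself a trace class operator, one cannot interchange limit and trace naively, and the justification must go through the monotone (normal) behaviour of the trace on the increasing net of positive operators $({\bf \Delta}_{{\bf B}\otimes I_\cH}(Y))^{1/2}(\widetilde{{\bf N}}_{\leq{\bf q}}\otimes I_\cH)({\bf \Delta}_{{\bf B}\otimes I_\cH}(Y))^{1/2}$, whose uniform trace bound is precisely what both forces the trace-class conclusion and pins down the limiting value. Everything else is a routine transcription of the noncommutative argument; the only genuinely new ingredient is the set of commutative normalization constants $\text{\rm trace\,}[Q_{s_i}^{(i)}]=\binom{s_i+n_i-1}{n_i-1}$, and these have already been absorbed into Lemma~\ref{identity2}.
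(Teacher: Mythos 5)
Your proposal is correct and follows essentially the same route as the paper's own proof: Lemma~\ref{identity2} gives the identity and the uniform bound $0\leq\text{\rm trace\,}[{\bf \Delta}_{{\bf B}\otimes I_\cH}(Y)(\widetilde{{\bf N}}_{\leq{\bf q}}\otimes I_\cH)]\leq\|Y\|\dim\cH$, and then the increasing net $\{\widetilde{{\bf N}}_{\leq{\bf q}}\}$ together with ${\bf \Delta}_{{\bf B}\otimes I_\cH}(Y)\geq 0$ and normality of the trace yields both the trace-class conclusion and the limit formula. The step you flagged as the main obstacle is resolved exactly as you describe, and your preliminary observation that $Y\geq 0$ (via Corollary~\ref{Taylor-rep}) matches what the paper does in the noncommutative counterpart, Theorem~\ref{trace-N}.
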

\begin{proof}
First, note that  Lemma \ref{identity2} implies
$$
 0\leq \text{\rm trace\,}\left[{\bf \Delta_{B\otimes {\it I}_\cH}}(Y)(\widetilde{\bf N}_{\leq {\bf q}}\otimes I_\cH)\right]=\frac{\text{\rm trace\,}\left[(Q_{q_1}^{(1)}\otimes \cdots \otimes Q_{q_k}^{(k)}\otimes I_\cH)Y\right]}{\text{\rm trace\,}\left[Q_{q_1}^{(1)}\otimes \cdots \otimes Q_{q_k}^{(k)}\right]}
 \leq \|Y\|\dim \cH
$$
for any $q_i\geq 0$ and  $i\in \{1,\ldots, k\}$.
Since $\{\widetilde{\bf N}_{\leq {\bf q}}\}_{{\bf q}\in \ZZ_+^k}$ is an increasing multi-sequence
of positive operators convergent to $\widetilde{\bf N}$ and ${\bf \Delta_{B\otimes{\it I}_\cH}}(Y)\geq 0$
we deduce that
\begin{equation*}
\begin{split}
\text{\rm trace\,}\left[{\bf \Delta}_{{\bf B}\otimes I_\cH}(Y)(\widetilde{\bf N}\otimes I_\cH)\right]&=\lim_{{\bf q}\in \ZZ_+^k}\text{\rm trace\,}\left[{\bf \Delta}_{{\bf B}\otimes I_\cH}(Y)(\widetilde{\bf N}_{\leq {\bf q}}\otimes I_\cH)\right]\\
&=\sup_{{\bf q}\in \ZZ_+^k}\text{\rm trace\,}\left[{\bf \Delta}_{{\bf B}\otimes {\it I}_\cH}(Y)(\widetilde{\bf N}_{\leq {\bf q}}\otimes I_\cH)\right].
\end{split}
\end{equation*}
Therefore, ${\bf \Delta}_{{\bf B}\otimes I_\cH}(Y)(\widetilde{\bf N}\otimes I_\cH)$ is a trace class operator and the equality in the theorem holds. The proof is complete.
\end{proof}

We recall from \cite{Po-Berezin3} that the {\it constrained   Berezin kernel}
associated with  ${\bf T} \in  {\bf B^c_n}(\cH)$  is      the
bounded operator  \ ${\bf \widetilde K}_{\bf T}:\cH\to \otimes_{i=1}^k F_s^2(H_{n_i}) \otimes
\overline{{\bf \Delta_{T}}(I) (\cH)}$ defined by
$${\bf \widetilde K}_{\bf T}:=\left(P_{\otimes_{i=1}^k F_s^2(H_{n_i})  }\otimes I_{\overline{{\bf \Delta_{T}}(I) (\cH)}}\right){\bf K_{T}},
$$
where ${\bf K_{T}}$ is the noncommutative Berezin kernel associated with ${\bf T}\in
{\bf B_n}(\cH)$. One can easily see that the range of ${\bf K_T}$ is in $\otimes_{i=1}^k F_s^2(H_{n_i}) \otimes
\overline{{\bf \Delta_{T}}(I) (\cH)}$  and
  ${\bf \widetilde K_{T}} { T}^*_{i,j}= ({\bf B}_{i,j}^*\otimes I)  {\bf \widetilde K_{T}}.
    $
 for any $i\in \{1,\ldots, k\}$ and $j\in \{1,\ldots, n_i\}$.
 An element ${\bf T} \in  {\bf B^c_n}(\cH)$    is said to have
 {\it constrained characteristic function} if there is a multi-analytic operator ${\bf \widetilde \Theta_T}:\otimes_{i=1}^k F_s^2(H_{n_i})\otimes \cE\to \otimes_{i=1}^k F_s^2(H_{n_i})\otimes \overline{{\bf \Delta_T}(I)(\cH)}$ with respect the universal model ${\bf B}$, i.e.
 ${\bf \widetilde \Theta_T} ({\bf B}_{i,j}\otimes I)=({\bf B}_{i,j}\otimes I){\bf \widetilde \Theta_T} $ for any $i\in\{1,\ldots, k\}$ and $j\in\{1,\ldots, n_i\}$,  such that $\bf {\widetilde K_T \widetilde K_T^*} +{\bf \widetilde \Theta_T}{\bf \widetilde \Theta_T^*}={\it I}$. The characteristic function is essentially unique if we restrict it to its support.
 We proved in \cite{Po-Berezin3} that an element  ${\bf T}$ has constrained characteristic function if and only if
${\bf \Delta}_{{\bf B}\otimes I}(I-\bf {\widetilde K_T \widetilde K_T^*})\geq 0.
$
 We remark that if $k=1$, then  any element in $ {\bf B^c_n}(\cH)$ has characteristic function.

In the commutative setting,  we call  the bounded operator ${\bf \Delta}_{{\bf B}\otimes {\it  I}_\cH}(\bf {\widetilde K_T \widetilde K_T^*})(\widetilde{\bf N}\otimes {\it I}_\cH)$ the {\it curvature operator} associated with ${\bf T}\in {\bf B_n^c}(\cH)$.

\begin{theorem} \label{index3} If  ${\bf T} \in  {\bf B_n^c}(\cH)$   has finite rank and characteristic function, then the curvature operator associated with ${\bf T}$  is trace class and \begin{equation*}\begin{split}
\text{\rm trace\,}\left[{\bf \Delta}_{{\bf B}\otimes I_\cH}(\bf {\widetilde K_T \widetilde K_T^*})(\widetilde{\bf N}\otimes {\it I}_\cH)\right]
&=\lim_{(q_1,\ldots, q_k)\in \ZZ_+^k}  \frac{\text{\rm trace\,}\left[ (Q_{q_1}^{(1)}\otimes \cdots \otimes Q_{q_k}^{(k)}\otimes I_\cH){\bf \widetilde K_{T}} {\bf\widetilde  K_{T}^*}\right]}{\text{\rm trace\,}\left[Q_{q_1}^{(1)}\otimes \cdots \otimes Q_{q_k}^{(k)}\right]}\\
&=\rank [{\bf \Delta_{T}}(I)]-\text{\rm trace\,}\left[{\bf \widetilde\Theta}_{\bf T}({\bf P}_\CC\otimes I){\bf \widetilde \Theta}_{\bf T}^* (\widetilde{\bf N}\otimes {\it I}_\cH)\right],
\end{split}
\end{equation*}
where ${\bf \widetilde \Theta}_{\bf T}$ is the constrained characteristic function of ${\bf T}$.
\end{theorem}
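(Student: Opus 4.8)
The plan is to follow the architecture of the proof of Theorem~\ref{index}, replacing the full Fock space tools (Lemma~\ref{identity} and Theorem~\ref{trace-N}) by their symmetric counterparts Lemma~\ref{identity2} and Theorem~\ref{trace-N2}, and using the constrained characteristic function ${\bf \widetilde\Theta_T}$ in place of ${\bf \Theta_T}$. The goal is to identify three quantities: the trace of the curvature operator $\text{\rm trace\,}[{\bf \Delta}_{{\bf B}\otimes I_\cH}({\bf \widetilde K_T}{\bf \widetilde K_T^*})(\widetilde{\bf N}\otimes I_\cH)]$, the Ces\`aro-type limit over the projections $Q_{\bf q}:=Q_{q_1}^{(1)}\otimes\cdots\otimes Q_{q_k}^{(k)}$, and the index expression $\rank[{\bf \Delta_T}(I)]-\text{\rm trace\,}[{\bf \widetilde\Theta_T}({\bf P}_\CC\otimes I){\bf \widetilde\Theta_T^*}(\widetilde{\bf N}\otimes I_\cH)]$. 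Since ${\bf T}$ has characteristic function, I may invoke the defining relation ${\bf \widetilde K_T}{\bf \widetilde K_T^*}+{\bf \widetilde\Theta_T}{\bf \widetilde\Theta_T^*}=I$ together with the equivalence, established in \cite{Po-Berezin3}, that this amounts to ${\bf \Delta}_{{\bf B}\otimes I}(I-{\bf \widetilde K_T}{\bf \widetilde K_T^*})={\bf \Delta}_{{\bf B}\otimes I}({\bf \widetilde\Theta_T}{\bf \widetilde\Theta_T^*})\geq 0$.

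First I would record two identities. Since ${\bf B}$ is a pure $k$-tuple with ${\bf \Delta_B}(I)={\bf P}_\CC$, the projection onto the constants $\CC 1\subset\otimes_{i=1}^k F_s^2(H_{n_i})$, one has ${\bf \Delta}_{{\bf B}\otimes I}(I)={\bf P}_\CC\otimes I$, exactly as in the noncommutative case. Next, because ${\bf \widetilde\Theta_T}$ is multi-analytic with respect to ${\bf B}$, the intertwining ${\bf \widetilde\Theta_T}({\bf B}_{i,j}\otimes I)=({\bf B}_{i,j}\otimes I){\bf \widetilde\Theta_T}$ propagates through each factor, giving $(id-\Phi_{{\bf B}_i\otimes I})({\bf \widetilde\Theta_T}{\bf \widetilde\Theta_T^*})={\bf \widetilde\Theta_T}(id-\Phi_{{\bf B}_i\otimes I})(I){\bf \widetilde\Theta_T^*}$; iterating over $i=1,\ldots,k$ yields the clean formula ${\bf \Delta}_{{\bf B}\otimes I}({\bf \widetilde\Theta_T}{\bf \widetilde\Theta_T^*})={\bf \widetilde\Theta_T}({\bf P}_\CC\otimes I){\bf \widetilde\Theta_T^*}$.

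With these in hand, I would apply Theorem~\ref{trace-N2} to $Y={\bf \widetilde\Theta_T}{\bf \widetilde\Theta_T^*}$, whose defect is positive by the characteristic function hypothesis and which acts on $\otimes_{i=1}^k F_s^2(H_{n_i})\otimes\overline{{\bf \Delta_T}(I)(\cH)}$, a space with finite ambient multiplicity because $\rank[{\bf \Delta_T}(I)]<\infty$. This shows ${\bf \widetilde\Theta_T}({\bf P}_\CC\otimes I){\bf \widetilde\Theta_T^*}(\widetilde{\bf N}\otimes I)$ is trace class and equals $\lim_{\bf q}\text{\rm trace\,}[(Q_{\bf q}\otimes I){\bf \widetilde\Theta_T}{\bf \widetilde\Theta_T^*}]/\text{\rm trace\,}[Q_{\bf q}]$. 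Substituting ${\bf \widetilde K_T}{\bf \widetilde K_T^*}=I-{\bf \widetilde\Theta_T}{\bf \widetilde\Theta_T^*}$ and using that $\text{\rm trace\,}[(Q_{\bf q}\otimes I_{\overline{{\bf \Delta_T}(I)(\cH)}})]/\text{\rm trace\,}[Q_{\bf q}]=\dim\overline{{\bf \Delta_T}(I)(\cH)}=\rank[{\bf \Delta_T}(I)]$ is constant in ${\bf q}$, I get that the middle limit exists and equals $\rank[{\bf \Delta_T}(I)]-\text{\rm trace\,}[{\bf \widetilde\Theta_T}({\bf P}_\CC\otimes I){\bf \widetilde\Theta_T^*}(\widetilde{\bf N}\otimes I)]$, which is the third expression. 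For the curvature operator itself I would write ${\bf \Delta}_{{\bf B}\otimes I}({\bf \widetilde K_T}{\bf \widetilde K_T^*})={\bf P}_\CC\otimes I-{\bf \widetilde\Theta_T}({\bf P}_\CC\otimes I){\bf \widetilde\Theta_T^*}$; the first summand times $\widetilde{\bf N}\otimes I$ collapses to ${\bf P}_\CC\otimes I$ because ${\bf P}_\CC\widetilde{\bf N}={\bf P}_\CC$ (here $\text{\rm trace\,}[Q_0^{(i)}]=1$), contributing trace $\rank[{\bf \Delta_T}(I)]$, while the second summand is precisely the trace-class term just computed. Hence the curvature operator is trace class and its trace equals the index expression, closing the loop among the three quantities.

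The step I expect to be the genuine obstacle, and the reason this is stated as a separate commutative theorem rather than a corollary of Section~2, is the existence of the limit over the $Q$-projections, equivalently the trace-class conclusion of Theorem~\ref{trace-N2}. In the noncommutative setting the normalized maps $\frac{1}{n_i}\Phi_{T_i}$ are trace contractive, so Lemma~\ref{basic} delivers convergence at once; but the symmetric normalization divides by $\text{\rm trace\,}[Q_{q_i}^{(i)}]\sim q_i^{\,n_i-1}/(n_i-1)!$, which does not arise from any trace-contractive completely positive map, so that machinery is unavailable. The convergence must instead be drawn from the positivity ${\bf \Delta}_{{\bf B}\otimes I}({\bf \widetilde\Theta_T}{\bf \widetilde\Theta_T^*})\geq 0$ supplied by the characteristic function, which is exactly why the hypothesis that ${\bf T}$ has characteristic function is indispensable here.
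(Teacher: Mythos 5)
Your proposal is correct and follows essentially the same route as the paper: the paper's proof likewise applies Theorem~\ref{trace-N2} to $Y=I-{\bf \widetilde K_T}{\bf \widetilde K_T^*}={\bf \widetilde \Theta_T}{\bf \widetilde \Theta_T^*}$, uses multi-analyticity of ${\bf \widetilde\Theta_T}$ to get ${\bf \Delta}_{{\bf B}\otimes I}({\bf \widetilde\Theta_T}{\bf \widetilde\Theta_T^*})={\bf \widetilde\Theta_T}({\bf P}_\CC\otimes I){\bf \widetilde\Theta_T^*}\geq 0$, and then combines the resulting trace identity with ${\bf \Delta}_{{\bf B}\otimes I}(I)={\bf P}_\CC\otimes I$ to obtain all three equalities. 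Your more detailed bookkeeping (the collapse $({\bf P}_\CC\otimes I)(\widetilde{\bf N}\otimes I)={\bf P}_\CC\otimes I$ and the constant ratio $\mathrm{trace}[Q_{\bf q}\otimes I]/\mathrm{trace}[Q_{\bf q}]=\rank[{\bf \Delta_T}(I)]$) simply makes explicit what the paper's condensed chain of equalities leaves implicit.
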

\begin{proof}

 Applying Theorem \ref{trace-N2} when $Y=I-\bf {\widetilde K_T \widetilde K_T^*}={\bf \widetilde \Theta_T}{\bf \widetilde \Theta_T^*}$ and  taking into account that the constrained characteristic function  is a multi-analytic operator with respect to ${\bf B}$,  we obtain
 \begin{equation*}\begin{split}
\lim_{(q_1,\ldots, q_k)\in \ZZ_+^k}  \frac{\text{\rm trace\,}\left[ (Q_{q_1}^{(1)}\otimes \cdots \otimes Q_{q_k}^{(k)}\otimes I_\cH){\bf \widetilde K_{T}} {\bf \widetilde K_{T}^*}\right]}{\text{\rm trace\,}\left[Q_{q_1}^{(1)}\otimes \cdots \otimes Q_{q_k}^{(k)}\right]}
&=\text{\rm trace\,}\left[{\bf \Delta}_{{\bf B}\otimes I}(\bf {\widetilde K_T \widetilde K_T^*})(\widetilde{\bf N}\otimes {\it I}_\cH)\right]\\
&=\rank [{\bf \Delta_{T}}(I)]-\text{\rm trace\,}\left[{\bf \Delta}_{{\bf B}\otimes {\it I}}(\bf {\widetilde \Theta_T \widetilde\Theta_T^*})(\widetilde{\bf N}\otimes {\it I}_\cH)\right]\\
&=\rank [{\bf \Delta_{T}}(I)]-\text{\rm trace\,}\left[{\bf \widetilde \Theta}_{\bf T}({\bf P}_\CC\otimes I){\bf \widetilde\Theta}_{\bf T}^* (\widetilde{\bf N}\otimes I_\cH)\right].
\end{split}
\end{equation*}
The proof is complete.
\end{proof}

Given an element ${\bf T}$ in the  commutative polyball ${\bf B_n^c}(\cH)$  with the property  that  it has finite rank and characteristic function, we introduce  its curvature by setting
$$
\text{\rm curv}_c({\bf T}):=
\lim_{m\to\infty}\frac{1}{\left(\begin{matrix} m+k\\ k\end{matrix}\right)}\sum_{{q_1\geq 0,\ldots, q_k\geq 0}\atop {q_1+\cdots +q_k\leq m}} \frac{\text{\rm trace\,}\left[ {\bf \widetilde K_{T}^*} (Q_{q_1}^{(1)}\otimes \cdots \otimes Q_{q_k}^{(k)}\otimes I_\cH){\bf \widetilde K_{T}}\right]}{\text{\rm trace\,}\left[Q_{q_1}^{(1)}\otimes \cdots \otimes Q_{q_k}^{(k)}\right]}.
$$

\begin{theorem} \label{commutative-curva} If  ${\bf T} \in  {\bf B_n^c}(\cH)$   has finite rank and characteristic function, then the curvature $\text{\rm curv}_c({\bf T})$  exists and satisfies the asymptotic formulas
\begin{equation*}\begin{split}
\text{\rm curv}_c({\bf T})
&=
\lim_{m\to\infty}\frac{1}{\left(\begin{matrix} m+k\\ k\end{matrix}\right)}\sum_{{q_1\geq 0,\ldots, q_k\geq 0}\atop {q_1+\cdots +q_k\leq m}} \frac{\text{\rm trace\,}\left[ \Phi_{T_1}^{q_1}\circ \cdots \circ \Phi_{T_k}^{q_k}({\bf \Delta_{T}}(I))\right]}{\prod_{i=1}^k{\text{\rm trace\,}\left(Q_{q_i}^{(i)}\right)}}\\
&=\lim_{m\to\infty}\frac{1}{\left(\begin{matrix} m+k-1\\ k-1\end{matrix}\right)}\sum_{{q_1\geq 0,\ldots, q_k\geq 0}\atop {q_1+\cdots +q_k=m}} \frac{\text{\rm trace\,}\left[ \Phi_{T_1}^{q_1}\circ \cdots \circ \Phi_{T_k}^{q_k}({\bf \Delta_{T}}(I))\right]}{\prod_{i=1}^k{\text{\rm trace\,}\left(Q_{q_i}^{(i)}\right)}}\\
&=\lim_{(q_1,\ldots, q_k)\in \ZZ_+^k} \frac{\text{\rm trace\,}\left[ \Phi_{T_1}^{q_1}\circ \cdots \circ \Phi_{T_k}^{q_k}({\bf \Delta_{T}}(I))\right]}{\prod_{i=1}^k{\text{\rm trace\,}\left(Q_{q_i}^{(i)}\right)}}\\
&= n_1 !\cdots n_k !
 \lim_{q_1\to\infty}\cdots\lim_{q_k\to\infty}
\frac{\text{\rm trace\,}\left[(id-\Phi_{T_1}^{q_1+1})\circ\cdots\circ (id-\Phi_{T_k}^{q_k+1})(I)\right]}
{q_1^{n_1}\cdots q_k^{n_k}}.
\end{split}
\end{equation*}
\end{theorem}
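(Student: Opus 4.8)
The plan is to collapse all four displayed formulas onto a single net limit and then read them off by summability arguments, with the entire difficulty concentrated in proving that the net limit exists; this is where the (constrained) characteristic function enters and replaces the summability Lemma \ref{basic} that did the job in the noncommutative Theorem \ref{curva1}.

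First I would record the commutative analogue of the connection formula \eqref{connection}. Since ${\bf T}\in{\bf B_n^c}(\cH)\subset{\bf B_n}(\cH)$, the noncommutative Berezin kernel ${\bf K_T}$ already takes values in $\otimes_{i=1}^k F_s^2(H_{n_i})\otimes\overline{{\bf \Delta_T}(I)(\cH)}$, so ${\bf \widetilde K_T}={\bf K_T}$, and on the range of ${\bf K_T}$ the projection $Q_{q_i}^{(i)}=P_{F_s^2(H_{n_i})}P_{q_i}^{(i)}|_{F_s^2(H_{n_i})}$ agrees with $P_{q_i}^{(i)}$. Writing $Q_{\bf q}:=Q_{q_1}^{(1)}\otimes\cdots\otimes Q_{q_k}^{(k)}$, this gives
\[
{\bf \widetilde K_T^*}(Q_{\bf q}\otimes I_\cH){\bf \widetilde K_T}={\bf K_T^*}(P_{q_1}^{(1)}\otimes\cdots\otimes P_{q_k}^{(k)}\otimes I_\cH){\bf K_T}=\Phi_{T_1}^{q_1}\circ\cdots\circ\Phi_{T_k}^{q_k}({\bf \Delta_T}(I))
\]
by \eqref{connection}. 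Taking traces and using $\text{\rm trace\,}[Q_{\bf q}]=\prod_i\text{\rm trace\,}(Q_{q_i}^{(i)})$ identifies the summands in the definition of $\text{\rm curv}_c({\bf T})$ with those of the first displayed formula, so only the existence of the limits remains.

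The heart of the matter is the third (net) limit. Set $a_{\bf q}:=\text{\rm trace\,}[\Phi_{T_1}^{q_1}\circ\cdots\circ\Phi_{T_k}^{q_k}({\bf \Delta_T}(I))]/\prod_i\text{\rm trace\,}(Q_{q_i}^{(i)})$. Because ${\bf T}$ has constrained characteristic function, $I-{\bf \widetilde K_T}{\bf \widetilde K_T^*}={\bf \widetilde\Theta_T}{\bf \widetilde\Theta_T^*}$ with ${\bf \Delta}_{{\bf B}\otimes I}({\bf \widetilde\Theta_T}{\bf \widetilde\Theta_T^*})\geq 0$, so Theorem \ref{trace-N2} applies to $Y={\bf \widetilde\Theta_T}{\bf \widetilde\Theta_T^*}$. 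Since ${\bf T}$ has finite rank, $\text{\rm trace\,}[(Q_{\bf q}\otimes I)I]=\rank[{\bf \Delta_T}(I)]\,\text{\rm trace\,}[Q_{\bf q}]<\infty$, and by cyclicity of the trace together with the connection formula,
\[
a_{\bf q}=\rank[{\bf \Delta_T}(I)]-\frac{\text{\rm trace\,}[(Q_{\bf q}\otimes I){\bf \widetilde\Theta_T}{\bf \widetilde\Theta_T^*}]}{\text{\rm trace\,}[Q_{\bf q}]}.
\]
Theorem \ref{trace-N2} then yields
\[
\lim_{{\bf q}\in\ZZ_+^k}a_{\bf q}=\rank[{\bf \Delta_T}(I)]-\text{\rm trace\,}\left[{\bf \widetilde\Theta_T}({\bf P}_\CC\otimes I){\bf \widetilde\Theta_T^*}(\widetilde{\bf N}\otimes I_\cH)\right]=:L,
\]
which is exactly the commutative index formula of Theorem \ref{index3}. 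This step is the main obstacle: the projections $Q_{q_i}^{(i)}$ have only polynomial trace $\binom{q_i+n_i-1}{n_i-1}$, so the trace-contractivity underlying Lemma \ref{basic} (tuned to the exponential weights $n_i^{q_i}$) no longer forces convergence of $a_{\bf q}$; the positivity furnished by the characteristic function is genuinely needed here.

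Finally, from net convergence $a_{\bf q}\to L$ and the uniform bound $0\leq a_{\bf q}\leq\rank[{\bf \Delta_T}(I)]$ (coming from $0\leq{\bf \widetilde K_T}{\bf \widetilde K_T^*}\leq I$), the first two (Ces\`aro) formulas follow by the very partition-of-indices estimate used in the proof of Lemma \ref{basic}: split the simplex/slice $\{q_1+\cdots+q_k\le m\}$, $\{q_1+\cdots+q_k= m\}$ into the interior, where $a_{\bf q}\approx L$, and a boundary of lower combinatorial order $O\!\left(\binom{m+k-2}{k-2}\right)$ against the total $\binom{m+k-1}{k-1}$, the boundary contribution vanishing because $a_{\bf q}$ is bounded; Stolz--Ces\`aro then passes from the slice sum to the simplex average as in Lemma \ref{basic2}. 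For the Arveson-type fourth formula I would expand $(id-\Phi_{T_i}^{q_i+1})=\sum_{s_i=0}^{q_i}\Phi_{T_i}^{s_i}(id-\Phi_{T_i})$ to get $\text{\rm trace\,}[(id-\Phi_{T_1}^{q_1+1})\circ\cdots\circ(id-\Phi_{T_k}^{q_k+1})(I)]=\sum_{{\bf s}\leq{\bf q}}a_{\bf s}\prod_i\text{\rm trace\,}(Q_{s_i}^{(i)})$, then apply the one–variable weighted Stolz--Ces\`aro theorem in each index, with weights $\binom{s+n_i-1}{n_i-1}$ and partial sums $\sum_{s=0}^{q_i}\binom{s+n_i-1}{n_i-1}=\binom{q_i+n_i}{n_i}$, iterating exactly as in the proof of Theorem \ref{curva1}; using $\binom{q_i+n_i}{n_i}\sim q_i^{n_i}/n_i!$ this produces $\lim_{q_1}\cdots\lim_{q_k}\text{\rm trace\,}[\,\cdots\,]/(q_1^{n_1}\cdots q_k^{n_k})=L/(n_1!\cdots n_k!)$, the claimed identity. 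The only delicate point in this last step is the existence of the inner iterated limits, which I would supply exactly as in Theorem \ref{curva1} and which reduces, when $k=1$, to Arveson's one–variable asymptotic formula.
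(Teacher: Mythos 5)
Your proposal is correct and takes essentially the same approach as the paper's proof: the same connection formula \eqref{connection2}, the same use of Theorem \ref{trace-N2} (through the index formula of Theorem \ref{index3}) applied to ${\bf \widetilde \Theta_T}{\bf \widetilde \Theta_T^*}$ to obtain the net limit, the same partition-of-indices plus Stolz--Ces\`aro argument for the two Ces\`aro formulas, and the same telescoping/iterated weighted Stolz--Ces\`aro derivation of the Arveson-type formula. The only immaterial deviation is your uniform bound $a_{\bf q}\leq \rank[{\bf \Delta_{T}}(I)]$ obtained from $0\leq {\bf \widetilde K_T}{\bf \widetilde K_T^*}\leq I$, whereas the paper bounds $x_{\bf q}\leq \text{\rm trace\,}[{\bf \Delta_{T}}(I)]$ via Arveson's inequality $\left\|\sum_{|\alpha_i|=q_i}T_{i,\alpha_i}^*T_{i,\alpha_i}\right\|\leq \text{\rm trace\,}[Q_{q_i}^{(i)}]$; both bounds play exactly the same role in the partition estimate.
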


\begin{proof}  Since the range of the Berezin kernel ${\bf K_T}$ is in the Hilbert space $\otimes_{i=1}^k F_s^2(H_{n_i}) \otimes
\overline{{\bf \Delta_{T}}(I) (\cH)}$  and ${\bf \widetilde K}_{\bf T}:=\left(P_{\otimes_{i=1}^k F_s^2(H_{n_i})  }\otimes I_{\overline{{\bf \Delta_{T}}(I) (\cH)}}\right){\bf K_{T}},
$
relation \eqref{connection} implies
\begin{equation} \label{connection2}
 {\bf \widetilde K_{T}^*} (Q_{q_1}^{(1)}\otimes \cdots \otimes Q_{q_k}^{(k)}\otimes I_\cH){\bf \widetilde K_{T}}=
  \Phi_{T_1}^{q_1}\circ \cdots \circ \Phi_{T_k}^{q_k}({\bf \Delta_{T}}(I))
  \end{equation}
  for any $q_1,\ldots,q_k\in \ZZ^+$. Hence, and due to  Theorem \ref{index3},
  we deduce that
  $$
  \lim_{(q_1,\ldots, q_k)\in \ZZ_+^k}  x_{ \bf q}
 =
  \lim_{(q_1,\ldots, q_k)\in \ZZ_+^k}  \frac{\text{\rm trace\,}\left[ (Q_{q_1}^{(1)}\otimes \cdots \otimes Q_{q_k}^{(k)}\otimes I_\cH){\bf \widetilde K_{T}} {\bf\widetilde  K_{T}^*}\right]}{\text{\rm trace\,}\left[Q_{q_1}^{(1)}\otimes \cdots \otimes Q_{q_k}^{(k)}\right]},
  $$
where  $ x_{ \bf q}:=\frac{\text{\rm trace\,}\left[ \Phi_{T_1}^{q_1}\circ \cdots \circ \Phi_{T_k}^{q_k}({\bf \Delta_{T}}(I))\right]}{\prod_{i=1}^k{\text{\rm trace\,}\left(Q_{q_i}^{(i)}\right)}}$ for ${\bf q}=(q_1,\ldots, q_k)\in \ZZ_+^k$.
Let ${\bf T}:=(T_1,\ldots, T_k)\in  {\bf B_n^c}(\cH)$ and ${\bf q}=(q_1,\ldots, q_k)\in \ZZ_+^k$. If $X\in \cT^+(\cH)$, we have
\begin{equation*}
\begin{split}
\text{\rm trace\,}[\Phi_{T_i}^{q_i}(X)]
&= \text{\rm trace\,}\left[ \sum_{\alpha_i\in \FF_{n_i}^+, |\alpha_i|=q_i} T_{i,\alpha_i} XT_{i,\alpha_i}^*\right]
=\text{\rm trace\,}\left[ \left(\sum_{\alpha_i\in \FF_{n_i}^+, |\alpha_i|=q_i} T_{i,\alpha_i}^* T_{i,\alpha_i}\right) X\right]\\
&\leq \left\|\sum_{\alpha_i\in \FF_{n_i}^+, |\alpha_i|=q_i} T_{i,\alpha_i}^* T_{i,\alpha_i}\right\|\text{\rm trace\,}(X)
\leq
  \text{\rm trace\,}[Q_{q_i}^{(i)}]\, \text{\rm trace\,}(X).
\end{split}
\end{equation*}
The latter equality was proved  by Arveson in \cite{Arv1}.
Applying the inequality above repeatedly, we obtain
\begin{equation} \label{ine-tr}
\begin{split}
 \frac{\text{\rm trace\,}\left[ \Phi_{T_1}^{q_1}\circ \cdots \circ \Phi_{T_k}^{q_k}({\bf \Delta_{T}}(I))\right]}{\prod_{i=1}^k{\text{\rm trace\,}\left(Q_{q_i}^{(i)}\right)}}\leq  \text{\rm trace\,}\left[{\bf \Delta_{T}}(I)\right].
\end{split}
\end{equation}

Due to Theorem \ref{trace-N2} and  Theorem \ref{index3}, the multi-sequence $\{x_{ \bf q}\}_{{\bf q}=(q_1,\ldots, q_k)\in \ZZ_+^k}$ is decreasing with respect to each of the indices $q_1, \ldots, q_k$, and $L:=\lim_{{\bf q} \in \ZZ_+^k}x_{ \bf q} $ exists. Given $\epsilon >0$, let $N_0\in \NN$ be such that
$
\left|x_{ \bf q}-L\right|< \epsilon$ for any $q_1\geq N_0,\ldots q_k\geq N_0$.
Consider the sets $A_1,\ldots, A_k, B_{N_0}$ defined in the proof of Lemma \ref{basic}.  Using relation \eqref{ine-tr}, we have  $x_{\bf q}\leq \text{\rm trace\,}[{\bf \Delta_T(I)}]$ for any ${\bf q}\in \ZZ_+^k$. Hence,  we deduce that
\begin{equation*}
\begin{split}
\sum_{{\bf q}=(q_1,\ldots, q_k)\in \cup_{i=1}^k A_i} x_{\bf q}
&\leq \sum_{i=1}^k \text{\rm card\,} (A_i)  \text{\rm trace\,}[{\bf \Delta_T(I)}]\leq kN_0 \left(\begin{matrix} m+k-2\\ k-2\end{matrix}\right) \text{\rm trace\,}[{\bf \Delta_T(I)}].
\end{split}
\end{equation*}
Now, as in the proof of Lemma \ref{basic}, we obtain that
\begin{equation*}
\left|\frac{1}{\left(\begin{matrix} m+k-1\\ k-1\end{matrix}\right)}\sum_{{q_1\geq 0,\ldots, q_k\geq 0}\atop {q_1+\cdots +q_k=m}} x_{\bf q}  -L\right|<\epsilon
\end{equation*}
for  $m$ big enough, which shows that
$$L:=\lim_{{\bf q} \in \ZZ_+^k}x_{ \bf q}=\lim_{m\to\infty} \frac{1}{\left(\begin{matrix} m+k-1\\ k-1\end{matrix}\right)}\sum_{{q_1\geq 0,\ldots, q_k\geq 0}\atop {q_1+\cdots +q_k=m}} x_{\bf q}.
$$
  Using Stolz-Ces\` aro  convergence theorem, we deduce that
\begin{equation*}
\lim_{m\to\infty}\frac{1}{\left(\begin{matrix} m+k\\ k\end{matrix}\right)}\sum_{{q_1\geq 0,\ldots, q_k\geq 0}\atop {q_1+\cdots +q_k\leq m}}x_{\bf q} =L.
\end{equation*}
Now, using relation \eqref{connection2}, we conclude
$$
\lim_{m\to\infty}\frac{1}{\left(\begin{matrix} m+k\\ k\end{matrix}\right)}\sum_{{q_1\geq 0,\ldots, q_k\geq 0}\atop {q_1+\cdots +q_k\leq m}} \frac{\text{\rm trace\,}\left[ {\bf \widetilde K_{T}^*} (Q_{q_1}^{(1)}\otimes \cdots \otimes Q_{q_k}^{(k)}\otimes I_\cH){\bf \widetilde K_{T}}\right]}{\text{\rm trace\,}\left[Q_{q_1}^{(1)}\otimes \cdots \otimes Q_{q_k}^{(k)}\right]} =L,
$$
and, consequently, the curvature $\text{\rm curv}_c({\bf T})$ exists.
Now, we prove the last equality in the theorem. Since $L=\lim_{q_1\to\infty}\cdots\lim_{q_k\to\infty} x_{\bf q} $
 and setting $y_{q_1}:=\lim_{q_2\to\infty}\cdots\lim_{q_k\to\infty} x_{\bf q}  $, an application of  Stolz-Ces\` aro  convergence theorem to the sequence $\{y_{q_1}\}_{q_1=0}^\infty$ implies
$$
\lim_{q_1\to \infty}\frac{1}{\sum_{s_1=0}^{q_1}\text{\rm trace\,}\left[Q_{s_1}^{(1)}\right]}\sum_{s_1=0}^{q_1}
\lim_{q_2\to\infty}\cdots\lim_{q_k\to\infty} \frac{\text{\rm trace\,}\left[ \Phi_{T_1}^{s_1}\circ \Phi_{T_2}^{q_2}\circ\cdots \circ \Phi_{T_k}^{q_k}({\bf \Delta_{T}}(I))\right]}{\prod_{i=2}^k{\text{\rm trace\,}\left(Q_{q_i}^{(i)}\right)}}=\lim_{q_1\to\infty}y_{q_1}=L.
$$
Similarly, setting $z_{q_2}:=\lim_{q_3\to\infty}\cdots\lim_{q_k\to\infty} \frac{\text{\rm trace\,}\left[ \Phi_{T_1}^{s_1}\circ \Phi_{T_2}^{q_2}\circ\cdots \circ \Phi_{T_k}^{q_k}({\bf \Delta_{T}}(I))\right]}{\prod_{i=2}^k{\text{\rm trace\,}\left(Q_{q_i}^{(i)}\right)}}$, we deduce that
$$
\lim_{q_2\to \infty}\frac{1}{\sum_{s_2=0}^{q_2}\text{\rm trace\,}\left[Q_{s_2}^{(2)}\right]}\sum_{s_2=0}^{q_2}
\lim_{q_3\to\infty}\cdots\lim_{q_k\to\infty} \frac{\text{\rm trace\,}\left[ \Phi_{T_1}^{s_1}\circ \Phi_{T_2}^{s_2}\circ\Phi_{T_3}^{q_3}\circ\cdots \circ \Phi_{T_k}^{q_k}({\bf \Delta_{T}}(I))\right]}{\prod_{i=3}^k{\text{\rm trace\,}\left(Q_{q_i}^{(i)}\right)}}=\lim_{q_2\to\infty}z_{q_2}.
$$
Continuing this process and putting together these results, we obtain
$$
L=\lim_{q_1\to\infty}\cdots\lim_{q_k\to\infty}
\frac{\text{\rm trace\,}\left[(id-\Phi_{T_1}^{q_1+1})\circ\cdots\circ (id-\Phi_{T_k}^{q_k+1})(I)\right]}
{ \prod_{i=1}^k \left(\sum_{s_i=0}^{q_i}\text{\rm trace\,}\left[Q_{s_i}^{(i)}\right]\right)}.
$$
Note that
$
\sum_{s_i=0}^{q_i}\text{\rm trace\,}\left[Q_{s_i}^{(i)}\right]=
\frac{(q_i+1)(q_i+2)\cdots (q_i+n_i)}{n_i !}
$
and, consequently,
$$
L= n_1 !\cdots n_k !
 \lim_{q_1\to\infty}\cdots\lim_{q_k\to\infty}
\frac{\text{\rm trace\,}\left[(id-\Phi_{T_1}^{q_1+1})\circ\cdots\circ (id-\Phi_{T_k}^{q_k+1})(I)\right]}
{q_1^{n_1}\cdots q_k^{n_k}}.
$$
The proof is complete.
\end{proof}
We remark that, due to relation \eqref{connection2}, all the asymptotic  formulas in Theorem \ref{commutative-curva}  can be written in terms of the constrained Berezin kernel ${\bf \widetilde K_T}$.

\begin{theorem} If  ${\bf T}=({ T}_1,\ldots, { T}_k)\in B(\cH)^{n_1}\times \cdots \times B(\cH)^{n_k}$, then the following statements are equivalent:
\begin{enumerate}
\item[(i)] ${\bf T}$ is unitarily equivalent  to ${\bf B}\otimes I_\cK$ for some finite dimensional  Hilbert space   $\cK$;
    \item[(ii)] ${\bf T}$ is a pure finite rank  element in the   polyball  ${\bf B^c_n}(\cH)$ such that ${\bf \Delta}_{{\bf B}\otimes I}(I-{\bf \widetilde K_{T}}{\bf \widetilde  K_{T}^*})\geq 0$, and
    $$\text{\rm curv}_c({\bf T})=\rank [{\bf \Delta_{T}}(I)].
    $$
\end{enumerate}
In this case, the constrained Berezin kernel ${\bf \widetilde  K_T}$ is a unitary operator and
$${\bf T}_{i,j}={\bf \widetilde K_T^*} ({\bf B}_{i,j}\otimes I_{\overline{{\bf \Delta_{T}}(I) (\cH)}}){\bf\widetilde  K_T}, \qquad i\in\{1,\ldots,k\},  j\in\{1,\ldots, n_i\}.
$$
\end{theorem}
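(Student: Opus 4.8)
The plan is to follow the argument of Theorem \ref{comp-inv} essentially line by line, replacing the full Fock space objects by their constrained (symmetric) counterparts: the universal model ${\bf S}$ by ${\bf B}$, the Berezin kernel ${\bf K_T}$ by the constrained kernel ${\bf \widetilde K_T}$, the characteristic function ${\bf \Theta_T}$ by the constrained characteristic function ${\bf \widetilde \Theta_T}$, the weight ${\bf N}$ by $\widetilde{\bf N}$, and the curvature formula of Corollary \ref{curva-maps} by that of Theorem \ref{commutative-curva}. The two identities that drive the proof are already available: Theorem \ref{index3} expresses $\text{\rm curv}_c({\bf T})$ as $\rank[{\bf \Delta_T}(I)]$ minus the trace of the constrained defect of ${\bf \widetilde \Theta_T}{\bf \widetilde\Theta_T^*}$ tested against $\widetilde{\bf N}$, and relation \eqref{connection2} is the constrained analogue of \eqref{connection}.

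For the implication $(i)\Rightarrow(ii)$, given a unitary $U$ with $T_{i,j}=U^*({\bf B}_{i,j}\otimes I_\cK)U$, I would apply Theorem \ref{commutative-curva} to ${\bf B}\otimes I_\cK$. Using $\Phi_{{\bf B}_1\otimes I_\cK}^{q_1}\circ\cdots\circ\Phi_{{\bf B}_k\otimes I_\cK}^{q_k}({\bf P}_\CC\otimes I_\cK)=(Q_{q_1}^{(1)}\otimes\cdots\otimes Q_{q_k}^{(k)})\otimes I_\cK$, the defining ratio is identically $\dim\cK$, so $\text{\rm curv}_c({\bf T})=\dim\cK$. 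On the other hand $\rank[{\bf \Delta_T}(I)]=\rank[{\bf \Delta}_{{\bf B}\otimes I_\cK}(I)]=\rank[{\bf P}_\CC\otimes I_\cK]=\dim\cK$, so equality holds. Purity is immediate, and since the constrained Berezin kernel of ${\bf B}\otimes I_\cK$ is unitary, so is ${\bf \widetilde K_T}$; then $I-{\bf \widetilde K_T}{\bf \widetilde K_T^*}=0$ forces the defect positivity condition.

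The substance is $(ii)\Rightarrow(i)$. The hypothesis ${\bf \Delta}_{{\bf B}\otimes I}(I-{\bf \widetilde K_T}{\bf \widetilde K_T^*})\geq 0$ guarantees, by the cited result of \cite{Po-Berezin3}, that ${\bf T}$ has a constrained characteristic function ${\bf \widetilde \Theta_T}$, multi-analytic with respect to ${\bf B}$, with ${\bf \widetilde K_T}{\bf \widetilde K_T^*}+{\bf \widetilde \Theta_T}{\bf \widetilde\Theta_T^*}=I$. Exactly as in Theorem \ref{comp-inv}, purity of ${\bf T}$ makes ${\bf \widetilde \Theta_T}$ a partial isometry whose support coincides with its initial space, so its restriction $\Phi$ to the support is an isometric multi-analytic operator with $\Phi\Phi^*={\bf \widetilde\Theta_T}{\bf \widetilde\Theta_T^*}$. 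Feeding this into Theorem \ref{index3} together with $\text{\rm curv}_c({\bf T})=\rank[{\bf \Delta_T}(I)]$ yields $\text{\rm trace}[\Phi({\bf P}_\CC\otimes I_\cL)\Phi^*(\widetilde{\bf N}\otimes I)]=0$. Faithfulness of the trace gives $\Phi({\bf P}_\CC\otimes I_\cL)\Phi^*(\widetilde{\bf N}\otimes I)=0$; testing against each $Q_{q_1}^{(1)}\otimes\cdots\otimes Q_{q_k}^{(k)}\otimes I$ gives $\Phi({\bf P}_\CC\otimes I_\cL)\Phi^*=0$, and since $\Phi$ is isometric and intertwines ${\bf B}\otimes I$ while the constants are cyclic for ${\bf B}$, we conclude $\Phi=0$, hence ${\bf \widetilde \Theta_T}=0$. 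Thus ${\bf \widetilde K_T}$ is a coisometry; being also an isometry by purity of ${\bf T}$, it is unitary, and the intertwining ${\bf \widetilde K_T}T_{i,j}^*=({\bf B}_{i,j}^*\otimes I){\bf \widetilde K_T}$ converts into ${\bf T}_{i,j}={\bf \widetilde K_T^*}({\bf B}_{i,j}\otimes I){\bf \widetilde K_T}$, giving the unitary equivalence in $(i)$.

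I expect the main obstacle to be the bookkeeping of constrained analogues rather than any new estimate: one must verify that every structural input borrowed from \cite{Po-Berezin-poly} in the proof of Theorem \ref{comp-inv} has a valid counterpart for the commutative model ${\bf B}$ on $\otimes_{i=1}^k F_s^2(H_{n_i})$ in \cite{Po-Berezin3}, namely the description of the support of ${\bf \widetilde\Theta_T}$ as the smallest reducing subspace over its initial co-invariant subspace, the partial-isometry property for pure elements, the irreducibility of the relevant operator algebra used to pass from $\Phi({\bf P}_\CC\otimes I_\cL)=0$ to $\Phi=0$, and the equivalence of purity with ${\bf \widetilde K_T}$ being an isometry. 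The only genuinely new feature is that the symmetric projections $Q_{q_i}^{(i)}$ replace the $P_{q_i}^{(i)}$ throughout, but since $\widetilde{\bf N}$ is built from exactly these projections and Theorems \ref{index3} and \ref{commutative-curva} are already phrased in those terms, no additional work is needed there.
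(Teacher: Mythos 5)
Your proposal follows the paper's own route almost verbatim (same computation for $(i)\Rightarrow(ii)$, same index-formula argument via Theorem \ref{index3} for $(ii)\Rightarrow(i)$), but one step you transplant from Theorem \ref{comp-inv} does not survive the passage to the commutative model. In the noncommutative proof, the identification of $\supp({\bf \Theta_T})$ with the initial space of the partial isometry ${\bf \Theta_T}$ --- which is exactly what makes the restriction $\Phi$ isometric --- was derived from the fact that the ${\bf S}_{i,j}$ are isometries: for $x$ in the initial space, $\|{\bf \Theta_T}({\bf S}_{i,j}\otimes I)x\|=\|({\bf S}_{i,j}\otimes I){\bf \Theta_T}x\|=\|{\bf \Theta_T}x\|=\|x\|=\|({\bf S}_{i,j}\otimes I)x\|$, so the initial space is invariant under each ${\bf S}_{i,j}\otimes I$ and hence, being also co-invariant, reducing. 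The operators ${\bf B}_{i,j}$ are compressions, not isometries, so this argument collapses: the initial space of ${\bf \widetilde \Theta_T}$ is only co-invariant, need not be reducing, and there is no reason it should coincide with $\supp({\bf \widetilde \Theta_T})$. Consequently your $\Phi:={\bf \widetilde \Theta_T}|_{\supp({\bf \widetilde \Theta_T})}$ need not be isometric, and the clause ``since $\Phi$ is isometric \dots we conclude $\Phi=0$'' is not justified as stated.

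Fortunately the isometric property is never actually needed, and this is precisely where the paper's proof deviates from yours. One always has $\Phi\Phi^*={\bf \widetilde \Theta_T}{\bf \widetilde \Theta_T^*}$, because $\supp({\bf \widetilde \Theta_T})$ contains $\overline{\ran {\bf \widetilde \Theta_T^*}}$ and hence ${\bf \widetilde \Theta_T}$ vanishes on its orthocomplement; so Theorem \ref{index3}, the hypothesis $\text{\rm curv}_c({\bf T})=\rank[{\bf \Delta_T}(I)]$, and faithfulness of the trace still yield ${\bf \widetilde \Theta_T}({\bf P}_\CC\otimes I_\cL){\bf \widetilde \Theta_T^*}=0$ after testing against the projections $Q_{q_1}^{(1)}\otimes\cdots\otimes Q_{q_k}^{(k)}\otimes I$. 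Writing this as $AA^*$ with $A:={\bf \widetilde \Theta_T}({\bf P}_\CC\otimes I_\cL)$ (legitimate because ${\bf P}_\CC\otimes I_\cL$ is a projection) gives ${\bf \widetilde \Theta_T}({\bf P}_\CC\otimes I_\cL)=0$; then multi-analyticity spreads the vanishing over $({\bf B}_{(\alpha)}\otimes I)(\CC 1\otimes \cL)$, and the support relation from \cite{Po-Berezin3}, namely $\supp({\bf \widetilde \Theta_T})=\bigvee_{(\alpha)}({\bf B}_{(\alpha)}\otimes I)(\cdot)=\otimes_{i=1}^k F_s^2(H_{n_i})\otimes\cL$, forces ${\bf \widetilde \Theta_T}=0$. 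This is the paper's argument; it is your argument with the appeal to isometricity of $\Phi$ deleted, and it is the one item in your closing ``bookkeeping'' list that genuinely requires a different justification rather than a renaming of symbols. The remainder of your proposal (the $(i)\Rightarrow(ii)$ computation using $\Phi_{{\bf B}_1\otimes I_\cK}^{q_1}\circ\cdots\circ\Phi_{{\bf B}_k\otimes I_\cK}^{q_k}({\bf P}_\CC\otimes I_\cK)=Q_{q_1}^{(1)}\otimes\cdots\otimes Q_{q_k}^{(k)}\otimes I_\cK$, purity giving ${\bf \widetilde K_T}$ isometric, and the final unitarity argument) is correct and matches the paper.
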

\begin{proof} The proof of the implication $(i)\implies (ii)$ is similar to that of Theorem \ref{comp-inv}, but uses Theorem \ref{commutative-curva}.

Assume that item (ii) holds. Since ${\bf \Delta_{B\otimes {\it I}}}(I-{\bf \widetilde  K_{T}}{\bf \widetilde K_{T}^*})\geq 0$,  the tuple ${\bf T}$   has characteristic function ${\bf \widetilde \Theta_T}:\otimes_{i=1}^k F_s^2(H_{n_i})\otimes \cE\to \otimes_{i=1}^k F_s^2(H_{n_i})\otimes \overline{{\bf \Delta_T}(I)(\cH)}$ which is  a multi-analytic operator with respect to the universal model ${\bf B}$ and
$\bf {\widetilde K_T \widetilde K_T^*} +{\bf \widetilde \Theta_T}{\bf \widetilde \Theta_T^*}={\it I}$. Since ${\bf T}$ is pure,  ${\bf \widetilde K}_{\bf T}$  is an isometry and, consequently, ${\bf \widetilde \Theta_T}$ is a partial isometry.
According to \cite{Po-Berezin3}, the  support of ${\bf \widetilde \Theta_T}$  satisfies the relation
 \begin{equation}
 \label{support}
 \supp ({\bf \widetilde \Theta_T})=\bigvee_{(\alpha)\in \FF_{n_1}^+\times\cdots
  \times \FF_{n_k}^+}({\bf B}_{(\alpha)}\otimes I_\cH) (\cM)
 =\otimes_{i=1}^k F_s^2(H_{n_i})\otimes \cL,
 \end{equation}
 where $\cL:=({\bf P}_\CC\otimes I_\cH)\overline{{\bf\widetilde  \Theta^*}
 (\otimes_{i=1}^k F_s^2(H_{n_i})\otimes \overline{{\bf \Delta_T}(I)(\cH)})}$.
Due to Theorem \ref{index3}, we  have
\begin{equation*}
\begin{split}
\text{\rm curv}_c({\bf T})&= \rank [{\bf \Delta_{T}}(I)]-\text{\rm trace\,}\left[{\bf \widetilde \Theta_T}({\bf P}_\CC\otimes I_\cL) {\bf \widetilde  \Theta_T^*}(\widetilde{\bf N}\otimes I_\cH)\right].
\end{split}
\end{equation*}
Since $\text{\rm curv}_c({\bf T})=\rank [{\bf \Delta_{T}}(I)]$, we deduce that
$\text{\rm trace\,}\left[{\bf \widetilde \Theta_T}({\bf P}_\CC\otimes I_\cL) {\bf \widetilde \Theta_T^*}(\widetilde{\bf N}\otimes I_\cH)\right]=0$. Since  the trace is faithful, we obtain
${\bf \widetilde \Theta_T}({\bf P}_\CC\otimes I_\cL) {\bf \widetilde \Theta_T^*}(\widetilde{\bf N}\otimes I_\cH)=0$, which implies
${\bf \widetilde \Theta_T}({\bf P}_\CC\otimes I_\cL) {\bf \widetilde \Theta_T^*}(Q_{q_1}^{(1)}\otimes \cdots \otimes Q_{q_k}^{(k)}\otimes I_\cH)=0$ for any $(q_1,\ldots, q_k)\in \ZZ_+^k$. Therefore,
${\bf \widetilde  \Theta_T}({\bf P}_\CC\otimes I_\cL) {\bf \widetilde \Theta_T^*}=0$.  Due to   relation \ref{support} and the fact that  ${\bf \widetilde \Theta_T}$ is a multi-analytic operator with respect to ${\bf B}$, we deduce that
 ${\bf \widetilde \Theta_T}=0$. Using relation  $\bf {\widetilde K_T\widetilde  K_T^*} +{\bf \widetilde\Theta_T}{\bf \widetilde\Theta_T^*}={\it I}$, we deduce that that ${\bf \widetilde K_T}$ is a co-isometry. Therefore, ${\bf \widetilde K_T}$ is a unitary operator. Since ${\bf T}_{i,j}={\bf\widetilde  K_T^*} ({\bf B}_{i,j}\otimes I_{\overline{{\bf \Delta_{T}}(I) (\cH)}}){\bf  \widetilde K_T}$ for $i\in\{1,\ldots,k\}$ and $j\in\{1,\ldots, n_i\}$,
the proof is complete.
\end{proof}

\begin{theorem} Let  ${\bf T}\in {\bf B_n^c}(\cH)$  have finite rank and  let $\cM$ be an invariant subspace under ${\bf T}$ such that  ${\bf T}|_\cM\in {\bf B_n^c}(\cM)$.  If ${\bf T}$ and ${\bf T}|_\cM$  have characteristic functions and  $\dim \cM^\perp<\infty$, then ${\bf T}|_\cM$ has finite rank and
$
\text{\rm curv}_c({\bf T})=\text{\rm curv}_c({\bf T}|_\cM).
$
\end{theorem}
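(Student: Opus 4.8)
The plan is to follow the architecture of the first (noncommutative) stability theorem of Section~4, but to replace its normalization by the commutative weights and to feed everything through the asymptotic formula of Theorem~\ref{commutative-curva}. The pleasant surprise will be that the commutative weights grow fast enough to turn the noncommutative \emph{inequality} into an exact \emph{equality}.

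First I would settle finiteness of the rank. Exactly as in the noncommutative case, $\rank({\bf T}|_\cM)=\rank{\bf \Delta_{T}}(P_\cM)$, and since ${\bf \Delta_{T}}(P_\cM)={\bf \Delta_{T}}(I_\cH)-{\bf \Delta_{T}}(P_{\cM^\perp})$ with both summands of finite rank (the first because ${\bf T}$ has finite rank, the second because $\dim\cM^\perp<\infty$ forces $P_{\cM^\perp}$, and hence ${\bf \Delta_{T}}(P_{\cM^\perp})$, to be finite rank), we obtain $\rank({\bf T}|_\cM)<\infty$. Combined with the hypothesis that ${\bf T}|_\cM$ has characteristic function, Theorem~\ref{commutative-curva} then applies to both ${\bf T}$ and ${\bf T}|_\cM$, so both curvatures exist and, taking the penultimate form in the proof of that theorem,
$$\text{\rm curv}_c({\bf T})=\lim_{q_1\to\infty}\cdots\lim_{q_k\to\infty}\frac{\text{\rm trace\,}\left[(id-\Phi_{T_1}^{q_1+1})\circ\cdots\circ(id-\Phi_{T_k}^{q_k+1})(I_\cH)\right]}{\prod_{i=1}^k\frac{(q_i+1)(q_i+2)\cdots(q_i+n_i)}{n_i!}},$$
with the analogous formula for ${\bf T}|_\cM$ using $I_\cM$ and $\Phi_{T_i|_\cM}$.

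Next, since $\cM$ is invariant under ${\bf T}$, the compression identity already used in Section~4 and in the proof of Theorem~\ref{multiplicity} gives
$$\text{\rm trace\,}\left[(id-\Phi_{T_1|_\cM}^{q_1+1})\circ\cdots\circ(id-\Phi_{T_k|_\cM}^{q_k+1})(I_\cM)\right]=\text{\rm trace\,}\left[(id-\Phi_{T_1}^{q_1+1})\circ\cdots\circ(id-\Phi_{T_k}^{q_k+1})(P_\cM)\right].$$
Writing $P_\cM=I_\cH-P_{\cM^\perp}$, the difference of the two numerators is exactly $\text{\rm trace\,}\left[(id-\Phi_{T_1}^{q_1+1})\circ\cdots\circ(id-\Phi_{T_k}^{q_k+1})(P_{\cM^\perp})\right]$. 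I would estimate this using the trace inequality $\text{\rm trace\,}[\Phi_{T_i}^{q_i}(X)]\leq\text{\rm trace\,}[Q_{q_i}^{(i)}]\,\text{\rm trace\,}(X)$ (due to Arveson and invoked in the proof of Theorem~\ref{commutative-curva}): expanding the product of the $k$ factors $(id-\Phi_{T_i}^{q_i+1})$ into $2^k$ terms, applying the inequality to each, and using that $\Phi_{T_i}$ are positive, one gets the bound $\prod_{i=1}^k\bigl(1+\text{\rm trace\,}[Q_{q_i+1}^{(i)}]\bigr)\,\text{\rm trace\,}[P_{\cM^\perp}]$.

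Finally I would divide this bound by the commutative normalization $\prod_{i=1}^k\frac{(q_i+1)(q_i+2)\cdots(q_i+n_i)}{n_i!}$ and perform the degree count, which is the crux of the argument. Each factor $1+\text{\rm trace\,}[Q_{q_i+1}^{(i)}]=1+\binom{q_i+n_i}{n_i-1}$ is a polynomial in $q_i$ of degree $n_i-1$, whereas the matching denominator factor has degree $n_i$; hence the normalized error factorizes into a product in which, taking the innermost limit $q_k\to\infty$, one factor tends to $0$ while the rest stay bounded, so the whole normalized error vanishes under the iterated limit. Comparing the two asymptotic formulas then gives $\text{\rm curv}_c({\bf T})=\text{\rm curv}_c({\bf T}|_\cM)$. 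The main point to get right is precisely this degree comparison: it is the only genuine difference from the noncommutative stability theorem, where the analogous estimate merely bounds the difference by $\dim\cM^\perp\prod_{i}(n_i-1)$; the commutative weight $q_i^{n_i}$ grows one order faster than the error $q_i^{n_i-1}$, and that is exactly what upgrades the inequality to an equality. I expect the only mild care needed is keeping the estimate compatible with the order of the iterated limits, which is handled by the observation that a single vanishing factor already forces the product to $0$.
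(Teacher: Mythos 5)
Your proposal is correct and follows essentially the same path as the paper's proof: finite rank of ${\bf T}|_\cM$ via ${\bf \Delta_{T}}(P_\cM)={\bf \Delta_{T}}(I_\cH)-{\bf \Delta_{T}}(P_{\cM^\perp})$, the compression identity turning $I_\cM$ into $P_\cM$, an error term of order $\prod_{i=1}^k q_i^{n_i-1}$, and the conclusion through the last asymptotic formula of Theorem \ref{commutative-curva}. The only (immaterial) difference is how the error $\text{\rm trace\,}\left[(id-\Phi_{T_1}^{q_1+1})\circ\cdots\circ (id-\Phi_{T_k}^{q_k+1})(P_{\cM^\perp})\right]$ is bounded: you expand into $2^k$ terms and iterate Arveson's trace inequality $\text{\rm trace\,}[\Phi_{T_i}^{q_i}(X)]\leq \text{\rm trace\,}[Q_{q_i}^{(i)}]\,\text{\rm trace\,}(X)$, whereas the paper uses $\text{\rm trace\,}(A)\leq \|A\|\,\rank [A]$ with the norm bounded by $2^k$ and the rank bounded by $\prod_{i=1}^k\left(1+\text{\rm trace\,}[Q_{q_i+1}^{(i)}]\right)\dim\cM^\perp$ (counting commuting monomials); both estimates give the same degree $n_i-1$ in each $q_i$, and the degree comparison against the degree-$n_i$ normalization that you single out as the crux is exactly the paper's closing step.
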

\begin{proof} Note that $\rank [{\bf T}|_\cM]=\rank [{\bf \Delta_T}(P_\cM)]$. Since
${\bf \Delta_T}(P_\cM)={\bf \Delta_T}(I_\cH)-{\bf \Delta_T}(P_{\cM^\perp})$, we deduce that $\rank [{\bf T}|_\cM]<\infty$.
Since $\cM$ is an invariant subspace under ${\bf T}$, we have
\begin{equation*}
\begin{split}
\text{\rm trace}&\left[\left(id -\Phi_{T_1|_\cM}^{q_1+1}\right)\circ \cdots \circ\left(id -\Phi_{ T_k|_\cM}^{q_k+1}\right)(I_\cM)\right]\\
&= \text{\rm trace}\left[\left(id -\Phi_{T_1}^{q_1+1}\right)\circ \cdots \circ\left(id -\Phi_{ T_k}^{q_k+1}\right)(P_\cM)\right]\\
&\leq \text{\rm trace}\left[\left(id -\Phi_{T_1}^{q_1+1}\right)\circ \cdots \circ\left(id -\Phi_{ T_k}^{q_k+1}\right)(I_\cH)\right]+\text{\rm trace}\left[\left(id -\Phi_{T_1}^{q_1+1}\right)\circ \cdots \circ\left(id -\Phi_{ T_k}^{q_k+1}\right)(P_{\cM^\perp})\right].
\end{split}
\end{equation*}
 Taking into account that $\left\|\Phi_{T_1}^{p_1}\cdots \Phi_{T_k}^{p_k}(P_{\cM^\perp})\right\|\leq 1$ for any $(p_1,\ldots,p_k)\in \ZZ_+^k$, one can easily see that
$$\left\|\left(id +\Phi_{T_1}^{q_1+1}\right)\circ \cdots \circ\left(id +\Phi_{ T_k}^{q_k+1}\right)(P_{\cM^\perp})\right\|\leq 2^k
$$
for any $(q_1,\ldots,q_k)\in \ZZ_+^k$. On the other hand, it is easy to see  that the dimension of the range of the operator $\left(id +\Phi_{T_1}^{q_1+1}\right)\circ \cdots \circ\left(id +\Phi_{ T_k}^{q_k+1}\right)(P_{\cM^\perp})$ is less than or equal to
$$
\left(1+\text{\rm trace\,} (Q_{q_1+1}^{(1)})\right)\cdots \left(1+\text{\rm trace\,} (Q_{q_k+1}^{(k)})\right)
\dim\cM^\perp.
$$
Using the fact that if $A$ is a finite rank positive operator, then
$\text{\rm trace}\, ( A)\leq \|A\|\rank [A]$, we deduce that
\begin{equation*}
\begin{split}
\text{\rm trace\,}&\left[\left(id -\Phi_{T_1}^{q_1+1}\right)\circ \cdots \circ\left(id -\Phi_{ T_k}^{q_k+1}\right)(P_{\cM^\perp})\right]\\
&\leq
\text{\rm trace\,}\left[\left(id +\Phi_{T_1}^{q_1+1}\right)\circ \cdots \circ\left(id +\Phi_{ T_k}^{q_k+1}\right)(P_{\cM^\perp})\right]\\
&\leq 2^k \left(1+\text{\rm trace\,} (Q_{q_1+1}^{(1)})\right)\cdots \left(1+\text{\rm trace\,} (Q_{q_k+1}^{(k)})\right)
\text{\rm trace\,}[P_{\cM^\perp}].
\end{split}
\end{equation*}
Consequently, we have
\begin{equation*}
\begin{split}
&\left|\frac{\text{\rm trace\,}\left[\left(id -\Phi_{T_1}^{q_1+1}\right)\circ \cdots \circ\left(id -\Phi_{ T_k}^{q_k+1}\right)(I_\cH)\right]}{{q_1^{n_1}\cdots q_k^{n_k}}}-\frac{\text{\rm trace\,}\left[\left(id -\Phi_{T_1|_\cM}^{q_1+1}\right)\circ \cdots \circ\left(id -\Phi_{ T_k|_\cM}^{q_k+1}\right)(I_\cM)\right]}{{q_1^{n_1}\cdots q_k^{n_k}}}
 \right|\\
 &\qquad\qquad\leq \frac{2^k \left(1+\text{\rm trace\,} (Q_{q_1+1}^{(1)})\right)\cdots \left(1+\text{\rm trace\,} (Q_{q_k+1}^{(k)})\right)}{{q_1^{n_1}\cdots q_k^{n_k}}}\text{\rm trace}\,[P_{\cM^\perp}].
\end{split}
\end{equation*}
Since
$
\text{\rm trace\,}(Q_{q_i+1}^{(i)})=\frac{(q_i+2)(q_i+3)\cdots (q_i+n_i)}{(n_i-1) !},
$
we have
$\lim_{q_i\to\infty} \frac{\text{\rm trace\,}(Q_{q_i+1}^{(i)})}{q_i^{n_i}}=0$. Passing to the limit as $q_1\to\infty, \ldots, q_k\to\infty$ in the inequality above and using Theorem \ref{commutative-curva}, we deduce
  that
$
\text{\rm curv}_c({\bf T})-\text{\rm curv}_c({\bf T}|_\cM) =0.
$
The proof is complete.
\end{proof}

Combining Corollary 2.6 from \cite{Po-Berezin3} with Lemma \ref{sub-poly}, we deduce  that  if $\cM\subset \otimes_{i=1}^k F_s^2(H_{n_i})\otimes \cK$  is an invariant subspace under the universal model ${\bf B}\otimes I_\cK$, then $({\bf B}\otimes I_\cK)|_\cM$ is in the commutative polyball if and only if $\cM$ is a Beurling type invariant subspace for ${\bf B}\otimes I_\cK$.

The proof of the next lemma is similar to that of Lemma \ref{coincidence}. The only difference is that we need to use Theorem 2.7. We omit the proof.
\begin{lemma}\label{coincidence2} Let $\cM$ be  an invariant subspace of
$\otimes_{i=1}^k F_s^2(H_{n_i})\otimes \cE$ such that it does not contain nontrivial reducing subspaces for the universal model ${\bf B}\otimes I_\cE$, and let
 ${\bf T}:=P_{\cM^\perp} ({\bf B}\otimes I_\cE)|_{\cM^\perp}$. Then there is a unitary operator $Z:\overline{{\bf \Delta_T}(I)(\cH)}\to \cE$ such that
 $(I\otimes Z){\bf\widetilde K_T}=V$, where ${\bf \widetilde K_T}$ is the Berezin kernel associated with ${\bf T}$ and  $V$ is the injection of $\cM^\perp$ into
 $\otimes_{i=1}^k F_s^2(H_{n_i})\otimes \cE$.

 Moreover, ${\bf T}$ has characteristic function if and only if $\cM$ is a Beurling type invariant subspace.
\end{lemma}

Given a  Beurling type invariant subspace  $\cM$   of the  tensor product $F_s^2(H_{n_1})\otimes\cdots\otimes F_s^2(H_{n_k})\otimes \cE$, where $\cE$ is a finite dimensional Hilbert space,
 we introduce  its multiplicity   by setting
$$
m_c(\cM):=
\lim_{m\to\infty}\frac{1}{\left(\begin{matrix} m+k\\ k\end{matrix}\right)}\sum_{{q_1\geq 0,\ldots, q_k\geq 0}\atop {q_1+\cdots +q_k\leq m}} \frac{\text{\rm trace\,}\left[ P_\cM (Q_{q_1}^{(1)}\otimes \cdots \otimes Q_{q_k}^{(k)}\otimes I_\cE) \right]}{\prod_{i=1}^k{\text{\rm trace\,}\left(Q_{q_i}^{(i)}\right)}}.
$$
The multiplicity measures the size of the subspace $\cM$. Note that if $\cM=F_s^2(H_{n_1})\otimes\cdots\otimes F_s^2(H_{n_k})\otimes \cE$, then $m_c(\cM)=\dim \cE$.
In what follows,  we show that the multiplicity invariant exists.
However, it remains an open problem whether  the multiplicity invariant  exists for arbitrary invariant subspaces
of the tensor product $F_s^2(H_{n_1})\otimes\cdots\otimes F_s^2(H_{n_k})\otimes \cE$.
This is true for the polydisc, when $n_1=\cdots n_k=1$, and the symmetric Fock space, when $k=1$.

The proof of the next result is similar to that of Theorem \ref{multiplicity}, but uses Lemma \ref{coincidence2} and Theorem \ref{commutative-curva}. We shall omit it.

\begin{theorem}  Let $\cM$ be a Beurling type invariant subspace of   $F_s^2(H_{n_1})\otimes\cdots\otimes F_s^2(H_{n_k})\otimes \cE$, where $\cE$ is a finite dimensional Hilbert space. Then the  the multiplicity $m_c(\cM)$ exists and
satisfies the equations
 \begin{equation*}
\begin{split}
m_c(\cM)&=\lim_{(q_1,\ldots, q_k)\in \ZZ_+^k}  \frac{\text{\rm trace\,}\left[ P_\cM (Q_{q_1}^{(1)}\otimes \cdots \otimes Q_{q_k}^{(k)}\otimes I_\cE) \right]}{\text{\rm trace\,}\left[Q_{q_1}^{(1)}\otimes \cdots \otimes Q_{q_k}^{(k)}\right]}\\
&=\lim_{q_1\to\infty}\cdots\lim_{q_k\to\infty}
\frac{\text{\rm trace\,}\left[P_{\cM}(Q_{\leq(q_1,\ldots, q_k)}\otimes I_\cE)  \right]}{\text{\rm trace\,}\left[Q_{\leq(q_1,\ldots, q_k)}\right]}\\
&=\lim_{m\to\infty}\frac{1}{\left(\begin{matrix} m+k-1\\ k-1\end{matrix}\right)}\sum_{{q_1\geq 0,\ldots, q_k\geq 0}\atop {q_1+\cdots +q_k= m}} \frac{\text{\rm trace\,}\left[ P_\cM (Q_{q_1}^{(1)}\otimes \cdots \otimes Q_{q_k}^{(k)}\otimes I_\cE)\right]}{\text{\rm trace\,}\left[Q_{q_1}^{(1)}\otimes \cdots \otimes Q_{q_k}^{(k)}\right]}\\
&=\dim \cE -\text{\rm curv}_c({\bf M}),
\end{split}
\end{equation*}
where  ${\bf M}:=(M_1,\ldots, M_k)$ with $M_i:=(M_{i,1},\ldots, M_{i,n_i})$ and $M_{i,j}:=P_{\cM^\perp}({\bf B}_{i,j}\otimes I_\cE)|_{\cM^\perp}$.
\end{theorem}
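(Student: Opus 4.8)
The plan is to mirror the proof of Theorem \ref{multiplicity}, transporting the computation from the full Fock space to the symmetric one and replacing the role of Theorem \ref{curva1} and Corollary \ref{curva-maps} by Theorem \ref{commutative-curva}. Set ${\bf M}:=(M_1,\ldots, M_k)$ with $M_{i,j}:=P_{\cM^\perp}({\bf B}_{i,j}\otimes I_\cE)|_{\cM^\perp}$. First I would verify that ${\bf M}$ is a finite rank element of the commutative polyball ${\bf B_n^c}(\cM^\perp)$. Since $\cM$ is invariant under each ${\bf B}_{i,j}\otimes I_\cE$, the orthocomplement $\cM^\perp$ is co-invariant, so products of the adjoints $M_{i,j}^*$ compress those of ${\bf B}_{i,j}^*\otimes I_\cE$; this yields, exactly as in relation \eqref{De},
\[
{\bf \Delta_M^p}(I_{\cM^\perp})=P_{\cM^\perp}{\bf \Delta_{B\otimes {\it I}}^p}(I)|_{\cM^\perp}\geq 0
\]
for every ${\bf p}=(p_1,\ldots,p_k)$ with $p_i\in\{0,1\}$. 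In particular, with ${\bf p}=(1,\ldots,1)$ and using that ${\bf \Delta_{B\otimes {\it I}}}(I)={\bf P}_\CC\otimes I_\cE$ has rank $\dim\cE$, the defect ${\bf \Delta_M}(I_{\cM^\perp})$ has rank at most $\dim\cE<\infty$, so ${\bf M}$ has finite rank; the commutativity of the entries of each $M_i$ is inherited from ${\bf B}_i$, so ${\bf M}$ indeed lies in ${\bf B_n^c}(\cM^\perp)$.

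Next I would supply the characteristic-function hypothesis needed to invoke Theorem \ref{commutative-curva}, and here the Beurling type assumption on $\cM$ is essential. By Lemma \ref{coincidence2}, since $\cM$ is a Beurling type invariant subspace, ${\bf M}$ has characteristic function, and its constrained Berezin kernel is identified, through $(I\otimes Z){\bf \widetilde K_M}=V$, with the injection $V$ of $\cM^\perp$. This is the one place where the argument genuinely departs from the noncommutative Theorem \ref{multiplicity}, where finite rank alone suffices: in the commutative setting the very existence of $\text{\rm curv}_c$ rests on the trace-class machinery of Theorem \ref{trace-N2} and Theorem \ref{index3}, which require a characteristic function. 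I expect this to be the \emph{main} (and essentially the only) obstacle, and it is resolved precisely by restricting to Beurling type subspaces.

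With ${\bf M}$ now a finite rank element of ${\bf B_n^c}(\cM^\perp)$ possessing characteristic function, I would establish the bridging trace identity. Using that $\cM^\perp$ is invariant under each ${\bf B}_{i,j}^*\otimes I_\cE$, the displayed defect identity, and the fact (underlying relation \eqref{connection2}) that $\Phi_{{\bf B}_1}^{q_1}\circ\cdots\circ\Phi_{{\bf B}_k}^{q_k}({\bf P}_\CC)=Q_{q_1}^{(1)}\otimes\cdots\otimes Q_{q_k}^{(k)}$, one obtains for each ${\bf q}=(q_1,\ldots,q_k)\in\ZZ_+^k$
\[
\text{\rm trace\,}\!\left[\Phi_{M_1}^{q_1}\circ\cdots\circ\Phi_{M_k}^{q_k}({\bf \Delta_M}(I_{\cM^\perp}))\right]
=\text{\rm trace\,}\!\left[P_{\cM^\perp}(Q_{q_1}^{(1)}\otimes\cdots\otimes Q_{q_k}^{(k)}\otimes I_\cE)\right].
\]
Writing $Q_{\bf q}:=Q_{q_1}^{(1)}\otimes\cdots\otimes Q_{q_k}^{(k)}$, $P_{\cM^\perp}=I-P_\cM$, and $\text{\rm trace\,}[Q_{\bf q}\otimes I_\cE]=\dim\cE\,\text{\rm trace\,}[Q_{\bf q}]$, dividing by $\text{\rm trace\,}[Q_{\bf q}]$ converts every averaged quantity in the definition of $m_c(\cM)$ into $\dim\cE$ minus the corresponding quantity appearing in the curvature formulas of Theorem \ref{commutative-curva} applied to ${\bf M}$.

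Finally I would read off the conclusion. Theorem \ref{commutative-curva} guarantees that $\text{\rm curv}_c({\bf M})$ exists and equals each of the Ces\`aro, diagonal, and iterated-limit expressions built from $\Phi_{M_1}^{q_1}\circ\cdots\circ\Phi_{M_k}^{q_k}({\bf \Delta_M}(I_{\cM^\perp}))$ normalized by $\prod_{i=1}^k\text{\rm trace\,}(Q_{q_i}^{(i)})$. Substituting the bridging identity term by term shows that each of the three limits defining the candidate formulas for $m_c(\cM)$ exists and equals $\dim\cE-\text{\rm curv}_c({\bf M})$; since the outermost averaged form is exactly $m_c(\cM)$, this simultaneously proves existence of the multiplicity and all four stated equalities, with ${\bf M}$ as described. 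The routine passages—the Stolz--Ces\`aro steps and the cumulative-projection ($Q_{\leq(q_1,\ldots,q_k)}$) form—are already packaged inside Theorem \ref{commutative-curva}, so no new summability estimates are required beyond those of Section 5.
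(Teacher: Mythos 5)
Your proposal is correct and takes essentially the same route the paper intends for this theorem, whose proof it omits as being ``similar to that of Theorem \ref{multiplicity}, but uses Lemma \ref{coincidence2} and Theorem \ref{commutative-curva}'': you adapt the defect identity \eqref{De} to ${\bf B}\otimes I_\cE$ to place ${\bf M}$ in ${\bf B^c_n}(\cM^\perp)$ with finite rank, invoke Lemma \ref{coincidence2} to extract the characteristic function from the Beurling hypothesis (the one genuinely new ingredient versus the noncommutative case, exactly as the paper indicates), prove the bridging trace identity with the projections $Q_{q_1}^{(1)}\otimes\cdots\otimes Q_{q_k}^{(k)}$, and then read off all four formulas from Theorem \ref{commutative-curva}. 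This matches the paper's argument step for step, so there is nothing further to compare.
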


We remark that there are commutative analogues of all the results from Section 4, concerning the continuity and multiplicative properties of the curvature and multiplicity invariants. Since the proofs are very similar we will omit them. We only mention the following result concerning the lower semi-continuity of the multiplicity invariant.
\begin{theorem}
Let $\cM$ and $\cM_m$ be Beurling type invariant subspaces in
$\otimes_{i=1}^k F_s^2(H_{n_i})\otimes \cE$ with $\dim \cE<\infty$.
If $\text{\rm WOT-}\lim_{m\to\infty}P_{\cM_m}=P_\cM$, then
$$
\liminf_{m\to\infty}m_c(\cM_m)\geq m_c(\cM).
$$
\end{theorem}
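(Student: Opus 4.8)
The plan is to reduce the statement, exactly as in the noncommutative proof, to an upper semi-continuity estimate for the curvature of the compressions to the orthocomplements. Set ${\bf M}:=(M_1,\ldots,M_k)$ with $M_{i,j}:=P_{\cM^\perp}({\bf B}_{i,j}\otimes I_\cE)|_{\cM^\perp}$, and define ${\bf M}^{(m)}$ analogously from $\cM_m$. Since $\cM$ and $\cM_m$ are of Beurling type, Lemma \ref{coincidence2} shows that ${\bf M}$ and ${\bf M}^{(m)}$ are pure elements of ${\bf B_n^c}(\cM^\perp)$, resp.\ ${\bf B_n^c}(\cM_m^\perp)$, possessing characteristic functions; moreover their ranks are bounded by $\dim\cE$, since ${\bf \Delta_{B\otimes {\it I}}}(I)={\bf P}_\CC\otimes I_\cE$ has rank $\dim\cE$ and ${\bf \Delta_M^p}(I_{\cM^\perp})=P_{\cM^\perp}{\bf \Delta_{B\otimes {\it I}}^p}(I)|_{\cM^\perp}$. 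Hence Theorem \ref{commutative-curva} applies to both, and the multiplicity theorem above gives $m_c(\cM)=\dim\cE-\text{\rm curv}_c({\bf M})$ and $m_c(\cM_m)=\dim\cE-\text{\rm curv}_c({\bf M}^{(m)})$. Thus it suffices to prove $\limsup_{m\to\infty}\text{\rm curv}_c({\bf M}^{(m)})\leq\text{\rm curv}_c({\bf M})$.

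Next I would record the convergence at each fixed ${\bf q}$. Writing $Q_{\bf q}:=Q_{q_1}^{(1)}\otimes\cdots\otimes Q_{q_k}^{(k)}$ and using the invariance of $\cM$ under each ${\bf B}_{i,j}\otimes I_\cE$ together with relation \eqref{connection2}, one obtains
$$\text{\rm trace\,}[\Phi_{M_1}^{q_1}\circ\cdots\circ\Phi_{M_k}^{q_k}({\bf \Delta_M}(I_{\cM^\perp}))]=\text{\rm trace\,}[P_{\cM^\perp}(Q_{\bf q}\otimes I_\cE)]=\dim\cE\prod_{i=1}^k\text{\rm trace\,}(Q_{q_i}^{(i)})-\text{\rm trace\,}[P_\cM(Q_{\bf q}\otimes I_\cE)],$$
with the analogous identity for ${\bf M}^{(m)}$. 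Because $Q_{\bf q}\otimes I_\cE$ is of finite rank (hence trace class) and $P_{\cM_m}\to P_\cM$ in the weak operator topology, the traces $\text{\rm trace\,}[P_{\cM_m}(Q_{\bf q}\otimes I_\cE)]$ converge to $\text{\rm trace\,}[P_\cM(Q_{\bf q}\otimes I_\cE)]$. Dividing by $\prod_i\text{\rm trace\,}(Q_{q_i}^{(i)})$ shows that the $Q$-normalized quantities
$$x_{\bf q}:=\frac{\text{\rm trace\,}[\Phi_{M_1}^{q_1}\circ\cdots\circ\Phi_{M_k}^{q_k}({\bf \Delta_M}(I_{\cM^\perp}))]}{\prod_{i=1}^k\text{\rm trace\,}(Q_{q_i}^{(i)})},\qquad x_{\bf q}^{(m)}:=\frac{\text{\rm trace\,}[\Phi_{M_1^{(m)}}^{q_1}\circ\cdots\circ\Phi_{M_k^{(m)}}^{q_k}({\bf \Delta_{M^{(m)}}}(I_{\cM_m^\perp}))]}{\prod_{i=1}^k\text{\rm trace\,}(Q_{q_i}^{(i)})}$$
satisfy $\lim_{m\to\infty}x_{\bf q}^{(m)}=x_{\bf q}$ for each fixed ${\bf q}\in\ZZ_+^k$.

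The crux, and the point where the commutative argument genuinely departs from the noncommutative one, is that Lemma \ref{semi-cont} cannot be invoked verbatim: there the decreasing multi-sequences carry the normalization $n_1^{q_1}\cdots n_k^{q_k}$ arising from trace-contractivity of $\tfrac{1}{n_i}\Phi_{T_i}$ via Lemma \ref{basic}, whereas here the denominators $\text{\rm trace\,}(Q_{q_i}^{(i)})\sim q_i^{n_i-1}/(n_i-1)!$ grow only polynomially, so that route is unavailable. The remedy is that the proof of Theorem \ref{commutative-curva}—through Theorem \ref{trace-N2} and Theorem \ref{index3} applied to $Y=I-{\bf \widetilde K_T}{\bf \widetilde K_T^*}$—already establishes that the $Q$-normalized multi-sequences $\{x_{\bf q}\}$ and $\{x_{\bf q}^{(m)}\}$ are themselves decreasing in each index and converge to $\text{\rm curv}_c({\bf M})$ and $\text{\rm curv}_c({\bf M}^{(m)})$, respectively. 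With this monotonicity in hand, the purely elementary $\limsup$ argument in the proof of Lemma \ref{semi-cont} applies word for word to $\{x_{\bf q}\}$ and $\{x_{\bf q}^{(m)}\}$: using $x_{\bf q}^{(m)}\geq\lim_{\bf q}x_{\bf q}^{(m)}$, the pointwise convergence $x_{\bf q}^{(m)}\to x_{\bf q}$, and $\lim_{\bf q}x_{\bf q}=\text{\rm curv}_c({\bf M})$, one obtains $\limsup_m\text{\rm curv}_c({\bf M}^{(m)})\leq\text{\rm curv}_c({\bf M})$ by contradiction. Combining this with the identities of the first paragraph yields $\liminf_{m\to\infty}m_c(\cM_m)\geq m_c(\cM)$. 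The main obstacle I anticipate is precisely the bookkeeping needed to confirm that the monotonicity hypothesis of the $\limsup$ argument is supplied by Theorem \ref{commutative-curva} in the $Q$-normalization, rather than by the inapplicable trace-contractivity mechanism of Lemma \ref{basic}.
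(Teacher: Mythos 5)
Your proposal is correct and follows essentially the route the paper intends (its own proof is omitted with the remark that it is ``very similar'' to the noncommutative case): exactly as in Section 4, you reduce the statement to upper semi-continuity of the curvature of the compressions via $m_c(\cM)=\dim\cE-\text{\rm curv}_c({\bf M})$, get pointwise convergence $x_{\bf q}^{(m)}\to x_{\bf q}$ for each fixed ${\bf q}$ from WOT convergence of the projections tested against the finite-rank operators $Q_{q_1}^{(1)}\otimes\cdots\otimes Q_{q_k}^{(k)}\otimes I_\cE$, and finish with the monotone-multisequence $\limsup$ contradiction argument. Your observation that Lemma \ref{semi-cont} cannot be cited verbatim---because the normalization $\prod_{i=1}^k\text{\rm trace\,}(Q_{q_i}^{(i)})$ is polynomial rather than geometric, so the trace-contractivity mechanism of Lemma \ref{basic} is unavailable---and your substitution of the monotonicity of the $Q$-normalized sequences established in the proof of Theorem \ref{commutative-curva} (which is where the Beurling-type hypothesis, through the existence of characteristic functions via Lemma \ref{coincidence2}, is genuinely used) is precisely the adaptation the paper leaves implicit.
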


       %

      \end{document}